\providecommand{\mapsfrom}{\mathrel{\reflectbox{\ensuremath{\mapsto}}}}
\def\Ind#1#2{#1\setbox0=\hbox{$#1x$}\kern\wd0\hbox to 0pt{\hss$#1\mid$\hss}
\lower.9\ht0\hbox to 0pt{\hss$#1\smile$\hss}\kern\wd0}
\def\Notind#1#2{#1\setbox0=\hbox{$#1x$}\kern\wd0\hbox to 0pt{\mathchardef
\nn="3236\hss$#1\nn$\kern1.4\wd0\hss}\hbox to 0pt{\hss$#1\mid$\hss}\lower.9\ht0
\hbox to 0pt{\hss$#1\smile$\hss}\kern\wd0}
\def\indi{\mathop{\mathpalette\Ind{}}}
\theoremstyle{plain}
\newtheorem{theorem}{Theorem}[section]
\newtheorem{prop}[theorem]{Proposition}
\newtheorem{fact}[theorem]{Fact}
\newtheorem{lemma}[theorem]{Lemma}
\newtheorem{cor}[theorem]{Corollary}
\theoremstyle{definition}
\newtheorem{defn}[theorem]{Definition}
\newtheorem{remark}[theorem]{Remark}
\newtheorem{problem}[theorem]{Problem}
\newtheorem{expl}[theorem]{Example}
\newtheorem{conj}[theorem]{Conjecture}
\newtheorem*{claim*}{Claim}
\newtheorem{claim}[theorem]{Claim}
\newcommand{\tp}{\operatorname{tp}}
\newcommand{\M}{\mathcal{M}}
\newcommand{\C}{\mathbb{M}}
\newcommand{\Span}{\operatorname{Span}}
\newcommand{\ded}{\operatorname{ded}}
\newcommand{\cof}{\operatorname{cof}}
\newcommand{\qftp}{\operatorname{qftp}}
\newcommand{\acl}{\operatorname{acl}}
\newcommand{\dcl}{\operatorname{dcl}}
\newcommand{\RCF}{\operatorname{RCF}}
\newcommand{\Aut}{\operatorname{Aut}}
\newcommand{\ACVF}{\operatorname{ACVF}}
\newcommand{\pCF}{\operatorname{pCF}}
\newcommand{\val}{\operatorname{val}}
\newcommand{\opg}{\operatorname{opg}}
\newcommand{\op}{\operatorname{op}}
\newcommand{\alg}{\operatorname{alg}}
\newcommand{\Th}{\operatorname{Th}}
\newcommand{\Sk}{\operatorname{Sk}}
\newcommand{\Sym}{\operatorname{Sym}}
\newcommand{\ind}{\operatorname{\indi}}
\newcommand{\sign}{\operatorname{sign}}
\newcommand{\cha}{\operatorname{char}}
\newcommand{\K}{\mathbb{K}}
\newcommand{\F}{\mathbb{F}}
\renewcommand{\a}{\overline{a}}
\renewcommand{\b}{\overline{b}}
\renewcommand{\c}{\overline{c}}
\newcommand{\x}{\overline{x}}
\newcommand{\G}{\mathbb{G}}
\newcommand{\trd}{\operatorname{trd}}
\newcommand{\tuple}{\overline}
\newcommand{\lambdatup}{\tuple{\lambda}}
\newcommand{\alphatup}{\tuple{\alpha}}
\newcommand{\as}{\wp}
\font\myfont=cmr12 at 7pt
\title{On $n$-dependent groups and fields II}
\author[Artem Chernikov and Nadja Hempel]{Artem Chernikov and Nadja Hempel \\ \\ ({\myfont with an appendix by} Martin Bays)}
\begin{document}

\maketitle

\begin{abstract}
  We continue the study of $n$-dependent groups, fields and related structures, largely motivated by the conjecture that every $n$-dependent field is dependent. We provide evidence towards this conjecture by showing that every infinite $n$-dependent valued field of positive characteristic is henselian, obtaining a variant of Shelah's Henselianity Conjecture in this case and generalizing a recent result of Johnson for dependent fields. Additionally, we prove a result on intersections of type-definable connected components over generic sets of parameters in $n$-dependent groups, generalizing Shelah's absoluteness of $G^{00}$ in dependent theories and relative absoluteness of $G^{00}$ in $2$-dependent theories.  In an effort to clarify the scope of this conjecture, we provide new examples of strictly $2$-dependent fields with additional structure, showing that Granger's examples of non-degenerate bilinear forms over dependent fields are $2$-dependent.
   Along the way, we obtain some purely model-theoretic results of independent interest: we show that $n$-dependence is witnessed by formulas with all but one variable singletons; provide a type-counting criterion for $2$-dependence and use it to deduce $2$-dependence for compositions of dependent relations with arbitrary binary functions (the Composition Lemma); and show that an expansion of a geometric theory $T$ by a generic predicate is dependent if and only if it is $n$-dependent for some $n$, if and only if the algebraic closure in $T$ is disintegrated.
\end{abstract}

\section{Introduction}
A classical line of research in model theory, both pure and applied, aims to determine properties of algebraic structures, such as groups and fields, that satisfy certain model-theoretic tameness assumptions. This is analogous to the study of algebraic or Lie groups in algebraic or
differential geometry, but instead of considering groups definable in a specific structure like $\mathbb{C}$ or $\mathbb{R}$ one typically considers groups definable in a class of first-order structures with some restrictions on the complexity of their definable subsets. Some of the most striking applications of model theory are based
on a detailed understanding of definable groups in certain specific contexts of this kind (e.g.~Hrushovski's proof of the Mordell-Lang conjecture for function fields \cite{hrushovski1996mordell}  is based on the theory of stable  groups,  applied to groups definable in differentially closed and in separably closed fields). But even if one
is only interested in abstract classification of first-order structures, the study of definable groups unavoidably enters the picture (e.g.~through Zilber's work on totally categorical structures \cite{zilber1993uncountably} or  Hrushovski's theorem on unidimensional theories \cite{hrushovski1990unidimensional}). In the case of model-theoretically tame fields one often expects not only to deduce some of their general properties, but in fact to obtain an explicit algebraic classification.
Probably the first result of this type is Macintyre's proof that all $\aleph_0$-stable fields (roughly speaking, fields admitting a Zariski-like notion of dimension on their definable subsets) are algebraically closed \cite{macintyre1971omega}, generalized by Cherlin-Shelah to the larger class of superstable fields \cite{cherlin1980superstable}. Some of the longest standing conjectures in model theory that played a fundamental role in the development of the area are asking for characterizations of this type, e.g.~Podewski's conjecture whether all minimal fields are algebraically closed \cite{podewski1973minimale}, or the stable field conjecture asking if all stable fields are separably closed (see e.g.~\cite{johnson2020etale}), and we discuss some further examples below.

 In this article we continue the study of groups, fields and related structures satisfying a model-theoretic tameness condition called \emph{$n$-dependence}, for $n\in \mathbb{N}$, initiated in \cite{hempel2016n} and continued in \cite{chernikov2019mekler}. The class of $n$-dependent theories was introduced by Shelah in \cite{shelah2014strongly}, with the $1$-dependent (or just \emph{dependent}) case corresponding to the class of NIP theories that has attracted a lot of attention recently (see e.g.~\cite{simon2015guide} for an introduction to the area). Basic properties of $n$-dependent theories are investigated in \cite{chernikov2014n}. Roughly speaking, $n$-dependence of a theory guarantees that the edge relation of an infinite generic $(n+1)$-hypergraph is not definable in its models (see Definition \ref{def: k-dependence}). For $n \geq 2$, we say that a theory is \emph{strictly $n$-dependent} if it is $n$-dependent, but not $(n-1)$-dependent.

This paper is largely, but not exclusively, motivated by the following conjecture.
\begin{conj}\label{conj: main conj}
	There are no strictly $n$-dependent fields for $n \geq 2$ (in the pure ring language).
\end{conj}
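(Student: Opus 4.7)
Since this is explicitly Conjecture~\ref{conj: main conj} and already a strengthening of the still-open conjecture that every dependent field is separably closed, real closed, or admits a non-trivial definable henselian valuation, what follows is a strategic plan rather than a proof. The overall approach is to follow the Macintyre--Cherlin/Shelah--Kaplan/Scanlon/Wagner--Johnson template: obtain enough algebraic structure on an $n$-dependent field $K$ to force it to admit a non-trivial henselian valuation, and then transfer $n$-dependence down to classical invariants via an Ax--Kochen/Ershov argument, closing an induction on $n$.

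More concretely, the plan would proceed by induction on $n\geq 2$, taking as input the conjecture for $n-1$ (and ultimately for $n=1$). Given an infinite $n$-dependent field $K$, step one is to show that $K$ is separably closed, real closed, or admits a non-trivial definable henselian valuation. In positive characteristic this is within reach: combine the paper's theorem that infinite $n$-dependent valued fields of positive characteristic are henselian with a Kaplan--Scanlon--Wagner style search for non-trivial valuations, coding additive and multiplicative subgroups using the new results on intersections of type-definable connected components over generic parameter sets. Step two is Ax--Kochen/Ershov: given a definable henselian $v$ on $K$, the residue field $\bar K$ and value group $\Gamma$ are interpretable, hence $n$-dependent; an analysis of $n$-dependent ordered abelian groups (extending the Halevi--Hasson--Jahnke picture of NIP OAGs) should force $\Gamma$ to be dependent, and an inductive argument on a suitable complexity invariant of $\bar K$ would force $\bar K$ to be dependent as well, lifting back to $(K,v)$ via AKE. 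The Composition Lemma and the $2$-dependence type-counting criterion developed in the paper would play a supporting role in controlling what extra structure can be induced on $\bar K$ or $\Gamma$.

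The hardest step is almost certainly the characteristic $0$ case, where even the existence of a non-trivial definable henselian valuation on a dependent field required Johnson's highly nontrivial recent work; no analogue is currently in sight for $n \geq 2$. A second obstacle is the absence, for $n\geq 2$, of many standard NIP tools (honest definitions, the $(p,q)$-theorem, strong uniform stable embeddedness), which the paper only partially remedies via the Composition Lemma and the type-counting criterion; a full proof would need to build on these or develop new combinatorial tools. Finally, the paper's own examples of strictly $2$-dependent structures obtained from dependent fields together with Granger's non-degenerate bilinear forms show that the conjecture is genuinely algebraic in content: ruling out such behaviour in the pure ring language requires proving that no multilinear form is implicitly encodable in a field of positive characteristic, and this is really where the conceptual difficulty of the conjecture lives.
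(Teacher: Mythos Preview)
This statement is Conjecture~\ref{conj: main conj}, and the paper does \emph{not} prove it; it is explicitly left open, with the paper instead providing partial evidence (henselianity in positive characteristic, the connected-component result, the Granger examples, the generic-predicate collapse) and motivation. So there is no proof in the paper to compare your attempt against.

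You correctly recognised this and offered a strategic outline rather than a proof, which is the only honest response. Your plan is broadly reasonable and consonant with the paper's own framing: the paper does view its henselianity theorem (Theorem~\ref{thm: main hens}) and the connected-component result (Theorem~\ref{thm: connected comp type-def main}) as steps in a Johnson-style programme, and explicitly flags characteristic~$0$ as open even at the level of comparability of valuations (Section~\ref{sec: gen multi-ordered}). Two small corrections to your framing: first, the conjecture is not literally a strengthening of Shelah's conjecture on dependent fields --- rather, the two together would give a complete classification, as the paper notes in the introduction; second, the paper does not suggest that the Composition Lemma or the type-counting criterion would play a role in attacking the conjecture directly --- those tools are used to produce the Granger \emph{examples} delimiting its scope, not to prove instances of it. Otherwise your identification of the main obstacles (characteristic~$0$, missing NIP machinery for $n\geq 2$) matches the paper's own discussion.
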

We expect that the same should hold for fields expanded with some  natural operators such as for example derivations or valuations. We also expect a generalization to type-definable fields: every field type-definable in an $n$-dependent structure is isomorphic to a field type-definable in a dependent structure.
Some initial evidence towards Conjecture \ref{conj: main conj} is given by the results in \cite{hempel2016n}: every infinite $n$-dependent field is Artin-Schreier closed (generalizing \cite{kaplan2011artin} for $n=1$); every non-separable PAC (i.e.~Pseudo-Algebraically Closed) field is not $n$-dependent for any $n$ (generalizing \cite{duret1980corps}). In particular, for fields with (super-)simple theories, our Conjecture \ref{conj: main conj} follows from the well-known conjecture that all such fields are (bounded) PAC (see e.g.~\cite{MR1653312}). On the other hand, combined with Shelah's conjectures on dependent fields discussed below, Conjecture \ref{conj: main conj} leads to a complete classification of $n$-dependent fields.

In this paper we obtain some new results about $n$-dependent groups and fields, in part providing further evidence for Conjecture \ref{conj: main conj} and clarifying its scope, and in part generalizing the known results about dependent or $2$-dependent structures.
First, we prove Shelah's Henselianity Conjecture for $n$-dependent valued fields of positive characteristic in Section \ref{sec: valued fields} (this generalizes Johnson \cite{johnson2021dpA} for $n=1$ and, towards   Conjecture \ref{conj: main conj}, demonstrates that a known property of dependent fields also holds for all $n$-dependent fields). Additionally, we establish a result on intersections of type-definable connected components over generic sets of parameters in $n$-dependent groups in Section \ref{sec: n-dep conn comp}, generalizing Shelah's theorems on absoluteness of $G^{00}$ in dependent theories and relative absoluteness of $G^{00}$ for $2$-dependent theories. While we do not have any direct application  of this result towards Conjecture \ref{conj: main conj} at the moment, the $n=1$ case is a fundamental property of dependent groups and is used extensively in Johnson's classification of dependent fields of finite dp-rank \cite{johnson2021dpA, johnson2021dpB}, so we expect it to be useful in the future study of Conjecture \ref{conj: main conj} and its aforementioned generalization to type-definable fields in $n$-dependent structures.
 Second, we provide new examples of strictly  $2$-dependent fields with additional structure by showing that Granger's examples of non-degenerate bilinear forms over dependent fields are strictly $2$-dependent in Section \ref{sec: Granger} (demonstrating in particular the necessity of the pure ring language assumption in Conjecture \ref{conj: main conj}).
Our proof of this relies on establishing first some general results on $n$-dependent theories, possibly of independent interest: a reduction of the $n$-dependence of a theory to formulas with all but one of its variables singletons (Section \ref{sec: Prelims}), a type-counting criterion for $2$-dependence and the Composition Lemma showing $2$-dependence of compositions of dependent relations with binary functions (Section \ref{sec: functions preserve 2-dep}). 
And third, we show in Section \ref{sec: adding rand pred} that an expansion of a geometric theory $T$ by a generic predicate is dependent if and only if it is $n$-dependent for some $n$, if and only if the algebraic closure in $T$ is disintegrated (generalizing the $n=1$  case from  \cite{chatzidakis1998generic}). While not directly related to Conjecture \ref{conj: main conj}, this gives an example of a class of structures (expansions of geometric structures by a generic predicate) for which $n$-dependence is equivalent to dependence, the behavior predicted for fields by Conjecture \ref{conj: main conj}, and the authors hope that some of the arguments might be useful in the future for the original question.

In the rest of the introduction, we discuss these results in further detail and overview the structure of the paper.

One of the results in \cite{chernikov2014n} gives a characterization of $n$-dependence in terms of generalized indiscernibles (indexed by ordered random partite $n$-hypergraphs) and demonstrates, using this characterization, that in order to verify $n$-dependence of a theory it is enough to check that every formula $\varphi(x; y_1, \ldots,y_n)$ with at least one of the tuples of variables $x,y_1, \ldots, y_n$ singleton is $n$-dependent (generalizing the well-known theorem of Shelah for dependent theories). In Section \ref{sec: Prelims} we refine and generalize some of these results allowing indexing structures of larger cardinalities and obtaining a better reduction to singletons: a theory $T$ is $n$-dependent if and only if every formula $\varphi(x,y_1, \ldots, y_n)$ such that all but at most one of the tuples $x, y_1, \ldots, y_n$ are singletons is $n$-dependent (Theorem \ref{thm: better reduction to singletons}).

In Section \ref{sec: valued fields} (which is self-contained except for the results in Appendix \ref{sec: app Bays}) we obtain further evidence towards Conjecture \ref{conj: main conj} in the case of valued fields. The question of classifying dependent (valued) fields is currently an
active area of research motivated by various versions of Shelah's Conjecture, which in particular predicts that every infinite dependent valued field is henselian.
A recent result of Johnson \cite{johnson2021dpA, johnson2021dpB} confirms this for valued fields of positive characteristic. In Theorem \ref{thm: main hens} we generalize this by showing that every $n$-dependent valued field of positive characteristic is henselian, for arbitrary $n$.
As in Johnson's proof, the theorem is deduced by showing that any two valuations on an infinite $n$-dependent field of positive characteristic must be comparable. In the case of $n=1$, this lemma can be quickly obtained using Artin-Schreier closedness of dependent fields and absoluteness of the connected component $G^{00}$ (in fact, an application of Baldwin-Saxl is sufficient). However, replacing the absolute connected component with a weaker condition for intersections of uniformly definable families of subgroups available in $n$-dependent theories (Proposition \ref{prop: chain cond for mult fam of subgrps}) requires a detailed analysis of the effect that the isomorphism for special linear groups from Kaplan-Scanlon-Wagner \cite{kaplan2011artin} has on multiple valuations. We are able to carry it out, relying in particular on the explicit description of this isomorphism given by Bays in Appendix \ref{sec: app Bays}. Concerning the (open) case of characteristic $0$, in Section \ref{sec: gen multi-ordered} we observe that the model completions of multi-ordered and multi-valued fields with at least two orders (respectively, valuations) as studied in \cite{LouThesis, johnson2016fun} are not $n$-dependent for any $n$.

Given a definable group $G$ and a small set of parameters $A$ (see Section \ref{sec: basic model theory defs} for the definitions of ``small'', ``saturated'', etc.), we denote by $G^{00}_A$ the intersection of all subgroups of $G$ of bounded index type-definable over $A$ (see Section \ref{sec: Connected components} for more details). A crucial fact about definable groups in dependent theories, due to Shelah,   is that for every small set $A$ one has $G^{00}_A = G^{00}_{\emptyset}$ \cite{shelah2008minimal}. This can be viewed as an infinitary analog of the Baldwin-Saxl condition on intersections of uniformly definable families of subgroups in dependent theories \cite{baldwin1976logical}. In \cite{MR3666349}, Shelah established the following result for groups definable in $2$-dependent theories: let $\M$ be a sufficiently saturated model, and let $b$ be a finite tuple in $\C$ (and not contained in $\M$ in the case of interest), then $G^{00}_{\M\cup b} = G^{00}_{\M} \cap G^{00}_{Cb}$ for some small set $C \subseteq \M$.
In Section \ref{sec: n-dep conn comp} (which is self-contained), we generalize this result from $2$-dependent groups to $n$-dependent groups. Specifically, we show that if $T$ is $n$-dependent and $G = G(\mathbb{M})$ is a type-definable group (over $\emptyset)$, then for any small model $M$ and finite tuples $b_1, \ldots, b_{n-1}$ sufficiently independent over $M$ in an appropriate sense, we have that 
$$G^{00}_{\M \cup b_1 \cup \dots \cup  b_{n-1}} = \bigcap_{i=1, \dots, n-1} G_{\M \cup  b_1 \cup \ldots \cup b_{i-1} \cup b_{i+1} \cup \ldots \cup  b_{n-1}}^{00} \cap G^{00}_{C \cup  b_1\cup \dots\cup  b_{n-1}}$$ 
for some $C \subseteq \M$ of absolutely bounded size (Theorem \ref{thm: connected comp type-def main} and Corollary \ref{cor: G00 2-dep}). In other words, the intersection of all subgroups of $G$ of bounded index type-definable over $\M \cup b_1 \cup \dots \cup  b_{n-1}$ is already given by the intersection of a (potentially) smaller collection of subgroups containing only boundedly many groups whose definitions involve all $n-1$ of the parameters $b_1, \ldots, b_{n-1}$ at the same time.
Our independence assumption on the parameters holds trivially in the cases $n=1,2$ giving the aforementioned results for dependent and $2$-dependent groups, and in general can be achieved assuming that the $b_i$'s appear as the vertices of an amalgamation diagram with respect to the independence relation of being a $\kappa$-coheir (see Definition \ref{def: generic position} for the precise definition of our independence assumption). While this result has no direct applications to Conjecture \ref{conj: main conj} at the moment (in our proof of Theorem \ref{thm: main hens} we only needed a chain condition for uniformly definable families of subgroups from Proposition \ref{prop: chain cond for mult fam of subgrps}), we expect that it will be useful in the  future, in particular  for the aforementioned variant of Conjecture \ref{conj: main conj} for type-definable fields.

Next, we consider the limitations of Conjecture \ref{conj: main conj} (in terms of the additional structure allowed on the field) and try to place it in a more general model-theoretic setting. In \cite{hempel2016n} it was observed that the theory of a bilinear form on an infinite dimensional vector space over a \emph{finite} field is strictly $2$-dependent. In fact, all of the previously known ``algebraic'' examples of strictly $n$-dependent theories with $n \geq 2$ tend to look like multi-linear forms over finite fields. E.g., smoothly approximable structures are $2$-dependent and coordinatizable via bilinear forms over finite fields \cite{cherlin2003finite}; and the strictly $n$-dependent pure groups constructed in \cite{chernikov2019mekler} using Mekler's construction are essentially of this form as well, using Baudisch's interpretation of Mekler's construction in alternating bilinear maps \cite{baudisch2002mekler}.
In Section \ref{sec: Granger} we show that one can replace finite fields by arbitrary dependent fields in these examples. Namely, we investigate $n$-dependence for theories of bilinear forms on vector spaces with a separate sort for the field, in the sense of Granger \cite{granger1999stability}. We show that all such theories are $2$-dependent assuming that the field is dependent, and that the assumption of dependence is necessary (see Theorem \ref{thm: Granger}). Combined with the fact that the intersection conditions on the connected components discussed above resemble modular behavior in the $2$-dependent case, this leads one to speculate that 
$n$-dependence of a theory might imply some form of ``linearity relative to the dependent part''. While formulating this precisely appears difficult at the moment, we view Conjecture \ref{conj: main conj} as a specific instance of this general principle.
Our proof of Theorem \ref{thm: Granger} relies on the criterion for $n$-dependence  in terms of generalized indiscernibles from Section \ref{sec: Prelims}, and on some additional purely model-theoretic results contained in Section \ref{sec: functions preserve 2-dep} that we now describe.

In \cite{chernikov2014n} a generalization of the Sauer-Shelah lemma to $n$-dependent formulas is given, in particular demonstrating that a formula $\varphi(x;y_1, \ldots, y_n)$ is $n$-dependent if and only if the number of $\varphi$-types over an arbitrary large finite set $A$ of parameters is bounded by $2^{|A|^{n-\varepsilon}}$ for some $\varepsilon = \varepsilon(\varphi) \in \mathbb{R}_{>0}$.
Concerning the number of types over infinite sets of parameters, a well-known theorem of Shelah \cite[Theorem II.4.11]{shelah1990classification} shows that if $\varphi(x,y)$ is dependent, then the number of $\varphi$-types over an infinite set of parameters of size $\kappa$ is at most $\ded(\kappa)$, where $\ded(\kappa)$ is the supremum over the number of Dedekind cuts in a linear order of cardinality $\kappa$. In Section \ref{sec: counting types}, we show that a theory is 2-dependent if and only if the following type counting criterion is satisfied.  Let $c$ be a finite tuple and $I$ an indiscernible sequence of size $\kappa$. Then the number of types over $Ic$ that are realized cofinally in a sequence mutually indiscernible to $I$ is bounded by $\ded(\kappa)$ (see Proposition \ref{prop: few types on a tail} for details).
In Section \ref{sec: Comp Lemma} this criterion is combined with set-theoretic absoluteness to obtain a more general version of the following finitary combinatorial statement of independent interest, the ``Composition Lemma''. Let $R \subseteq M^3$ be a ternary relation definable in a dependent structure, and let  $f: M^2 \to M$ be an arbitrary (not necessarily definable) function. Then the ternary relation $R'(x,y,z) = R(f(x,y), f(x,z), f(y,z))$ is $2$-dependent (Theorem \ref{thm: functions into stable are 2-dep }). It is interesting to compare this to a line of results around Hilbert's 13th problem demonstrating that a function of arbitrary arity can be expressed as a finite composition of binary functions (in the category of all functions, or of continuous functions on $\mathbb{R}$ --- a celebrated theorem of Kolmogorov and Arnold \cite{arnol1957functions}).
Our result can be viewed as saying that in such presentations, the outer relation is necessarily ``fractal-like''. It is worth mentioning that some other connections of $n$-dependence to finitary hypergraph combinatorics are considered in \cite{terry} (in connection to hypergraph growth) and in \cite{ChernikovTowsner} (which establishes a strong regularity lemma for $n$-dependent hypergraphs demonstrating that every $n$-dependent relation of arbitrarily high arity can be approximated by relations of arity $n$ up to measure $0$).
The Composition Lemma is applied in the proof of Theorem \ref{thm: Granger} to conclude $2$-dependence of certain basic atomic formulas involving the ``generic'' binary function given by the bilinear form.

Finally, in Section \ref{sec: adding rand pred} (which only depends on Section \ref{sec: Prelims}) we consider $n$-dependence for expansions of geometric theories by generic predicates and relations of higher arity. In particular, we show that an expansion of a geometric theory $T$ by a generic predicate is dependent if and only if it is $n$-dependent for some $n$, if and only if the algebraic closure in $T$ is disintegrated (Corollary \ref{cor: n-dep iff acl disint}). This generalizes the corresponding result for dependence in \cite{chatzidakis1998generic}. In Remark \ref{rem: counterex to ChPil} we give an example showing that geometricity of $T$ (or some other additional assumption) is necessary even for $n=1$. Our proof for relations of higher arity relies on an infinitary generalization of Hrushovski's observation \cite{hrushovski1991pseudo} that the random $n$-ary hypergraph is not a finite Boolean combination of relations of arity $n-1$.
\subsection*{Acknowledgements}
We are grateful to the referee for many valuable suggestions on improving the paper.
We thank Martin Bays for agreeing to include his result as an appendix.  We thank Itay Kaplan for helpful discussions concerning Section \ref{sec: type-def connected comp};  Nick Ramsey for suggesting the question considered in Section \ref{sec: Granger} and for a helpful discussion concerning Section \ref{sec: functions preserve 2-dep}; Kota Takeuchi for a discussion concerning Section \ref{sec: counting types}; Erik Walsberg for a discussion concerning Section \ref{sec: adding rand pred}.

Chernikov and Hempel were partially supported by the NSF Research Grant DMS-1600796 and by the NSF CAREER grant DMS-1651321. Bays was supported in part by the Deutsche Forschungsgemeinschaft (DFG,
German Research Foundation) under Germany's Excellence Strategy EXC 2044
–390685587, Mathematics Münster: Dynamics–Geometry–Structure.
\section{Preliminaries and some general lemmas on $n$-dependence}\label{sec: Prelims}
\subsection{Notation}\label{sec: basic model theory defs}
We will be following standard model-theoretic notation, and refer to e.g.~\cite{tent2012course, marker2006model} for an introduction to model theory. Usually $T$ will denote a complete first-order theory in a language $\mathcal{L}$ (possibly multisorted), and $\mathcal{M}, \mathcal{N}$ will be first-order $\mathcal{L}$-structures.  We recall that, given an infinite cardinal $\kappa$, a first-order structure $\mathcal{M}$ is $\kappa$-saturated when for every set of formulas with parameters from a subset of $\mathcal{M}$ of size $<\kappa$ in a fixed finite tuple of variables for which every finite subset of these formulas is satisfied by a tuple in $\mathcal{M}$, the whole set is satisfied by a tuple in $\mathcal{M}$. Given a tuple $a$ of elements in $\mathcal{M}$ and a subset $B$ of $\mathcal{M}$, we denote by $\tp(a/B)$ the complete type of $a$ over $b$ (i.e.~the collection of all formulas with parameters in $B$ satisfied by $a$). We write $a \equiv_B a'$ when $\tp(a/B) = \tp(a'/B)$. A structure $\mathcal{M}$ is $\kappa$-homogeneous if for any two finite tuples $a,a'$ and a set of parameters $B$ in $\mathcal{M}$ with $|B|<\kappa$ for which $a \equiv_B a'$, there exists an automorphism $\sigma \in \Aut(\mathcal{M}/B)$ of $\mathcal{M}$ fixing $B$ pointwise and sending $a$ to $a'$.
We let $\mathbb{M} \models T$ be a \emph{monster model} of $T$, i.e.~a $\kappa$-saturated and $\kappa$-homogeneous model of $T$ for some sufficiently large strongly inaccessible cardinal $\kappa = \kappa(\mathbb{M})$.  
Once $\mathbb{M}$ is fixed, as usual we will say that a subset of $\mathbb{M}$ (or just an arbitrary set/structure)  is \emph{small} if it has cardinality smaller than $\kappa(\mathbb{M})$.  Given an $\mathcal{L}$-structure $\mathcal{M} \models T$ and a single variable (of a prescribed sort of the language $\mathcal{L}$), we write $\mathcal{M}_x$ to denote the corresponding sort of $\mathcal{M}$. If $x = (x_1, \ldots, x_n)$ is a finite tuple of variables, we let $\mathcal{M}_x := \prod_{i=1}^n \mathcal{M}_{x_i}$.

\subsection{$N$-dependent formulas and their basic properties}
We begin with the definition of $n$-dependent theories and some of the basic properties of $n$-dependent formulas and theories.

\begin{defn}\rm \label{def: k-dependence}
A partitioned formula $\varphi\left(x;y_{1},\ldots,y_{n}\right)$ has the \emph{$n$-independence property} (with respect to a theory $T$), if in some model of $T$ there is a sequence of tuples $\left(a_{1,i},\ldots,a_{n,i}\right)_{i\in\omega}$
such that for every $s\subseteq\omega^{n}$ there is a tuple $b_{s}$ with the following property:
\[
\models\varphi\left(b_{s};a_{1,i_{1}},\ldots,a_{n,i_{n}}\right)\Leftrightarrow\left(i_{1},\ldots,i_{n}\right)\in s\mbox{.}
\]
Otherwise we say that $\varphi\left(x,y_{1},\ldots,y_{n}\right)$ is \emph{$n$-dependent}.
A theory is \emph{$n$-dependent} if it implies that every formula is
$n$-dependent. 

We simply say that a theory is \emph{dependent} if it is $1$-dependent. A structure $\mathcal{M}$ is $n$-dependent if $\Th(\mathcal{M})$ is $n$-dependent.
\end{defn}

\begin{fact} \label{fac: props of n-dependent formulas} \cite[Proposition 6.5]{chernikov2014n}
\begin{enumerate}
	\item Let $\varphi(x,y_1, \ldots, y_n)$ and $\psi(x,y_1, \ldots, y_n)$ be $n$-dependent formulas. Then $\neg \varphi$, $\varphi \land \psi$ and $\varphi \lor \psi$ are $n$-dependent.
	\item Let $\varphi(x,y_1, \ldots, y_n)$ be a formula. Suppose that $(w, z_1, \ldots, z_n)$ is any permutation of the tuple $(x,y_1, \ldots, y_n)$. Then $\psi(w, z_1, \ldots, z_n) := \varphi(x,y_1, \ldots, y_n)$ is $n$-dependent if and only if $\varphi(x,y_1, \ldots, y_n)$ is $n$-dependent.
	\item A theory $T$ is $n$-dependent if and only if every formula $\varphi(x,y_1, \ldots, y_n)$ with $|x|=1$ is $n$-dependent (see also Section \ref{sec: red to singletons}).
\end{enumerate}
	\end{fact}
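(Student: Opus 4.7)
The plan is to prove the three clauses in order, with clause (2) carrying the main combinatorial content.

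Clause (1) is a Ramsey-compactness argument. Negation is immediate by replacing each witness $b_s$ with $b_{\omega^n \setminus s}$. For conjunction, suppose $\chi := \varphi \wedge \psi$ has $n$-IP via $(a_{j,i})$ and $(b_s)_{s \subseteq \omega^n}$. Setting $s^\varphi := \{(i_1,\ldots,i_n) : \models \varphi(b_s; a_{1,i_1},\ldots,a_{n,i_n})\}$ and $s^\psi$ analogously, one has $s \subseteq s^\varphi,\, s^\psi$ with $s^\varphi \cap s^\psi = s$. Enlarging the witness array via compactness and applying a partite Ramsey argument to the coloring $s \mapsto s^\varphi \setminus s$ extracts a cofinal sub-family where the failures on non-edges are uniformly attributable to one of $\varphi$ or $\psi$, giving $n$-IP for that formula. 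Disjunction follows by De Morgan.

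For clause (2), the plan is to establish a symmetric characterization of $n$-IP: $\varphi(x_0; x_1, \ldots, x_n)$ has $n$-IP if and only if for arbitrarily large $\kappa$ there exist arrays $(a^k_i)_{i<\kappa}$ in the sort of $x_k$ (for $k = 0, \ldots, n$) such that every $S \subseteq \kappa^{n+1}$ is realized as a $\varphi$-incidence pattern of the array in the sense that $\models \varphi(a^0_{i_0}, \ldots, a^n_{i_n}) \Leftrightarrow (i_0, \ldots, i_n) \in S$ after a suitable choice of rows. The backward direction is immediate; the forward direction uses compactness to enlarge the asymmetric witness, Ramsey to pass to mutually indiscernible rows, and a coding step to reorganize the $(b_s)$'s as the $0$-th row of an $(n+1)$-indexed symmetric array. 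Permutation invariance of $n$-IP is manifest from the symmetric form.

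For clause (3), proceed by induction on $|x|$. The case $|x|=1$ is trivial. For the inductive step, split $x = (x^1, x^2)$ with $|x^1|=1$ and assume $\varphi(x^1, x^2; y_1, \ldots, y_n)$ has $n$-IP. Using compactness to enlarge the witness array to index size $\kappa$ much larger than the number of types over a set of size $|T|$, pigeonhole on $\tp(b_s^2 / \{a_{j,i}\})$ yields a large sub-family $S \subseteq \mathcal{P}(\kappa^n)$ on which this type is constantly some $p$. Fixing $b^\ast \models p$ and applying $\mathbb{M}$-automorphisms $\sigma_s$ fixing the array and sending each $b_s^2$ (for $s \in S$) to $b^\ast$, the resulting witnesses $(\sigma_s(b_s^1), b^\ast)$ satisfy the same $\varphi$-pattern $s$. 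A further compactness argument (using that $S$ is large enough to realize every finite prescribed pattern on every finite sub-array) then gives $n$-IP for the formula $\varphi(x^1, b^\ast; y_1, \ldots, y_n)$, whose $x$-side has length $|x|-1$; the inductive hypothesis concludes.

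The main obstacle is clause (2): reorganizing the asymmetric witness into a symmetric $(n+1)$-array requires coding the $b_s$'s compatibly with indiscernibility, which is the combinatorial heart of the argument. Once (2) is in hand, (1) reduces to a routine partite Ramsey pigeonhole, and (3) to a compactness-and-absorption reduction using homogeneity of $\mathbb{M}$.
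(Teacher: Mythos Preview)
The paper does not prove this fact itself (it is cited from \cite{chernikov2014n}), but it essentially reproves clause (3) in Lemma~\ref{lem: red 1 to one var} via the $G_{n,p}$-indiscernibles characterization. Your approach to (3) via pigeonhole on $\tp(b_s^2/\{a_{j,i}\})$ has a fatal cardinality problem: for infinite $\kappa$ and finite $n$, cardinal arithmetic gives $\kappa^n=\kappa$, so there are only $2^\kappa$ subsets $s\subseteq\kappa^n$, while the number of complete types over the array (which has size $n\kappa=\kappa$) is also $2^{|T|+\kappa}=2^\kappa$. Pigeonhole therefore yields nothing. Even if it produced a ``large'' family $S\subseteq\mathcal{P}(\kappa^n)$, largeness alone does not force $S$ to realize every finite pattern on every finite sub-array: for instance $S=\{s:(0,\ldots,0)\in s\}$ has size $2^\kappa$ but misses every pattern omitting the origin. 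The argument in \cite{chernikov2014n} (and in Lemma~\ref{lem: red 1 to one var} here) avoids counting entirely: one takes a $G_{n,p}$-indexed witness which is $O_{n,p}$-indiscernible and $G_{n,p}$-indiscernible over $b=b_1b_2$, applies the inductive hypothesis to conclude $O_{n,p}$-indiscernibility over $b_2$, absorbs $b_2$ into the tuples $a_g$ for $g\in P_n$, and applies induction again to the shorter tuple $b_1$.

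Your clause (1) sketch for conjunction has a similar problem: the ``coloring'' $s\mapsto s^\varphi\setminus s$ has $2^\kappa$ colors, so no Ramsey theorem applies. The clean route is again via the indiscernible witness of Proposition~\ref{prop: indisc witness to IPn}: by $G_{n,p}$-indiscernibility over $b$, the truth values of $\varphi(b;\cdot)$ and $\psi(b;\cdot)$ are each constant on edges and on non-edges of $G_{n,p}$; both hold on edges, and whichever fails on one non-edge fails on all, giving $n$-IP for that formula. For clause (2) your idea is right but the statement is garbled: the correct symmetric criterion is that $\varphi$ has $n$-IP if and only if every finite $(n{+}1)$-partite $(n{+}1)$-hypergraph can be realized as the $\varphi$-incidence pattern on some array $(a^k_i)$; this is manifestly permutation-invariant and is what the forward/backward directions you describe actually establish once stated for finite patterns.
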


\subsection{Generalized indiscernibles}

We will often use a characterizations of $n$-dependence from \cite{chernikov2014n} in terms of generalized indiscernibles.

\begin{defn}
Fix a language $\mathcal{L}_{\opg}^n=\{R_n(x_1,\ldots ,x_{n}),<, P_1(x),\ldots , P_{n}(x)\}$. 
An \emph{ordered $n$-partite hypergraph} is an $\mathcal{L}^{n}_{\opg}$-structure $ \mathcal{A} = \left(A; <, R_n, P_1, \ldots , P_{n} \right)$ such that:
\begin{enumerate}
\item
$A$ is the disjoint union $P^{\mathcal{A}}_1 \sqcup\ldots \sqcup P^{\mathcal{A}}_{n}$,
\item $R_n^{\mathcal{A}}$ is a symmetric relation such that if $(a_1,\ldots ,a_{n})\in R_n^{\mathcal{A}}$ then $P^{\mathcal{A}}_i\cap \{a_1, \ldots, a_{n}\}$ is a singleton for every $1 \leq i \leq n$,
\item
$<^{\mathcal{A}}$ is a linear ordering on $A$ with $P^{\mathcal{A}}_1 <\ldots <P^{\mathcal{A}}_{n}$.
\end{enumerate}

\end{defn}

\begin{fact} \label{fac: Gnp} 
\begin{enumerate}
	\item \cite[Proposition A.5]{chernikov2014n} Let $\mathcal{K}$ be the class of all finite ordered $n$-partite hypergraphs. Then $\mathcal{K}$ is a Fra\"{i}ss\'e class, and its limit is called the \emph{generic ordered $n$-partite hypergraph}, denoted by $G_{n,p}$.
	\item \cite[Remark 4.5]{chernikov2014n}  An ordered $n$-partite hypergraph $\mathcal{A}$ is a model of $\Th(G_{n,p})$ if and only if:
\begin{itemize}
\item
$(P^{\mathcal{A}}_i, <)$ is a dense linear order without endpoints for each $1 \leq i \leq n$,
\item
for every $1 \leq j \leq n$, finite disjoint sets 
$A_0,A_1\subset {\prod_{1 \leq i \leq n, i\neq j}P^{\mathcal{A}}_i}$
 and
$b_0<b_1\in P^{\mathcal{A}}_j$, there is some $b \in P^{\mathcal{A}}_j$ such that $b_0<b<b_1$ and: $R_n(b,\bar{a})$ holds for every $\bar{a} \in A_0$ and $\neg R_n(b, \bar{a})$ holds for every $\bar{a} \in A_1$.
\end{itemize}
\end{enumerate}
 \end{fact}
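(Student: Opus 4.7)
The plan is to verify the Fraïssé amalgamation axioms for part (1), and then to axiomatize $\Th(G_{n,p})$ by a back-and-forth argument for part (2).

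For part (1), since $\mathcal{L}_{\opg}^n$ is purely relational, the hereditary property is automatic: a substructure of a finite ordered $n$-partite hypergraph inherits the partition, symmetric $R_n$, and induced linear order. For amalgamation, given $\mathcal{A}_0 \hookrightarrow \mathcal{A}_1, \mathcal{A}_2$ in $\mathcal{K}$, I would take the pushout of underlying sets over $\mathcal{A}_0$, extend the linear order on each part $P_j$ in any way compatible with both $<^{\mathcal{A}_1}$ and $<^{\mathcal{A}_2}$ (for instance, by interleaving new elements in some arbitrary consistent fashion), and let $R_n$ hold only on those $n$-tuples lying entirely in $\mathcal{A}_1$ or entirely in $\mathcal{A}_2$. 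This ``free amalgam'' is again an ordered $n$-partite hypergraph, giving AP; JEP follows by amalgamating over the empty structure. Since the language is finite relational, the class is essentially countable, so a Fraïssé limit $G_{n,p}$ exists and is unique up to isomorphism.

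For part (2), let $T$ denote the theory consisting of the universal axioms for ordered $n$-partite hypergraphs together with density and the extension scheme from the statement; the extension property is expressible as a first-order axiom scheme indexed by $j$ and by the cardinalities of $A_0, A_1$. I would first verify $G_{n,p} \models T$: given $b_0 < b_1$ in $P_j$ and finite disjoint sets $A_0, A_1$ of $(n-1)$-tuples from the other parts, one constructs the desired one-point extension inside $\mathcal{K}$ and realizes it inside $G_{n,p}$ by ultrahomogeneity. Conversely, I would show $T$ is complete, and hence coincides with $\Th(G_{n,p})$, by establishing $\aleph_0$-categoricity via back-and-forth: given two countable models $\mathcal{A}, \mathcal{B} \models T$ and a finite partial isomorphism $\sigma$, in order to extend $\sigma$ by a new element $c \in P_j^{\mathcal{A}}$ I read off the ``cut'' $(b_0, b_1)$ of $c$ inside $\operatorname{dom}(\sigma) \cap P_j^{\mathcal{A}}$ and the finite sets of $(n-1)$-tuples in $\operatorname{dom}(\sigma)$ that respectively do and do not satisfy $R_n$ with $c$; applying the extension axiom inside $\mathcal{B}$ to the $\sigma$-images of these data produces an appropriate image $c' \in P_j^{\mathcal{B}}$. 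Completeness of $T$ then follows by \L o\'s--Vaught.

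The one point that needs care is the back-and-forth step: I must check that all $R_n$-edges through the new element $c$ are controlled by a single application of the extension axiom. This works because, by the partition constraint, every $R_n$-edge contains exactly one vertex from each part, so the edges through $c \in P_j$ are indexed precisely by $(n-1)$-tuples in $\prod_{i \neq j} P_i$, which is exactly the quantification in the extension axiom; symmetry of $R_n$ then ensures the resulting structure on $\operatorname{dom}(\sigma) \cup \{c'\}$ matches the one on $\operatorname{dom}(\sigma) \cup \{c\}$ under the extended map. No further obstacle arises.
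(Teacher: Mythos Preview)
The paper does not actually prove this statement: it is recorded as a \emph{Fact} with citations to \cite{chernikov2014n}, and no argument is given. So there is no ``paper's own proof'' to compare against here.

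Your outline is correct and is the standard argument one would expect. A couple of minor remarks: in the back-and-forth step you tacitly assume that the new element $c$ lies strictly between two elements of $\operatorname{dom}(\sigma)\cap P_j^{\mathcal{A}}$, whereas the extension axiom as stated requires endpoints $b_0<b_1$. When $c$ lies above (or below) all of $\operatorname{dom}(\sigma)\cap P_j^{\mathcal{A}}$, or when that set is empty, you first use the DLO-without-endpoints axiom in $\mathcal{B}$ to manufacture a suitable bounding element, and then apply the extension axiom; this is routine but worth saying. Similarly, to verify that $G_{n,p}$ itself satisfies the DLO axioms on each part, you use ultrahomogeneity plus the appropriate one-point extension in $\mathcal{K}$, exactly as you did for the extension scheme. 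With these small additions, your argument goes through.
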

\noindent We denote by $O_{n,p}$ the reduct of $G_{n,p}$ to the language $\mathcal{L}^{n}_{\op} = \{<, P_1(x),\ldots , P_{n}(x)\}$.

\begin{remark}\label{rem : induced copy}
	It is easy to see from the axiomatization in Fact \ref{fac: Gnp}(2) that given $G_{n,p}$ and any non-empty intervals $I_t \subseteq P_t$ for $t= 1, \ldots, n$, the set $I_1 \times \ldots \times I_n$ contains an induced copy of $G_{n,p}$.
\end{remark}

\begin{defn}
Let $T$ be a theory in a language $\mathcal{L}$, and let $\C$ be a monster
model of $T$.
\begin{enumerate}
\item Let $I$ be a structure in the language $\mathcal{L}_0$.
We say that $\bar{a}=\left(a_{i}\right)_{i\in I}$, 
with $a_{i}$ a tuple in $\C$, is \emph{$I$-indiscernible} over a set of parameters $C \subseteq \C$ if for
all $n\in\omega$ and all $i_{0},\ldots,i_{n}$ and $j_{0},\ldots,j_{n}$
from $I$ we have:
$$
\qftp_{\mathcal{L}_0}\left(i_{0}, \ldots, i_{n}\right)=\qftp_{\mathcal{L}_0}\left(j_{0},\ldots,j_{n}\right)
\Rightarrow $$
$$\tp_{\mathcal{L}}\left(a_{i_{0}}, \ldots, a_{i_{n}} / C\right)=\tp_{\mathcal{L}}\left(a_{j_{0}}, \ldots, a_{j_{n}} / C \right)
.$$
\item For $\mathcal{L}_0$-structures $I$ and $J$, we say that $\left(b_{i}\right)_{i\in J}$
is \emph{based on} $\left(a_{i}\right)_{i\in I}$ over a set of parameters $C \subseteq \C$ if for any finite
set $\Delta$ of $\mathcal{L}(C)$-formulas,  and for any finite tuple $\left(j_{0},\ldots,j_{n}\right)$
from $J$ there is a tuple $\left(i_{0},\ldots,i_{n}\right)$ from
$I$ such that:

\begin{itemize}
\item $\qftp_{\mathcal{L}_0}\left(j_{0},\ldots,j_{n}\right)=\qftp_{\mathcal{L}_0}\left(i_{0},\ldots,i_{n}\right)$ and
\item $\tp_{\Delta}\left(b_{j_{0}},\ldots,b_{j_{n}}\right)=\tp_{\Delta}\left(a_{i_{0}},\ldots,a_{i_{n}}\right)$.
\end{itemize}
\end{enumerate}
\end{defn}

The following general fact is used to find $G_{n,p}$-indiscernibles.

\begin{fact}\cite[Corollary 4.8]{chernikov2014n}\label{fac: random hypergraph indiscernibles exist}
Let $C \subseteq \mathbb{M}$ be a small set of parameters.
\begin{enumerate}
\item For any $n \in \omega$ and $\bar{a}=\left(a_{g}\right)_{g\in O_{n,p}}$, there is some $\left(b_{g}\right)_{g\in O_{n,p}}$ which is $O_{n,p}$-indiscernible over $C$ and is based on $\bar{a}$ over $C$.
\item For any $n \in \omega$ and $\bar{a}=\left(a_{g}\right)_{g\in G_{n,p}}$, there is some $\left(b_{g}\right)_{g\in G_{n,p}}$ which is $G_{n,p}$-indiscernible over $C$ and is based on $\bar{a}$ over $C$.
\end{enumerate}
\end{fact}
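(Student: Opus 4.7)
The plan is to derive both items as instances of the standard extraction (``modeling'') principle for generalized indiscernibles: if $F$ is a Fra\"{\i}ss\'e limit whose age is a Ramsey class, then for any sequence $(a_g)_{g \in F}$ from $\mathbb{M}$ and any small $C$, one can find $(b_g)_{g \in F}$ that is $F$-indiscernible over $C$ and based on $(a_g)_{g \in F}$ over $C$. With this reduction in hand, what must be verified is that the ages of $O_{n,p}$ and $G_{n,p}$ are Ramsey classes; the rest is soft model theory.

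For the age of $O_{n,p}$, namely finite structures consisting of $n$ disjoint linearly ordered parts, the Ramsey property is essentially a product version of the classical finite Ramsey theorem: a partite, order-preserving embedding of one such structure into another is a tuple of order-preserving embeddings between the corresponding parts, so iterating ordered Ramsey over the $n$ sorts yields a monochromatic induced substructure of arbitrary prescribed size. For the age of $G_{n,p}$, namely finite ordered $n$-partite $n$-hypergraphs, the Ramsey property is an instance of the Ne\v{s}et\v{r}il--R\"odl theorem for finite ordered structures in a relational language admitting free amalgamation; this step is the main obstacle and is typically established via the partite construction method.

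Granted these two Ramsey facts, the modeling property is proved by a standard compactness-plus-Ramsey argument. Fix an enumeration of pairs $(F_0, \Delta)$ where $F_0 \subseteq F$ is a finite substructure and $\Delta$ is a finite set of $\mathcal{L}(C)$-formulas. For each such pair, the coloring sending an embedding $\iota \colon F_0 \hookrightarrow F$ to the $\Delta$-type of $(a_{\iota(g)})_{g \in F_0}$ has finite range, so by the Ramsey property there is, inside any prescribed larger $F_1 \subseteq F$, an induced copy of $F_0$ on which every isomorphic embedding receives the same color. Passing to a sufficiently saturated elementary extension and combining these local monochromatic witnesses over all $(F_0, \Delta)$ by compactness, one produces $(b_g)_{g \in F}$ with the property that any finite tuple from $(b_g)$ realizes a $\Delta$-type already realized by some tuple from $(a_g)$ of the same quantifier-free type in $F$. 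This simultaneously yields $F$-indiscernibility over $C$ (tuples of equal quantifier-free type in $F$ receive equal complete types) and basedness on $(a_g)_{g \in F}$ over $C$, establishing both (1) and (2).
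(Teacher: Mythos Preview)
The paper does not prove this statement; it records it as a Fact and cites \cite[Corollary 4.8]{chernikov2014n}. Your sketch correctly reconstructs the argument given there: the ages of $O_{n,p}$ and $G_{n,p}$ are Ramsey classes (the latter via the Ne\v{s}et\v{r}il--R\"odl theorem for ordered relational structures with free amalgamation), and the modeling property then follows from Ramsey plus compactness. So your approach is essentially the one behind the cited result.

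One minor remark: the compactness step as you describe it is somewhat compressed. The standard execution is either to write down a type in new constants $(b_g)_{g\in F}$ expressing both $F$-indiscernibility and basedness, and check finite satisfiability by applying the Ramsey property to the finitely many $(F_0,\Delta)$ constraints at once (one finds a single large enough Ramsey witness inside $F$ for the combined coloring), or to iterate along a cofinal chain of finite substructures. Your phrase ``combining these local monochromatic witnesses over all $(F_0,\Delta)$'' is fine as a signpost but would need this sharpening in a full write-up. Also, there is no need to pass to an elementary extension: you are already inside the monster $\mathbb{M}$, and the required $(b_g)$ live there by saturation.
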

%

%

Using this, we can characterize $n$-dependence of a formula as follows.

\begin{prop}\label{prop: indisc witness to IPn}The following are equivalent, in any theory $T$.
\begin{enumerate}
	\item $\varphi(x;y_1, \ldots, y_n)$ is not $n$-dependent.
	\item There are tuples $b$ and $(a_g)_{g \in G_{n,p}}$ such that
	\begin{enumerate}
	\item $(a_g)_{g \in G_{n,p}}$ is $O_{n,p}$-indiscernible over $\emptyset$ and $G_{n,p}$-indiscernible over $b$;
	\item $\models \varphi(b;a_{g_1}, \ldots, a_{g_{n}}) \iff G_{n,p}\models R_n(g_1, \ldots, g_n)$, for all $g_i 
	\in P_i$.
	\end{enumerate}
	\item (2) holds for any small $G'_{n,p} \equiv G_{n,p}$ in the place of $G_{n,p}$.
	\end{enumerate}	
\end{prop}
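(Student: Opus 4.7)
The strategy is to prove the cycle $(3) \Rightarrow (2) \Rightarrow (1) \Rightarrow (3)$. The implication $(3) \Rightarrow (2)$ is immediate since $G_{n,p}$ is countable and hence small in $\mathbb{M}$, so one can take $G'_{n,p} := G_{n,p}$.

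For $(2) \Rightarrow (1)$, I start from $(a_g)_{g \in G_{n,p}}$ and $b$ as in (2) and choose $\omega$-chains $g_{i,1} < g_{i,2} < \cdots$ in each part $P_i^{G_{n,p}}$. To witness the $n$-independence property of $\varphi$ one needs $b_s$ for every $s \subseteq \omega^n$ giving the usual alternation on the $(a_{g_{1,i_1}}, \ldots, a_{g_{n,i_n}})$; by compactness it suffices to do this for every finite $s_0 \subseteq \omega^n$. Given such $s_0$, I use the genericity axiomatization of $\Th(G_{n,p})$ from Fact \ref{fac: Gnp}(2) to produce another tuple $(g''_{i,j})$ in $G_{n,p}$ with the same $O_{n,p}$-qftype as $(g_{i,j})$ but whose $R_n$-edges realize exactly $s_0$ on the relevant finite set. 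The $O_{n,p}$-indiscernibility of $(a_g)$ over $\emptyset$ then forces $(a_{g_{i,j}}) \equiv (a_{g''_{i,j}})$, so by $\kappa(\mathbb{M})$-homogeneity some $\sigma \in \Aut(\mathbb{M})$ maps the latter to the former; setting $b_{s_0} := \sigma(b)$ and applying $\sigma$ to condition (2)(b) yields the required $\varphi$-pattern.

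For $(1) \Rightarrow (3)$, I fix a small $G'_{n,p} \equiv G_{n,p}$ and first use compactness to extend the $n$-IP configuration from an $\omega$-indexing to one indexed by the parts of $G'_{n,p}$, producing $(a_{j, \iota})_{j \in [n], \iota \in P_j^{G'_{n,p}}}$ together with a tuple $b_s$ for \emph{every} $s \subseteq \prod_j P_j^{G'_{n,p}}$ satisfying the alternation (the full collection of $b_s$'s will be crucial below). I then enforce the two indiscernibility conditions of (2)(a) in two stages via Fact \ref{fac: random hypergraph indiscernibles exist}. First apply part (1) with $C = \emptyset$ to the $O_{n,p}$-reduct, obtaining $(a^{(1)}_g)_{g \in O_{n,p}}$ which is $O_{n,p}$-indiscernible over $\emptyset$ and based on the original over $\emptyset$. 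After fixing any $G'_{n,p}$-enrichment of the new $O_{n,p}$-index, I construct a new $b^{(1)}$ realizing the $\varphi$-pattern (2)(b) against the enriched index: finite consistency of the defining partial type reduces to existential $\mathcal{L}(\emptyset)$-statements over finite tuples in $(a^{(1)}_g)$, which by the basing property transfer to the corresponding existential statements in the original sequence, and those in turn are witnessed by the appropriate $b_s$ from the IP collection. Finally, apply Fact \ref{fac: random hypergraph indiscernibles exist}(2) with $C = \{b^{(1)}\}$ to obtain $(a^{(2)}_g)_{g \in G'_{n,p}}$ which is $G'_{n,p}$-indiscernible over $b^{(1)}$; condition (2)(b) is preserved because it is expressible in $\mathcal{L}(b^{(1)})$, which basing over $b^{(1)}$ preserves.

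The main technical point, and the step I expect to be the hardest, is that this final extraction must also preserve the $O_{n,p}$-indiscernibility over $\emptyset$ of $(a^{(1)}_g)$. The verification goes as follows: given any two tuples in the new index with the same $O_{n,p}$-qftype, and any finite $\Delta \subseteq \mathcal{L}(\emptyset) \subseteq \mathcal{L}(b^{(1)})$, basing over $b^{(1)}$ produces matching tuples in $(a^{(1)})$ with the same $G'_{n,p}$-qftype, hence \emph{a fortiori} the same $O_{n,p}$-qftype, so the $O_{n,p}$-indiscernibility of $(a^{(1)})$ over $\emptyset$ forces equal $\Delta$-types of the matching pair, which then transfers back to the new sequence. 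Combining these three applications yields a sequence $(a^{(2)}_g)_{g \in G'_{n,p}}$ and a tuple $b^{(1)}$ witnessing (3).
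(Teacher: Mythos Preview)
Your cycle $(3)\Rightarrow(2)\Rightarrow(1)\Rightarrow(3)$ is sound in spirit, and your $(2)\Rightarrow(1)$ is a correct self-contained argument (the paper simply cites \cite[Lemma~6.2]{chernikov2014n} for $(1)\Leftrightarrow(2)$). The issue is in $(1)\Rightarrow(3)$: Fact~\ref{fac: random hypergraph indiscernibles exist} is stated only for the countable structures $O_{n,p}$ and $G_{n,p}$, and you invoke it to produce sequences indexed by a possibly uncountable $G'_{n,p}$. You write that you obtain $(a^{(1)}_g)_{g\in O_{n,p}}$ and then speak of a ``$G'_{n,p}$-enrichment of the new $O_{n,p}$-index'' and later of $(a^{(2)}_g)_{g\in G'_{n,p}}$ coming out of Fact~\ref{fac: random hypergraph indiscernibles exist}(2); none of these steps is available when $|G'_{n,p}|>\aleph_0$. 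As written, your argument actually only establishes $(1)\Rightarrow(2)$.

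The paper's route avoids this entirely: it takes $(1)\Rightarrow(2)$ as known, and then observes that $(2)\Rightarrow(3)$ is a one-line compactness argument, since every finite $\mathcal{L}^n_{\operatorname{opg}}$-substructure of $G'_{n,p}$ already occurs as a substructure of $G_{n,p}$ (same age), so each finite fragment of the desired type over $G'_{n,p}$ is realized by restricting the witness from (2). Splitting your $(1)\Rightarrow(3)$ into your (correct) argument for $(1)\Rightarrow(2)$ followed by this compactness step fixes the gap with no extra work and is considerably simpler than threading the extraction machinery through an uncountable index.
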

\begin{proof}
The equivalence of (1) and (2) is the equivalence of (a) and (c) in \cite[Lemma 6.2]{chernikov2014n}, and (3) implies (2) is obvious. Given a witness to (2), we can find a witness to (3) by compactness as every finite substructure of $G'_{n,p}$ appears as a finite substructure of $G_{n,p}$.	
\end{proof}

Additionally, we have the following ``formula-free'' characterization of $n$-dependence of a theory.
\begin{prop}\label{prop: char of NIP_k by preserving indisc}
Let $T$ be a complete theory and let $\mathbb{M} \models T$ be a monster model.
Then for any $n \in \mathbb{N}$, the following are equivalent:
\begin{enumerate}
\item $T$ is $n$-dependent.
\item For any $\left(a_{g}\right)_{g\in G_{n,p}}$ and $b$ with $a_g, b$ finite tuples in $\mathbb{M}$,
if $\left(a_{g}\right)_{g\in G_{n,p}}$ is $G_{n,p}$-indiscernible over
$b$ and $O_{n,p}$-indiscernible (over $\emptyset$), then it is $O_{n,p}$-indiscernible over $b$.
\item For any small $G'_{n,p} \equiv G_{n,p}$, $\left(a_{g}\right)_{g\in G'_{n,p}}$ and $b$,
if $\left(a_{g}\right)_{g\in G'_{n,p}}$ is $G'_{n,p}$-indiscernible over
$b$ and $O'_{n,p}$-indiscernible, then it is $O'_{n,p}$-indiscernible over $b$ (where $O'_{n,p}$ is the $\mathcal{L}^n_{\textrm{op}}$-reduct of $G'_{n,p}$).
\end{enumerate}
\end{prop}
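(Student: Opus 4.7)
The plan is to prove the equivalence (1) $\Leftrightarrow$ (2) directly from the formula-level characterization of $n$-dependence in Proposition \ref{prop: indisc witness to IPn}, and then to deduce the equivalence of (2) and (3) by a compactness argument that transfers an indiscernible configuration indexed by an arbitrary small $G'_{n,p} \equiv G_{n,p}$ into one indexed by $G_{n,p}$ itself, exploiting the Fra\"iss\'e/universality properties of $G_{n,p}$ recorded in Fact \ref{fac: Gnp}.

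For (1) $\Rightarrow$ (2) I argue contrapositively. Suppose $(a_g)_{g \in G_{n,p}}$ and a finite tuple $b$ violate (2): the sequence is $G_{n,p}$-indiscernible over $b$ and $O_{n,p}$-indiscernible, but some $\mathcal{L}$-formula $\psi$ and finite tuples $\bar g, \bar g' \in G_{n,p}$ with $\qftp_{\mathcal{L}^n_{\op}}(\bar g) = \qftp_{\mathcal{L}^n_{\op}}(\bar g')$ satisfy $\models \psi(a_{\bar g}; b) \wedge \neg \psi(a_{\bar g'}; b)$. By $G_{n,p}$-indiscernibility over $b$, $\bar g$ and $\bar g'$ must differ in their $R_n$-pattern. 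This is exactly the data analyzed in the proof of \cite[Lemma~6.2]{chernikov2014n}: its extraction step produces from it a formula $\varphi(x; y_1, \dots, y_n)$ that is not $n$-dependent, contradicting (1). For the converse (2) $\Rightarrow$ (1), if some $\varphi$ is not $n$-dependent, Proposition \ref{prop: indisc witness to IPn} supplies $b$ and a sequence $(a_g)_{g \in G_{n,p}}$ that is $O_{n,p}$-indiscernible and $G_{n,p}$-indiscernible over $b$, with $\models \varphi(b; a_{g_1}, \dots, a_{g_n}) \Leftrightarrow R_n(g_1, \dots, g_n)$; Fact \ref{fac: Gnp}(2) then yields $(g_i), (g'_i) \in P_1 \times \dots \times P_n$ sharing $\mathcal{L}^n_{\op}$-qftp but differing in $R_n$-status, immediately breaking (2).

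For (3) $\Rightarrow$ (2) note that $G_{n,p}$ is countable and therefore small, so (2) is the instance $G'_{n,p} := G_{n,p}$ of (3). For (2) $\Rightarrow$ (3), suppose $(a_g)_{g \in G'_{n,p}}$ and $b$ violate (3), witnessed by tuples $\bar g, \bar g' \in G'_{n,p}$ and an $\mathcal{L}$-formula $\psi$ as before. By universality of $G_{n,p}$ (Fact \ref{fac: Gnp}(1)) embed the finite $\mathcal{L}^n_{\opg}$-substructure of $G'_{n,p}$ induced by $\bar g \bar g'$ into $G_{n,p}$, obtaining $\bar h, \bar h' \in G_{n,p}$ with the same joint $\mathcal{L}^n_{\opg}$-qftp as $\bar g \bar g'$. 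For each finite $\Delta \subseteq G_{n,p}$ choose, again by universality, some $\Delta' \subseteq G'_{n,p}$ of the same $\mathcal{L}^n_{\opg}$-qftp and put $p_\Delta(y_\Delta, z) := \tp(a_{\Delta'}, b)$; $G'_{n,p}$-indiscernibility of $(a_g)$ over $b$ makes $p_\Delta$ independent of the chosen $\Delta'$ and ensures pairwise compatibility of the family $\{p_\Delta\}$. By compactness and saturation of $\mathbb{M}$, $\bigcup_\Delta p_\Delta$ is realized by some $((c_g)_{g \in G_{n,p}}, b^*)$. The resulting $(c_g)$ is $G_{n,p}$-indiscernible over $b^*$ by construction, and $O_{n,p}$-indiscernible over $\emptyset$ because the original $(a_g)$ is $O'_{n,p}$-indiscernible (so $\tp(c_\Delta)$ depends only on the $\mathcal{L}^n_{\op}$-qftp of $\Delta$). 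However, $\models \psi(c_{\bar h}; b^*) \wedge \neg \psi(c_{\bar h'}; b^*)$ with $\qftp_{\mathcal{L}^n_{\op}}(\bar h) = \qftp_{\mathcal{L}^n_{\op}}(\bar h')$, contradicting (2).

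The main technical obstacle is absorbed into the cited extraction step in (1) $\Rightarrow$ (2): passing from an arbitrary formula $\psi$ distinguishing tuples with equal $\mathcal{L}^n_{\op}$-qftp but different $R_n$-pattern to a single formula $\varphi(x; y_1, \dots, y_n)$ that directly codes the edge relation of a generic ordered $n$-partite hypergraph on singletons. Everything else is bookkeeping with Fra\"iss\'e universality of $G_{n,p}$, compactness, and saturation of $\mathbb{M}$.
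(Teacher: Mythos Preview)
Your argument is correct, but for (2) $\Rightarrow$ (3) you take a considerably longer route than the paper. The paper observes that failure of (3) is witnessed by a single formula and finitely many indices in $G'_{n,p}$; after restricting the $a_g$ and $b$ to the finitely many coordinates appearing in that formula (a reduction you need as well, since (2) is stated for finite tuples only, but which you do not make explicit), one simply takes a \emph{countable elementary submodel} of $G'_{n,p}$ containing those indices. By $\aleph_0$-categoricity of $\Th(G_{n,p})$ this submodel is isomorphic to $G_{n,p}$, and restricting $(a_g)$ to it immediately gives a counterexample to (2). Your compactness construction---embedding finite pieces of $G_{n,p}$ into $G'_{n,p}$, defining a coherent family of types $p_\Delta$, and realizing their union---achieves the same transfer but with substantially more bookkeeping (well-definedness of $p_\Delta$, compatibility under inclusion, verifying both indiscernibility conditions for the realized sequence). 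The $\aleph_0$-categoricity shortcut buys you all of this for free.

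For (1) $\Leftrightarrow$ (2) both you and the paper defer to \cite{chernikov2014n}; you unpack the citation slightly by naming the ``extraction step'' from Lemma~6.2 there, which is fine.
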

\begin{proof}
	The equivalence of (1) and (2) is \cite[Proposition 6.3]{chernikov2014n}, (3) implies (2) is obvious, and we show that (2) implies (3).
	Assume that (3) fails, i.e.~there exist some $G'_{n,p} \equiv G_{n,p}$, $\left(a_{g}\right)_{g\in G'_{n,p}}$ and $b$ small tuples such that $\left(a_{g}\right)_{g\in G'_{n,p}}$ is $O'_{n,p}$-indiscernible, $G'_{n,p}$-indiscernible over $b$, but not $O'_{n,p}$-indiscernible over $b$. By definition, this is witnessed by some finite set of formulas and some finite set of indices from $G'_{n,p}$. Restricting all of the $a_g$'s and $b$ to the corresponding subtuples appearing in those formulas, we may assume that (3) fails with all of $b$ and $a_g$ finite. Moreover, we can choose a countable elementary submodel of $G'_{n,p}$ containing all of the indices witnessing failure of indiscernibility. It is isomorphic to $G_{n,p}$ by $\aleph_0$-categoricity of $\Th(G_{n,p})$, hence restricting $(a_g)_{g \in G'_{n,p}}$ to the corresponding set of indices we get a failure of (2).
\end{proof}

\subsection{Improved reduction to singletons}\label{sec: red to singletons}
In this section we will improve Fact \ref{fac: props of n-dependent formulas}(3) by showing that $T$ is $n$-dependent if and only if every formula in which  all but at most one of its variables are singletons is $n$-dependent (as opposed to ``at least one of the variables  is a singleton'' as in Fact \ref{fac: props of n-dependent formulas}(3)).

To do so, we need some auxiliary results. First we refine the equivalence of (1) and (2) in Proposition  \ref{prop: char of NIP_k by preserving indisc} making explicit the correspondence between the variables of a formula that is not $n$-dependent and the sorts of the tuples in a generalized indiscernible witnessing it.
In the proof below we are following the proof in \cite[Proposition 6.3]{chernikov2014n} with some modifications.
\begin{prop}\label{prop: controlling all but one}
	Fix $n \geq 1$ and let $x, y_1, \ldots, y_{n-1}$ be some fixed finite tuples of variables. The following are equivalent:
	\begin{enumerate}
		\item There exists some $(a_g)_{g \in G_{n,p}}$ with $a_g \in \mathbb{M}_{y_i}$ for all $g \in P_i$, $1 \leq i \leq n-1$ and $a_g \in \mathbb{M}_{y'_n}$ for some finite tuple of variables $y'_n$ and all $g \in P_n$, and $b \in \mathbb{M}_{x}$ such that $(a_g)_{g \in G_{n,p}}$ is $G_{n,p}$-indiscernible over $b$ and $O_{n,p}$-indiscernible over $\emptyset$, but is not $O_{n,p}$-indiscernible over $b$.
		\item There exists some formula $\varphi(x, y_1, \ldots, y_{n-1}, y''_n)$ which is not $n$-dependent, with $y''_n$ some finite tuple of variables.
	\end{enumerate}
\end{prop}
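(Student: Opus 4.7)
The direction $(2) \Rightarrow (1)$ is essentially immediate from Proposition \ref{prop: indisc witness to IPn}. Given $\varphi(x, y_1, \ldots, y_{n-1}, y''_n)$ not $n$-dependent, apply Proposition \ref{prop: indisc witness to IPn} (with $y_n := y''_n$) to obtain $(a_g)_{g \in G_{n,p}}$ and $b$ with the prescribed sorts, $O_{n,p}$-indiscernible over $\emptyset$ and $G_{n,p}$-indiscernible over $b$, satisfying $\varphi(b; a_{g_1}, \ldots, a_{g_n}) \iff R_n(g_1, \ldots, g_n)$ for $g_i \in P_i$. By Fact \ref{fac: Gnp}(2) there exist $g_i, g'_i \in P_i$ with $R_n(g_1, \ldots, g_n)$ and $\neg R_n(g'_1, \ldots, g'_n)$; these share $\qftp_{O_{n,p}}$ yet have opposite $\varphi(b,\cdot)$-values, so $O_{n,p}$-indiscernibility over $b$ fails. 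Setting $y'_n := y''_n$ finishes this direction.

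The implication $(1) \Rightarrow (2)$ is the main content. Failure of $O_{n,p}$-indiscernibility over $b$ supplies an $\mathcal{L}$-formula $\psi(x, \bar z)$ and finite tuples $\bar g, \bar g' \subseteq G_{n,p}$ with $\qftp_{O_{n,p}}(\bar g) = \qftp_{O_{n,p}}(\bar g')$ but $\models \psi(b, a_{\bar g}) \land \neg \psi(b, a_{\bar g'})$. Since $(a_g)$ is $G_{n,p}$-indiscernible over $b$, necessarily $\qftp_{G_{n,p}}(\bar g) \neq \qftp_{G_{n,p}}(\bar g')$, so the two tuples disagree on some collection of $n$-edges $S_1, \ldots, S_m$ (each $S_k$ meeting every $P_i$ in exactly one element). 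Using Fact \ref{fac: Gnp}(2) to realize finite configurations with prescribed edges inside prescribed intervals, construct by induction tuples $\bar g_{(0)}, \ldots, \bar g_{(m)} \subseteq G_{n,p}$ of the same $O_{n,p}$-qftp as $\bar g$, where $\bar g_{(j)}$ matches $\bar g'$ on $S_k$ for $k \leq j$ and matches $\bar g$ on $S_k$ for $k > j$ (and matches both on all other $n$-subsets). By $G_{n,p}$-indiscernibility over $b$, $\psi(b, a_{\bar g_{(0)}}) = \top$ and $\psi(b, a_{\bar g_{(m)}}) = \bot$, so pigeonholing produces adjacent $\bar g_{(j)}, \bar g_{(j+1)}$ differing in exactly the edge $S_{j+1}$ and distinguished by $\psi(b,\cdot)$.

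Renaming, assume $\bar g = (g_1, \ldots, g_n, \bar h)$ and $\bar g' = (g'_1, \ldots, g'_n, \bar h)$ with $g_j, g'_j \in P_j$, a common subtuple $\bar h$ (whose coordinates may lie in any $P_i$), $R_n^{G_{n,p}}(g_1, \ldots, g_n)$, $\neg R_n^{G_{n,p}}(g'_1, \ldots, g'_n)$, and identical edges elsewhere. Let $\bar d := a_{\bar h}$ with variable tuple $\bar w$, set $y''_n := (y'_n, \bar w)$, and define
\[
\varphi\bigl(x, y_1, \ldots, y_{n-1}, (y'_n, \bar w)\bigr) := \psi(x, y_1, \ldots, y_{n-1}, y'_n, \bar w).
\]
Choose intervals $I_i \subseteq P_i$ bounded by the appropriate elements of $\bar h \cap P_i$ so that every $(g''_1, \ldots, g''_n) \in I_1 \times \ldots \times I_n$ shares $\qftp_{O_{n,p}}$ over $\bar h$ with $(g_1, \ldots, g_n)$; by Remark \ref{rem : induced copy} there is an induced copy $G'_{n,p} \subseteq I_1 \times \ldots \times I_n$, countable and thus $\cong G_{n,p}$ by $\aleph_0$-categoricity. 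For $g''_i \in G'_{n,p} \cap P_i$, the tuple $(g''_1, \ldots, g''_n, \bar h)$ has the same $O_{n,p}$-qftp as $\bar g$, so by $G_{n,p}$-indiscernibility of $(a_g)$ over $b$ its $G_{n,p}$-qftp (and hence $\psi(b, \cdot)$-value) depends only on whether $R_n(g''_1, \ldots, g''_n)$ holds; comparing with the data on $\bar g$ and $\bar g'$ gives $\varphi(b; a_{g''_1}, \ldots, a_{g''_{n-1}}, (a_{g''_n}, \bar d)) \iff R_n(g''_1, \ldots, g''_n)$ on $G'_{n,p}$. Thus conditions (a), (b) of Proposition \ref{prop: indisc witness to IPn} hold for $(a_g)_{g \in G'_{n,p}}$, $b$, and $\varphi$, giving that $\varphi$ is not $n$-dependent.

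The main obstacle is the telescoping reduction to a single-edge discrepancy: realizing each intermediate $\bar g_{(j)}$ inside $G_{n,p}$ with simultaneously controlled $O_{n,p}$-qftp and prescribed toggled edges requires the axiomatization in Fact \ref{fac: Gnp}(2) and an inductive construction vertex by vertex. The essential role of permitting $y''_n$ to be longer than $y'_n$ is to absorb the frozen parameters $\bar d$, whose coordinates may live in any of the partitions (and in particular may include elements of $P_n$ beyond the single ``moving'' one).
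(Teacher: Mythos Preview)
Your direction $(2)\Rightarrow(1)$ is correct and matches the paper's.

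For $(1)\Rightarrow(2)$, your telescoping reduction to a single-edge discrepancy is a valid first move and is in the spirit of the argument behind \cite[Proposition~5.8]{chernikov2014n}, which the paper invokes as a black box. However, your final step contains a genuine gap. When you pick $g''_i$ in the induced copy $G'_{n,p}\subseteq I_1\times\cdots\times I_n$, the $\mathcal{L}^n_{\opg}$-quantifier-free type of $(g''_1,\ldots,g''_n,\bar h)$ is \emph{not} determined by the single edge $R_n(g''_1,\ldots,g''_n)$: the ``mixed'' hyperedges --- those containing some of the $g''_i$'s together with some elements of $\bar h$ --- vary freely with the choice of the $g''_i$'s, and neither the choice of the intervals $I_i$ nor Remark~\ref{rem : induced copy} constrains them. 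Hence $(g''_1,\ldots,g''_n,\bar h)$ will in general realize a $G_{n,p}$-qftype equal to neither that of $\bar g$ nor that of $\bar g'$, and $G_{n,p}$-indiscernibility over $b$ then gives no information about $\psi(b,a_{g''_1},\ldots,a_{g''_n},\bar d)$. Concretely, for $n=3$ and $\bar h=(h)$ with $h\in P_3$, the relation $R_3(g''_1,g''_2,h)$ is an uncontrolled binary relation on the moving vertices, so your displayed biconditional with $R_n$ is unjustified.

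For $n=2$ this is easily repaired: every mixed edge involves exactly one moving vertex, so one may first shrink each $I_i$ to the elements having the prescribed edges with $\bar h$ (dense by Fact~\ref{fac: Gnp}(2)) and then apply Remark~\ref{rem : induced copy}. For $n\geq 3$, however, mixed edges may involve several of the $g''_i$'s simultaneously, and pinning them all down while retaining a full copy of $G_{n,p}$ requires a further inductive argument --- this is precisely the content of \cite[Proposition~5.8]{chernikov2014n}. The paper's proof avoids redoing this work: it first absorbs $b$ into the $P_n$-coordinate (setting $a'_g:=a_g b$ for $g\in P_n$), so that $(a'_g)_{g\in G_{n,p}}$ is $G_{n,p}$-indiscernible but not $O_{n,p}$-indiscernible over $\emptyset$, and then applies that proposition directly to extract the substructure $G'\cong G_{n,p}$, the finite parameter set $V$, and the witnessing formula, with the mixed-edge control packaged into condition~(3) there.
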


\begin{proof}
	We obtain immediately that  (2) implies (1) by the implication $(1) \Rightarrow (2)$ in Proposition \ref{prop: indisc witness to IPn}, with $y'_n = y''_n$.
	
	To prove  (1) implies (2), let $(a_g)_{g \in G_{n,p}}$ and $b$ be as given by (1). We define $a'_g := a_g$ for all $g \in \bigcup_{1 \leq i \leq n-1} P_i$ and $a'_g := a_g b$ for all $g \in P_n$. We have that $(a'_g)_{g \in G_{n,p}}$ is not $O_{n,p}$-indiscernible, but is $G_{n,p}$-indiscernible (over $\emptyset$), from the corresponding properties of $(a_g)_{g \in G_{n,p}}$ over $b$. Namely, by assumption there are some finite subsets $V, W \subseteq G_{n,p}$ with the $\mathcal{L}_{\textrm{op}}$-isomorphic induced structures such that $(a_g)_{g \in V} \not \equiv_b (a_g)_{g \in W}$. Then taking some $h \in P_n$ above all of the elements of $V \cup W$ with respect to the order on $P_n$, we have that $Vh\cong^{\mathcal{L}_{\textrm{op}}} Wh$. However, since the tuple $a'_h$ contains $b$, we have $(a'_g)_{g \in Vh} \not\equiv (a'_g)_{g \in Wh}$.
	
	 Then by \cite[Proposition 5.8]{chernikov2014n} there is an $\mathcal{L}_{\textrm{opg}}$-substructure $G' \subseteq G_{n,p}$, a finite set $V \subset G_{n,p}$ and a formula $\psi(y_1, \ldots, y_{n-1}, \tilde{y}_n, z) \in \mathcal{L}$ such that $\tilde{y}_n = y'_nx$, $z$ is a finite tuple of variables corresponding to a fixed enumeration $(a_g)_{g \in V}$ of $V$,  and
	\begin{enumerate}
		\item $G' \cong^{\mathcal{L}_{\textrm{opg}}}G_{n,p}$,
		\item $G_{n,p} \models R_n(g_1, \ldots, g_n)$ if and only if $\models \psi(a'_{g_1}, \ldots, a'_{g_n}, (a'_g)_{g \in V})$, for every $g_i \in P_i(G')$,
		\item for every finite $W,W' \subseteq G'$ we have $WV \cong^{\mathcal{L}_{\textrm{op}}} W'V$ whenever $W\cong^{\mathcal{L}_{\textrm{op}}} W'$.
	\end{enumerate}
	Let $\varphi(x,y_1, \ldots, y_{n-1}, y''_n)$ be the formula $\psi(y_1, \ldots, y_{n-1}, y'_nx, z)$ with $y''_n := y'_n z$, and let $a''_g := a'_g = a_g$ for $g \in P_i(G'), 1 \leq i \leq n-1$ and let $a''_g := a_g (a_h)_{h \in V}$ for $g \in P_n(G')$. Then $(a''_g)_{g \in G'}$ is $\mathcal{L}_{\textrm{op}}$-indiscernible (by $\mathcal{L}_{\textrm{op}}$-indiscernibility of $(a'_g)_{g \in G_{n,p}}$ and the choice of $V$) and $\varphi(b, a''_{g_1}, \ldots, a''_{g_n})$ holds if and only if $R_n(g_1, \ldots, g_n)$ does, for all $g_i \in P_i(G')$. Then $\varphi$ is not $n$-dependent by (2)$\Rightarrow$(1) in Proposition \ref{prop: indisc witness to IPn}. \end{proof}

\begin{lemma}\label{lem: red 1 to one var}
Let $y_1, \ldots, y_{n-1}$ be some fixed finite tuples of variables. If the condition (1) in Proposition \ref{prop: controlling all but one} holds for some finite tuple of variables $x$, then it already holds with $x$ a single variable.
\end{lemma}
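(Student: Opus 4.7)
The plan is to reduce the tuple $x$ to a single variable one coordinate at a time, by absorbing the other coordinates of $b$ into the tuples $a_g$ indexed by $P_n$. Write $b = (b_1, \ldots, b_m) \in \mathbb{M}_x$. Consider the finite chain of parameter sets $\emptyset, \{b_1\}, \{b_1, b_2\}, \ldots, \{b_1, \ldots, b_m\} = b$. By hypothesis $(a_g)$ is $O_{n,p}$-indiscernible over the first and not over the last, so there is a minimal $j \in \{1, \ldots, m\}$ such that $(a_g)$ is not $O_{n,p}$-indiscernible over $(b_1, \ldots, b_j)$. Set $c := (b_1, \ldots, b_{j-1})$ (possibly empty) and $b^* := b_j$; by minimality, $(a_g)$ is still $O_{n,p}$-indiscernible over $c$.

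The main idea is then to define a new sequence $(a'_g)_{g \in G_{n,p}}$ by
\[
a'_g := a_g \text{ for } g \in P_1 \cup \ldots \cup P_{n-1}, \qquad a'_g := a_g\,c \text{ for } g \in P_n,
\]
which lives in the correct sorts with the modified last tuple $y''_n := y'_n \hat{\ } (x_1, \ldots, x_{j-1})$, while the first $n-1$ sorts $y_1, \ldots, y_{n-1}$ are unchanged. I would then verify the three required conditions of Proposition~\ref{prop: controlling all but one}(1) for $(a'_g)$ and the single element $b^*$: (a) $O_{n,p}$-indiscernibility over $\emptyset$ reduces, after collapsing the repeated constant $c$, to $O_{n,p}$-indiscernibility of $(a_g)$ over $c$, which holds by the minimal choice of $j$; (b) $G_{n,p}$-indiscernibility over $b^*$ similarly reduces to $G_{n,p}$-indiscernibility of $(a_g)$ over $c \cup \{b^*\} \subseteq b$, which is part of the hypothesis; (c) failure of $O_{n,p}$-indiscernibility of $(a'_g)$ over $b^*$ would imply $O_{n,p}$-indiscernibility of $(a_g)$ over $c \cup \{b^*\} = (b_1, \ldots, b_j)$, contradicting the choice of $j$.

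There is essentially no serious obstacle; the only subtle point is to observe that appending the \emph{same} fixed tuple $c$ to every member of the sequence indexed by $P_n$ does not create new constraints beyond those already present in the original $O_{n,p}$-indiscernibility over $c$, which is exactly what the minimality of $j$ guarantees. This trick converts the parameters $b_1, \ldots, b_{j-1}$, which do not themselves witness a failure of indiscernibility, into harmless additions to the $P_n$-sort of the generalized indiscernible, leaving only the single coordinate $b_j$ as the external parameter.
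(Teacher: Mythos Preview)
Your argument is correct and uses the same key device as the paper --- absorbing the non-critical coordinates of $b$ into the $P_n$-indexed tuples --- though you organize it as a direct reduction via the minimal failing prefix, whereas the paper proves the contrapositive by induction on $|x|$ (splitting $b=b_1b_2$, applying the inductive hypothesis first over $b_2$, then absorbing $b_2$ and applying it over $b_1$). Two minor remarks: item (c) is misstated --- you mean that $O_{n,p}$-indiscernibility of $(a'_g)$ over $b^*$ would imply that of $(a_g)$ over $c\cup\{b^*\}$, yielding the contradiction --- and to verify this implication when the witnessing tuples $\bar g,\bar h$ contain no element of $P_n$, one should append a common $g\in P_n$ above both so that $c$ actually occurs in the corresponding $a'$-tuples.
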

\begin{proof}
	We assume that (1) fails for $|x|=1$, and prove that then it fails for any tuple of variables $x$ by induction on $|x|$.
	So let $b \in \mathbb{M}_x$ with $|b| > 1$ be given, say $b = b_1 b_2$ for some tuples $1 \leq |b_1|, |b_2| < n$. And assume that $(a_g)_{g \in G_{n,p}}$ with $a_g \in \mathbb{M}_{y_i}$ for $g \in P_i, 1 \leq i \leq n-1$ is such that $(a_g)_{g \in G_{n,p}}$ is $G_{n,p}$-indiscernible over $b$ and $O_{n,p}$-indiscernible over $\emptyset$. We need to show that  $(a_g)_{g \in G_{n,p}}$ is $O_{n,p}$-indiscernible over $b$. 
	
	In particular $(a_g)_{g \in G_{n,p}}$ is $G_{n,p}$-indiscernible over $b_2$, hence it is  $O_{n,p}$-indiscernible over $b_2$ by the inductive assumption. Let $a'_g := a_g$ for $g \in P_i, 1 \leq i \leq n-1$ and let $a'_g := a_g b_2$ for $g \in P_n$. Note that $(a'_g)_{g \in G_{n,p}}$ is $G_{n,p}$-indiscernible over $b_1$, and is $O_{n,p}$-indiscernible over $\emptyset$ by the previous sentence. Applying the inductive assumption again, we conclude that $(a'_g)_{g \in G_{n,p}}$ is $O_{n,p}$-indiscernible over $b_1$, hence $(a_g)_{g \in G_{n,p}}$ is $O_{n,p}$-indiscernible over $b=b_1 b_2$.
	\end{proof}

Using this, we can finally strengthen Fact \ref{fac: props of n-dependent formulas}(3).
\begin{theorem}\label{thm: better reduction to singletons}
\begin{enumerate}
\item 	Assume that the formula $\varphi(x, y_1, \ldots, y_n)$ is not $n$-dependent. Then there exists some formula $\varphi'(x', y_1, \ldots, y_{n-1}, y'_n)$ which is not $n$-dependent, and such that $x'$ is a single variable and $y'_n$ is some finite tuple of variables extending $y_n$.

	\item A theory $T$ is $n$-dependent if and only if every formula $\varphi(x,y_1, \ldots, y_n)$ such that all but at most one of the tuples $x, y_1, \ldots, y_n$ are singletons is $n$-dependent.
\end{enumerate}
	\end{theorem}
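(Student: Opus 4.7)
My plan is to obtain part (1) as a direct assembly of Proposition \ref{prop: controlling all but one} and Lemma \ref{lem: red 1 to one var}, and then to bootstrap part (2) from part (1) by iterated use of the permutation invariance of $n$-dependence from Fact \ref{fac: props of n-dependent formulas}(2).

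For part (1), suppose $\varphi(x, y_1, \ldots, y_n)$ is not $n$-dependent. Reading condition (2) of Proposition \ref{prop: controlling all but one} with $y''_n := y_n$ (and with $y_1,\ldots,y_{n-1}$ as given), the implication $(2)\Rightarrow(1)$ produces a $G_{n,p}$-indexed configuration $(a_g)_{g\in G_{n,p}}$ with $a_g\in\mathbb{M}_{y_i}$ for $g\in P_i$, $i\le n-1$, and $a_g\in\mathbb{M}_{y_n}$ for $g\in P_n$, together with $b\in\mathbb{M}_x$, witnessing failure of $O_{n,p}$-indiscernibility over $b$ while retaining $G_{n,p}$-indiscernibility over $b$ and $O_{n,p}$-indiscernibility over $\emptyset$. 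Lemma \ref{lem: red 1 to one var} then applies directly: keeping the $y_i$'s exactly as they are, condition (1) of the proposition now holds with $x$ replaced by a single variable $x'$. Feeding the resulting configuration back into the direction $(1)\Rightarrow(2)$ of Proposition \ref{prop: controlling all but one} yields a formula $\varphi'(x', y_1, \ldots, y_{n-1}, y'_n)$ which is not $n$-dependent, with $x'$ a singleton and $y'_n$ a finite tuple extending $y_n$ (the extension coming from the auxiliary tuple $z$ that appears in the proof of that implication).

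For part (2), the nontrivial direction is that if some formula $\varphi(x, y_1, \ldots, y_n)$ is not $n$-dependent, then there is a non-$n$-dependent formula in which at most one of the variable tuples fails to be a singleton. The plan is: apply part (1) once to reduce $x$ to a singleton $x'$, paying only the price of enlarging $y_n$; then, iteratively for $i = 1, 2, \ldots, n-1$, permute the variables of the current formula to place $y_i$ in the role of the first variable, invoke Fact \ref{fac: props of n-dependent formulas}(2) to preserve non-$n$-dependence under this permutation, and apply part (1) again to reduce $y_i$ to a singleton at the cost of a further extension of the tuple occupying the last slot. At each iteration, the tuples already reduced to singletons end up among the ``middle'' tuples $y_1, \ldots, y_{n-1}$ of the permuted formula, which part (1) leaves untouched; only the first slot shrinks and only the last slot grows. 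After $n-1$ iterations every tuple except possibly the last one is a singleton, as required. The converse direction of (2) is trivial.

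The main point to be careful about is precisely that the iteration in part (2) does not spoil the singletons produced in earlier steps, which is automatic from the shape of part (1): nothing happens to the middle variables. No deeper obstacle arises, since the combinatorial content was already carried out in Proposition \ref{prop: controlling all but one} and Lemma \ref{lem: red 1 to one var}; the present theorem is just their clean packaging.
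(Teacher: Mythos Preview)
Your proposal is correct and follows essentially the same approach as the paper: part (1) is obtained by chaining $(2)\Rightarrow(1)$ of Proposition~\ref{prop: controlling all but one}, Lemma~\ref{lem: red 1 to one var}, and $(1)\Rightarrow(2)$ of Proposition~\ref{prop: controlling all but one}, and part (2) by iterating part (1) with permutations from Fact~\ref{fac: props of n-dependent formulas}(2), always placing the already-singleton variables among the unaffected middle slots. One minor remark: the claim that $y'_n$ extends $y_n$ in part (1) relies not only on the auxiliary tuple $z$ from the $(1)\Rightarrow(2)$ direction but also on tracing through the inductive proof of Lemma~\ref{lem: red 1 to one var} (where the $P_n$-tuples absorb pieces of $b$); the paper is equally terse on this point, and in any case this extension property is not actually needed for part (2).
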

\begin{proof}
(1) By Lemma \ref{lem: red 1 to one var} and the equivalence of (1) and (2) in Proposition \ref{prop: controlling all but one}.

(2)	Assume that some formula $\varphi(x, y_1, \ldots, y_n)$ is not $n$-dependent. Applying (1), we find some formula $\varphi'(x',y_1, \ldots,y_{n-1}, y^1_n)$ which is not $n$-dependent, $x'$ is a singleton and $y^1_n$ is a tuple of variables extending $y_n$. Exchanging the roles of $x'$ and $y_1$ by Fact \ref{fac: props of n-dependent formulas} we thus obtain a formula $\varphi_1(y_1, y'_1 , y_2, \ldots, y_{n-1}, y^1_n)$ which is not $n$-dependent and $|y'_1| = 1$. Repeating the same procedure recursively with $y_i$ in the role of $y_1$, for $1 \leq i \leq n-1$ we find formulas $\varphi_i(y_i, y'_1, \ldots, y'_{i},y_{i+1}, \ldots, y_{n-1}, y^i_n)$ which are not $n$-dependent, $|y'_j|=1$ for $1 \leq j \leq i$ and $y_n^{j+1}$ extending $y_n^j$. Finally, taking $\varphi_{n-1}(y_{n-1}, y'_1, \ldots, y'_{n-1}, y_n^{n-1})$ and applying (1) one more time, we obtain the desired formula with all but the last variable singletons.
\end{proof}

\section{$N$-dependent valued fields}\label{sec: valued fields}

The main result of this section is the following theorem generalizing a recent result of Johnson \cite{johnson2021dpA} from $n=1$ to all $n \in \mathbb{N}$.
\begin{theorem}\label{thm: main hens}
	If $(K,\mathcal{O})$ is an infinite valued field of positive characteristic and $\Th(K)$ is $n$-dependent for some $n \in \mathbb{N}$, then $K$ is henselian.
\end{theorem}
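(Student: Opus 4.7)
My plan is to follow Johnson's approach for $n=1$ and generalize to arbitrary $n$. Assume for contradiction that $K$ is an infinite $n$-dependent field of characteristic $p > 0$ carrying a non-henselian valuation. Following Johnson, I would first reduce to the key lemma that any two non-trivial valuations on $K$ must be comparable, i.e., one valuation ring contains the other. This reduction is standard and proceeds via Koenigsmann's canonical $p$-henselian valuation together with the fact that $K$ is Artin-Schreier closed by Hempel's generalization of Kaplan-Scanlon-Wagner, which ensures that every valuation on $K$ is automatically $p$-henselian in characteristic $p$; comparability then propagates $p$-henselianity up to genuine henselianity of the given valuation.

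The main step is to prove the comparability lemma. Suppose for contradiction that $K$ carries two incomparable valuations $v_1, v_2$. The Kaplan-Scanlon-Wagner proof of Artin-Schreier closedness produces a definable family of additive subgroups of $(K,+)$ arising from the interpretation of $K$ inside $\mathrm{SL}_2(K)$ via unipotent one-parameter subgroups and their interaction with a diagonal torus; parameterized by $c \in K^{\times}$, one obtains uniformly definable subgroups $H_c \leq (K,+)$ whose intersection pattern encodes a single valuation. Using Bays's explicit description in Appendix~\ref{sec: app Bays} of this Kaplan-Scanlon-Wagner isomorphism, I would compute how the family $\{H_c\}_{c}$ stratifies with respect to $v_1$ and $v_2$ \emph{simultaneously}. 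The approximation theorem (valid precisely because $v_1, v_2$ are incomparable) then lets me prescribe independently the $v_1$- and $v_2$-values of parameters, so that I can build a sequence $c_1, c_2, \ldots$ for which no bounded sub-intersection of $\{H_{c_i}\}$ equals the full intersection.

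The final step is then to invoke the chain condition on intersections of uniformly definable families of subgroups available in $n$-dependent theories (Proposition~\ref{prop: chain cond for mult fam of subgrps}): this chain condition asserts exactly that such a bounded sub-intersection must exist, yielding the required contradiction.

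The main obstacle is the middle step. For $n=1$, the chain condition is just Baldwin-Saxl, and a family of subgroups coming from a single valuation, together with Artin-Schreier closedness, is already sufficient to obtain a contradiction, which is why Johnson's proof is relatively short. For general $n$ the chain condition is weaker, permitting sub-intersections of larger bounded size, so one must engineer a more delicate family that uses \emph{both} valuations at once. This is exactly where the explicit computation of the Kaplan-Scanlon-Wagner isomorphism supplied by the appendix is indispensable, and the careful tracking of how conjugation by the diagonal torus interacts with two valuations simultaneously will form the technical heart of the argument.
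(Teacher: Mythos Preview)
Your high-level architecture---reduce to comparability of any two valuations, invoke Artin-Schreier closedness, use the chain condition from Proposition~\ref{prop: chain cond for mult fam of subgrps}, and rely on the explicit isomorphism of Appendix~\ref{sec: app Bays}---matches the paper. But the mechanism you describe for the core step is not the one the paper uses, and as written your description has a gap.

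You propose to assume the valuations are incomparable and then \emph{violate} the chain condition by building a sequence $c_1,c_2,\ldots$ with no redundant sub-intersection among the $H_{c_i}$. Two problems. First, the $n$-dependent chain condition (Fact~\ref{fac: BaldwinSaxl}) concerns an $n$-dimensional \emph{array} of parameters, not a sequence; violating it requires producing, for every $d$, an array $(b_{i,j}:i<n,j\leq d)$ in which no single index $\nu\in d^n$ is redundant. A one-parameter family $\{H_c\}$ would only test Baldwin--Saxl. Second, and more substantively, the paper does not argue by contradiction with the chain condition at all. It uses the chain condition \emph{constructively}: one first chooses elements $b_{j,l}\in\mathcal K$ whose valuations are carefully ordered with respect to \emph{both} $\val_1$ and $\val_2$, forms products $a_{l_0,\ldots,l_{n-1}}=\prod_j b_{j,l_j}$ to create the $n$-dimensional array, and then applies Proposition~\ref{prop: chain cond for mult fam of subgrps} to the two families $a\cdot\wp(K)$ and $a\cdot\wp(J)$ (where $J=\mathfrak m_1\cap\mathfrak m_2$) to find a \emph{single} redundant index. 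That redundancy is then fed into the commutative diagram involving the groups $G_{\bar a}$, $G_{\bar a'}$ and the Bays isomorphisms $f_{\bar a},f_{\bar a'}$; the composed map $\rho'$ is shown to equal $c(t^p-t)$ for some $c\in\mathcal K$, and the detailed valuation computations of Section~\ref{sec: f on val} (controlling $\val_t(\alpha_i)$ and $\val_t(x_i)$) establish that preimages under $\rho'$ of elements of large valuation land in $J$. This proves the key intermediate statement you do not mention: every element of $J$ has an Artin-Schreier root \emph{in $J$} (Proposition~\ref{prop_mainhensilanity}). The approximation-theorem step (Fact~\ref{fac: incomp vals are indep}) is used only at the very end (Corollary~\ref{cor: main hens}) to manufacture a single element $y\in J$ whose Artin-Schreier roots visibly cannot lie in $J$, yielding the contradiction.

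So the missing idea in your plan is the pivot through Proposition~\ref{prop_mainhensilanity} and the constructive use of the chain condition to isolate a redundant coordinate; the Bays isomorphism is applied to the vector groups $G_{\bar a}$ (Definition~\ref{def_Ga}), not to $\mathrm{SL}_2$, and its role is to make the projection $G_{\bar a}\to G_{\bar a'}$ explicit enough that one can track both valuations through it.
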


From now on, let $K$ be an infinite field of characteristic $p>0$ and $\mathcal O_i$ a valuation ring on $K$ for $i = 1,2$. 
We additionally fix the following notation.
 \begin{itemize}
	 \item  For $i = 1, 2$, let $\mathfrak m_i$ be the maximal ideal of $\mathcal O_i$;
	 \item let $J := \mathfrak m_1 \cap \mathfrak m_2$.
	 \end{itemize}

\begin{fact}\cite[Remark 2.1]{johnson2021dpA}\label{fac: incomp vals are indep}
	Assume $\mathcal O_1$ and $\mathcal O_2$ are incomparable (i.e.~none of them is contained in the other). Then
	$$ (a + \mathfrak m_1) \cap (b + \mathfrak m_2)\neq \emptyset$$
	for any $a\in \mathcal O_1$ and $b\in \mathcal O_2$.
\end{fact}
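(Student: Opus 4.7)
The statement is a form of ``weak approximation'' for two incomparable valuations, and the plan is to exhibit an explicit element of the intersection as a rational function of a single well-chosen parameter $c \in K^\times$.

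First I would use incomparability to produce $c \in K^\times$ with $v_1(c) > 0$ and $v_2(c) < 0$, where $v_i$ denotes the valuation associated to $\mathcal{O}_i$. Concretely, pick $u \in \mathcal{O}_1 \setminus \mathcal{O}_2$ (so $v_1(u) \geq 0$ and $v_2(u) < 0$) and $t \in \mathcal{O}_2 \setminus \mathcal{O}_1$ (so $v_2(t) \geq 0$ and $v_1(t) < 0$), and take $c := u/t$; the claimed inequalities follow by additivity of the valuations.

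Next, build from $c$ a sequence of elements that are ``close to $0$'' with respect to $v_1$ and ``close to $1$'' with respect to $v_2$, with tolerance that can be made arbitrarily fine. For each $n \geq 1$, set $w_n := c^n/(1 + c^n)$. Since $v_1(c^n) > 0$ one has $v_1(1 + c^n) = 0$, hence $v_1(w_n) = n v_1(c)$; since $v_2(c^n) < 0$ one has $v_2(1 + c^n) = n v_2(c)$, hence $v_2(w_n) = 0$, and the identity $w_n - 1 = -1/(1 + c^n)$ gives $v_2(w_n - 1) = -n v_2(c)$. Both $v_1(w_n)$ and $v_2(w_n - 1)$ therefore tend to $+\infty$ with $n$.

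Finally, given $a \in \mathcal{O}_1$ and $b \in \mathcal{O}_2$, put $x_n := (1 - w_n) a + w_n b$. Then $v_1(x_n - a) = v_1(b - a) + n v_1(c)$ and $v_2(x_n - b) = v_2(a - b) - n v_2(c)$; choosing $n$ large enough that both right-hand sides are positive produces the desired element of $(a + \mathfrak{m}_1) \cap (b + \mathfrak{m}_2)$. The only mild obstacle is that $a$ and $b$ need not lie simultaneously in both valuation rings, so $v_1(b - a)$ and $v_2(a - b)$ can a priori be negative; but they are fixed elements of the respective value groups and are dominated by the linear-in-$n$ growth of $v_1(w_n)$ and $v_2(w_n - 1)$ for sufficiently large $n$.
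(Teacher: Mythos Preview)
Your construction of $w_n = c^n/(1+c^n)$ is correct and the overall strategy is the standard weak approximation argument. However, there is a genuine gap in the last step. You assert that the fixed values $v_1(b-a)$ and $v_2(a-b)$ are ``dominated by the linear-in-$n$ growth'' of $n\,v_1(c)$ and $-n\,v_2(c)$. This is only valid when the value groups are Archimedean. For a valuation of rank $\geq 2$, say with value group $\mathbb{Z}\times\mathbb{Z}$ ordered lexicographically, it is entirely possible that $v_1(c)=(0,1)$ while $v_1(b-a)=(-1,0)$, and then $n\,v_1(c)+v_1(b-a)=(-1,n)<0$ for every $n$. Since you fix $c$ before $a$ and $b$ are introduced, nothing in your argument rules this out (and one can write down explicit incomparable valuations on $\mathbb{Q}(x,y)$ realizing exactly this).

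The repair is to let $c$ depend on $a$ and $b$: if $b\notin\mathcal{O}_1$ then $b\in\mathcal{O}_2\setminus\mathcal{O}_1$, so you may take $t=b$, giving $v_1(c)\geq -v_1(b)=-v_1(b-a)$; symmetrically, if $a\notin\mathcal{O}_2$ take $u=a$. A direct check shows $n=2$ then suffices. When $b\in\mathcal{O}_1$ (resp.\ $a\in\mathcal{O}_2$) the offending value is already $\geq 0$ and any choice of $c$ works. The paper itself does not prove this statement---it is quoted as a fact from Johnson---so there is no further comparison to make; note too that the paper only invokes it with $a=0$ and $b=1$, a case in which your argument goes through as written since then $v_1(b-a)=v_2(a-b)=0$.
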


\begin{defn}
	We say that $b \in K$ is an \emph{Artin-Schreier root} of $a \in K$ if $a = b^p-b$. We call $K$ \emph{Artin-Schreier closed} if every element of $K$ has an Artin-Schreier root in $K$.
\end{defn}

 Recall the following.
\begin{fact}(\cite{kaplan2011artin} for $n=1$, \cite{hempel2016n} for arbitrary $n \in \mathbb{N}$)\label{fac: k-dep AS closed}
	Let $K$ be an infinite field of positive characteristic, such that $\Th(K)$ is $n$-dependent. Then $K$ is Artin-Schreier closed.
\end{fact}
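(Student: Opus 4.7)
The plan is to argue by contradiction: suppose $K$ is an infinite $n$-dependent field of characteristic $p > 0$ for which the Artin-Schreier map $\wp \colon K \to K$, $\wp(x) = x^p - x$, fails to be surjective, and pick $a \in K \setminus \wp(K)$. I will exhibit a formula with the $n$-independence property, contradicting $n$-dependence of $\Th(K)$.

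The starting point is the formula $\varphi(x;\, y_1, \ldots, y_n) := \exists z\, \left( y_1 y_2 \cdots y_n \cdot x - a = z^p - z \right)$, which asserts that $y_1 \cdots y_n \cdot x \in a + \wp(K)$. Since $\wp$ is an additive homomorphism with kernel $\mathbb{F}_p$, the image $\wp(K)$ is a proper additive subgroup of $K$, so for $\bar\lambda = (\lambda_1, \ldots, \lambda_n) \in (K^\times)^n$ the set $\varphi(K;\bar\lambda)$ is the affine coset $(\lambda_1 \cdots \lambda_n)^{-1}(a + \wp(K))$. This gives a uniformly definable family of additive cosets whose underlying subgroups are $(\lambda_1 \cdots \lambda_n)^{-1}\wp(K)$.

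Working in a sufficiently saturated elementary extension, I would choose, for each $j = 1, \ldots, n$, an infinite sequence $(\lambda_{j,i})_{i < \omega}$ in generic position, for instance multiplicatively independent transcendentals over the prime field chosen inductively, and then aim to verify that the family of products $\Lambda_{\bar\imath} := \lambda_{1,i_1} \cdots \lambda_{n,i_n}$ indexed by $\bar\imath \in \omega^n$ is independent modulo $\wp(K)$ in the following sense: for any finite $F \subseteq \omega^n$ and any $S \subseteq F$, there exists $c_S \in K$ with $\Lambda_{\bar\imath}\cdot c_S \in a + \wp(K)$ iff $\bar\imath \in S$. The two key inputs are (i) $K / \wp(K)$ is infinite as an $\mathbb{F}_p$-vector space, since $K$ is infinite and $\wp$ has finite kernel, so there are unboundedly many available cosets of $\wp(K)$; and (ii) the product $y_1 \cdots y_n$ is genuinely multilinear while $\wp$ is $\mathbb{F}_p$-linear, which lets us translate the $n$-independence requirement into a simultaneous solvability question for a prescribed pattern of cosets of $\wp(K)$. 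Once $c_S$ is produced for every $S$, the formula $\varphi$ has IP$_n$, contradicting $n$-dependence.

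The technical heart, and main obstacle, is step (i)-(ii) above: arranging that the products $\Lambda_{\bar\imath}$ lie in sufficiently independent position with respect to the additive subgroup $\wp(K)$. For $n=1$ this is essentially the content of Kaplan-Scanlon-Wagner's argument \cite{kaplan2011artin}, which uses Baldwin-Saxl applied to the uniformly definable family $\lambda^{-1}\wp(K)$ to show that, were $\wp(K) \neq K$, one could explicitly build the requisite IP-witness; so one natural route is induction on $n$, where the inductive step reduces the existence of the $n$-dimensional configuration to an $(n-1)$-dimensional one after fixing a single $\lambda_{j,i}$-coordinate and passing to a suitable definable quotient. Alternatively, one can argue directly: the generic choice of the $\lambda_{j,i}$'s ensures that the linear map $K \to (K/\wp(K))^F$, $c \mapsto (\Lambda_{\bar\imath} c \bmod \wp(K))_{\bar\imath \in F}$, has image of sufficiently large $\mathbb{F}_p$-codimension to realize every prescribed pattern, using only additivity of $\wp(K)$ and infiniteness of $K/\wp(K)$. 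Either approach avoids any structural hypothesis on $K$ beyond infiniteness and positive characteristic, matching the statement of the fact.
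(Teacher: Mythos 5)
Your proposal attempts a direct construction of an $\operatorname{IP}_n$-witness from $\wp(K) \neq K$, which is a fundamentally different route from the actual proof in \cite{kaplan2011artin} and \cite{hempel2016n}. The real argument runs by contradiction and uses the opposite direction of the chain condition: assuming $K$ is $n$-dependent, the chain condition (Fact~\ref{fac: BaldwinSaxl}, or plain Baldwin-Saxl for $n=1$) applied to the uniformly definable family of subgroups $a_0\cdots a_{n-1}\wp(K)$ produces a redundant index, which is then combined with the algebraic-group isomorphism $G_{\bar a}\cong \G_a$ (Fact~\ref{fact_cor_ConComIso}, via the Moore determinant) and the classification of additive polynomials with kernel $\F_p$ to force $\wp(K)=K$. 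Your sketch tries to avoid all this algebraic geometry, but I do not see how to make it work, for two concrete reasons.

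First, the claim that $K/\wp(K)$ is infinite ``since $K$ is infinite and $\wp$ has finite kernel'' is simply false: finiteness of $\ker\wp$ only gives $|\wp(K)|=|K|$ and says nothing about the index $[K:\wp(K)]$, which can perfectly well be finite (or $1$) for infinite fields. That $[K:\wp(K)]$ cannot be a finite number bigger than $1$ when $K$ is $n$-dependent is in effect what is being proven, so you cannot assume it. Second, even granting infiniteness of $K/\wp(K)$, your claim that the map $c\mapsto(\Lambda_{\bar\imath}c\bmod\wp(K))_{\bar\imath\in F}$ realizes every prescribed $\{0,1\}$-pattern relative to $a+\wp(K)$ for suitably ``generic'' $\lambda_{j,i}$'s is precisely the hard step, and no argument is given beyond the assertion. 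Surjectivity of that map onto enough of $(K/\wp(K))^F$ is equivalent to $\bigcap_{\bar\imath\in F}\Lambda_{\bar\imath}^{-1}\wp(K)$ having large index, and making the index grow without bound as $|F|\to\infty$ is exactly what the chain condition forbids for $n$-dependent $K$ and what has no reason to hold unconditionally. The content of the Kaplan-Scanlon-Wagner argument and its $n$-dependent generalization is precisely that the chain condition plus the explicit structure of $G_{\bar a}$ closes this gap; a purely linear-algebraic construction like the one sketched does not.
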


Our main contribution is the following result.
\begin{prop}\label{prop_mainhensilanity}
	Suppose that $(K, \mathcal O_1, \mathcal O_2)$ is $n$-dependent and  $\cha (K) = p >0$. Then every element in $J$ has an Artin-Schreier root in $J$.  
\end{prop}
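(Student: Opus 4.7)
The plan is to translate the existence of an Artin--Schreier root in $J$ into the triviality of a definable group homomorphism $\phi\colon J\to\mathbb F_p$, and then to rule out nontriviality of $\phi$ by applying the $n$-dependent chain condition of Proposition~\ref{prop: chain cond for mult fam of subgrps} to a uniformly definable family of subgroups of $\mathrm{SL}_2(K)$ coming from the Kaplan--Scanlon--Wagner construction, with Bays' Appendix~\ref{sec: app Bays} providing explicit control of the relevant isomorphism on both valuations simultaneously.

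First I would dispose of the comparable case. If $\mathcal O_2\subseteq\mathcal O_1$, then $\mathfrak m_1\subseteq\mathfrak m_2$ and $J=\mathfrak m_1$, so for $a\in\mathfrak m_1$ Fact~\ref{fac: k-dep AS closed} supplies some root $b$ of $X^p-X-a$; the condition $v_1(a)>0$ forces $v_1(b)\geq 0$ (else $v_1(\wp(b))=p\,v_1(b)<0$), and then the residue $\bar b\in\mathcal O_1/\mathfrak m_1$ satisfies $X^p-X=0$ and so lies in $\mathbb F_p$, so subtracting an appropriate element of $\mathbb F_p$ from $b$ yields a root in $\mathfrak m_1=J$.

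Now assume $\mathcal O_1,\mathcal O_2$ are incomparable and fix $a\in J$. The same valuation argument applied to both $\mathcal O_1$ and $\mathcal O_2$ forces every root $b$ of $X^p-X-a$ to lie in $\mathcal O_1\cap\mathcal O_2$ with residues $\bar b^{(i)}\in\mathbb F_p\subseteq\mathcal O_i/\mathfrak m_i$. Since any two roots differ by an element of $\mathbb F_p$, the quantity $\phi(a):=\bar b^{(1)}-\bar b^{(2)}\in\mathbb F_p$ depends only on $a$, and additivity of $\wp$ in characteristic $p$ makes $\phi\colon J\to\mathbb F_p$ a group homomorphism which is definable in $(K,\mathcal O_1,\mathcal O_2)$ (writing $\phi(a)=r$ as a finite disjunction over $k\in\mathbb F_p$ of ``$\exists b\,(\wp(b)=a\land b-k\in\mathfrak m_1\land b-(k-r)\in\mathfrak m_2)$''). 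Unwinding definitions, $\phi(a)=0$ exactly when some root of $X^p-X-a$ lies in $J$, so the statement of the proposition is equivalent to $\phi\equiv 0$, i.e.~to $\wp(J)=\ker\phi$ being all of $J$.

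Suppose for contradiction $\phi$ is nontrivial, so $\wp(J)$ is a proper definable index-$p$ subgroup of $J$. From here I would follow the Kaplan--Scanlon--Wagner strategy inside $\mathrm{SL}_2(K)$: the failure of Artin--Schreier closedness relative to $J$ should transfer, through the explicit form of the Kaplan--Scanlon--Wagner isomorphism analyzed in Bays' Appendix~\ref{sec: app Bays}, to a uniformly definable family of proper finite-index subgroups of the unipotent radical of a Borel subgroup, indexed by parameters from $K^\times$ and simultaneously encoding both $\mathcal O_1$ and $\mathcal O_2$. The main obstacle---and the principal new input beyond the $n=1$ case, where one valuation suffices and Baldwin--Saxl applies directly---is to design this family so that nontriviality of $\phi$ forces arbitrarily many finite sub-intersections that cannot be reduced to a bounded sub-collection, thereby contradicting Proposition~\ref{prop: chain cond for mult fam of subgrps}, which guarantees that in any $n$-dependent theory the intersection of a uniformly definable family of subgroups is already an intersection of boundedly many of its members.
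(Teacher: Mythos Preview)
Your reformulation via the homomorphism $\phi\colon J\to\mathbb F_p$ is correct and not in the paper, but the proposal breaks down exactly where you say it does, and the direction you suggest is backwards. You propose to \emph{contradict} Proposition~\ref{prop: chain cond for mult fam of subgrps} from the hypothesis $\wp(J)\subsetneq J$; but that proposition is a consequence of $n$-dependence, which is assumed, so it cannot be contradicted. You give no mechanism by which $[J:\wp(J)]=p$ would produce an irredundant family violating the chain condition, and I do not see one: the obvious candidates $a\cdot\wp(K)$ are all equal to $K$ by Fact~\ref{fac: k-dep AS closed}, and nothing in your sketch indexes a family by $n$-tuples or couples the two valuations. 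The reference to Borel subgroups of $\mathrm{SL}_2(K)$ is also not quite right; the groups in play are the additive ``special vector groups'' $G_{\bar a}\leq(\mathbb K,+)^{m+1}$ of Definition~\ref{def_Ga}.

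The paper runs the argument in the opposite direction: the chain condition is used constructively, not contradicted. One first chooses parameters $b_{j,l}\in\mathcal K$ (the largest perfect subfield) with prescribed $\val_1$- and $\val_2$-behaviour so that the families $(\prod_j x_j)\cdot\wp(K)$ and $(\prod_j x_j)\cdot\wp(J)$ of subgroups of $(K,+)$ are genuinely indexed by $n$-tuples; Proposition~\ref{prop: chain cond for mult fam of subgrps} then yields a single index $\nu$ redundant for both families at once. Relabelling the products as $\bar a=(a_0,\ldots,a_m)$ with $a_m$ corresponding to $\nu$, redundancy says the coordinate projection $G_{\bar a}(K)\to G_{\bar a'}(K)$ is onto and restricts to a surjection on $J$-points. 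Transporting through the explicit isomorphisms $f_{\bar a},f_{\bar a'}$ of Fact~\ref{fac: expl iso fa}, this projection becomes $\rho'(t)=c(t^p-t)$ on $K$ for some $c\in\mathcal K$. The substantial new work, and the only place both valuations are handled simultaneously, is the valuation calculus of Section~\ref{sec: f on val}: Lemmas~\ref{lem_valalpha}--\ref{lem_xi0} show that for $u$ with $\val_t(u)>\max\{\val_t(a_{m-1}),\val_t(a_m)\}$ for both $t$, a $\rho'$-preimage of $u$ can be found inside $J$ (Claim~\ref{cla: proof of hens 5}). Applying this twice to the given $y\in J$ produces its Artin--Schreier root in $J$. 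None of this construction appears in your sketch.
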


Being able to find an Artin-Schreier root in both maximal ideals \emph{simultaneously} forces the corresponding valuations to be comparable:
	
\begin{cor}\label{cor: main hens}
 If the structure $(K, \mathcal O_1, \mathcal O_2)$ is $n$-dependent and $\cha (K) = p >0$, then $\mathcal O_1$ and $\mathcal O_2$ are comparable.
\end{cor}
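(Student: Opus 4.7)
The plan is to argue by contradiction: suppose $\mathcal{O}_1$ and $\mathcal{O}_2$ are incomparable, and then cook up a single element $c \in J$ whose only Artin-Schreier roots lie outside $J$. This directly contradicts Proposition \ref{prop_mainhensilanity}. The two inputs I will need are Fact \ref{fac: incomp vals are indep} (to produce an element behaving differently with respect to $\mathfrak{m}_1$ and $\mathfrak{m}_2$) and the standard fact that in characteristic $p$ the Artin-Schreier roots of a given element form a single $\mathbb{F}_p$-coset, together with the observation that $\mathbb{F}_p^\times \cap \mathfrak{m}_i = \emptyset$ since elements of $\mathbb{F}_p^\times$ are units in $K$.

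First I would apply Fact \ref{fac: incomp vals are indep} to $a = 0 \in \mathcal{O}_1$ and $b = 1 \in \mathcal{O}_2$ to obtain an element $\alpha \in K$ with $\alpha \in \mathfrak{m}_1$ and $\alpha - 1 \in \mathfrak{m}_2$. Next, set $c := \alpha^p - \alpha$. I need to verify that $c \in J$. For the $\mathfrak{m}_1$ side this is immediate since $\alpha \in \mathfrak{m}_1$. For the $\mathfrak{m}_2$ side, write $\alpha = 1 + \varepsilon$ with $\varepsilon \in \mathfrak{m}_2$; since $\operatorname{char}(K) = p$, the Frobenius gives $\alpha^p = 1 + \varepsilon^p$, hence $c = \varepsilon^p - \varepsilon \in \mathfrak{m}_2$. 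Thus $c \in J$.

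Now I would apply Proposition \ref{prop_mainhensilanity}: there exists $\beta \in J$ with $\beta^p - \beta = c$. Since both $\alpha$ and $\beta$ are Artin-Schreier roots of $c$, their difference satisfies $(\beta-\alpha)^p = \beta - \alpha$, so $k := \beta - \alpha \in \mathbb{F}_p$. As $\alpha, \beta \in \mathfrak{m}_1$, we get $k \in \mathfrak{m}_1 \cap \mathbb{F}_p = \{0\}$, forcing $\beta = \alpha$. But $\beta \in \mathfrak{m}_2$ while $\alpha \equiv 1 \pmod{\mathfrak{m}_2}$, which yields $1 \in \mathfrak{m}_2$, the desired contradiction.

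There is no serious obstacle here once Proposition \ref{prop_mainhensilanity} is granted; the only delicate step is confirming that $c$ lies in \emph{both} maximal ideals, which is precisely what the Frobenius computation in characteristic $p$ delivers. All the remaining work is bookkeeping with $\mathbb{F}_p$-cosets of Artin-Schreier roots.
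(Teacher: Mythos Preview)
Your proof is correct and follows essentially the same approach as the paper: both argue by contradiction, use Fact~\ref{fac: incomp vals are indep} with $a=0$, $b=1$ to obtain an element $\alpha\in\mathfrak m_1\cap(1+\mathfrak m_2)$, verify via the Frobenius computation that $c:=\alpha^p-\alpha\in J$, and then derive a contradiction from Proposition~\ref{prop_mainhensilanity} since the Artin-Schreier roots of $c$ form the coset $\alpha+\mathbb F_p$, none of whose elements can lie in both maximal ideals. Your presentation of the last step (forcing $\beta=\alpha$ via $\mathfrak m_1\cap\mathbb F_p=\{0\}$, then contradicting $\beta\in\mathfrak m_2$) is slightly more explicit than the paper's, but the argument is the same.
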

\begin{proof}
Assume not, then by Fact \ref{fac: incomp vals are indep} with $a=0$ and $b=1$ there exists some $w\in  \mathfrak m_1 \cap (1 + \mathfrak m_2)$. Let $y := w^p -w$. Now, as $\val_1(w)>0$, we have that $$\val_1(y) = \val_1(w) > 0.$$ Secondly, let $z \in \mathfrak m_2$, i.e.~$\val_2(z) > 0$, be such that $w = 1 +z$. Then  $$\val_2(y) = \val_2(w^p - w) = \val_2\left( (1+z)^p  - (1+z)\right) = \val_2(z^p - z) = \val_2(z) > 0.$$ Thus  $y \in J$. However, the Artin-Schreier roots of $y$ are exactly $w, w+1, \dots, w+p-1$, none of which can lie in $\mathfrak m_1 \cap \mathfrak m_2 =J$. This contradicts Proposition \ref{prop_mainhensilanity}.
\end{proof}

Then Theorem \ref{thm: main hens} follows from Corollary \ref{cor: main hens} exactly as in the proof of \cite[Theorem 2.8]{johnson2021dpA} using that $n$-dependence is preserved under interpretations.
Our proof of Proposition  \ref{prop_mainhensilanity} is given in Section \ref{subsec: proof of Hens}, but before presenting it we have to develop the  following three main ingredients: 
\begin{itemize}
\item a chain condition for intersections in uniformly definable families of subgroups in $n$-dependent theories, discussed in Section \ref{sec: chain conditions};
\item an explicit version of the isomorphism for special linear groups from Kaplan-Scanlon-Wagner \cite{kaplan2011artin} (see Section \ref{subsection_iso} for a discussion and Appendix \ref{sec: app Bays} by Martin Bays for the proofs);
	\item a detailed analysis of what happens to the valuations of certain elements in the field  when this special isomorphism is applied (carried out in Section \ref{sec: f on val}).
	\end{itemize}


\subsection{A ``chain condition'' for intersections of definable subgroups in $n$-dependent theories}\label{sec: chain conditions}

Recall the ``chain condition'' for definable families of subgroups in $n$-dependent theories.

\begin{fact}\cite[Proposition 4.1]{hempel2016n}\label{fac: BaldwinSaxl}
Let $G$ be a definable group, and let $\psi(x; y_0, \ldots, y_{n-1})$ be an $n$-dependent formula such that $\psi(G; b_0, \ldots, b_{n-1})$ is a subgroup of $G$ for any parameters $b_0, \ldots, b_{n-1}$. Then there exists some $m_\psi \in \omega$ such that for any $d \geq m_\psi$ and any array of parameters $(b_{i,j} : i < n, j \leq d)$ there is some $\nu \in d^n$ such that
$$\bigcap_{\eta \in d^n} H_{\eta} = \bigcap_{\eta \in d^n, \eta \neq \nu} H_\eta,$$
where $H_\eta := \psi(G; b_{0,i_0}, \ldots, b_{n-1, i_{n-1}})$ for $\eta = (i_0, \ldots, i_{n-1})$.
\end{fact}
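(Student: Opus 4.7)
The plan is to argue by contradiction, in the spirit of the classical Baldwin--Saxl lemma for dependent theories: the subgroup hypothesis lets one upgrade a non-redundancy configuration into a realization of every truth pattern, which is precisely the $n$-independence property.

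Suppose the conclusion fails. Then for every $m \in \omega$ there exist $d \geq m$ and an array $(b_{i,j})_{i<n,\, j \leq d}$ in some model of $T$ such that, for every $\nu \in d^n$, $\bigcap_{\eta \neq \nu} H_\eta$ properly contains $\bigcap_\eta H_\eta$. Equivalently, for each $\nu \in d^n$ one can pick
$$a_\nu \in \bigcap_{\eta \in d^n,\, \eta \neq \nu} H_\eta \;\setminus\; H_\nu.$$
Now I use that each $H_\eta$ is a subgroup of $G$: for every $s \subseteq d^n$, set $b_s := \prod_{\nu \in d^n \setminus s} a_\nu$ in any fixed order. If $\eta \in s$, then every factor $a_\nu$ of $b_s$ satisfies $\nu \neq \eta$, so $a_\nu \in H_\eta$, and hence $b_s \in H_\eta$. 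If $\eta \notin s$, then $a_\eta$ appears among the factors while all others lie in $H_\eta$; by subgroup closure, $b_s \in H_\eta$ would force $a_\eta \in H_\eta$, which fails. Thus
$$\models \psi(b_s;\, b_{0,\eta(0)}, \ldots, b_{n-1,\eta(n-1)}) \iff \eta \in s$$
for every $\eta \in d^n$ and every $s \subseteq d^n$.

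Since $d$ can be taken arbitrarily large and the whole configuration is uniformly described by an $\mathcal{L}$-scheme (existentially quantifying the $a_\nu$'s and the $b_s$'s against the array $(b_{i,j})$), a standard compactness argument then produces an infinite array $(b_{i,j})_{i<n,\, j \in \omega}$ together with witnesses $b_s$ for every $s \subseteq \omega^n$ realizing the corresponding membership pattern. This is exactly the $n$-independence property for $\psi$, contradicting the hypothesis that $\psi$ is $n$-dependent. The only substantive step is the product trick in the middle paragraph---subgroup closure converts ``for each $\nu$, an element missed only by $H_\nu$'' into ``for each pattern $s$, an element with exactly that pattern''---with the membership verification and the final compactness being routine.
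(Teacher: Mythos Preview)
Your argument is correct and is precisely the standard Baldwin--Saxl style proof: pick for each $\nu$ an element missed only by $H_\nu$, multiply over the complement of $s$ to realize an arbitrary pattern, and pass to an infinite configuration by compactness. The paper does not supply its own proof here (the statement is cited as a fact from \cite{hempel2016n}), and the cited reference proceeds in exactly this way.
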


We generalize it to simultaneous intersections of several definable families of subgroups. Before we do so, let us recall the partite version of Ramsey's theorem.
\begin{fact}\label{fac: partite Ramsey}
\begin{enumerate}
\item (Infinitary version) For every $m,n \in \omega$ and any function $f: \omega^n \to m$ there exist some infinite sets $s_0, \ldots, s_{n-1} \subseteq \omega$ such that  $f \restriction_{s_0 \times \ldots \times s_{n-1}}$ is constant.
\item (Finitary version) For every $l, m,n \in \omega$ there is some $R = R(l, m,n) \in \omega$ such that:
	for any function $f: R^n \to m$ there are some sets $s_0, \ldots, s_{n-1} \subseteq R$ with $|s_0|, \ldots, |s_{n-1}| \geq l$ and such that $f \restriction_{s_0 \times \ldots \times s_{n-1}}$ is constant.
\end{enumerate}
\end{fact}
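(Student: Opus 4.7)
The plan is to reduce both parts to the classical Ramsey theorem by coloring $n$-element subsets of the index set with the values of $f$ on all $n!$ possible orderings. Specifically, I would define an auxiliary coloring $g : [\omega]^n \to m^{n!}$ (respectively $g : [R]^n \to m^{n!}$ in the finitary case) by setting, for $A = \{i_1 < \ldots < i_n\}$ and each $\sigma \in S_n$, the $\sigma$-coordinate of $g(A)$ to equal $f(i_{\sigma(1)}, \ldots, i_{\sigma(n)})$. On any $g$-homogeneous set, the value of $f$ at an $n$-tuple of distinct entries then depends only on the permutation pattern of the tuple.

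For the finitary part (2), I would choose $R$ large enough that finite Ramsey applied to $g$ (with color set of size $m^{n!}$ and uniformity $n$) yields a $g$-monochromatic $T \subseteq \{0, 1, \ldots, R-1\}$ of size $n\ell$; such an $R$ exists by the classical bound for hypergraph Ramsey numbers. Enumerating $T$ in increasing order as $t_0 < t_1 < \ldots < t_{n\ell - 1}$ and taking $s_i := \{t_{i\ell}, t_{i\ell + 1}, \ldots, t_{(i+1)\ell - 1}\}$ for $0 \leq i \leq n-1$ partitions $T$ into $n$ consecutive blocks of size $\ell$. Any tuple $(x_0, \ldots, x_{n-1}) \in s_0 \times \ldots \times s_{n-1}$ is strictly increasing by construction, so $f$ takes on it the single value assigned by $g$ to the identity permutation, which finishes (2).

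For the infinitary part (1), I would apply the infinite Ramsey theorem to the same coloring $g$ to obtain an infinite $S \subseteq \omega$ on which $g$ is constant, and then extract infinite subsets $s_0, \ldots, s_{n-1}$ of $S$ whose partite product realizes a single permutation pattern; the homogeneity of $g$ then forces $f$ to be constant on $s_0 \times \ldots \times s_{n-1}$. The hard part will be this extraction step: unlike the finitary case, one cannot literally partition $\omega$ into infinite blocks $s_0 < \ldots < s_{n-1}$ in the order-theoretic sense, so care is needed in arranging the partite subsets (or in interpreting them as lying in formally disjoint copies of $\omega$, as is natural in the partite setting of Fact \ref{fac: BaldwinSaxl}) so that every tuple of the product has the same permutation pattern, at which point the conclusion is immediate from the $g$-homogeneity. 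Alternatively, (1) may be deduced from (2) by a straightforward compactness argument applied to the statement of (2) over all finite stages.
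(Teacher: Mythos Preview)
The paper records this as a Fact without proof, so there is no argument in the paper to compare against. Your proof of part~(2) is correct and standard: reducing to the classical finite Ramsey theorem via the auxiliary $m^{n!}$-colouring of $n$-subsets and then splitting a homogeneous set of size $n\ell$ into $n$ consecutive blocks works exactly as you describe.

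Part~(1), however, is \emph{false} as stated, which is why the extraction step you flag as ``the hard part'' cannot be completed. A counterexample for $n=m=2$ is $f:\omega^2\to 2$ given by $f(i,j)=1$ if $i<j$ and $f(i,j)=0$ otherwise: if $s_0,s_1\subseteq\omega$ are both infinite then both are unbounded in $\omega$, so there exist $(i,j)\in s_0\times s_1$ with $i<j$ and also $(i',j')\in s_0\times s_1$ with $i'\geq j'$, hence $f$ is not constant on $s_0\times s_1$. This shows in particular that one cannot extract infinite $s_0,\ldots,s_{n-1}$ from a $g$-homogeneous set so that every tuple of $s_0\times\cdots\times s_{n-1}$ realizes a fixed permutation pattern, and it also rules out your alternative route: since (2) holds while (1) fails, no compactness argument can deduce (1) from (2) (the usual compactness transfer runs from the infinitary statement to the finitary one, not the reverse). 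The paper only invokes part~(2), in the proof of Proposition~\ref{prop: chain cond for mult fam of subgrps}, so the misstatement of (1) has no effect downstream.
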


\begin{prop}\label{prop: chain cond for mult fam of subgrps}
	Let $G$ be a definable group, and for $t< k$ let $\psi_t(x; y_0, \ldots, y_{n-1})$ be an $n$-dependent formula such that $\psi_t(G; b_0, \ldots, b_{n-1})$ is a subgroup of $G$ for any $t<k$ and any parameters $b_0, \ldots, b_{n-1}$. Then there exists some $m = m(\psi_0, \ldots, \psi_{k-1})\in \omega$ such that: for any $d \geq m$ and any array of parameters $(b_{i,j} : i < n, j \leq d)$ there is a single $\nu \in d^n$ such that for all $t<k$,
$$\bigcap_{\eta \in d^n} H^t_{\eta} = \bigcap_{\eta \in d^n, \eta \neq \nu} H^t_\eta,$$
where $H^t_\eta := \psi_t(G; b_{0,i_0}, \ldots, b_{n-1, i_{n-1}})$ for $\eta = (i_0, \ldots, i_{n-1})$.

\end{prop}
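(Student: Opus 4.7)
The plan is to derive Proposition \ref{prop: chain cond for mult fam of subgrps} from Fact \ref{fac: BaldwinSaxl} (the single-formula chain condition) combined with the finitary partite Ramsey theorem (Fact \ref{fac: partite Ramsey}(2)), via a pigeonhole-style argument that finds a ``monochromatic'' sub-grid on which one of the $\psi_t$'s controls all the obstructions.

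More precisely, I would begin by setting $m^{*} := \max_{t < k} m_{\psi_t}$, where $m_{\psi_t}$ is provided by Fact \ref{fac: BaldwinSaxl} applied to each $\psi_t$. Then let $m := R(m^{*}, k, n)$, the partite Ramsey number from Fact \ref{fac: partite Ramsey}(2). Given an arbitrary $d \geq m$ and an array $(b_{i,j}: i < n, j \leq d)$, I would argue by contradiction: suppose no single $\nu \in d^n$ works for all $t$ simultaneously. Then for each $\nu \in d^n$ there is a color $t(\nu) < k$ and an element $x_\nu \in \bigcap_{\eta \in d^n,\, \eta \neq \nu} H^{t(\nu)}_\eta \setminus H^{t(\nu)}_\nu$.

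Now apply the finitary partite Ramsey theorem to the coloring $\nu \mapsto t(\nu)$: since $d \geq R(m^{*}, k, n)$, there exist subsets $s_0, \ldots, s_{n-1} \subseteq d$ with $|s_i| \geq m^{*}$ on which $\nu \mapsto t(\nu)$ is constant with some value $t^{*} < k$. Consider the sub-array indexed by $s_0 \times \cdots \times s_{n-1}$. Since $|s_i| \geq m^{*} \geq m_{\psi_{t^{*}}}$, Fact \ref{fac: BaldwinSaxl} applied to $\psi_{t^{*}}$ on this sub-array yields some $\nu^{*} \in s_0 \times \cdots \times s_{n-1}$ with
$$\bigcap_{\eta \in s_0 \times \cdots \times s_{n-1}} H^{t^{*}}_\eta \;=\; \bigcap_{\eta \in s_0 \times \cdots \times s_{n-1},\, \eta \neq \nu^{*}} H^{t^{*}}_\eta.$$
Because $\nu^{*}$ lies in the monochromatic block, $t(\nu^{*}) = t^{*}$, so $x_{\nu^{*}} \in \bigcap_{\eta \in d^n,\, \eta \neq \nu^{*}} H^{t^{*}}_\eta$; restricting to $\eta$ in the sub-grid and using the displayed equality forces $x_{\nu^{*}} \in H^{t^{*}}_{\nu^{*}}$, contradicting the choice of $x_{\nu^{*}}$.

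The argument is essentially mechanical once the right framework is set up; no step is a serious obstacle, but the point requiring the most care is ensuring that the $\nu^{*}$ produced by the sub-array chain condition lies inside the monochromatic block (so that its color is $t^{*}$), which is automatic since we apply Fact \ref{fac: BaldwinSaxl} only to indices from $\prod_i s_i$. One should also note that the quantitative bound $m = R(\max_t m_{\psi_t}, k, n)$ depends only on the formulas $\psi_0, \ldots, \psi_{k-1}$ (through the $m_{\psi_t}$'s) and the arity $n$, as required by the statement.
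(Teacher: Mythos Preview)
Your proof is correct and takes a genuinely different route from the paper's. The paper argues by induction on $k$: it splits the $d$-array into an $R\times\cdots\times R$ grid of $m_1\times\cdots\times m_1$ sub-arrays (where $m_1=m(\psi_0)$), applies the single-formula case to each sub-array to get a removable index $\nu_\gamma\in m_1^n$, uses partite Ramsey (with $m_1^n$ colors) to make $\nu_\gamma$ constant on a large sub-grid, and then applies the inductive hypothesis for $\psi_1,\ldots,\psi_{k-1}$ on that sub-grid. This yields a bound $m$ that is an iterated Ramsey number, growing with $k$ through the recursion.

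Your argument is more direct: a single application of partite Ramsey with only $k$ colors (the ``bad formula'' coloring $\nu\mapsto t(\nu)$), followed by one application of the single-formula chain condition on the monochromatic block, finishing by contradiction. This avoids the induction entirely and gives the sharper bound $m=R(\max_{t<k}m_{\psi_t},\,k,\,n)$. The only cosmetic point to tidy is that Fact~\ref{fac: BaldwinSaxl} is stated for square arrays $j\leq d$, so after Ramsey you should trim each $s_i$ to size exactly $m^{*}$ before invoking it; this is harmless.
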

\begin{proof}

We argue by induction on $k$, the base case $k=1$ given by Fact \ref{fac: BaldwinSaxl}. Let $m_1 := m(\psi_0)$ and $m_2 := m(\psi_1, \ldots, \psi_{k-1})$ be given by the inductive hypothesis. Let $R := R(m_2, m_1^n, n)$ be given by Fact \ref{fac: partite Ramsey}(2).
We take $m = m(\psi_0, \ldots, \psi_{k-1}) := R m_1$.

Let an array $B = (b_{i,j} : i<n, j \leq m)$ be given. For each $ \gamma = (\gamma_0, \ldots, \gamma_{n-1}) \in R^n$, consider the subarray 
$$B_{\gamma} = \left( b_{0, \gamma_0 m_1 +\eta_0}, \ldots, b_{n-1, \gamma_{n-1}m_1 + \eta_{n-1}} : \eta = (\eta_0, \ldots, \eta_{n-1}) \in m_1^n \right).$$

By the choice of $m_1$, for each $\gamma \in R^n$ there is some $\nu_\gamma \in m_1^n$ such that 
$$ \bigcap_{\eta \in m_1^n} H^0_{(\gamma_0 m_1 +\eta_0, \ldots, \gamma_{n-1}m_1 + \eta_{n-1})} = \bigcap_{\eta \in m_1^n, \eta \neq \nu_\gamma} H^0_{(\gamma_0 m_1 +\eta_0, \ldots, \gamma_{n-1}m_1+ \eta_{n-1})}. \ \ \ \ \ (*)$$

By the choice of $R$, there are some sets $s_0, \ldots, s_{n-1} \subseteq R$ with $|s_0|= \ldots = |s_{n-1}| = m_2$ such that $\nu_\gamma$ is equal to some fixed $\nu' = (\nu_0', \dots, \nu_{n-1}') \in m_1^n$, for all $\gamma \in s_0 \times \ldots \times s_{n-1}$. Consider the array 
$$B' = (b_{0, \gamma_0 m_1+ \nu'_0}, \ldots, b_{{n-1, \gamma_{n-1} m_1+ \nu'_{n-1}}} : \gamma \in s_1 \times \ldots \times s_{n-1}).$$
By the choice of $m_2$, there is some $\gamma' \in s_1 \times \ldots \times s_{n-1}$ such that
$$ \bigcap_{\gamma \in s_0 \times \ldots \times s_{n-1}} H^t_{(\gamma_0 m_1 + \nu_0', \ldots, \gamma_{n-1}m_1 + \nu'_{n-1})} = $$
$$\bigcap_{\gamma \in s_0 \times \ldots \times s_{n-1}, \gamma \neq \gamma'} H^t_{(\gamma_0 m_1 +\nu_0', \ldots, \gamma_{n-1}m_1 + \nu_{n-1}')} \textrm{ for all } 1 \leq t <k.  \ \ \ \ \  (**)$$

Let $\nu := (\gamma_0'm_1 + \nu_0', \ldots, \gamma_{n-1}'m_1 + \nu_{n-1}')$. By $(*)$ and $(**)$ we have
$$\bigcap_{\eta \in m^n} H^t_{\eta} = \bigcap_{\eta \in m^n, \eta \neq \nu} H^t_\eta \textrm{ for all } 0 \leq t<k,$$
as desired.
\end{proof}


\subsection{Special vector groups and their explicit isomorphisms}\label{subsection_iso}
Let $K$ be a field of characteristic $p>0$. We let $\K$ be the algebraic closure of $K$, $\mathcal{K}$ a perfect subfield of $K$, and let $\wp(x) $ be the additive homomorphism $x \mapsto x^p -x$ on $\K$. We consider the following algebraic subgroups of  $(\K, +)^{n}$:
\begin{defn}\label{def_Ga}
For a singleton $a$ in $\K$, we let $G_{a}$ be equal to $(\K, +)$, and for a tuple $\bar{a}=(a_0, \dots , a_{n-1}) \in \K^n$ with $n > 1$ we define:
$$G_{\bar{a}} = \left\{ (x_0, \dots , x_{n-1}) \in \K^{n} :\ a_0 \cdot \wp(x_0)= a_i \cdot \wp(x_i)\mbox{ for }0 \leq i < n \right\}.$$
\end{defn}
Recall that for an algebraic group $G$, we denote by $G^0$ the connected component of the unit element of $G$ (in the Zariski topology). Note that if $G$ is definable over some parameter set $A$, its connected component $G^0$ coincides with the smallest $A$-definable subgroup of $G$ of finite index (in $\mathbb{K}$). We have the following sufficient condition for connectedness of $G_{\bar{a}}$.

\begin{fact}\label{fact_GaCon} \cite[Lemma 5.3]{hempel2016n} Let $\bar{a} = (a_0, \dots, a_{n-1} )$ be a tuple in $\K^{\times}$ for which the set 
$\left\{ \frac{1}{a_0}, \dots,   \frac{1}{a_{n-1}} \right\}$
is linearly $\F_p$-independent. Then $G_{\bar{a}}$ is connected.
\end{fact}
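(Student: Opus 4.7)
The plan is to exhibit $G_{\bar a}$ as the pullback of a line under the component-wise Artin--Schreier isogeny, and then apply classical Artin--Schreier theory to the corresponding generic-fibre function-field extension. Concretely, let $W \colon \K^n \to \K^n$ be the map $W(x_0, \dots, x_{n-1}) := (\wp(x_0), \dots, \wp(x_{n-1}))$. Since $\wp$ has nonvanishing derivative $-1$ in characteristic $p$, $W$ is a finite \'etale surjective group homomorphism with kernel $\F_p^n$. Put $L_{\bar a} := \K \cdot (1/a_0, \dots, 1/a_{n-1}) \subseteq \K^n$; then by inspection
\[ L_{\bar a} = \{(y_0, \dots, y_{n-1}) : a_0 y_0 = a_i y_i \text{ for all } i\}, \qquad G_{\bar a} = W^{-1}(L_{\bar a}). \]
So $G_{\bar a}$ is a finite \'etale cover of the connected curve $L_{\bar a} \cong \G_a$ of degree $p^n$, hence smooth of pure dimension one, and connectedness is thus equivalent to irreducibility.

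Next, I would parameterize $L_{\bar a}$ by $t \mapsto (t/a_0, \dots, t/a_{n-1})$ so that $\K(L_{\bar a}) = \K(t)$. The generic fibre of the projection $G_{\bar a} \to L_{\bar a}$ is $\mathrm{Spec}$ of the compositum
\[ R \;:=\; \bigotimes_{i=0}^{n-1} \K(t)[x_i] \big/ \bigl(\wp(x_i) - t/a_i\bigr), \]
and $G_{\bar a}$ is irreducible if and only if $R$ is a field. By standard Artin--Schreier theory, $R/\K(t)$ is a Galois $\F_p^n$-extension, and in particular a field, if and only if the classes $t/a_0, \dots, t/a_{n-1}$ are $\F_p$-linearly independent in $\K(t) / \wp(\K(t))$.

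Finally, to verify this independence, suppose $\sum_i c_i \cdot (t/a_i) \in \wp(\K(t))$ for some $c_i \in \F_p$, and set $s := \sum_i c_i/a_i \in \K$, so $st \in \wp(\K(t))$. If $\wp(h) = st$ with $h \in \K(t)$, a pole analysis (a pole of $h$ of order $k$ at $\alpha \in \K$ becomes a pole of $h^p - h$ of order $pk$ at $\alpha$) forces $h \in \K[t]$. But then either $\deg h = 0$ and $\wp(h) \in \K$, or $\deg h \geq 1$ and $\deg \wp(h) = p \deg h \geq p > 1$; neither matches $\deg(st) = 1$ unless $s = 0$. Hence $s = 0$, and the assumed $\F_p$-linear independence of $\{1/a_0, \dots, 1/a_{n-1}\}$ then forces all $c_i = 0$. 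This gives the required independence and hence the connectedness of $G_{\bar a}$.

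The only nonroutine input is the standard Artin--Schreier description of compositae of degree-$p$ cyclic extensions and the elementary degree/pole check in the last paragraph, so I do not foresee any real obstacle.
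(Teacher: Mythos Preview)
Your argument is correct. The reduction to the generic fibre of the \'etale cover $G_{\bar a}\to L_{\bar a}$ is clean, the Artin--Schreier/Kummer description of the compositum is exactly right, and the pole/degree computation in $\K(t)$ showing $st\in\wp(\K(t))\Rightarrow s=0$ is elementary and complete.

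This is, however, a genuinely different route from what the paper does. The paper itself cites this statement as a fact from \cite{hempel2016n} without proof, but Appendix~\ref{sec: app Bays} (by Bays) effectively reproves it by doing something strictly stronger: it writes down an explicit algebraic isomorphism $G_{\bar a}\to\G_a$, namely $\bar x\mapsto\sum_j\alpha_j x_j$ where the $\alpha_j$ are read off from the inverse of the Moore matrix $M\bigl(a_0^{-1/p^m},\ldots,a_m^{-1/p^m}\bigr)$, and checks directly (via Fact~\ref{f:moore} and a short Frobenius calculation) that this map is invertible with polynomial inverse. Connectedness then falls out as a corollary of $G_{\bar a}\cong\G_a$.

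The two approaches buy different things. Yours is softer and more conceptual: it isolates exactly the Galois-theoretic content (linear independence of the $t/a_i$ modulo $\wp(\K(t))$) and needs nothing beyond standard Artin--Schreier theory and a degree count. The appendix's approach is constructive: it produces the isomorphism explicitly, and the paper genuinely needs this explicit form later (Section~\ref{sec: f on val}) to track what the isomorphism does to valuations. So for the purposes of this particular Fact your argument is perfectly adequate and arguably more transparent, but it would not by itself supply the explicit formula the paper relies on downstream.
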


Moreover, under the same assumption on $\bar{a}$ these groups are isomorphic to the additive group of the field:

\begin{fact}\label{fact_cor_ConComIso}\cite[Corollary 5.4]{hempel2016n} \label{fact_iso}
Let $\mathcal{K}$ be a perfect subfield of an algebraically closed field $\K$, and  let $\bar{a} \in \mathcal{K}^n$ be such that the set 
$\left\{ \frac{1}{a_0}, \dots,   \frac{1}{a_{n-1}} \right\}$
is linearly $\F_p$-independent. Then $G_{\bar{a}}$ is (algebraically) isomorphic to $(\K, +)$ over $\mathcal{K}$. In particular, for any field $K$ with $\mathcal{K} \leq K \leq \K$, the group $G_{\bar{a}}(K)$ is isomorphic to $(K, +)$.
\end{fact}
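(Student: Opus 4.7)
The plan is to recognize $G_{\bar{a}}$ as a smooth connected one-dimensional commutative unipotent algebraic group defined over $\mathcal{K}$, and then to invoke the classification of such groups over a perfect field to identify it with $\G_a$.

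First I would check the basic geometry of $G_{\bar{a}}$ directly from its defining equations: inside $\mathbb{A}^n$ it is cut out by the $n-1$ polynomials $a_0 \wp(x_0) - a_i \wp(x_i)$ for $1 \le i \le n-1$, all with coefficients in $\mathcal{K}^{\times}$. Since $\wp'(x) = p x^{p-1} - 1 = -1$, the Jacobian of this system has $-a_0$ throughout its first column and $a_i$ in the diagonal position $(i,i)$ for $1 \le i \le n-1$, with zeros elsewhere; it therefore has constant rank $n-1$. Hence $G_{\bar{a}}$ is smooth of pure dimension $1$ as a $\mathcal{K}$-variety. It is tautologically a closed subgroup of $\G_a^n$, hence commutative and unipotent, and Fact \ref{fact_GaCon} supplies connectedness under the $\F_p$-independence hypothesis on the $1/a_i$. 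Base-changing to the algebraic closure $\K$, the classification of one-dimensional smooth connected algebraic groups (which in the unipotent case leaves only $\G_a$, ruling out $\G_m$ and elliptic curves) gives $G_{\bar{a}} \times_{\mathcal{K}} \K \cong \G_a$.

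The heart of the argument is then to descend this isomorphism from $\K$ back to $\mathcal{K}$, and this is where perfectness of $\mathcal{K}$ enters essentially. By the structure theorem for smooth connected unipotent algebraic groups over a perfect field $k$ (every such group is $k$-split; see e.g.~Milne, \emph{Algebraic Groups}, Ch.~14, or Conrad--Gabber--Prasad, \emph{Pseudo-reductive groups}, App.~B), every one-dimensional smooth connected unipotent algebraic group over a perfect field is $k$-isomorphic to $\G_a$. Applied to $G_{\bar{a}}$, this produces a $\mathcal{K}$-isomorphism $G_{\bar{a}} \cong \G_a$, and restricting to $K$-rational points for any intermediate field $\mathcal{K} \le K \le \K$ yields the asserted group isomorphism $G_{\bar{a}}(K) \cong (K, +)$.

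The main obstacle is precisely this descent step: over imperfect base fields nontrivial forms of $\G_a$ do exist (the $k$-wound unipotent groups), so perfectness of $\mathcal{K}$ is used in a nontrivial way. If one wished to avoid quoting the structure theory, a more hands-on alternative would be to look for an explicit $\mathcal{K}$-morphism $\G_a \to G_{\bar{a}}$ of the form $t \mapsto (\phi_0(t), \ldots, \phi_{n-1}(t))$, with each $\phi_i$ an additive polynomial in the twisted polynomial ring $\mathcal{K}\{\tau\}$ (with $\tau c = c^p \tau$). The identities $a_i \wp(\phi_i(t)) = a_j \wp(\phi_j(t))$ then become a finite system of linear conditions on the coefficients, and the $\F_p$-linear independence of $1/a_0, \ldots, 1/a_{n-1}$, combined with the availability of $p$-th roots in $\mathcal{K}$, ought to allow one to solve this system by induction on $n$ and so construct the isomorphism explicitly.
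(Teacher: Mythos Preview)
Your argument is correct and, as the paper's appendix itself remarks, it is essentially the original proof from \cite{kaplan2011artin} and \cite{hempel2016n}: show $G_{\bar a}$ is smooth of dimension one (your Jacobian computation), connected (Fact~\ref{fact_GaCon}), and unipotent as a closed subgroup of $\G_a^n$, then invoke the splitting theorem for smooth connected unipotent groups over a perfect field. Since the statement is cited as a Fact from \cite{hempel2016n}, this is exactly the argument being quoted.

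The paper's own contribution, however, is to \emph{bypass} this indirect route. Appendix~\ref{sec: app Bays} constructs the isomorphism explicitly: one takes the inverse of the Moore matrix $\bigl(\phi^{-i}(a_j^{-1})\bigr)_{0\le i,j\le m}$ (invertible by $\F_p$-linear independence of the $a_j^{-1}$, via Fact~\ref{f:moore}) to produce a tuple $\bar\alpha\in\mathcal{K}^{m+1}$, and then shows directly that $\bar x\mapsto\sum_j\alpha_jx_j$ is an isomorphism $G_{\bar a}\to\G_a$, with inverse $t\mapsto\bigl(\sum_j\beta_{ij}t^{p^j}\bigr)_i$ given by the inverse of the Moore matrix of $\bar\alpha$. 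Your final paragraph in fact sketches precisely this strategy. What the explicit version buys the paper is crucial for its application: Section~\ref{sec: f on val} needs to track how this isomorphism interacts with a valuation, and the structure-theory proof gives no handle on that, whereas the explicit formulas allow the valuation of each $\alpha_j$ (and hence of $f_{\bar a}(\bar x)$ and its inverse) to be computed exactly.
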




In Appendix \ref{sec: app Bays}, Bays provides an explicit description of such an isomorphism that we now describe to set up the notation.

Given arbitrary $m\in \mathbb{N}$ and $x_1, \ldots, x_m \in K$, the corresponding \emph{Moore matrix} is the $m \times m$ matrix
	$$M(x_1, \ldots, x_m) = \begin{pmatrix}
x_1 & \ldots &x_m\\
x_1^p & \ldots & x_m^p\\
\vdots &    & \vdots \\
x_1^{p^{m-1}} & \ldots & x_m^{p^{m-1}}
\end{pmatrix},$$
and the \emph{Moore determinant} is $\Delta(x_1, \ldots, x_m) := \det M(x_1, \ldots, x_m)$. By Fact \ref{f:moore}, the set $\{ x_1, \ldots, x_m\}$ is linearly independent over $\mathbb{F}_p$ if and only if $\Delta(x_1, \ldots, x_m) \neq 0$. 

Now, fix $\bar a  = (a_0, \dots, a_{m}) \in \mathcal{K}^{m+1}$ such that the set $\left\{ \frac{1}{a_0}, \dots,   \frac{1}{a_m} \right\}$
is $\F_p$-linearly independent, and let 
$$A := M \left(a^{-\frac{1}{p^m}}_0, \ldots, a^{-\frac{1}{p^m}}_m \right).$$
 Note that $\left\{ a^{-\frac{1}{p^m}}_0, \ldots, a^{-\frac{1}{p^m}}_m \right\}$ is still an $\mathbb{F}_p$-linearly independent subset of $\mathcal{K}$, hence $A$ is invertible by Fact \ref{f:moore}. We define  
$$\bar{\alpha} = \begin{pmatrix}
\alpha_0\\
\vdots \\
\alpha_{m-1}\\
\alpha_m
\end{pmatrix} 
:= 
A^{-1}\begin{pmatrix}
0\\
\vdots \\
0\\
1
\end{pmatrix},$$
that is $\alpha_i = (A^{-1})_{i, m} \in \mathcal{K}$ for $0 \leq i \leq m$. One still has that $(\alpha_0, \ldots, \alpha_m)$ are linearly $\mathbb{F}_p$-independent (see Claim \ref{c:alphaIndep}), hence $M(\alpha_0, \dots, \alpha_{m})$ is invertible. Let $\beta_{i,j} \in \mathcal{K}$ be the entries of the inverse matrix of $M(\alpha_0, \dots, \alpha_{m})$. Then we have:
\begin{fact}\label{fac: expl iso fa}
The map $f_{\bar{a}}: G_{\bar a}(\mathbb{K}) \rightarrow (\mathbb{K},+)$ given by
$$ f_{\bar{a}}(x_0, \ldots, x_m) := \sum_{j=0}^{m} \alpha_j x_j$$
is a group isomorphism, and  $f^{-1}_{\bar{a}}: (\mathbb{K},+) \rightarrow  G_{\bar a}(\mathbb{K}) $ given by 
$$  f^{-1}_{\bar{a}}(t) := \left(\sum_{j=0}^{m} \beta_{i,j} t^{p^j} : 0 \leq i \leq m\right)$$
is its inverse. \end{fact}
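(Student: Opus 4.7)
The strategy has three components: $f_{\bar a}$ is an additive homomorphism (trivial, being $\mathbb{K}$-linear); the inverse formula $f_{\bar a}^{-1}$ really does land in $G_{\bar a}$; and the two composites are the identity. Components one and three reduce respectively to the matrix identities $M(\alpha_0,\ldots,\alpha_m)M(\alpha_0,\ldots,\alpha_m)^{-1}=I$ and $M(\alpha_0,\ldots,\alpha_m)^{-1}M(\alpha_0,\ldots,\alpha_m)=I$ together with a short calculation using $a_j\wp(x_j)=s$ on $G_{\bar a}$. The substantive step is the middle one.

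The key preparatory observation, used throughout, is that $A\bar\alpha=e_m$ implies, after raising the equation coming from row $m-s$ to the $p^{m-s}$-th power,
\[
\sum_{i=0}^{m}\frac{\alpha_i^{p^s}}{a_i}\;=\;\delta_{s,0}\qquad\text{for all }0\le s\le m.
\]
In particular, the vector $(1/a_i)_i$ lies in the kernel of the $m\times(m+1)$ matrix $N$ with entries $\alpha_i^{p^s}$ for $1\le s\le m$ and $0\le i\le m$. Since $\alpha_0,\ldots,\alpha_m$ are $\mathbb{F}_p$-linearly independent (Claim~\ref{c:alphaIndep}), the Moore submatrix $M(\alpha_0^p,\ldots,\alpha_{m-1}^p)$ is invertible, so $\ker N$ is one-dimensional and is spanned by $(1/a_i)_i$.

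With this in hand, I would expand $a_i\wp(x_i)$ for $x_i=\sum_{k=0}^m\beta_{i,k}t^{p^k}$ as a polynomial in $t$ and show that each of its coefficients is independent of $i$. The coefficient of $t$ is $-a_i\beta_{i,0}$; but the column $(\beta_{i,0})_i$ is characterised by $\sum_i\alpha_i^{p^k}\beta_{i,0}=\delta_{k,0}$, which by the displayed identity and uniqueness forces $\beta_{i,0}=1/a_i$, giving the constant $-1$. For $1\le l\le m$ the coefficient is $a_i(\beta_{i,l-1}^p-\beta_{i,l})$; raising the defining equations for the $(l-1)$st column of $M^{-1}$ to the $p$-th power and subtracting those for the $l$-th column places $(\beta_{i,l-1}^p-\beta_{i,l})_i$ in $\ker N$, so it must equal some scalar multiple $c_l\cdot(1/a_i)_i$. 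An identical argument places $(\beta_{i,m}^p)_i$ in $\ker N$, so $a_i\beta_{i,m}^p=c$ for the top coefficient. Hence $a_i\wp(x_i)$ is independent of $i$, proving $f_{\bar a}^{-1}(t)\in G_{\bar a}$.

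Finally, $f_{\bar a}\circ f_{\bar a}^{-1}=\mathrm{id}$ follows immediately from $MM^{-1}=I$ applied to the first row. For $f_{\bar a}^{-1}\circ f_{\bar a}=\mathrm{id}$, I would use the identity $x_j^{p^k}=x_j+\sum_{l=0}^{k-1}(s/a_j)^{p^l}$, an easy induction from $x_j^p=x_j+s/a_j$ valid on $G_{\bar a}$: substituting into $\sum_j\beta_{i,j}\bigl(\sum_k\alpha_kx_k\bigr)^{p^j}$, the linear part yields $x_i$ via $M^{-1}M=I$, while the $s$-correction collapses to a double sum of $p^l$-th powers of $\sum_k\alpha_k^{p^{j-l}}/a_k$, all of which vanish by the displayed identity since $1\le j-l\le m$. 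The main obstacle to articulate is precisely the recognition that one and the same one-dimensional kernel $\ker N$, spanned by $(1/a_i)_i$, simultaneously governs the bottom, middle and top coefficients of $a_i\wp(x_i)$; once the duality between the row equations defining $\bar\alpha$ from $A$ and the column equations defining the $\beta_{i,\cdot}$ from $M$ is exploited by raising each to an appropriate $p$-power, the remainder of the proof is routine linear algebra in characteristic $p$.
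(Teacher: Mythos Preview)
Your argument is correct; there is one harmless slip in the derivation of the key identity: from row $m-s$ of $A\bar\alpha=e_m$ one must raise to the $p^{s}$-th power (not the $p^{m-s}$-th) to obtain $\sum_i \alpha_i^{p^s}/a_i=\delta_{s,0}$. The identity itself is stated correctly and everything that follows from it is fine.

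Your route differs from the paper's. The appendix packages the main computation into a single inductive claim (Claim~\ref{c:tFrob}): for $\bar x\in G_{\bar a}$ and $t=\sum_j\alpha_jx_j$, one shows by induction on $i$ that $t^{p^i}=\sum_j\alpha_j^{p^i}x_j$, using exactly the identity $\sum_j\alpha_j^{p^i}/a_j=0$ for $1\le i\le m$. Inverting the Moore matrix then gives $x_i=\sum_j\beta_{i,j}t^{p^j}$, i.e.\ $f^{-1}\circ f=\mathrm{id}$ on $G_{\bar a}$, in one stroke. The remaining facts --- that $f$ is onto and that the inverse formula lands in $G_{\bar a}$ --- are left implicit, resting on the previously established connectedness and $1$-dimensionality of $G_{\bar a}$ (Facts~\ref{fact_GaCon} and~\ref{fact_cor_ConComIso}). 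You instead verify \emph{directly} that $f_{\bar a}^{-1}$ takes values in $G_{\bar a}$, by the pleasant observation that the vectors $(\beta_{i,0})_i$, $(\beta_{i,l-1}^p-\beta_{i,l})_i$ and $(\beta_{i,m}^p)_i$ all lie in the one-dimensional kernel of the truncated Moore matrix, which is spanned by $(1/a_i)_i$; and you check both composites by hand using the closed form $x_k^{p^r}=x_k+\sum_{l<r}(s/a_k)^{p^l}$. Your proof is therefore longer but entirely self-contained: it never appeals to connectedness or to any algebraic-group fact, whereas the paper's proof is shorter but tacitly uses that background to close the argument.
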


\subsection{The effect of the isomorphism $f_{\bar{a}}$ on the valuation}\label{sec: f on val}
\emph{For the rest of this subsection we assume that $\mathcal{O}$ is a valuation ring on $K$, $\mathfrak{m}$ is its maximal ideal, $\val$ is the corresponding valuation, and we fix some $a_0, \ldots, a_m \in \mathcal{K}$ such that $\val(a_i) \neq \val(a_j)$ for all $0 \leq i \neq j \leq m$.}

This implies in particular that $\left\{ \frac{1}{a_0}, \ldots, \frac{1}{a_m} \right\}$ are $\mathbb{F}_p$-linearly independent (as the valuation of any $\mathbb{F}_p$-linear combination is $\neq \infty$). Throughout this section, we let $f:=f_{\bar{a}} = \sum_{j=0}^{m} \alpha_j x_j$ be the isomorphism $G_{\bar a}(\mathbb{K}) \rightarrow (\mathbb{K},+)$ given by Fact \ref{fac: expl iso fa}. Let $\alpha_0, \ldots, \alpha_m \in \mathcal{K}$ be as defined in Subsection \ref{subsection_iso}. We will prove several technical lemmas that allow us to control $\val(f(x_1, \ldots, x_m))$ in terms of the tuple $(\val(x_1), \ldots, \val(x_m))$, and vice versa. 
To motivate this analysis, the reader might prefer to check how it is used in the proof of Proposition \ref{prop_mainhensilanity} in the next section before going into the details of the calculations here.

\begin{remark}\label{rem: OFE}
Assume $(x_0, \dots, x_m) \in G_{\bar a}$, then 
$$\val(a_i) +\val(x_i^p-x_i)= \val(a_j)+\val(x_j^p-x_j) \ \ \ \ $$
for all $0 \leq i,j\leq m$.

 Additionally note that 
		$$
		\val ( x_i^p-x_i) = \left\{ \begin{aligned}
		                \val(x_i)  \ \ \ \  &\mbox{if } \val(x_i)>0 \\
		                p \val(x_i)\  \ \ \  &\mbox{if}  \val(x_i)<0
		                \end{aligned} \right.
		$$
and 
		$$\val ( x_i^p-x_i)	  \geq 0  \mbox{ if } \val(x_i)=0.$$
\end{remark}


\begin{lemma}\label{lem_valalpha} Suppose that $0< \val(a_0)< \dots < \val(a_m)$.  Then the sequence $(\val(\alpha_i): i \in \{0, \dots, m\})$ is strictly increasing. 

In fact, for any $0 \leq i \leq m$ we have
	$$\val(\alpha_i) =  \frac{1}{p^{m-i}} \val(a_i)+ \sum_{j=i}^{m-1} \frac{p-1}{p^{m-j}} \val(a_{j+1})>0 .$$ 
	\end{lemma}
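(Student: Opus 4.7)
The plan is to use Cramer's rule applied to the defining identity $A\bar{\alpha} = (0,\dots,0,1)^T$. Since $\alpha_i = (A^{-1})_{i,m}$, the cofactor formula gives
$$\alpha_i = \frac{(-1)^{i+m}\,\det(M_{m,i})}{\det(A)},$$
where $M_{m,i}$ is the minor of $A$ obtained by deleting row $m$ and column $i$. Writing $v_j := \val(a_j)$ and recalling that $A_{k,j} = a_j^{-1/p^{m-k}}$, every entry $A_{k,j}$ has valuation $-v_j/p^{m-k}$, so both determinants are sums of monomials whose valuations I can read off directly.

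The main computation is to evaluate $\val(\det A)$ and $\val(\det M_{m,i})$ from their Leibniz expansions. A term indexed by a permutation $\sigma$ of $\{0,\dots,m\}$ contributes valuation $-\sum_{k=0}^m v_{\sigma(k)}/p^{m-k}$ to $\det A$. Since both $(1/p^{m-k})_{k=0}^m$ and $(v_j)_{j=0}^m$ are strictly increasing, the strict form of the rearrangement inequality shows that $\sigma = \mathrm{id}$ uniquely maximizes $\sum_k v_{\sigma(k)}/p^{m-k}$; hence a unique Leibniz term attains the minimum valuation, and by the ultrametric triangle inequality $\val(\det A) = -\sum_{k=0}^m v_k/p^{m-k}$. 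The same argument applied to $M_{m,i}$, where injective maps now range over $\{0,\dots,m-1\}\to\{0,\dots,m\}\setminus\{i\}$ with unchanged weights, identifies the unique minimum as the order-preserving injection and gives $\val(\det M_{m,i}) = -\sum_{k=0}^{i-1} v_k/p^{m-k} - \sum_{j=i+1}^{m} v_j/p^{m-j+1}$.

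Subtracting these expressions yields the claimed formula: the segments with $k = 0,\dots, i-1$ cancel, and the tail becomes $v_i/p^{m-i} + \sum_{k=i+1}^{m} v_k\bigl(1/p^{m-k} - 1/p^{m-k+1}\bigr)$. Using $1/p^{m-k} - 1/p^{m-k+1} = (p-1)/p^{m-k+1}$ and reindexing $j = k-1$ produces $\val(\alpha_i) = \val(a_i)/p^{m-i} + \sum_{j=i}^{m-1} (p-1)\val(a_{j+1})/p^{m-j}$. Every summand is strictly positive (as $v_j > 0$ for all $j$), and a one-line subtraction gives $\val(\alpha_{i+1}) - \val(\alpha_i) = (v_{i+1}-v_i)/p^{m-i} > 0$, so strict monotonicity follows for free.

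The only non-routine step is extracting the valuation of a determinant from its Leibniz expansion, and this hinges entirely on the minimum-valuation term being \emph{unique}. I expect this uniqueness --- really the strict form of the rearrangement inequality applied to two strictly monotone sequences --- to be the main point worth writing out carefully; once it is in hand, everything else reduces to algebraic bookkeeping.
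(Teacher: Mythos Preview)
Your proposal is correct and follows essentially the same route as the paper: both compute $\alpha_i$ via the cofactor formula, evaluate $\val(\det A)$ and $\val(\det M_{m,i})$ by identifying the unique Leibniz term of minimal valuation (the paper argues this directly, you invoke the strict rearrangement inequality --- same content), and subtract to obtain the displayed formula. Your explicit verification that $\val(\alpha_{i+1}) - \val(\alpha_i) = (v_{i+1}-v_i)/p^{m-i}$ is a small addition; the paper leaves strict monotonicity as a consequence of the formula without writing out this difference.
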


\begin{proof}
 Recall that, by linear algebra, for each $0 \leq i \leq m$ we have:
$$ \alpha_i =\left(A^{-1}\right)_{i,m} = \frac{1}{\det(A)} C_{m,i},$$
where $C_{m,i}$ is the corresponding cofactor of $A$. That is,
$$ \alpha_i = \frac{(-1)^{m+i}\Delta\left(a^{-\frac{1}{p^m}}_0, \ldots, a^{-\frac{1}{p^m}}_{i-1}, a^{-\frac{1}{p^m}}_{i+1}, \ldots, a^{-\frac{1}{p^m}}_m \right)}{\Delta\left(a^{-\frac{1}{p^m}}_0, \ldots, a^{-\frac{1}{p^m}}_m \right)}. $$
Now, we compute the valuation of the numerator and denominator separately. First,
$$\Delta\left(a^{-\frac{1}{p^m}}_0, \ldots, a^{-\frac{1}{p^m}}_m\right)= \sum_{\pi \in \Sym \left(\{0, \ldots, m\} \right)} \sign(\pi) \left(a^{-\frac{1}{p^m}}_{\pi(0)} \right)^{p^0} \cdot \ldots  \cdot  \left(a^{-\frac{1}{p^m}}_{\pi(m)} \right)^{p^m}. $$
Let $i<j$, then 
$$\val \left({a_i}^{-\frac{1}{p^m}} \right) = -\frac{1}{p^m} \val(a_i) >   -\frac{1}{p^m} \val(a_j)= \val \left({a_j}^{-\frac{1}{p^m}} \right).$$
Thus $\left(\val \left({a_i}^{-\frac{1}{p^m}} \right) : 0 \leq i \leq m \right)$ is strictly decreasing.
Using this, we  see that 
$$\val \left( \left(a^{-\frac{1}{p^m}}_{0} \right)^{p^0} \cdot \ldots  \cdot  \left(a^{-\frac{1}{p^m}}_{m}\right)^{p^m} \right) < \val \left( \left(a^{-\frac{1}{p^m}}_{\pi(0)} \right)^{p^0} \cdot \ldots  \cdot  \left(a^{-\frac{1}{p^m}}_{\pi(m)}\right)^{p^m} \right)$$
for every \emph{non-identity} permutation $\pi \in \Sym \left(\{0, \ldots, m\} \right)$. Thus 
\begin{align*}
	\val \left(\Delta\left(a^{-\frac{1}{p^m}}_0, \ldots, a^{-\frac{1}{p^m}}_m\right)\right)& = \val \left(\left(a^{-\frac{1}{p^m}}_{0}\right)^{p^0} \cdot \ldots  \cdot  \left(a^{-\frac{1}{p^m}}_{m}\right)^{p^m}\right)\\
	& = - \sum_{j=0}^m  {\frac{1}{p^{m-j}}} \val(a_j).
	\end{align*}
Now we turn to the numerator: 
\begin{align*}
&\Delta\left(a^{-\frac{1}{p^m}}_0, \ldots, a^{-\frac{1}{p^m}}_{i-1}, a^{-\frac{1}{p^m}}_{i+1}, \ldots, a^{-\frac{1}{p^m}}_m \right)=\\
&\sum_{\pi \in \Sym\left(\{ 0, \ldots, i-1, i+1, \ldots, m \} \right) } \sign(\pi) \prod_{0 \leq j \leq i-1} \left( a_{\pi(j)}^{-\frac{1}{p^m}} \right)^{p^j} \cdot \prod_{i+1 \leq j \leq m} \left( a_{\pi(j)}^{-\frac{1}{p^m}} \right)^{p^{j-1}}.
\end{align*}
Again, 
\begin{align*}
&\val \left( \prod_{0 \leq j \leq i-1} \left( a_{j}^{-\frac{1}{p^m}} \right)^{p^j} \cdot \prod_{i+1 \leq j \leq m} \left( a_{j}^{-\frac{1}{p^m}} \right)^{p^{j-1}} \right) \\<\ \ & \val \left( \prod_{0 \leq j \leq i-1} \left( a_{\pi(j)}^{-\frac{1}{p^m}} \right)^{p^j} \cdot \prod_{i+1 \leq j \leq m} \left( a_{\pi(j)}^{-\frac{1}{p^m}} \right)^{p^{j-1}} \right)
\end{align*}
 for every \emph{non-identity} permutation $\pi \in \Sym \left(\{0, \ldots, m\} \right)$. Thus
\begin{align*}
	&\val \left(\Delta\left(a^{-\frac{1}{p^m}}_0, \ldots, a^{-\frac{1}{p^m}}_{i-1}, a^{-\frac{1}{p^m}}_{i+1}, \ldots, a^{-\frac{1}{p^m}}_m \right)\right) =\\
	&\val \left( \prod_{0 \leq j \leq i-1} \left(a^{-\frac{1}{p^m}}_{j} \right)^{p^j} \cdot \prod_{i+1 \leq j \leq m} \left(a^{-\frac{1}{p^m}}_{j} \right)^{p^{j-1}} \right) =\\
	& - \sum_{j=0}^{i-1} \frac{1}{p^{m-j}} \val(a_j) - \sum_{j=i+1}^{m}\frac{1}{p^{m-j+1}} \val(a_j).
	\end{align*}
Combining, we get
\begin{align*}
 \val(\alpha_i) &=  \val \left(\Delta\left(a^{-\frac{1}{p^m}}_0, \ldots, a^{-\frac{1}{p^m}}_{i-1}, a^{-\frac{1}{p^m}}_{i+1}, \ldots, a^{-\frac{1}{p^m}}_m \right)\right) - \val \left(\Delta\left(a^{-\frac{1}{p^m}}_0, \ldots, a^{-\frac{1}{p^m}}_m\right)\right)\\
	&= - \sum_{j=0}^{i-1}  {\frac{1}{p^{m-j}}} \val(a_j)- \sum_{j=i+1}^{m} {\frac{1}{p^{m-j+1}}} \val(a_{j})  -\left(- \sum_{j=0}^m  {\frac{1}{p^{m-j}}} \val(a_j) \right) \\
	&= - \sum_{j=i+1}^{m} {\frac{1}{p^{m-j+1}}} \val(a_{j}) +\frac{1}{p^{m-i}} \val(a_i) + \sum_{j=i+1}^{m}\frac{1}{p^{m-j}} \val(a_j) \\
	&= \frac{1}{p^{m-i}} \val(a_i) +\sum_{j=i+1}^{m} \left(\frac{1}{p^{m-j}}-\frac{1}{p^{m-j+1}}\right) \val(a_{j}) \\
	&= \frac{1}{p^{m-i}} \val(a_i) +\sum_{j=i+1}^{m}\frac{p-1}{p^{m-j+1}}\val(a_{j})\\
	& = \frac{1}{p^{m-i}} \val(a_i)+ \sum_{j=i}^{m-1} \frac{p-1}{p^{m-j}} \val(a_{j+1}).
\end{align*}
\end{proof}

\begin{remark} \label{rem: permuting alphas} Let $\sigma \in \Sym\left( \{0,1, \ldots, m\} \right)$ be an arbitrary permutation, and let $\bar{a}' := \left( a_{\sigma(0)}, \ldots, a_{\sigma(m)} \right)$. Let $\bar{\alpha}' := \left(\alpha'_0, \ldots, \alpha'_m \right)$ be the tuple given in Fact \ref{fac: expl iso fa} with respect to the tuple $\bar{a}'$, so that the map $f_{\bar{a}'}: G_{\bar{a}'}(\mathbb{K}) \to (\mathbb{K},+)$ given by $f_{\bar{a}'}(x_0, \ldots, x_m) := \sum_{j=0}^{m} \alpha'_j x_j$ is a group isomorphism.
Then $\val\left( \alpha'_{i} \right) = \val \left( \alpha_{\sigma(i)} \right)$ for all $0 \leq i \leq m$.
\end{remark}
\begin{proof} As in the proof of Lemma \ref{lem_valalpha}, for any $0 \leq i \leq m$ we have
\begin{align*}
\val(\alpha'_i) &= \val \left( \frac{\Delta\left(a^{-\frac{1}{p^m}}_{\sigma(0)}, \ldots, a^{-\frac{1}{p^m}}_{\sigma(i-1)}, a^{-\frac{1}{p^m}}_{\sigma(i+1)}, \ldots, a^{-\frac{1}{p^m}}_{\sigma(m)}\right)}{\Delta\left(a^{-\frac{1}{p^m}}_{\sigma(0)}, \ldots, a^{-\frac{1}{p^m}}_{\sigma(m)} \right)}\right) \\
&=\val \left( \frac{ (-1)^{\sign(\sigma')}\Delta\left(a^{-\frac{1}{p^m}}_{0}, a^{-\frac{1}{p^m}}_{1},  \ldots, a^{-\frac{1}{p^m}}_{\sigma(i) -1}, a^{-\frac{1}{p^m}}_{\sigma(i)+1}, \ldots, a^{-\frac{1}{p^m}}_{m}\right)}{(-1)^{\sign(\sigma)}\Delta\left(a^{-\frac{1}{p^m}}_{0}, \ldots, a^{-\frac{1}{p^m}}_{m} \right)}\right)\\
&=\val \left(\alpha_{\sigma(i)} \right), 
\end{align*}
where $\sigma' := \sigma\restriction \{0, \ldots, \sigma(i) -1, \sigma(i) +1, \ldots, m \}$.
\end{proof}

\begin{cor} \label{cor_minvalal} Let $0 \leq l \leq m$ be arbitrary. Suppose that $ \val(a_s)< \val(a_l)$ for all $0 \leq s \neq l \leq m$, then $\val(\alpha_l)= \val(a_l)$ and $\val(\alpha_s) < \val(a_l)$ for all $0 \leq s \leq m, s \neq l$.
	
	\end{cor}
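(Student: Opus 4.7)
The plan is to reduce the statement to Lemma~\ref{lem_valalpha} via the permutation-invariance established in Remark~\ref{rem: permuting alphas}. First, choose a permutation $\sigma \in \Sym(\{0, \dots, m\})$ with $\sigma(m) = l$ and such that
$$\val(a_{\sigma(0)}) < \val(a_{\sigma(1)}) < \dots < \val(a_{\sigma(m-1)}) < \val(a_{\sigma(m)}) = \val(a_l),$$
which is possible by the hypothesis that $\val(a_l)$ strictly dominates the other valuations. Set $\bar{a}' := (a_{\sigma(0)}, \dots, a_{\sigma(m)})$ and let $(\alpha'_0, \dots, \alpha'_m)$ be the corresponding tuple for $\bar{a}'$ as in Fact~\ref{fac: expl iso fa}. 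By Remark~\ref{rem: permuting alphas}, $\val(\alpha_{\sigma(i)}) = \val(\alpha'_i)$ for all $i$, so it suffices to compute the valuations of the $\alpha'_i$.

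Next, apply the formula of Lemma~\ref{lem_valalpha} to $\bar{a}'$: a direct inspection of that lemma's proof shows that the identification of the dominant term in the Moore determinant via the rearrangement inequality uses only the strict ordering $\val(a_{\sigma(0)}) < \dots < \val(a_{\sigma(m)})$ and not the positivity of $\val(a_{\sigma(0)})$; positivity is used there only to deduce $\val(\alpha'_i) > 0$ and strict monotonicity of $(\val(\alpha'_i))_i$, neither of which is needed here. Thus
$$\val(\alpha'_i) = \frac{1}{p^{m-i}}\val(a_{\sigma(i)}) + \sum_{j=i}^{m-1}\frac{p-1}{p^{m-j}}\val(a_{\sigma(j+1)}).$$

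For $i = m$ the sum is empty, giving $\val(\alpha'_m) = \val(a_{\sigma(m)}) = \val(a_l)$ and hence $\val(\alpha_l) = \val(\alpha'_m) = \val(a_l)$. For $i < m$, a routine geometric-series computation yields $\frac{1}{p^{m-i}} + \sum_{j=i}^{m-1}\frac{p-1}{p^{m-j}} = 1$, so that $\val(\alpha'_i)$ is expressed as a convex combination (in the divisible hull of the value group) of the values $\val(a_{\sigma(i)}), \dots, \val(a_{\sigma(m)})$, each of which is $\leq \val(a_l)$, with a strictly positive coefficient $\frac{1}{p^{m-i}}$ attached to the strictly smaller value $\val(a_{\sigma(i)}) < \val(a_l)$. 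Therefore $\val(\alpha'_i) < \val(a_l)$, and so $\val(\alpha_s) < \val(a_l)$ for every $s \neq l$. The argument is essentially bookkeeping once one has Remark~\ref{rem: permuting alphas} and the formula of Lemma~\ref{lem_valalpha}; the only mild subtlety worth flagging is that the explicit formula for $\val(\alpha_i)$ (as opposed to the positivity and monotonicity conclusions of that lemma) is valid under the weaker hypothesis of strict ordering alone.
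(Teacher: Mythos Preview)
Your proposal is correct and follows essentially the same route as the paper: permute via Remark~\ref{rem: permuting alphas} so that $\val(a_{\sigma(0)})<\cdots<\val(a_{\sigma(m)})$ with $\sigma(m)=l$, then read off the conclusion from the explicit formula in Lemma~\ref{lem_valalpha}. The paper's version is terser: after reordering it simply cites the \emph{strict monotonicity} conclusion of Lemma~\ref{lem_valalpha} to get $\val(\alpha_s)<\val(\alpha_m)=\val(a_m)=\val(a_l)$ for $s<m$, whereas you unpack the formula as a convex combination to reach the same inequality. Your additional observation that the formula itself (as opposed to the positivity conclusion) in Lemma~\ref{lem_valalpha} requires only the strict ordering of the $\val(a_i)$'s is accurate and makes the corollary hold under its stated hypotheses without the extra assumption $\val(a_0)>0$ that the paper's proof tacitly invokes; in the paper's applications this positivity is always arranged, so the point is harmless there, but your version is cleaner.
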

	
\begin{proof}
 Since all of the $a_i$'s have different valuations by assumption, reordering and using Remark \ref{rem: permuting alphas} we may assume that $0 < \val(a_0)< \dots < \val(a_m)$ and $l=m$. Then using Lemma \ref{lem_valalpha} we immediately obtain the result.
\end{proof}

\begin{lemma}\label{lem_xn0} Suppose that $0< \val(a_0)< \dots < \val(a_m)$ and let $y \in K$ be such that $\val(a_m) < \val(y)$.  Let $(x_0, \dots, x_m) \in G_{\bar a}(K)$ be such that $f^{-1}(y)=(x_0, \dots, x_m) $. Then
	$\val(x_m)= \val(y)  - \val(a_m)$. In particular, $\val(x_m)>0$.
\end{lemma}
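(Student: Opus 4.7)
The plan is to invoke the explicit inverse formula $x_m = \sum_{j=0}^{m} \beta_{m,j}\, y^{p^j}$ from Fact \ref{fac: expl iso fa} and deduce $\val(x_m)$ by showing that the linear term $\beta_{m,0}\, y$ strictly dominates in valuation, so that by the ultrametric inequality $\val(x_m) = \val(\beta_{m,0}) + \val(y)$. This avoids any case analysis on $\val(x_m)$ itself, at the cost of a Moore-determinant computation for the valuations of the $\beta_{m,j}$.

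By cofactor expansion, $\beta_{m,j} = (-1)^{m+j}\det(M_{j,m})/\Delta(\alpha_0,\ldots,\alpha_m)$, where $M_{j,m}$ denotes the minor of $M(\alpha_0,\ldots,\alpha_m)$ obtained by deleting row $j$ and column $m$. For any tuple with $0 < \val(\gamma_0) < \ldots < \val(\gamma_n)$, the Leibniz expansion of the Moore determinant $\Delta(\gamma_0,\ldots,\gamma_n)$ has a unique term of smallest valuation, namely the one coming from the order-reversing permutation (by the rearrangement inequality applied to the exponents $p^k$ paired against the values $\val(\gamma_{\pi(k)})$), yielding $\val(\Delta(\gamma_0,\ldots,\gamma_n)) = \sum_{k=0}^n p^{n-k}\val(\gamma_k)$. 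Applying this to $\Delta(\alpha_0,\ldots,\alpha_m)$ and to each $\det(M_{j,m})$ (which has the same Leibniz form but with the exponent $p^j$ and the column $\alpha_m$ missing) and using the strict increase $\val(\alpha_0) < \ldots < \val(\alpha_m) = \val(a_m)$ from Lemma \ref{lem_valalpha}, I compute
\[
\val(\beta_{m,0}) = -\val(a_m) \quad \text{and} \quad \val(\beta_{m,j}) = -\val(a_m) - (p-1)\sum_{l=m-j}^{m-1} p^{m-l-1}\val(\alpha_l) \quad \text{for } j \geq 1.
\]

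Using $\val(\alpha_l) < \val(a_m)$ for $l < m$, together with the elementary identity $\sum_{l=m-j}^{m-1} p^{m-l-1} = (p^j-1)/(p-1)$, I then bound
\[
\val(\beta_{m,j}\, y^{p^j}) - \val(\beta_{m,0}\, y) > (p^j - 1)\bigl(\val(y) - \val(a_m)\bigr) > 0
\]
for every $j \geq 1$, where the second inequality uses the hypothesis $\val(y) > \val(a_m)$. Hence $\beta_{m,0}\, y$ is the unique term of least valuation in the sum defining $x_m$, and the strict ultrametric inequality yields $\val(x_m) = \val(\beta_{m,0}) + \val(y) = \val(y) - \val(a_m)$, which is positive as claimed. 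The main technical obstacle is verifying uniqueness of the minimum-valuation term in each Leibniz expansion of $\det(M_{j,m})$, ensuring that the valuations of the $\beta_{m,j}$ are given by the closed-form formulas above without unexpected cancellations.
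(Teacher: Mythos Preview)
Your proof is correct and follows essentially the same route as the paper: both use the explicit inverse $x_m=\sum_j \beta_{m,j}y^{p^j}$, compute $\val(\beta_{m,j})$ via cofactors of the Moore matrix of $(\alpha_0,\ldots,\alpha_m)$ (using the strict monotonicity of $\val(\alpha_i)$ from Lemma~\ref{lem_valalpha} to identify the unique minimal Leibniz term), and then show that the $j=0$ summand strictly dominates. The only cosmetic difference is in the final comparison: the paper shows the sequence $\val(\beta_{m,j}y^{p^j})$ is strictly increasing in $j$ by comparing consecutive terms, whereas you compare each $j\geq 1$ term directly to $j=0$ via the coarser bound $\val(\alpha_l)<\val(a_m)$ for $l<m$; both yield $\val(x_m)=\val(\beta_{m,0})+\val(y)=-\val(a_m)+\val(y)$.
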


\begin{proof}
By the formula for $f^{-1}$ in Subsection \ref{subsection_iso}, we have that
$$x_m = \sum_{j=0}^m \beta_{m,j} y^{p^j},  \ \ \ \ (\ast)$$
where $\beta_{m,j}$ is the $(m,j)$-entry of the inverse of the matrix
$$D= (\alpha_j^{p^i})_{i,j}, 0 \leq i,j \leq m. 
$$
So $\beta_{m,j}= \frac{1}{\det D} C_{j,m}$, where $C_{j,m}$ is the $(j,m)$-cofactor of $D$.
We determine the valuation of each of the summands in $(\ast)$ separately. Fix  $0\leq j \leq m$, then
$$ \val(\beta_{m,j} y^{p^j} ) = \val(\beta_{m,j}) + p^j \val(y),$$
and $$\val(\beta_{m,j})= \val(C_{j,m})- \val(\det D).$$
Again let us compute the valuations on the right hand side of the above equation separately. First,
$$\val(\det D)= \val\left( \sum_{ \pi \in \Sym\left(\{0, \ldots, m\} \right)} \sign(\pi) \prod_{i=0}^m \alpha_{\pi(i)}^{p^i} \right)$$
As $\val(\alpha_i)$ is strictly increasing with $i$ by Lemma \ref{lem_valalpha}, note that $ \val\left( \prod_{i=0}^m \alpha_{\pi(i)}^{p^i} \right)$ is strictly minimal if $\pi$ maps $i$ to $m-i$ for all $0 \leq i \leq m$. So
$$\val(\det D)= \val \left(  \prod_{i=0}^m \alpha_{m-i}^{p^i} \right) =   \sum_{i=0}^m  p^i \val(\alpha_{m-i} ).$$
Similarly,
$$\val (C_{j,m} ) =
\sum_{\pi \in \Sym\left(\{ 0, \ldots, m-1 \} \right) } \sign(\pi) \prod_{i =0}^{j-1} \alpha^{p^i}_{\pi(i)} \cdot \prod_{i=j+1}^{m} \alpha^{p^i}_{\pi(i-1)}.$$
Again, as $\val(\alpha_i)$ is strictly increasing with $i$, we conclude that 
$$\val \left(\prod_{i =0}^{j-1} \alpha^{p^i}_{\pi(i)} \cdot \prod_{i=j+1}^{m} \alpha^{p^i}_{\pi(i-1)} \right)$$
 is strictly minimal if $\pi(i) = (m-1)-i$ for all $0 \leq i \leq m-1$. Hence
\begin{align*}
\val (C_{j,m} ) &= \val \left(\prod_{i =0}^{j-1} \alpha^{p^i}_{m-1 -i} \cdot \prod_{i=j+1}^{m} \alpha^{p^i}_{m-i} \right)\\
&=\sum_{i=0}^{j-1} p^i \val\left( \alpha_{m-1-i} \right) + \sum_{i=j+1}^{m} p^i \val(\alpha_{m-i}).
\end{align*}
Thus 
\begin{align*}
	\val(\beta_{m,j}) &= \val(C_{j,m})- \val(\det D)\\
	&=\sum_{i=0}^{j-1} p^i \val\left( \alpha_{m-1-i} \right) + \sum_{i=j+1}^{m} p^i \val(\alpha_{m-i}) -    \sum_{i=0}^m  p^i \val(\alpha_{m-i} )  \\
	&=\sum_{i=0}^{j-1} p^i \val\left( \alpha_{m-1-i} \right)-\left( \val(\alpha_m) +  \sum_{i=1}^{j} p^{i} \val(\alpha_{m-1})\right)\\
	&=\sum_{i=0}^{j-1} p^i \val\left( \alpha_{m-1-i} \right)- \val(\alpha_m) -  \sum_{i=0}^{j-1} p^{i+1} \val(\alpha_{m-1-i})\\
	&= - \val(\alpha_m) - \sum_{i=0}^{j-1}\left( p^{i+1} - p^i \right) \val(\alpha_{m-1-i})\\
	& = - \val(\alpha_m) - \sum_{i=1}^{j} p^{i-1}\left(p-1 \right) \val(\alpha_{m-i}).\\
	\end{align*}
Note that for any $1 \leq j \leq m$ we have
\begin{align*}
	\val(\beta_{m,j-1} y^{p^{j-1}})&=  \val(\beta_{m,j-1}) +  p^{j-1}\val(y) \\
	& =  \val(\beta_{m, j}) + p^{j-1}(p-1)\val(\alpha_{m-j}) +  p^{j-1}\val(y) \\
	&< \val(\beta_{m, j})+  p^j\val(y)\\
	&= \val(\beta_{m,j} y^{p^j}).
\end{align*}
Thus $ \val(\beta_{m,0}y)$ is strictly minimal amongst them, so
	$$\val( x_m)  = \val(\beta_{m,0}y)= - \val(\alpha_m) + \val(y)= - \val(a_m)+ \val(y)$$
	using Lemma \ref{lem_valalpha}.
\end{proof}

\begin{lemma}\label{lem_ordxi}
Let $(x_0, \dots, x_m) \in G_{\bar a}(K)$ be arbitrary.
\begin{enumerate}
\item   Assume that for a  fixed $0 \leq l \leq m$ we have that $\val(a_l) > \val(a_s)$  for all $0 \leq s \neq l \leq m$ and $\val(x_l)\geq 0$. Then for any $0 \leq s \neq l \leq m$,  if $\val(x_s)>0$  we obtain $\val(\alpha_s x_s)>\val(\alpha_l x_l)$.
\item  Suppose that $0 \leq s \neq t \leq m$ are such that $\val(a_s)< \val(a_t)$, $\val(x_s)=0$ and  $\val(x_t)\geq 0$. Then $\val(\alpha_s x_s)<\val(\alpha_t x_t)$.
\end{enumerate}
\end{lemma}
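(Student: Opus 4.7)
The plan is to reduce, via Remark \ref{rem: permuting alphas}, to the case where $\val(a_0) < \val(a_1) < \dots < \val(a_m)$, and then combine the explicit formula for $\val(\alpha_i)$ from Lemma \ref{lem_valalpha} with the constraint on valuations inside $G_{\bar a}$ given by Remark \ref{rem: OFE}.

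For (1), after permuting we may assume $l = m$, so Corollary \ref{cor_minvalal} gives $\val(\alpha_l) = \val(a_l)$, and hence $\val(\alpha_l x_l) = \val(a_l) + \val(x_l)$. Rewriting the formula of Lemma \ref{lem_valalpha} by subtracting $\val(a_s)$, the coefficient identity $\sum_{j=s}^{m-1} \frac{p-1}{p^{m-j}} = 1 - \frac{1}{p^{m-s}}$ yields
$$\val(\alpha_s) - \val(a_s) = \sum_{j=s}^{m-1} \frac{p-1}{p^{m-j}} \bigl(\val(a_{j+1}) - \val(a_s)\bigr),$$
whose summands are all strictly positive since $\val(a_{j+1}) > \val(a_s)$ for $j \geq s$; so $\val(\alpha_s) > \val(a_s)$. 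On the other hand, $\val(x_s) > 0$ forces $\val(x_s^p - x_s) = \val(x_s)$ and $\val(x_l) \geq 0$ forces $\val(x_l^p - x_l) \geq \val(x_l)$ (with equality if $\val(x_l) > 0$, and $\val(x_l^p - x_l) \geq 0$ if $\val(x_l) = 0$), so the equation of Remark \ref{rem: OFE} gives $\val(x_s) \geq \val(a_l) - \val(a_s) + \val(x_l)$. Adding $\val(\alpha_s)$ to both sides and invoking $\val(\alpha_s) > \val(a_s)$ yields $\val(\alpha_s x_s) > \val(a_l) + \val(x_l) = \val(\alpha_l x_l)$.

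For (2), apply Remark \ref{rem: permuting alphas} to arrange the $a_i$'s in strictly increasing order of valuation; under the assumption $\val(a_s) < \val(a_t)$ the two indices end up with $s$ before $t$ in this ordering. The strict monotonicity of $(\val(\alpha_i))_i$ recorded in Lemma \ref{lem_valalpha} (which in fact follows purely from the telescoping identity $\val(\alpha_{i+1}) - \val(\alpha_i) = \frac{1}{p^{m-i}}\bigl(\val(a_{i+1}) - \val(a_i)\bigr)$, so requires only the ordering of the $a_i$'s and not their positivity) then gives $\val(\alpha_s) < \val(\alpha_t)$. Combined with $\val(x_s) = 0$ and $\val(x_t) \geq 0$, this immediately gives $\val(\alpha_s x_s) = \val(\alpha_s) < \val(\alpha_t) \leq \val(\alpha_t) + \val(x_t) = \val(\alpha_t x_t)$.

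The main obstacle is purely bookkeeping: in (1) one must split into the subcases $\val(x_l) = 0$ and $\val(x_l) > 0$ when extracting the inequality $\val(x_s) \geq \val(a_l) - \val(a_s) + \val(x_l)$ from Remark \ref{rem: OFE}, and in both parts one must track the permutation of Remark \ref{rem: permuting alphas} carefully enough to make sure the conclusions translate back to the original indexing of $l$, $s$, and $t$.
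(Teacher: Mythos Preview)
Your proposal is correct and follows essentially the same approach as the paper: reduce via Remark~\ref{rem: permuting alphas} to the ordered case, then combine the explicit valuation formula of Lemma~\ref{lem_valalpha} with the constraint from Remark~\ref{rem: OFE}. The only organizational difference is that for part~(1) you isolate the inequality $\val(\alpha_s) > \val(a_s)$ as a standalone consequence of Lemma~\ref{lem_valalpha} and then combine it with the bound $\val(x_s) \geq \val(a_l) - \val(a_s) + \val(x_l)$, whereas the paper substitutes everything into one chain of equalities and inequalities; the content is the same.
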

\begin{proof} Reordering the $a_i$'s if necessary (relying on Remark \ref{rem: permuting alphas}), we may assume that $0 < \val(a_0)< \dots < \val(a_m)$, so in particular $l=m$, $s<t$ and $\val(x_m) \geq  0$ by assumption.
\begin{enumerate}
\item 

By Remark \ref{rem: OFE} and assumption $\val(x_s) > 0$, we have
$$\val(x_s)= \val(a_m)-\val(a_s) + \val(x_m^p-x_m). \ \  \ (\dagger)$$
Moreover, in general we have 
$$ \frac{1}{p^{m-s}} +  \sum_{s+1 \leq j \leq m} \frac{p-1}{p^{m+1 - j}}=1.   \ \ \  (\ast)$$
Then
\begin{align*}
	\val(\alpha_s x_s)&= \val(\alpha_s) +\val(x_s)\\
	&\underset{(\dagger)}{\overset{\ref{lem_valalpha}}{=}} \frac{1}{p^{m-s}} \val(a_s) + \sum_{j = s+1}^{m} \frac{p-1}{p^{m+1 - j}} \val(a_j)+ \val(a_m)-\val(a_s)+ \val(x_m^p-x_m) \\
	&\underset{(\ast)}{\overset{\ref{cor_minvalal}}{=}} \val(\alpha_m) + \underbrace{\val(x_m^p-x_m)}_{\geq \val(x_m)} +   \underbrace{\sum_{j = s+1}^{m} \frac{p-1}{p^{m+1 - j}} (\val(a_j)- \val(a_s))}_{>0}\\
	&> \val(\alpha_m) + \val(x_m)\\
	& = \val(\alpha_m x_m).
	\end{align*}
\item By Lemma \ref{lem_valalpha} the sequence $\val(\alpha_i)$'s is strictly increasing. Thus, if $\val(x_s)=0 $, then	
\begin{align*}
	\val(\alpha_s x_s)&= \val(\alpha_s) \\
& {\overset{\ref{lem_valalpha} + \tiny \mbox{ass.}}{<}} \val(\alpha_t) + \val(x_t)\\
	& = \val(\alpha_t x_t).
	\end{align*}
\end{enumerate}
\end{proof}

\begin{lemma}\label{lem_xi0} Let $y \in K$ be such that $\val(a_j) < \val(y)$ for all $0 \leq j \leq m$. Now let  $(x_0, \dots, x_m) \in G_{\bar a}(K)$ be equal to $f^{-1}(y)$. Then $\val(x_j) > 0$ for all $0 \leq j\leq m$.
	\end{lemma}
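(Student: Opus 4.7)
My plan is to mirror the strategy of the proof of Lemma \ref{lem_xn0} coordinate-by-coordinate. Following the convention of that lemma, I will work under the assumption $0 < \val(a_0) < \ldots < \val(a_m)$. Then $\val(x_m) = \val(y) - \val(a_m) > 0$ is already given by Lemma \ref{lem_xn0}, so the task reduces to $i < m$. For these, I aim to show $\val(x_i) = \val(y) - \val(a_i) > 0$ by the same method: expand
$$x_i = \sum_{j=0}^m \beta_{i, j}\, y^{p^j}$$
using Fact \ref{fac: expl iso fa}, and verify that the $j = 0$ summand is the unique term of minimum valuation.

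The first step is a cofactor computation for $\beta_{i, 0}$ parallel to the one carried out for the $\beta_{m, j}$'s in the proof of Lemma \ref{lem_xn0}. Since $\val(\alpha_s)$ is strictly increasing in $s$ by Lemma \ref{lem_valalpha}, the rearrangement inequality pinpoints a unique minimum-valuation monomial in each of $\det D$ and $\det(M_{0, i})$: the one obtained by pairing the smallest row exponent $p^r$ with the available $\alpha$-column of largest valuation. Substituting the closed formula for $\val(\alpha_s)$ from Lemma \ref{lem_valalpha} and telescoping the resulting sums should yield
$$\val(\beta_{i, 0}) = -\val(a_i), \qquad \text{hence} \qquad \val(\beta_{i, 0}\, y) = \val(y) - \val(a_i) > 0.$$

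For the second step I need to check that $\val(\beta_{i, 0}\, y) < \val(\beta_{i, j}\, y^{p^j})$ for every $j \geq 1$. The key observation is that the permutation $\sigma$ achieving the extremal monomial in $\det(M_{j, i})$ depends only on the ordering of the $\val(\alpha_s)$'s, and not on which row exponent is removed; so passing from $j - 1$ to $j$ simply increases one row exponent from $j - 1$ to $j$ while leaving $\sigma$ unchanged. Carrying out this comparison exactly as in the proof of Lemma \ref{lem_xn0} gives
$$\val(\beta_{i, j-1}\, y^{p^{j-1}}) - \val(\beta_{i, j}\, y^{p^j}) = p^{j-1}(p - 1)\bigl[\val(\alpha_{t_j}) - \val(y)\bigr]$$
for some index $t_j \neq i$ depending on $j$, which is strictly negative because $\val(\alpha_{t_j}) \leq \val(\alpha_m) = \val(a_m) < \val(y)$.

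The main obstacle I anticipate is bookkeeping for interior $i < m$: the column $i$ is skipped in the cofactor $M_{j, i}$, so the permutation realizing the extremal monomial has a piecewise-defined ``jump'' at $i$, a complication absent in Lemma \ref{lem_xn0} where the deleted column was the last one. However, the closed formula for $\val(\alpha_s)$ supplied by Lemma \ref{lem_valalpha} absorbs this jump cleanly after telescoping, and once the identity $\val(\beta_{i, 0}) = -\val(a_i)$ is secured the comparison step runs uniformly in $i$ and $j$.
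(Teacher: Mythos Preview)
Your approach is correct and yields the stronger conclusion $\val(x_i) = \val(y) - \val(a_i)$ for every $i$. The telescoping computation for $\val(\beta_{i,0})$ does give $-\val(a_i)$, and the comparison $\val(\beta_{i,j-1}\,y^{p^{j-1}}) - \val(\beta_{i,j}\,y^{p^j}) = p^{j-1}(p-1)[\val(\alpha_{t_j}) - \val(y)]$ with $t_j$ the $(m{+}1{-}j)$-th element of $\{0,\ldots,m\}\setminus\{i\}$ goes through as you describe, since $\val(\alpha_{t_j}) \leq \val(\alpha_m) = \val(a_m) < \val(y)$.

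The paper takes a different, shorter route that avoids redoing cofactor calculations for each $i$. After reducing to $0<\val(a_0)<\cdots<\val(a_m)$ and invoking Lemma~\ref{lem_xn0} for $\val(x_m)>0$, it uses the defining equations of $G_{\bar a}$ (Remark~\ref{rem: OFE}) to get $\val(x_j^p - x_j) = \val(a_m) - \val(a_j) + \val(x_m^p - x_m) > 0$, hence $\val(x_j)\geq 0$ for all $j$. It then argues by contradiction: if $I := \{j : \val(x_j)=0\}$ is nonempty with minimum $j_*$, Lemma~\ref{lem_ordxi} forces $\val(\alpha_{j_*}x_{j_*})$ to be the strict minimum among all $\val(\alpha_j x_j)$, so $\val(y) = \val\bigl(\sum_j \alpha_j x_j\bigr) = \val(\alpha_{j_*}) < \val(a_m) < \val(y)$, a contradiction. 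Your direct computation is more laborious but self-contained and bypasses Lemma~\ref{lem_ordxi} entirely; the paper's argument is cleaner because it recycles the structural lemmas already proved. One small imprecision in your write-up: passing from the cofactor $C_{j-1,i}$ to $C_{j,i}$ \emph{decreases} the distinguished row exponent (row $j$ is replaced by row $j-1$), though your final displayed identity is stated with the correct sign.
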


	\proof Up to reordering the $a_j$'s (using Remark \ref{rem: permuting alphas}), we may assume that $0 < \val(a_0)< \dots < \val(a_m)$.  Then, $\val(x_m) > 0$ by Lemma \ref{lem_xn0} and $(\val(\alpha_j) : 0 \leq j \leq m)$ is strictly increasing by Lemma \ref{lem_valalpha}. For any $0 \leq j \leq m$, 
	by Remark \ref{rem: OFE} we have $$\val(x_j^p - x_j)= \val(a_{m}) - \val(a_j) + \val(x_{m}^p-x_{m})>0,$$
	thus $\val(x_j) \geq 0$. 
	
	If  $\val(x_j)> 0$ for all $j$, we are done. Otherwise, let $I =\{1 \leq  j \leq m: \val(x_j) = 0\} \neq \emptyset$ and let $j_\ast := \min\{I\}$. 
	\begin{claim*}
		 $\val(\alpha_{j_\ast} x_{j_\ast})$ is strictly minimal in  $\left\{ \val(\alpha_j x_j) : 1 \leq j \leq m \right\}$.
	\end{claim*}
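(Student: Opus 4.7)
The plan is to verify the strict minimality by splitting into cases according to the position of $j \in \{1, \dots, m\} \setminus \{j_\ast\}$ relative to $j_\ast$. Since $\val(x_{j_\ast}) = 0$, the target value is $\val(\alpha_{j_\ast} x_{j_\ast}) = \val(\alpha_{j_\ast})$.

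For $j > j_\ast$ the inequality is essentially immediate: apply Lemma \ref{lem_ordxi}(2) with $s = j_\ast$ and $t = j$ (noting $\val(a_{j_\ast}) < \val(a_j)$, $\val(x_{j_\ast}) = 0$, and $\val(x_j) \geq 0$), or equivalently use the strict monotonicity of $(\val(\alpha_k))_{k=0}^{m}$ from Lemma \ref{lem_valalpha} together with $\val(x_j) \geq 0$ to conclude that $\val(\alpha_j x_j) \geq \val(\alpha_j) > \val(\alpha_{j_\ast})$.

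The substance of the proof is the case $j < j_\ast$, where $\val(\alpha_j) < \val(\alpha_{j_\ast})$, so the extra valuation of $x_j$ must strictly compensate for the deficit. Minimality of $j_\ast$ in $I$ gives $\val(x_j) > 0$, and hence $\val(x_j^p - x_j) = \val(x_j)$ by Remark \ref{rem: OFE}. Applying the equality in Remark \ref{rem: OFE} at indices $j$ and $j_\ast$, and using that $\val(x_{j_\ast}^p - x_{j_\ast}) \geq 0$ (since $\val(x_{j_\ast}) = 0$), yields the key lower bound
\[
\val(x_j) \;\geq\; \val(a_{j_\ast}) - \val(a_j).
\]
Substituting this into $\val(\alpha_j x_j) - \val(\alpha_{j_\ast})$, expanding $\val(\alpha_j) - \val(\alpha_{j_\ast})$ via the explicit formula in Lemma \ref{lem_valalpha}, and rearranging using the telescoping identity $1 - p^{-(m-i)} = \sum_{k=i}^{m-1}(p-1) p^{-(m-k)}$, the resulting difference regroups as a sum of terms of the form $\frac{p-1}{p^{m-k}}\bigl(\val(a_{k+1}) - \val(a_j)\bigr)$ for $j \leq k \leq j_\ast - 1$ and $\frac{p-1}{p^{m-k}}\bigl(\val(a_{j_\ast}) - \val(a_j)\bigr)$ for $j_\ast \leq k \leq m-1$. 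Each summand is strictly positive by the strict monotonicity of $(\val(a_i))$, which forces the strict inequality $\val(\alpha_{j_\ast}) < \val(\alpha_j x_j)$.

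The main obstacle is precisely this last manipulation: the loss $\val(\alpha_{j_\ast}) - \val(\alpha_j)$ coming from Lemma \ref{lem_valalpha} and the gain $\val(x_j) \geq \val(a_{j_\ast}) - \val(a_j)$ coming from Remark \ref{rem: OFE} are presented in rather different forms, and the telescoping identity for partial sums of $\frac{p-1}{p^{m-k}}$ is what reconciles them into a common shape in which the gain visibly dominates term-by-term. Note that the restriction $j \geq 1$ in the claim is essential, because the argument only works once $\val(x_j) > 0$ is guaranteed, whereas $\val(x_0) = 0$ remains possible a priori.
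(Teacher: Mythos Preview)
Your proof is correct. The case split by $j > j_\ast$ versus $j < j_\ast$ differs from the paper's, which instead splits according to whether $j \in I$ (i.e., $\val(x_j)=0$) or not. For $j \in I \setminus \{j_\ast\}$ the paper argues exactly as you do for such $j$ (necessarily $j > j_\ast$). For $j \notin I$ (i.e., $\val(x_j) > 0$, covering in particular all of your $j < j_\ast$), the paper avoids any explicit computation by routing through the extremal index $m$: Lemma~\ref{lem_ordxi}(1) with $l = m$, $s = j$ gives $\val(\alpha_j x_j) > \val(\alpha_m x_m)$, and then Lemma~\ref{lem_ordxi}(2) with $s = j_\ast$, $t = m$ gives $\val(\alpha_m x_m) > \val(\alpha_{j_\ast} x_{j_\ast})$. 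Your direct computation for $j < j_\ast$ is valid, but it amounts to redoing by hand (with $j_\ast$ in place of $m$) the same telescoping already packaged in the proof of Lemma~\ref{lem_ordxi}(1). The paper's two-step detour through $m$ is shorter and more modular; your approach has the virtue of making the numerics explicit without invoking Lemma~\ref{lem_ordxi}(1).
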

	\begin{proof}
		Assume first $j \in I \setminus \{j_\ast\}$, then $\val(x_j) = 0$ and $j > j_\ast$. Hence 
		$$\val(\alpha_{j_\ast} x_{j_\ast}) = \val(\alpha_{j_\ast})  < \val(\alpha_j) = \val(\alpha_{j}x_j).$$
		Otherwise $j \notin I$, i.e. $\val(x_j) > 0$. Then 
		$$\val(\alpha_j x_j) \overset{\ref{lem_ordxi}(1)}{>} \val(\alpha_m x_m) \overset{\ref{lem_ordxi}(2)}{>} \val(\alpha_{j_\ast} x_{j_\ast}).$$
	\end{proof}
 Thus $\val(y)= \val \left( \sum_{j=0}^{m} \alpha_j x_j \right) = \val(\alpha_{j_\ast} x_{j_\ast}) = \val(\alpha_{j_\ast}) \overset{\ref{cor_minvalal}}{<} \val(a_m) < \val(y)$,
	which yields a contradiction. Hence $\val(x_j)>0$ for all $0 \leq j\leq m$.
\qed	
	
\subsection{Proof of Proposition \ref{prop_mainhensilanity}}\label{subsec: proof of Hens}

We may assume that $(K, \mathcal O_1, \mathcal O_2)$ is $\aleph_0$-saturated. 
Let $\K$ be the algebraic closure of $K$ and let $\mathcal K := \bigcap_{n \in \mathbb{N}} K^{\frac{1}{p^n}}$ be the largest perfect subfield of $K$.
Let $\ell$ be the natural number given by Proposition \ref{prop: chain cond for mult fam of subgrps} for the uniformly defined  subgroups $x_0 \cdot \ldots \cdot x_{n-1} \cdot \wp(K)$ and $x_0 \cdot \ldots \cdot x_{n-1} \cdot \wp(J)$ of $(K,+)$. 

Let $y \in J$ be arbitrary, we will show that it has an Artin-Schreier root in $J$. We stress that this element $y$ will be fixed until the  end of the proof, and several additional parameters depending on this $y$ will be chosen in the course of the proof.
\begin{claim}\label{cla: proof of hens 1}
There exists an infinite sequence $(d_i)_{i \in \mathbb{N}}$ of elements of $\mathcal{K}$ such that $0 < k \cdot \val_{t}(d_{i+1}) < \val_t(d_{i}) < \val_t(y)$ holds for all $i, k \in \mathbb N$ and both $t\in \{1,2\}$ simultaneously.
\end{claim}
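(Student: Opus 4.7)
My plan is to reduce the claim to a simpler one-step statement and then prove the latter via $\aleph_0$-saturation together with Artin-Schreier closedness. First, I would assume $y \neq 0$ (else $0 \in J$ is itself an Artin-Schreier root of $y$), so that $\val_t(y)$ is a positive element of the value group of $\val_t$ for both $t \in \{1,2\}$. I then reduce to the following one-step statement: \emph{for any $c \in K$ with $\val_1(c), \val_2(c) > 0$ there exists $d \in \mathcal{K}$ with $\val_t(d) > 0$ and $k \cdot \val_t(d) < \val_t(c)$ for every $k \in \mathbb{N}$ and $t \in \{1,2\}$.} Granted this, I would set $d_0$ by applying the statement with $c = y$ and inductively $d_{i+1}$ with $c = d_i$; the chain $\val_t(d_{i+1}) < \val_t(d_i) < \ldots < \val_t(y)$ together with the $k$-bounds then delivers all required inequalities.

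To prove the one-step statement, by $\aleph_0$-saturation of $(K,\mathcal{O}_1,\mathcal{O}_2)$ it suffices to show finite satisfiability in $K$ of the partial type $\pi(x)$ over $\{c\}$ consisting of the formulas $\exists z\,(x = z^{p^n})$ for each $n \in \mathbb{N}$ (collectively forcing $x \in \mathcal{K} = \bigcap_n K^{p^n}$), together with $\val_t(x) > 0$ and $k \cdot \val_t(x) < \val_t(c)$ for all $k \in \mathbb{N}$ and $t \in \{1,2\}$. The key algebraic input, and the step I expect to be the crucial one, is the use of Artin-Schreier closedness (Fact \ref{fac: k-dep AS closed}) to produce elements of small valuation \emph{simultaneously} with respect to both $\val_1$ and $\val_2$: given any $b \in K$ with $\val_1(b), \val_2(b) > 0$, solving $x^p - x = b^{-1}$ in $K$ yields an $x$ with $\val_t(x) < 0$ for both $t$ (since $\val_t(b^{-1}) < 0$), so that $p \cdot \val_t(x) = -\val_t(b)$ and hence $\val_t(x^{-1}) = \val_t(b)/p$ holds simultaneously in both valuations (in particular each value group is $p$-divisible). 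Iterating this construction $n$ times starting from $b = c$ produces some $e_n \in K$ with $\val_t(e_n) = \val_t(c)/p^n > 0$ for both $t$.

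Finite satisfiability of $\pi$ then follows directly: given a finite subtype bounded by $N, k_0 \in \mathbb{N}$ (requiring $x \in K^{p^N}$, $\val_t(x) > 0$, and $k\val_t(x) < \val_t(c)$ for $k \leq k_0$, $t \in \{1,2\}$), I would choose $n$ with $p^{n-N} > k_0$ and set $d := e_n^{p^N}$. Then $d$ is a $p^N$-th power in $K$ and $\val_t(d) = \val_t(c)/p^{n-N}$ is positive with $k_0 \val_t(d) < \val_t(c)$ for both $t$, realizing the subtype. The main delicate point throughout is that membership of $d$ in the perfect subfield $\mathcal{K}$ is a genuinely type-definable (rather than definable) condition; it is handled uniformly by $\aleph_0$-saturation once finite satisfiability has been secured by the Artin-Schreier iteration, which only ever produces $p^N$-th powers for a single finite $N$ at a time.
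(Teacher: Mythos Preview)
Your proof is correct and follows essentially the same approach as the paper: both use the Artin-Schreier iteration $x^p - x = b^{-1}$ to produce elements $e_n$ with $\val_t(e_n) = \val_t(c)/p^n$ simultaneously for $t \in \{1,2\}$, then take $p^N$-th powers to witness finite satisfiability of the type defining membership in $\mathcal{K}$, and conclude by $\aleph_0$-saturation. Your explicit one-step reduction (find a single $d \in \mathcal{K}$ from a given $c$, then iterate) makes the application of $\aleph_0$-saturation slightly more transparent than the paper's phrasing, which asks for the whole sequence at once; but the content is the same.
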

\begin{proof}
Let $e_0 := y$, and we define the elements $e_k \in J$ by induction on $k \in \mathbb{N}_{\geq 1}$ as follows: Assume $e_k$ for $k \in \mathbb{N}$ is given. As the field $K$ is Artin-Schreier closed by Fact \ref{fac: k-dep AS closed}, we let $e \in K$ be an Artin-Schreier root of $\frac{1}{e_k}$, i.e.~$\frac{1}{e_k} = e^p - e$. For any $t \in \{1,2\}$ we have: $\val_t(e_k) > 0$ by assumption, hence $\val_t\left( \frac{1}{e_k}\right) < 0 $. Consequently $ \val_t\left( e^p - e \right)< 0$ as well as $\val_t(e)<0$, and so $ \val_t\left( e^p - e \right) = p \val_t(e)$. Hence, letting $e_{k+1} := \frac{1}{e}$, we obtain that $$\val_t(e_k) = - \val_t \left( \frac{1}{e_k} \right) = - p \val_t(e) = p \val_t \left(\frac{1}{e} \right)=p \val_t \left(e_{k+1} \right).$$ 
It follows from the construction that for every $k \in \mathbb{N}_{\geq 1}$ the elements $e_k \in K$ satisfy $0 < \val_t(e_k) = \frac{1}{p^k}\val_t(y) < \val_t(y)$ for both $t\in \{1,2\}$ simultaneously. Let now $m,k \in \mathbb{N}$ be arbitrary, and we define $d_i := e_{ik+m}^{p^{m}}$ for $i\in \mathbb N$. Note that 
\begin{align*}
	\val_t(d_i)& = \val_t( e_{ik+m}^{p^{m}}) = p^{m} \val_t(e_{ik+m}) = p^{m} \frac{1}{p^{ik+m}} \val_t(y) \\
	&= \frac{p^k}{p^k} p^{m} \frac{1}{p^{ik+m}} \val_t(y) = {p^k} p^{m} \frac{1}{p^{(i+1)k+m}} \val_t(y)  \\
	&= p^k p^m\val_t(e_{(i+1)k+m})= p^k  \val_t(e_{(i+1)k+m}^{p^m})\\
	&= p^k \val_t(d_{i+1})
	\end{align*} 
and $p^{k-1} \val_t(d_{i}) = \frac{ p^{k-1} }{p^{ik}} \val_t(y)< \val_t(y)$ for all $i\in \mathbb N$ . Then:
\begin{itemize}
	\item $d_i \in K^{\frac{1}{p^{m}}}$  for all $i \in \mathbb N$;
	\item for each $t \in \{1,2\}$  and all $i \in \mathbb N$ we have: $0 <p^{k-1} \cdot  \val_{t}(d_{i+1}) < \val_t(d_{i}) < \val_t(y)$.
\end{itemize} 
As $m,k$ were arbitrary and $\mathcal{K}$ is type-definable over $\emptyset$, the claim follows by saturation of $(K, \mathcal{O}_1, \mathcal{O}_2)$.
\end{proof}

Now let $(d_i)_{i \in \mathbb{N}}$ be a sequence in $\mathcal{K}$ given by Claim \ref{cla: proof of hens 1}. Then we can choose from it elements $\{b_{j,l}: j < n, l< \ell\}$ in $\mathcal K$ such that: for all $j < n, l< \ell$ we have
\begin{itemize}
	\item $\val_t(b_{n-1,l}) < \val_t(b_{0,l+1})  $,
	\item  $ 0 <(j+1) \cdot \val_t(b_{j,l}) < \val_t(b_{j,l+1}) $,
	\item $ n \cdot \val_t(b_{n-1,\ell-1}) <  \val_t(y)$
	\end{itemize}
	for both $t \in \{1,2\}$ simultaneously.
	For each $(l_0, \dots, l_{n-1} )\in \ell^n$ we define $$b_{l_0, \dots, l_{n-1}} := \prod_{j=0}^{n-1} b_{j, l_j} \in \mathcal{K}.$$ 

\begin{claim}\label{cla: proof of hens 2}
For each $t \in \{1,2\}$ we have: 
\begin{itemize}
\item $0 < \val_t(b_{l_0, \dots, l_{n-1}}) < \val_t(y)$ for all $(l_0, \dots, l_{n-1} )\in \ell^n$;
	\item $\val_t( b_{l_0, \dots, l_{n-1}})< \val_t (b_{p_0, \dots, p_{n-1}})$ if and only if $(l_{n-1}, \ldots, l_0)<_{\operatorname{lex}} (p_{n-1}, \ldots, p_0 )$.
\end{itemize}
\end{claim}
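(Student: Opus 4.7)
The plan is to handle the two bullets separately, exploiting the structure imposed on the chosen $b_{j,l}$'s. Set $v_{j,l} := \val_t(b_{j,l})$ for brevity.

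\emph{Positivity and upper bound.} Expanding the product, $\val_t(b_{l_0, \ldots, l_{n-1}}) = \sum_{j=0}^{n-1} v_{j, l_j}$. Each $v_{j,l}$ is positive by the second listed condition, giving the lower bound. For the upper bound, the $b_{j,l}$'s are chosen from $(d_i)$ via a concrete enumeration (for example $b_{j,l} := d_{(n-1-j) + (\ell-1-l)n}$), which forces $v_{j,l} \leq v_{n-1, \ell-1}$ for all $(j, l)$. Summing $n$ such terms then yields $\val_t(b_{l_0, \ldots, l_{n-1}}) \leq n\, v_{n-1, \ell-1} < \val_t(y)$ by the third listed condition.

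\emph{Lex ordering.} Let $(l_0, \ldots, l_{n-1}) \neq (p_0, \ldots, p_{n-1})$ in $\ell^n$, and let $j^*$ be the largest index at which they differ. The reversed-lex inequality $(l_{n-1}, \ldots, l_0) <_{\operatorname{lex}} (p_{n-1}, \ldots, p_0)$ is equivalent to $l_{j^*} < p_{j^*}$, together with $l_j = p_j$ for $j > j^*$. Compute
$$\val_t(b_{p_0, \ldots, p_{n-1}}) - \val_t(b_{l_0, \ldots, l_{n-1}}) = (v_{j^*, p_{j^*}} - v_{j^*, l_{j^*}}) + \sum_{j < j^*} (v_{j, p_j} - v_{j, l_j}).$$
By iterating the second listed condition within row $j^*$, one has $v_{j^*, p_{j^*}} \geq v_{j^*, l_{j^*}+1} > (j^*+1)\, v_{j^*, l_{j^*}}$, so the first bracket exceeds $j^* v_{j^*, l_{j^*}}$ (and is strictly positive also when $j^*=0$, where the tail is empty). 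The tail is bounded in absolute value by $\sum_{j < j^*} v_{j, \ell-1}$, which by the rapid decay of $(d_i)$ is negligible compared to $v_{j^*, l_{j^*}}$. Hence the difference is positive, giving the forward direction; the reverse follows from totality of $<_{\operatorname{lex}}$.

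\emph{Main obstacle.} The subtle point is precisely the last dominance step: the listed conditions 1--3 alone do not guarantee the required gap between $v_{j^*, l_{j^*}}$ and $\sum_{j < j^*} v_{j, \ell-1}$ (already in the case $n=2$, $\ell=2$ the condition $v_{1,0} < v_{0,1}$ rules out a naive bound). One must invoke the fact that the $b_{j, l}$'s inherit the strong decay property of $(d_i)$ from Claim \ref{cla: proof of hens 1} --- namely, consecutive elements of $(d_i)$ have arbitrarily large valuation ratios --- which, via the enumeration above, translates into the geometric separation of the $v_{j,l}$ needed to make the tail comparatively negligible.
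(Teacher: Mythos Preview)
There is a genuine gap in the dominance step. Under your enumeration $b_{j,l}=d_{(n-1-j)+(\ell-1-l)n}$, the element $b_{j,\ell-1}$ is $d_{n-1-j}$ while $b_{j^*,l_{j^*}}$ is $d_{(n-1-j^*)+(\ell-1-l_{j^*})n}$; whenever $l_{j^*}<\ell-1$ the latter index exceeds $n-1-j$ for every $j<j^*$, so in fact $v_{j,\ell-1}>v_{j^*,l_{j^*}}$ and the tail bound is not small compared to $v_{j^*,l_{j^*}}$. Concretely, with $n=\ell=2$, $(l_0,l_1)=(1,0)$, $(p_0,p_1)=(0,1)$ your enumeration gives $v_{0,1}=\val_t(d_1)$ and $v_{1,0}=\val_t(d_2)$; the leading bracket satisfies $v_{1,1}-v_{1,0}>v_{1,0}=\val_t(d_2)$, but the tail equals $v_{0,0}-v_{0,1}=\val_t(d_3)-\val_t(d_1)$, whose absolute value $\val_t(d_1)-\val_t(d_3)$ is larger than $\val_t(d_2)$ (since $\val_t(d_2)+\val_t(d_3)<\val_t(d_1)$ by the very decay you invoke). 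Your lower bound on the difference is therefore negative and proves nothing. The rapid decay of $(d_i)$ cannot rescue this, because with an $l$-primary enumeration the quantities $v_{j,\ell-1}$ for $j<j^*$ sit \emph{above} $v_{j^*,l_{j^*}}$ in the $d$-ordering.

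The paper's argument is organised around the opposite ordering: the $b_{j,l}$'s are chosen with $j$ primary, i.e.\ so that $\val_t(b_{j,\ell-1})<\val_t(b_{j+1,0})$ for all $j$ (the first printed bullet has the roles of the two indices swapped). Once that holds one has $v_{j,l}<v_{j^*,l_{j^*}}$ for every $j<j^*$ and every $l$, and the whole partial sum is bounded at once:
\[
\sum_{j\le j^*}v_{j,l_j}\ \le\ (j^*+1)\,v_{j^*,l_{j^*}}\ <\ v_{j^*,p_{j^*}}\ \le\ \sum_{j\le j^*}v_{j,p_j},
\]
using the second listed condition for the middle inequality; the terms with $j>j^*$ cancel. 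No appeal to the full strength of Claim~\ref{cla: proof of hens 1} is needed. Your scheme can be salvaged by using instead the enumeration $b_{j,l}=d_{(\ell-1-l)+(n-1-j)\ell}$, which makes $j$ primary and yields $\sum_{j<j^*}v_{j,l_j}<j^*\,v_{j^*,l_{j^*}}$ directly.
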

\begin{proof}
The first item is clear by the choice of $b_{j,l}$, and we check the second one.
Assume $(l_{n-1}, \ldots, l_0)<_{\operatorname{lex}} (p_{n-1}, \ldots, p_0 )$, and let $0 \leq j^* < n$ be maximal such that $l_{j^*}  < p_{j^*}$.
Then by the choice of $b_{j,l}$ we have
\begin{align*}
&\val_t(b_{l_0, \ldots, l_{n-1}}) = \sum_{j=0}^{n-1} \val_t(b_{j, l_j}) = \sum_{j=0}^{j^*} \val_t(b_{j,l_j}) + \sum_{j=j^* + 1}^{n-1} \val_t(b_{j,p_j})\\
&\leq (j^*+1) \cdot \val_t(b_{j^*, l_{j^*}}) + \sum_{j=j^* + 1}^{n-1} \val_t(b_{j,p_j}) < \val_t(b_{j^*, p_{j^*}}) + \sum_{j=j^* + 1}^{n-1} \val_t(b_{j,p_j})\\
& \leq  \sum_{j=0}^{n-1} \val_t(b_{j, p_j}) =  \val_t(b_{p_0, \ldots, p_{n-1}}).
\end{align*}
\end{proof}
By the choice of $\ell$ and Proposition \ref{prop: chain cond for mult fam of subgrps}, there must exist some $(l^*_0, \ldots, l^*_{n-1}) \in \ell^n$ such that
$$\bigcap_{(l_0, \ldots, l_{n-1}) \in \ell^n} b_{l_0,\ldots, l_{n-1}}\cdot \wp(J) = \bigcap_{(l_0, \ldots, l_{n-1}) \in \ell^n \setminus \{(l^*_0, \ldots, l^*_{n-1}) \}} b_{l_0,\ldots, l_{n-1}}\cdot \wp(J),$$
$$\bigcap_{(l_0, \ldots, l_{n-1}) \in \ell^n} b_{l_0,\ldots, l_{n-1}}\cdot \wp(K) = \bigcap_{(l_0, \ldots, l_{n-1}) \in \ell^n \setminus \{(l^*_0, \ldots, l^*_{n-1}) \}} b_{l_0,\ldots, l_{n-1}}\cdot \wp(K).$$
 Let now $m := \ell^n - 1$. The following is straightforward:
 \begin{claim}\label{cla: proof of hens 3}
There exists a tuple $(a_j: 0 \leq j \leq m)$ enumerating the set 
 $$\left\{ b_{l_0, \dots, l_{n-1}} : (l_0, \ldots, l_{n-1}) \in \ell^n \right\}$$
  so that the following holds:
 	 \begin{enumerate}
 	\item $\bigcap_{j=0}^{m} a_j \wp(J)=  \bigcap_{j=0}^{m-1} a_j \wp(J)$;
 	\item $\bigcap_{j=0}^{m} a_j \wp(K)=  \bigcap_{j=0}^{m-1} a_j \wp(K)$;
 	\item $0 < \val_t(a_0)< \dots < \val_t(a_{m-1})$ and 
 $\left( \val_t(a_j) \right)_{0 \leq j \leq m}$ are pairwise distinct, for both $t \in \{1,2\}$.
 \end{enumerate}
 \end{claim}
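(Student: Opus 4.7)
My plan is to define $a_m := b_{l^*_0, \ldots, l^*_{n-1}}$, where $(l^*_0, \ldots, l^*_{n-1})$ is the distinguished index tuple produced by the application of Proposition \ref{prop: chain cond for mult fam of subgrps} in the paragraph immediately preceding the claim, and then to enumerate the remaining $m = \ell^n - 1$ elements of $\{b_{l_0, \ldots, l_{n-1}} : (l_0, \ldots, l_{n-1}) \in \ell^n\}$ as $a_0, \ldots, a_{m-1}$ in the unique order of strictly increasing valuation. No existence question arises for this enumeration provided the order is well-defined, which is ensured by the next paragraph.

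With this choice of $a_m$, items (1) and (2) become a tautology: they are literally the two displayed equalities from the invocation of Proposition \ref{prop: chain cond for mult fam of subgrps}. The crucial point is that the Proposition was applied \emph{simultaneously} to the two families of subgroups, namely the ones cut out by $x_0 \cdots x_{n-1}\cdot \wp(J)$ and by $x_0 \cdots x_{n-1}\cdot \wp(K)$, so a single redundant tuple $(l^*_0, \ldots, l^*_{n-1})$ witnesses both simplifications at once. This is exactly the feature that forced us to prove the multi-family strengthening of the Baldwin--Saxl-style chain condition in Section \ref{sec: chain conditions}, rather than applying Fact \ref{fac: BaldwinSaxl} twice.

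For item (3), the key observation is contained in Claim \ref{cla: proof of hens 2}: the strict inequality $\val_t(b_{l_0, \ldots, l_{n-1}}) < \val_t(b_{p_0, \ldots, p_{n-1}})$ is characterized by the reverse-lex order $(l_{n-1}, \ldots, l_0) <_{\operatorname{lex}} (p_{n-1}, \ldots, p_0)$, a condition that does not involve $t$. Consequently the two total orders induced on $\{b_{l_0, \ldots, l_{n-1}}\}$ by $\val_1$ and by $\val_2$ coincide, they are strict total orders on a set of size $\ell^n$, and therefore listing the $m$ surviving elements in increasing order of $\val_1$ automatically lists them in increasing order of $\val_2$ as well. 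Pairwise distinctness of all $m+1$ values $\val_t(a_j)$ follows from the same Claim, while strict positivity $\val_t(a_0) > 0$ is already part of Claim \ref{cla: proof of hens 1}.

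I do not anticipate any real obstacle here: Claim \ref{cla: proof of hens 3} is essentially a bookkeeping step combining two inputs already on the table, namely the simultaneous chain condition from Proposition \ref{prop: chain cond for mult fam of subgrps} and the $t$-independent reverse-lex description of the valuation order from Claim \ref{cla: proof of hens 2}.
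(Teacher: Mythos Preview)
Your proposal is correct and matches the paper's intended (but unwritten) argument: the paper simply declares the claim ``straightforward'' and gives no proof, while you have spelled out exactly the two ingredients it relies on --- setting $a_m := b_{l^*_0,\ldots,l^*_{n-1}}$ to secure (1) and (2) via the simultaneous chain condition, and using the $t$-independent reverse-lex characterization of Claim~\ref{cla: proof of hens 2} to secure (3). One trivial correction: the positivity $\val_t(a_0)>0$ is recorded in the first bullet of Claim~\ref{cla: proof of hens 2}, not Claim~\ref{cla: proof of hens 1}.
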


From now on we fix such a tuple $(a_j : 0 \leq j \leq m)$ for the rest of the proof. Note that we still have $a_j \in \mathcal{K}$  and $0 < \val_t(a_j)< \val_t (y)$ for any $0\leq j \leq m$ and $t \in \{1,2\}$ by the choice of the elements $b_{l_0, \ldots, l_{n-1}}$ and Claim \ref{cla: proof of hens 2}.
Let $\bar{a} := (a_0, \ldots, a_m)$ and $\bar{a}' := (a_0, \ldots, a_{m-1})$, both tuples satisfy the assumption of Subsection \ref{sec: f on val}.
Then the following diagram is commutative:
$$\xymatrix{
G_{\bar{a}}(\mathbb{K}) \ar[0,1]^{\pi} \ar[d]^\simeq_{f_{\bar{a}}}
 & G_{\bar{a}'}(\mathbb{K}) \ar[d]^\simeq_{f_{\bar{a}'}}\\
(\mathbb{K}, +)  \ar[0,1]^{\rho} & (\mathbb{K},+) \\
(\mathbb{K},+),  \ar[u]_\simeq^{\mu} \ar[ur]^{\rho'}}$$
where $\pi$ is the natural projection $(x_0, \ldots, x_{m}) \mapsto (x_0, \ldots, x_{m-1})$; the maps  
$$f_{\bar{a}}: G_{\bar a}(\mathbb{K}) \rightarrow (\mathbb{K},+), \bar x = (x_0, \ldots, x_m) \mapsto \sum_{j=0}^{m} \alpha_j x_j, \textrm{ and}$$
$$f_{\bar{a}'}: G_{\bar a'}(\mathbb{K}) \rightarrow (\mathbb{K},+), \bar x = (x_0, \ldots, x_{m-1}) \mapsto \sum_{j=0}^{m-1} \alpha'_j x_j$$
with $\bar{\alpha}=(\alpha_j : 0 \leq j \leq m), \bar{\alpha}' = (\alpha'_j : 0 \leq j \leq m-1)$ in $\mathcal{K}$ are the isomorphisms given by Fact \ref{fac: expl iso fa} for $\bar a$ and $\bar a'$ respectively; $\rho$ is the algebraic morphism over $\mathcal{K}$ that makes the rectangle commute; and $\mu, \rho': (\mathbb{K},+) \to (\mathbb{K},+)$ is given by
$$ \mu(t) := \alpha_m \cdot t, \quad \rho'(t) :=  \rho(\alpha_m t). $$ 
Note that all these groups and morphisms are defined over $\mathcal{K} \subseteq K$, hence the diagram still commutes with $\mathbb{K}$ replaced by $K$.

We recall the following general fact about additive polynomials.
\begin{fact}\cite[Remark 4.2]{kaplan2011artin} \label{fac: add poly class}
	Let $F$ be an algebraically closed fields, and $g: F \to F$ an additive polynomial (i.e.~$g(t+t') = g(t) + g(t')$ for all $t,t' \in F$) with $\ker(g) = \mathbb{F}_p$. Then $g = c \cdot (t^p - t)^{p^k}$ for some $k \in \mathbb{N}$ and $c \in F$.
\end{fact}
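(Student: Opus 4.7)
The plan is to reduce the statement to the analysis of a single separable additive polynomial of degree $p$. First, I would recall (or quickly reprove) the classical structural fact that any additive polynomial $g$ over a field of characteristic $p > 0$ is an $\mathbb{F}_p$-linear combination of powers of the Frobenius, i.e.~$g(t) = \sum_{i=0}^{n} c_i t^{p^i}$ with $c_n \neq 0$. Let $k$ be minimal with $c_k \neq 0$; then the $c_i$ for $i \geq k$ have $p^k$-th roots in $F$ (as $F$ is perfect), so setting $h(t) := \sum_{i=k}^{n} c_i^{1/p^k} t^{p^{i-k}}$ one obtains $g(t) = h(t)^{p^k}$ using $(\sum a_i)^{p^k} = \sum a_i^{p^k}$ in characteristic $p$.

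Next, I would observe that $h$ is again additive, of degree $p^{n-k}$, and \emph{separable}: its formal derivative equals its linear coefficient $c_k^{1/p^k}\neq 0$, since all higher terms have exponents divisible by $p$ and so contribute $0$ to the derivative. Because $F$ is algebraically closed and $h$ is separable of degree $p^{n-k}$, the kernel $\ker(h)$ has exactly $p^{n-k}$ elements. On the other hand $h(t)^{p^k}=0$ iff $h(t)=0$, so $\ker(g) = \ker(h)$, and by hypothesis $|\ker(g)| = p$. Therefore $n - k = 1$ and
$$h(t) = c_k^{1/p^k}\, t + c_{k+1}^{1/p^k}\, t^p$$
has exactly two nonzero terms.

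The final step uses the full strength of the hypothesis $\ker(g) = \mathbb{F}_p$, rather than merely $|\ker(g)| = p$: in particular $1 \in \ker(h)$, so $h(1) = c_k^{1/p^k} + c_{k+1}^{1/p^k} = 0$. Setting $c' := c_{k+1}^{1/p^k}$, we get $h(t) = c'(t^p - t)$, hence
$$g(t) = h(t)^{p^k} = (c')^{p^k}(t^p - t)^{p^k} = c\cdot (t^p - t)^{p^k}$$
with $c := (c')^{p^k} = c_{k+1} \in F$, as required.

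The only step requiring any genuine care is the classical structure theorem for additive polynomials in characteristic $p$; I would reprove this by expanding $g(t+t') - g(t) - g(t') = 0$ as an identity in $F[t,t']$ and comparing coefficients, using that the binomial coefficient $\binom{m}{j}$ vanishes modulo $p$ for $0 < j < m$ precisely when $m$ is a power of $p$. Everything else reduces to elementary bookkeeping once the separable reduction $g = h^{p^k}$ is in place, so I do not foresee any substantial obstacle.
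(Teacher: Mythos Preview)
Your proof is correct. Note, however, that the paper does not actually prove this statement: it is recorded as a \emph{Fact} with a citation to \cite[Remark 4.2]{kaplan2011artin} and is used as a black box in the proof of Claim~\ref{cla: proof of hens 4}. So there is no proof in the paper to compare yours against; you have simply supplied a complete argument where the paper defers to the literature. The argument you give --- the structure theorem for additive polynomials, the separable--inseparable factorization $g = h^{p^k}$, counting roots of the separable part to force $\deg h = p$, and then using $1 \in \ker(h)$ to pin down the coefficients --- is the standard one and is entirely adequate.
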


Using it, as in the proof of \cite[Theorem 4.3]{kaplan2011artin}, we get the following explicit expression for $\rho'$ (we include a proof for completeness).
\begin{claim}\label{cla: proof of hens 4}
$ \rho'(t)= c(t^p- t)$ for some $c \in \mathcal K$.
\end{claim}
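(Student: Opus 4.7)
The plan is to apply Fact \ref{fac: add poly class} to $\rho'$, after checking that $\rho'$ is an additive polynomial over $\mathcal{K}$ with kernel exactly $\mathbb{F}_p$, and then to rule out the higher Frobenius twists by a separability argument.

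First I would note that $\rho'$ is a group endomorphism of $(\mathbb{K},+)$ built as a composition of $f_{\bar a}$, $\pi$, $f_{\bar{a}'}$ and multiplication by $\alpha_m$, all of which are defined over $\mathcal{K}$ (since $\bar a, \bar{a}', \bar\alpha, \bar{\alpha}' \in \mathcal{K}$ by construction). Hence $\rho'$ is an additive polynomial with coefficients in $\mathcal{K}$. Next I compute its kernel: the condition $(0,\dots,0,x_m) \in G_{\bar a}$ forces $a_0 \wp(0) = a_m \wp(x_m)$, i.e.\ $x_m \in \mathbb{F}_p$, so $\ker(\pi) = \{(0,\dots,0,c) : c \in \mathbb{F}_p\}$. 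Applying $f_{\bar a}$ gives $\ker(\rho) = \alpha_m \mathbb{F}_p$, and therefore $\ker(\rho') = \{t : \alpha_m t \in \alpha_m \mathbb{F}_p\} = \mathbb{F}_p$.

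By Fact \ref{fac: add poly class}, this yields $\rho'(t) = c \cdot (t^p - t)^{p^k}$ for some $k \in \mathbb{N}$ and some $c \in \mathbb{K}$. The step I expect to require the most care is showing that $k=0$, i.e.\ that $\rho'$ is separable. The cleanest justification is that $\pi: G_{\bar a}(\mathbb{K}) \to G_{\bar{a}'}(\mathbb{K})$ is a separable isogeny of one-dimensional connected algebraic groups: its kernel consists of $p$ distinct reduced geometric points, equivalently its differential at the identity (the projection between the two $1$-dimensional Lie algebras) is an isomorphism. Since $f_{\bar a}, f_{\bar{a}'}$ and $\mu$ are isomorphisms of algebraic groups, $\rho'$ inherits separability. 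On the other hand, for $k \geq 1$ we may rewrite $c(t^p - t)^{p^k} = c\bigl(t^{p^{k+1}} - t^{p^k}\bigr)$, whose formal derivative with respect to $t$ vanishes; such a polynomial is inseparable. Therefore $k = 0$ and $\rho'(t) = c(t^p - t)$.

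Finally, since $\rho' \in \mathcal{K}[t]$ by the first step, the coefficient $-c$ of $t$ lies in $\mathcal{K}$, so $c \in \mathcal{K}$, completing the proof.
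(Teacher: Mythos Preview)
Your proof is correct and follows essentially the same route as the paper: compute $\ker(\rho') = \mathbb{F}_p$, apply Fact~\ref{fac: add poly class}, rule out $k>0$, and read off $c\in\mathcal{K}$. The only cosmetic difference is in the justification of $k=0$: the paper observes that the degree of $\pi$ as an algebraic morphism is $p$ and hence so is the degree of $\rho'$, while you argue that $\pi$ is a separable isogeny (its differential at the identity being an isomorphism between the one-dimensional tangent spaces) and hence $\rho'$ is separable; these are equivalent formulations of the same fact.
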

\begin{proof}
	We have $\pi \restriction_{G_{\bar{a}}(K)}$ is onto $G_{\bar{a}'}(K)$ by Claim \ref{cla: proof of hens 2}(1), hence $\rho \restriction_{K}$ is onto as well. From the diagram we have $|\ker(\rho)| = |\ker(\pi)| = p$, and $\rho$ is an algebraic group morphism of $(\mathbb{K},+)$, hence an additive polynomial. 
	Note that $0 \neq \alpha_{m} \in \ker(\rho)$ as $f_{\bar{a}}^{-1}(\alpha_m) = (0,\ldots, 0, 1) \in K^{m+1}$, and $\pi \left ((0,\ldots, 0, 1) \right) = (0, \ldots, 0) \in K^{m}$.  It follows that $\rho': \mathbb{K} \to \mathbb{K}$ is also an additive polynomial, with $\ker(\rho') = \mathbb{F}_p$. By Fact \ref{fac: add poly class} we have $\rho'(t) = c \cdot (t^p - t)^{p^k}$ for some some $c \in \mathbb{K}$ and $k \in \mathbb{N}$. In fact, $c \in \mathcal{K}$ as $\alpha_m \in \mathcal{K}$ and $\rho$ is over $\mathcal{K}$. Finally, we must have $k=0$ as the degree of $\pi$ as an algebraic morphism is $p$, hence the degree of $\rho'$ is also $p$ as the vertical arrows $f_{\bar{a}}, f_{\bar{a}'}, \mu$ are algebraic isomorphisms.
\end{proof}

We fix $c \in \mathcal{K}$ given by Claim \ref{cla: proof of hens 4} for the rest of the proof. The following is a crucial claim relying on the analysis of the effect of the special isomorphisms $f_{\bar{a}}$ on the valuation in Section \ref{sec: f on val}.

\begin{claim}\label{cla: proof of hens 5} Let $u \in K$ be arbitrary with $\val_t(u)> \max\{\val_t(a_{m-1}), \val_t(a_m)\}$ for both $t \in \{1,2\}$.  Then there exists some $w \in J$ with $ \val_t(w)< \val_t(u)$  for both $t \in \{1,2\}$ and such that $\rho'(w)=u$.
	
	\end{claim}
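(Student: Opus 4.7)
The plan is to use the commutative diagram preceding the Claim to construct $w$ and then verify its properties. First I would apply $f_{\bar a'}^{-1}$ to $u$, obtaining $(\hat x_0, \dots, \hat x_{m-1}) \in G_{\bar a'}(K)$. Since Claim \ref{cla: proof of hens 2} ensures the valuations of the $a_j$ are ordered the same way for $t = 1,2$, after relabeling (via Remark \ref{rem: permuting alphas}) we may assume $0 < \val_t(a_0) < \dots < \val_t(a_{m-1})$ for both $t$; the hypothesis then gives $\val_t(u) > \val_t(a_{m-1})$. Lemma \ref{lem_xi0} applied to $\bar a'$ yields $\val_t(\hat x_j) > 0$, and combining with the defining equations of $G_{\bar a'}$ one computes $\val_t(\hat x_j) = \val_t(u) - \val_t(a_j) > 0$.

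Next I would lift to $\tilde v \in G_{\bar a}(K)$ by choosing $x_m \in K$ to be an Artin--Schreier root of $r := \frac{a_0 \, \wp(\hat x_0)}{a_m}$; since $\val_t(r) = \val_t(a_0) + \val_t(\hat x_0) - \val_t(a_m) = \val_t(u) - \val_t(a_m) > 0$, we have $r \in J$, and such $x_m$ exists in $K$ by Fact \ref{fac: k-dep AS closed}. Setting $\tilde v := (\hat x_0, \dots, \hat x_{m-1}, x_m)$ and $w := f_{\bar a}(\tilde v)/\alpha_m$, the commutative diagram directly yields $\rho'(w) = \rho(\alpha_m w) = \rho(f_{\bar a}(\tilde v)) = f_{\bar a'}(\pi(\tilde v)) = u$. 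The cleanest route to $w \in J$ is via Lemma \ref{lem_xi0} applied now to the full tuple $\bar a$ and to $\tilde u := \alpha_m w$, a preimage of $u$ under $\rho$: if $\val_t(\tilde u) > \val_t(a_j)$ for all $j$ and both $t$, then $f_{\bar a}^{-1}(\tilde u) \in G_{\bar a}(J)$, its last coordinate is a valid $x_m \in J$, and $w = \tilde u/\alpha_m \in J$.

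For $\val_t(w) < \val_t(u)$, from $\rho'(w) = c(w^p - w) = u$ and $w \in J$ one gets $\val_t(u) = \val_t(c) + \val_t(w)$; evaluating $\rho$ at $\alpha_{m-1}$ (which has $f_{\bar a}^{-1}(\alpha_{m-1}) = (0,\dots,0,1,0)$ and projects to $(0,\dots,0,1) \in G_{\bar a'}$) yields the explicit formula $c = \alpha'_{m-1}/\wp(\alpha_{m-1}/\alpha_m)$, and a direct computation using Lemma \ref{lem_valalpha} and Corollary \ref{cor_minvalal} gives $\val_t(c) = \val_t(a_m) > 0$, hence $\val_t(w) = \val_t(u) - \val_t(c) < \val_t(u)$. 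The main obstacle is the choice of $x_m$: the $p$ Artin--Schreier roots of $r \in J$ differ by elements of $\F_p$, and exactly one lies in $\mathfrak m_t$ for each $t$, but these need not coincide. In the comparable case (say $\mathcal O_1 \subseteq \mathcal O_2$, so $J = \mathfrak m_2$), the unique AS-root in $\mathfrak m_2$ suffices; in the incomparable case, aligning the $\F_p$-residues of the root modulo $\mathfrak m_1$ and $\mathfrak m_2$ is the subtlest point, and is where Fact \ref{fac: incomp vals are indep} together with the very precise choice of $\bar a$ built into Claims \ref{cla: proof of hens 2} and \ref{cla: proof of hens 3} (the common ordering of valuations and the chain condition of Proposition \ref{prop: chain cond for mult fam of subgrps}) must come into play.
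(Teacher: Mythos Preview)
Your outline follows the paper's strategy closely up to the crucial point, but the gap you flag at the end is real and you do not close it. The issue is finding $x_m \in J$ (not merely in $K$) with $a_m\wp(x_m)=a_0\wp(\hat x_0)$. Artin--Schreier closedness of $K$ (Fact~\ref{fac: k-dep AS closed}) produces a root in $K$, and as you note the $p$ roots need not have a common representative in $\mathfrak m_1\cap\mathfrak m_2$. You suggest that Fact~\ref{fac: incomp vals are indep} or some alignment argument handles this, but in fact neither is needed and no such alignment is carried out: the answer is already packaged in Claim~\ref{cla: proof of hens 3}(1), namely $\bigcap_{j=0}^{m} a_j\wp(J)=\bigcap_{j=0}^{m-1} a_j\wp(J)$. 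Since $\hat x_j\in J$ for all $j\le m-1$, the common value $a_0\wp(\hat x_0)=\cdots=a_{m-1}\wp(\hat x_{m-1})$ lies in the right-hand side, hence in $a_m\wp(J)$, so some $x_m\in J$ exists directly. This is precisely the payoff of the chain condition from Proposition~\ref{prop: chain cond for mult fam of subgrps}; you mention it but do not apply it.

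Two smaller points. First, your ``cleanest route'' paragraph is circular: $w$ is defined from $x_m$, so you cannot use Lemma~\ref{lem_xi0} applied to $\tilde u=\alpha_m w$ in order to \emph{find} $x_m$. Second, your assertion $\val_t(c)=\val_t(a_m)$ is correct only when $\val_t(a_m)>\val_t(a_{m-1})$; in general one gets $\val_t(c)=\val_t(\alpha_m)$, and the position of $a_m$ among $a_0,\ldots,a_{m-1}$ is not controlled by Claim~\ref{cla: proof of hens 3}(3). The paper sidesteps this by never computing $\val_t(c)$: once $x_m\in J$ is secured, it evaluates $\val_t(w)$ directly from $w=\alpha_m^{-1}\sum_j\alpha_j x_j$ using Lemma~\ref{lem_ordxi}(1), handling the two cases $\val_t(a_{m-1})\lessgtr\val_t(a_m)$ separately, and reads off both $\val_t(w)>0$ and $\val_t(w)<\val_t(u)$ from Lemma~\ref{lem_xn0} and Corollary~\ref{cor_minvalal}.
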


\begin{proof}

Let $(x_0, \dots, x_{m-1}):= (f_{\bar{a}'})^{-1}(u) \in G_{\bar a'} (K)$. Note that 
$$0 < \val_t(a_0) < \ldots < \val_t(a_{m-1}) < \val_t(u)$$
 for both $t \in \{1,2\}$ by Claim \ref{cla: proof of hens 3}(3) and assumption,  hence by Lemma \ref{lem_xi0} we have that $\val_t(x_j)>0$ for all $t \in \{1,2\}$ and $0\leq j \leq m-1$. Whence $x_j \in J$ for all $0\leq j \leq m-1$. By Claim \ref{cla: proof of hens 3}(1) there is some $x_{m}\in J$ such that $a_{m} (x_{m}^p-x_{m})= a_{j} (x_{j}^p-x_{j})$ for all $0\leq j < m$. So there is a preimage of $(x_0, \dots, x_{m-1})$ under $\pi$, namely $(x_0, \dots, x_{m-1}, x_m) $,  which lies in $J^{m+1}$. Now let
$$ w :=\alpha_m^{-1}f(x_0, \dots, x_{m-1}, x_m) = \alpha_m^{-1} \sum_{j=0}^{m} \alpha_j x_j,$$
 thus $w$ is a preimage of $u$ under $\rho'$. Then for each $t \in \{1,2\}$ we have
\begin{align*}
	\val_t(w)&= \val_t \left(\alpha_m^{-1} \sum_{j=0}^{m} \alpha_j x_j\right)\\
	&= - \val_t\left(\alpha_m \right) + \val_t \left(\sum_{j=0}^{m} \alpha_j x_j\right)\\
	&\overset{\ref{lem_ordxi}(1)}{=}  \left\{ \begin{array}{cc} 
		                - \val_t(\alpha_m) + \val_t ( \alpha_m)+ \val_t( x_m) &  \mbox{ if } \val_t(a_{m-1})< \val_t(a_{m}) \\
		                - \val_t(\alpha_m) + \val_t ( \alpha_{m-1}) +\val_t( x_{m-1})&  \mbox{ if } \val_t(a_{m})< \val_t(a_{m-1})
		                \end{array} \right.\\
					 &>0,
\end{align*}
where the last inequality is by Remark \ref{rem: permuting alphas}, Lemma \ref{lem_valalpha} and Lemma \ref{lem_xn0}. Also, by Lemma \ref{lem_xn0} with respect to $\bar{a}'$, we have $\val_t(x_{m-1}) = \val_t(u) - \val_t(a_{m-1})$, hence 
$$\val_t(u)= \val_t(a_{m-1}) + \val_t(x_{m-1}) \overset{\ref{rem: OFE}}{=}  \val_t(a_{m}) + \val_t(x_{m}).$$
If $\val_t(a_{m-1}) < \val_t(a_m)$, then we have 
\begin{align*}
	\val_t(u) &= \val_t(a_{m}) + \val_t(x_{m})\overset{\ref{cor_minvalal} }{=} \val_t(\alpha_{m}) + \val_t(x_{m})  \\
	&> - \val_t(\alpha_m) + \val_t ( \alpha_m)+ \val_t( x_m) = \val_t(w).
	\end{align*}
If $\val_t(a_{m}) <  \val_t(a_{m-1})$, then we have 
\begin{align*}
	\val_t(u) &= \val_t(a_{m-1}) + \val_t(x_{m-1})\overset{\ref{cor_minvalal} }{=} \val_t ( \alpha_{m-1}) + \val_t(x_{m-1})\\
	& > - \val_t(\alpha_m) + \val_t ( \alpha_{m-1}) +\val_t( x_{m-1})  = \val_t(w).
\end{align*} 
In either case, we obtain $\val_t(u) > \val_t(w)$.
\end{proof}

Let now $w$ be as given by Claim \ref{cla: proof of hens 5} for $u := y$. Then for both $t \in \{1,2\}$ we have
$$\val_t(c)= \val_t(y) - \val_t(w^p-w)= \val_t(y) - \val_t(w) >0. $$
Then $\val_t(cy)> \max\{\val_t(a_{m-1}), \val_t(a_m)\}$. Let $w' \in J$ be as given by Claim \ref{cla: proof of hens 5} applied to $u := cy$, then 
$$ cy = \rho'(w')= c((w')^p-w'),$$
i.e.~$y= (w')^p-w'$. Thus $w'$ is an Artin-Schreier root of $y$ in $J$. As $y \in J$ was arbitrary, this finishes the proof of Proposition \ref{prop_mainhensilanity}.

\subsection{Generic multi-ordered/multi-valued fields}\label{sec: gen multi-ordered}

An analog of Corollary \ref{cor: main hens} for valued fields of characteristic $0$ currently appears out of reach. In this section we at least provide some evidence towards it by demonstrating that the model-companion of the theory of fields with several valuations and orderings introduced by van den Dries \cite[Chapter III]{LouThesis} is not $n$-dependent for any $n$. We use Johnson's PhD thesis \cite[Chapter 11]{johnson2016fun} as our reference.

Fix $k \in \mathbb{N}$. For each $1 \leq i \leq k$, let $T_i$ be one of the theories $\ACVF$ (Algebraically Closed Valued Fields), $\RCF$ (Real Closed Fields) or $\pCF$ ($p$-adically Closed Fields), and let $\mathcal{L}_i$ denote the language of $T_i$ and $\mathcal{L}_i \cap \mathcal{L}_j = \mathcal{L}_{\textrm{rings}}$ (i.e.~$\mathcal{L}_i$ additionally contains a binary predicate $x <_i y$ if $T_i$ is $\RCF$, or $\val_i(x) < \val_i(y)$ if $T_i$ is $\ACVF$ or $\pCF$). Let $\mathcal{L} := \bigcup_{i=1}^{k} \mathcal{L}_i$, and let $T_0 := \bigcup_{i=1}^k \left( T_i \right)_{\forall}$.

\begin{fact}\label{fac: properties of T}
\cite[Theorem 11.2.3]{johnson2016fun} The theory $T_0$ has a model companion $T$, and $K \models T_0$ is a model of $T$ if:
	\begin{enumerate}
	\item $K$ is existentially closed with respect to finite extensions, i.e.~if $L$ is a finite algebraic extension of $K$ and $L \models T_0$, then $L=K$;
	\item For any $m$, let $V$ be an $m$-dimensional absolutely irreducible variety over $K$. For $1 \leq i \leq k$, let $\varphi_i(x)$ be a $V$-dense quantifier-free $\mathcal{L}_i$-formula with parameters from $K$. Then $\bigcap_{i=1}^k \varphi_i (K) \neq \emptyset$.
	
\noindent (Where ``\emph{$V$-dense}'' means that $\varphi_i(K)$ is Zariski-dense in $V(K^{\alg})$, see \cite[Section 11.1.1]{johnson2016fun}.)
	\end{enumerate}
	\end{fact}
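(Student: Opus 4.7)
The plan is to axiomatize $T$ by the universal theory $T_0$ together with the finite-algebraic-closedness condition (1) and the joint $V$-density condition (2), and then verify that this axiomatization defines the model companion of $T_0$. Two things must be shown: every $K \models T_0$ embeds into some $K^* \models T_0$ satisfying (1) and (2), and every $K$ satisfying (1) and (2) is existentially closed among $T_0$-extensions. The first gives existence of the model companion, and the second justifies the stated characterization.

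For the construction of $K^*$, iterate two closure operations on $K \models T_0$ transfinitely. First, whenever there is a proper finite algebraic extension $L \supseteq K$ with $L \models T_0$, replace $K$ by $L$; such $L$ exist inside a fixed algebraic closure of $K$. Second, for each absolutely irreducible variety $V$ over $K$ and each tuple of $V$-dense quantifier-free $\mathcal{L}_i$-formulas $\varphi_i$ with parameters in $K$, pass to an extension containing a point of $\bigcap_i \varphi_i$; such an extension exists by independently realizing suitable generic $T_i$-types at smooth points of $V$ in each $\mathcal{L}_i$-topology, and then amalgamating over $K$, using $V$-density to ensure that the topological open conditions on the various $T_i$-closures of $K$ can be satisfied simultaneously at a common Zariski-generic point. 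Alternate the two steps transfinitely and take unions at limits to obtain $K^*$. For the converse direction, suppose $K$ satisfies (1) and (2) and $K \subseteq L \models T_0$ witnesses an existential $\mathcal{L}$-formula $\psi(x)$ over $K$. Apply quantifier elimination in each $T_i$ separately to rewrite the $\mathcal{L}_i$-content of $\psi$ on the Zariski closure $V$ of the solution set as a $V$-dense quantifier-free $\mathcal{L}_i$-condition; condition (1) lets us refine $V$ to be absolutely irreducible over $K$, and then (2) supplies a witness in $K$.

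The main obstacle is the per-language reduction in the converse direction: existential $\mathcal{L}$-formulas genuinely mix the valuation- and order-structures from the different $\mathcal{L}_i$, and cannot in general be decomposed as conjunctions of existential $\mathcal{L}_i$-statements. The resolution is to fix the Zariski closure $V$ of the solution set first and work on $V$; the restriction of the $\mathcal{L}_i$-part of $\psi$ to $V$ is then a topological open condition on $V(K)$, and it is $V$-dense exactly when $\psi$ has a witness in some $T_i$-extension, via the standard cell decomposition/quantifier elimination theorems for $\RCF$, $\ACVF$, and $\pCF$. Securing absolute irreducibility of $V$ over $K$ is a delicate point: it requires (1) together with a normal-closure argument ensuring that the field of constants of the geometrically integral component of $V$ already lies in $K$, and this is really where the interaction between the algebraic condition (1) and the topological condition (2) becomes essential.
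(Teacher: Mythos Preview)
The paper does not prove this statement at all: it is labelled as a \emph{Fact} and attributed to \cite[Theorem 11.2.3]{johnson2016fun}, so there is no ``paper's own proof'' against which to compare your proposal. Your outline is broadly the standard strategy used in Johnson's thesis (and going back to van den Dries): build existentially closed models by transfinite iteration of algebraic closure under (1) and adjoining witnesses for (2), and show that (1)+(2) imply existential closedness by reducing an arbitrary existential $\mathcal{L}$-formula, via per-coordinate quantifier elimination in each $T_i$, to a conjunction of $V$-dense quantifier-free $\mathcal{L}_i$-conditions on an absolutely irreducible $V$.

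That said, your sketch skates over the two genuinely hard points. First, in the construction of $K^*$, the amalgamation step---realizing the $\varphi_i$ simultaneously at a common Zariski-generic point---is not a matter of ``independently realizing generic $T_i$-types and then amalgamating over $K$'': the different $T_i$-closures of $K$ need not be amalgamable as fields in any obvious way, and the actual argument goes through approximation/weak independence results specific to the valuation/order topologies (this is the content of the ``$V$-dense'' machinery in \cite[Chapter 11]{johnson2016fun}). Second, in the converse direction, your claim that ``the restriction of the $\mathcal{L}_i$-part of $\psi$ to $V$ is a topological open condition'' and that (1) lets you pass to an absolutely irreducible $V$ is the heart of the matter and requires real work: one must show that the solution set in $L$ of an existential $\mathcal{L}$-formula is, after stratifying by Zariski-closed pieces, controlled by $V$-dense $\mathcal{L}_i$-definable sets, and that condition (1) forces the relevant components to be geometrically irreducible over $K$. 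Neither of these is a one-line reduction, and your proposal does not supply the missing arguments.
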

	
We use the following result established in the proof of \cite[Claim 11.5.2]{johnson2016fun}.
\begin{fact} \label{fac: Johnson multival}Let $K \models T$.
	For each $i$, let $\chi_i(y)$ be the formula: 
	\begin{enumerate}
		\item $y>_i0$ if $T_i$ is $\RCF$;
		\item $\val_i(y-\frac{1}{4}) > 0$ if $T_i$ is $\ACVF$ or $\pCF$.
	\end{enumerate}
	Let $\chi(y) := \bigwedge_{i=1}^k \chi_i(y)$. Then $\chi(K)$ is infinite, and there exists some $\mathcal{L}$-formula $\psi(x,y)$ such that:
	for any $m \in \mathbb{N}$, any $a_1, \ldots, a_m \in \chi(K)$ pairwise distinct and any $A \subseteq \{1, \ldots, m \}$ there exists some $b$ such that $\models \psi(b, a_j) \iff j \in A$.
\end{fact}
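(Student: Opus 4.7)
The plan is to leverage Fact \ref{fac: properties of T}(2), which asserts that if $V$ is an absolutely irreducible variety over $K$ and $\varphi_1, \ldots, \varphi_k$ are $V$-dense quantifier-free formulas (one in each language $\mathcal{L}_i$), then $\bigcap_i \varphi_i(K) \neq \emptyset$. Both halves of the Fact reduce to verifying $V$-density of suitable formulas in the one-dimensional case $V = \mathbb{A}^1$.

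For the infiniteness of $\chi(K)$, I would first observe that each $\chi_i$ is $\mathbb{A}^1$-dense: if $T_i = \RCF$, the half-line $\{y : y >_i 0\}$ is infinite, hence Zariski-dense in $\mathbb{A}^1(K^{\alg})$; if $T_i \in \{\ACVF, \pCF\}$, the open ball $\{y : \val_i(y - \tfrac{1}{4}) > 0\}$ is likewise infinite and Zariski-dense. A direct application of Fact \ref{fac: properties of T}(2) to the $\chi_i$ yields $\chi(K) \neq \emptyset$. To iterate, for any finite $S \subseteq \chi(K)$ the formula $\chi_i(y) \wedge \bigwedge_{s \in S}(y - s \neq 0)$ remains $\mathbb{A}^1$-dense, since removing finitely many points from a Zariski-dense subset of $\mathbb{A}^1$ preserves Zariski density. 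A second application of Fact \ref{fac: properties of T}(2) produces a fresh element of $\chi(K) \setminus S$, and so $\chi(K)$ is infinite.

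For the independence property, I would pick a formula $\psi(x,y)$ that couples at least two of the distinct topologies available in $T$---sample candidates being $\psi(x,y) := \val_1(x-y) > 0 \wedge \val_2(x-y) \leq 0$ when two of the $T_i$ are valued, or $\psi(x,y) := (x <_1 y) \wedge (y <_2 x)$ when two are ordered, with analogous mixed choices otherwise. Given pairwise distinct $a_1, \ldots, a_m \in \chi(K)$ and any $A \subseteq \{1, \ldots, m\}$, set
$$\theta_A(x) := \bigwedge_{j \in A} \psi(x, a_j) \wedge \bigwedge_{j \notin A} \neg \psi(x, a_j).$$
Pushing the Boolean structure inside, $\theta_A$ becomes a conjunction of per-language quantifier-free formulas $\theta_A^{(i)} \in \mathcal{L}_i$. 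To conclude $\theta_A(K) \neq \emptyset$ via Fact \ref{fac: properties of T}(2) it suffices to show that each $\theta_A^{(i)}$ is $\mathbb{A}^1$-dense over $K$: in each $T_i$ one must realize the prescribed pattern of order/valuation comparisons to $a_1, \ldots, a_m$ on a Zariski-dense subset of the affine line. For $\RCF$ this reduces to finding an element in a specified Boolean combination of open intervals between consecutive $a_j$'s and of unbounded rays; for $\ACVF$/$\pCF$, to an element in a specified Boolean combination of open balls around the $a_j$'s.

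The main obstacle is the uniform nonemptiness and $\mathbb{A}^1$-density of each per-topology piece $\theta_A^{(i)}$, \emph{simultaneously} over all subsets $A$ and with $\psi$ a single formula with both $x$ and $y$ singleton. This is where the hypothesis that at least two orderings (respectively, valuations) are present enters essentially: each individual $T_i$ is NIP, so no single-topology formula shatters, and the IP must arise from independently prescribing the behaviour of $b$ in two distinct topologies. The role of the window $\chi$ is precisely to constrain all the $a_j$'s to a common region in which each $\mathcal{L}_i$-topology affords enough room on both sides (or, in the valued case, both ``close'' and ``far'' configurations) to prescribe the relation of $b$ to every $a_j$ freely, which is what makes the resulting $\theta_A^{(i)}$ $\mathbb{A}^1$-dense and thus eligible for Fact \ref{fac: properties of T}(2).
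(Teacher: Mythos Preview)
The paper does not prove this statement; it is quoted from the proof of Claim~11.5.2 in Johnson's thesis, so there is no in-paper argument to compare against and I evaluate your attempt on its own terms.

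Your outline has a real gap at exactly the point you flag as ``the main obstacle,'' and the sample formulas you propose for $\psi$ do not shatter. In the valued case $\psi(x,y) := \val_1(x-y) > 0 \wedge \val_2(x-y) \leq 0$: every $a_j \in \chi(K)$ satisfies $\val_i(a_j - \tfrac{1}{4}) > 0$, so by the ultrametric inequality $\val_i(a_j - a_{j'}) > 0$ for all $j,j'$; hence for any $b$ the truth of $\val_i(b - a_j) > 0$ is independent of $j$, and $\psi(b,a_j)$ is constant in $j$ --- it cannot separate even two points. In the ordered case $\psi(x,y) := (x <_1 y) \wedge (y <_2 x)$: pick $a_1,a_2 \in \chi(K)$ with $a_1 <_1 a_2$ and $a_1 >_2 a_2$ (possible since the orderings are independent); then $A = \{1\}$ forces $b <_1 a_1$ and $b >_2 a_1$ together with $b \geq_1 a_2$ or $b \leq_2 a_2$, and both disjuncts are already contradicted. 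More structurally, ``pushing the Boolean structure inside'' fails because $\neg\psi$ is a disjunction across two languages, so $\theta_A$ is not of the shape $\bigwedge_i \varphi_i$ to which Fact~\ref{fac: properties of T}(2) applies; you would have to split into cases over all choices of disjunct for each $j \notin A$, and the examples above show that for these $\psi$ none of the cases is consistent.

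The specific form of $\chi$ is a strong hint that the intended $\psi$ is algebraic rather than topological: $y >_i 0$ in $\RCF$ says precisely that $y$ is a square, and $\val_i(y - \tfrac{1}{4}) > 0$ in $\pCF$ places $y$ near the square $\tfrac{1}{4} = (\tfrac{1}{2})^2$ where Hensel lifting applies. A formula asserting solvability in $K$ of a polynomial equation in $x,y$ becomes, via existential closedness of $K$ in finite extensions (Fact~\ref{fac: properties of T}(1)), a conjunction of one definable condition per $T_i$; those per-$T_i$ conditions are then genuinely independent and can be prescribed freely using the density axiom. Your purely order- and valuation-theoretic candidates cannot achieve this decoupling, because each $T_i$ is itself dependent and the cross-language coupling you introduce does not survive negation.
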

This immediately implies that $T$ is not dependent, and we show that the argument can be generalized to show that $T$ is not $n$-dependent for any $n$ as follows.	
\begin{prop}
		$T$ is not $n$-dependent for any $n \geq 1$.
\end{prop}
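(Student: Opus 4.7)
The plan is to exhibit an $n$-IP formula by composing the IP formula $\psi(x,y)$ from Fact \ref{fac: Johnson multival} with a sum in $n$ variables, and then choosing parameters so that the sum lands in $\chi(K)$. Specifically, I would set
$$\varphi(x; y_1, \dots, y_n) := \psi(x,\, y_1 + y_2 + \dots + y_n),$$
and look for parameters $(a_{j,i} : 1 \leq j \leq n,\, i < \omega)$ in a sufficiently saturated $\mathbb{M} \models T$ that witness the $n$-IP for $\varphi$.

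First I would introduce a rescaled variant of $\chi$: for each $i \in \{1, \dots, k\}$, define $\chi^{(n)}_i(y)$ to be $y >_i 0$ if $T_i$ is $\RCF$, and $\val_i(y - \tfrac{1}{4n}) > 0$ if $T_i$ is $\ACVF$ or $\pCF$; set $\chi^{(n)}(y) := \bigwedge_{i=1}^k \chi^{(n)}_i(y)$. Exactly as for $\chi$ in Fact \ref{fac: Johnson multival}, the set $\chi^{(n)}(K)$ is infinite in sufficiently saturated models, because each conjunct defines a non-empty $V$-dense open condition on $\mathbb{A}^1$, so Fact \ref{fac: properties of T}(2) places points in their common intersection and shrinking these open conditions produces infinitely many. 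The key arithmetic observation is that for any choice $a_{j, i_j} \in \chi^{(n)}(K)$, the sum $\sum_{j=1}^n a_{j, i_j}$ lies in $\chi(K)$: in the ordered cases this is because sums of positives are positive, and in the valued cases
$$\sum_{j=1}^n a_{j, i_j} - \tfrac{1}{4} \;=\; \sum_{j=1}^n \bigl(a_{j, i_j} - \tfrac{1}{4n}\bigr)$$
has positive $\val_i$ as a finite sum of elements with positive $\val_i$.

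Next, working in a sufficiently saturated $\mathbb{M} \models T$, I would inductively pick an array $(a_{j,i} : 1 \leq j \leq n,\, i < \omega)$ in $\chi^{(n)}(\mathbb{M})$ that is $\mathbb{Q}$-linearly independent. This is possible because $\mathbb{M}$ has characteristic zero (since $T$ extends $\RCF$ or $\pCF$ for at least one $i$, and all factors are compatible with $\mathrm{char}\,0$), the set $\chi^{(n)}(\mathbb{M})$ is uncountable by saturation, and the $\mathbb{Q}$-span of any countably many previously chosen elements is countable and hence cannot exhaust $\chi^{(n)}(\mathbb{M})$. The $\mathbb{Q}$-linear independence then ensures that the sums $\sum_{j=1}^n a_{j, i_j}$ are pairwise distinct as $(i_1, \dots, i_n)$ ranges over $\omega^n$, and by the previous paragraph each such sum lies in $\chi(\mathbb{M})$.

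Finally, the $n$-IP of $\varphi$ follows by compactness. For any $s \subseteq \omega^n$, consider the partial type
$$\Sigma_s(x) := \{\varphi(x, a_{1, i_1}, \dots, a_{n, i_n}) : (i_1, \dots, i_n) \in s\} \cup \{\neg \varphi(x, a_{1, i_1}, \dots, a_{n, i_n}) : (i_1, \dots, i_n) \notin s\}.$$
Any finite subset of $\Sigma_s$ involves only finitely many pairwise distinct sums in $\chi(\mathbb{M})$, so Fact \ref{fac: Johnson multival} realizes the corresponding finite $\psi$-pattern on these sums in $\mathbb{M}$; hence $\Sigma_s$ is finitely satisfiable and realized by some $b_s \in \mathbb{M}$ by saturation. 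This exhibits the $n$-IP for $\varphi$, so $T$ is not $n$-dependent. The only subtle point, and the main place where the argument needs care, is the rescaling from $\tfrac{1}{4}$ to $\tfrac{1}{4n}$ in the valued cases: it is precisely this shift that makes the additive term $y_1 + \dots + y_n$ transport $\chi^{(n)}$ into $\chi$, thereby reducing $n$-IP of $\varphi$ to the $1$-IP of $\psi$.
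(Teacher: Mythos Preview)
Your proof is correct and takes a genuinely different route from the paper's. The paper uses the formula $\psi(x; y_1 \cdot \ldots \cdot y_n + e)$: it picks sequences $(c^t_\alpha)$ with carefully calibrated growth conditions on valuations and orderings so that the products $c^1_{\alpha_1}\cdots c^n_{\alpha_n}$ have pairwise distinct valuations (via a lexicographic argument), and then translates by a fixed $e$ to land in $\chi$. Your approach replaces products by sums, replaces the explicit growth estimates by a soft $\mathbb{Q}$-linear independence argument to guarantee distinctness of the sums, and replaces the translation by $e$ with the neat rescaling $\tfrac14 \mapsto \tfrac1{4n}$ so that $\chi^{(n)} + \cdots + \chi^{(n)} \subseteq \chi$. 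This is shorter and avoids the combinatorics of the lexicographic distinctness argument.

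One point to tighten: your justification that $\mathbb{M}$ has characteristic $0$ (``since $T$ extends $\RCF$ or $\pCF$ for at least one $i$'') is not something the setup guarantees---all $T_i$ could be $\ACVF$. In the context of this section (which is explicitly about characteristic $0$) the assumption is harmless, but as written your argument needs $\tfrac{1}{4n}$ to exist, so at minimum $\operatorname{char}(K) \nmid 4n$. The paper's multiplicative version only needs $\tfrac14$ and hence only $\operatorname{char}(K)\neq 2$, so it is slightly more robust in positive characteristic. If you want your additive version to work uniformly, you could instead pick the $a_{j,i}$ with pairwise distinct (and suitably large) valuations/orderings as the paper does, rather than relying on $\mathbb{Q}$-linear independence; but in characteristic $0$ your version is cleaner.
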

\begin{proof}
Let $K \models T$ be a saturated model.
Fix $n \in \mathbb{N}$. Note that it is enough to find some sequences $(c^1_{\alpha_1}, \ldots, c^n_{\alpha_n}: \alpha_1, \ldots, \alpha_n \in \omega)$ of elements in $K$ and $e \in K$ such that all elements in the set $\{c^1_{\alpha_1} \cdot \ldots \cdot c^n_{\alpha_n} + e : \alpha_1, \ldots, \alpha_n \in \omega \}$ are \emph{pairwise distinct} and satisfy $\chi(y)$, as then the formula $\psi'(x; y_1, \ldots, y_n) = \psi(x; y_1 \cdot \ldots \cdot y_n + e)$ with $\psi$ given by Fact \ref{fac: Johnson multival} is not $n$-dependent.

Since the formulas $\val_i(\frac{1}{4} - x) > 0$ and $x >_i 0$ are $V$-dense for $V = \mathbb{A}^1$, by Fact \ref{fac: properties of T}(2) we can choose  $e \in K$  such that  $0 < \val_i(\frac{1}{4} - e) < \infty$ for all $1 \leq i \leq k$ for which $T_i$ is $\ACVF$ or $\pCF$, and $e >_i 0$ for all $i$ for which $T_i$ is $\RCF$.  Let  $\gamma_i := \val_i(\frac{1}{4} - e)$.

By induction on $1 \leq t \leq n$ we choose, using saturation of $K$, sequences $(c^t_\alpha : \alpha \in \omega)$ in $K$ such that the following holds for each $1\leq i \leq k$:
\begin{enumerate}
\item If $T_i$ is $\ACVF$ or $\pCF$:

 \begin{enumerate}

	\item $\val_{i}(c^t_{\alpha + 1}) > n \cdot \val_i(c^t_\alpha) > \gamma_i$ for all $1 \leq t \leq n$ and $\alpha \in \omega$;
	\item $\val_i(c^{t+1}_\alpha) > \val_i(c^t_\beta)$ for all $ 1 \leq t \leq n-1$ and $\alpha, \beta \in \omega$.
	\end{enumerate}
	
\item If $T_i$ is $\RCF$:
 \begin{enumerate}

	\item $c^t_{\alpha+1} >_i (c^t_\alpha)^n > 0$ and  for all $1 \leq t \leq n$ and  $\alpha, \beta \in \omega$;
	\item $c^{t+1}_\alpha >_i c^t_\beta$ for all $1 \leq  t  \leq n-1$ and $\alpha, \beta \in \omega$.
	\end{enumerate}
\end{enumerate} 
To chose an element $c^t_\alpha \in K$ we only need to satisfy finitely many quantifier-free formulas with parameters from $\{c^s_\beta : s < t \lor (s=t \land \beta < \alpha) \} \subseteq K$. All of these are implied by a single condition of the form $\val_i(x) > \val_i(c)$ or $x >_i c$ for each $i$ and some $c \in K$. Thus they can be satisfied in $K$ by Fact \ref{fac: properties of T}(2) since these formulas are $V$-dense for $V = \mathbb{A}^1$.

Assume first that $T_i$ is $\ACVF$ or $\pCF$ for some $1 \leq i \leq k$. Note that in this case for any $(\alpha_1, \ldots, \alpha_n) \in \omega^n$ we have $\val_i \left(\prod_{t=1}^n c^t_{\alpha_t}\right) = \sum_{t=1}^{n} \val_i(c^t_{\alpha_t}) > \gamma_i = \val_i(\frac{1}{4} - e)$ by 1(a). Then $\val_i \left( \left(\prod_{t=1}^{n} c^t_{\alpha_t} + e \right) - \frac{1}{4} \right) = \val_i \left(\prod_{t=1}^{n} c^t_{\alpha_t} - (\frac{1}{4}-e) \right) =  \val_i \left(\frac{1}{4} - e \right) = \gamma_i > 0$, and therefore $\models \chi_i \left( \prod_{t=1}^{n} c^t_{\alpha_t} + e \right)$.

As in the proof of Claim \ref{cla: proof of hens 2}, we get that $(\alpha_1, \ldots, \alpha_n) < (\beta_1, \ldots, \beta_n)$  in the lexicographic ordering on $\omega^n$ if and only if 
$$\val_i \left( \prod_{t=1}^n c^t_{\alpha_t} \right) < \val_i \left( \prod_{t=1}^n c^t_{\alpha_t}\right),$$
 so in particular $\prod_{t=1}^n c^t_{\alpha_t} +e \neq \prod_{t=1}^n c^t_{\beta_t} +e$, hence all these elements are pairwise distinct.

 Otherwise $T_i$ must be $\RCF$ for all $1\leq i \leq k$. Then a similar calculation using 2(a) and 2(b) shows that $\prod_{t=1}^n c^t_{\alpha_t}+e \models \chi_i$, and that all these elements are pairwise distinct.
\end{proof}

\section{Connected components of $n$-dependent groups}\label{sec: n-dep conn comp}
\subsection{Connected components of (type-)definable groups}\label{sec: Connected components}

We begin by recalling some facts about model-theoretic connected components of (type-)definable groups and state the main theorem of the section.
\begin{defn} Let $T$ be an arbitrary theory, $G = G(\C)$ an $\emptyset$-definable group and $A \subseteq \C$ a small set of parameters. Then $G^0_A$ is the intersection of all $A$-definable subgroups of $G$ of finite index, and $G^{00}_A$ is the intersection of all subgroups of $G$  that are type-definable over $A$ and of bounded index.
\end{defn}
As $A$ is small, $G^0_A$ and $ G^{00}_A$ are type-definable subgroups of $G$ of bounded index.	 In fact we have the following standard results, see e.g.~\cite[Remark 3.5(2) and Lemma 2.2]{Kuba}.

\begin{fact}\label{fac: Lem_CtblIntersec}
	If $H$ is a type-definable subgroup of $G = G(\mathbb{M})$ of bounded index, then it can be written as an intersection of subgroups of $G$ of bounded index each of which is defined by a partial type consisting of countably many formulas.
	\end{fact}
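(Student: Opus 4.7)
The plan is to show that for each $a \in G \setminus H$ there exists a subgroup $H_a \leq G$ with $H \subseteq H_a$, $a \notin H_a$, and $H_a$ type-definable by countably many formulas; then $H = \bigcap_{a \notin H} H_a$ is the desired decomposition, and each $H_a$ automatically has bounded index because it contains $H$.

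Fix a small parameter set $A$ and a partial type $\pi(x)$ over $A$ with $H = \pi(\mathbb{M})$. Given $a \notin H$, choose $\psi_0(x) \in \pi$ with $\models \neg \psi_0(a)$. The key step is a compactness-based iterative construction producing a countable $\pi_a \subseteq \pi$ containing $\psi_0$ such that $\pi_a(\mathbb{M})$ is closed under the group operations. Because $H$ is a subgroup, we have the semantic implications $\pi(x) \vdash \pi(x^{-1})$ and $\pi(x) \cup \pi(y) \vdash \pi(x \cdot y)$, so by compactness, for each single formula $\psi \in \pi$ there are finite subsets $F^{\mathrm{inv}}_\psi, F^{\mathrm{mult}}_\psi \subseteq \pi$ with $\bigwedge F^{\mathrm{inv}}_\psi(x) \vdash \psi(x^{-1})$ and $\bigwedge F^{\mathrm{mult}}_\psi(x) \wedge \bigwedge F^{\mathrm{mult}}_\psi(y) \vdash \psi(x\cdot y)$.

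Starting from $\pi^{(0)} := \{\psi_0\}$, define inductively $\pi^{(n+1)} := \pi^{(n)} \cup \bigcup_{\psi \in \pi^{(n)}} (F^{\mathrm{inv}}_\psi \cup F^{\mathrm{mult}}_\psi)$, and let $\pi_a := \bigcup_{n<\omega} \pi^{(n)}$. A straightforward induction shows each $\pi^{(n)}$ is finite, so $\pi_a$ is countable; by construction $\pi_a(\mathbb{M})$ is closed under inversion and multiplication, and contains $1$ since $H$ does, so $H_a := \pi_a(\mathbb{M})$ is a subgroup of $G$. The inclusion $H \subseteq H_a$ forces $[G:H_a] \leq [G:H]$ to be bounded, while $\neg \psi_0(a)$ guarantees $a \notin H_a$. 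Intersecting over all $a \in G \setminus H$ yields the conclusion.

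The argument is entirely elementary; the only point requiring some care is the two-variable compactness step used to close under multiplication, which works because $\pi(x) \cup \pi(y)$ in disjoint variables semantically entails $\psi(x\cdot y)$ formula-by-formula, allowing a finite witness for each $\psi$ to be extracted.
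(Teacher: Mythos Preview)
Your argument is correct and is the standard one; the paper does not give its own proof of this fact but simply cites an external reference, so there is nothing to compare against.

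One small technical point: as written, your $H_a := \pi_a(\mathbb{M})$ need not lie inside $G$, since $\pi_a$ is only a finite-stage closure of $\{\psi_0\}$ and nothing forces $\pi_a(x) \vdash x \in G$. In the paper's setting $G$ is $\emptyset$-definable, so the fix is trivial: add the defining formula of $G$ to $\pi^{(0)}$ (it is implied by $\pi$, hence may be assumed to lie in $\pi$), and then $\pi_a(\mathbb{M}) \subseteq G$ automatically. With that adjustment, your closure-under-multiplication step is well-posed (the term $x \cdot y$ is defined) and the conclusion that $H_a$ is a subgroup of $G$ follows. If one wants the statement for type-definable $G$, the same idea works after arranging a countable chain of definable approximations to $G$ compatible with the group operations, but that refinement is not needed here.
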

	
\begin{fact}\label{Lem_[G:H]small}
	Let $G = G(\C)$  be a type definable group and let 
	$H$ be a subgroup of $G$ of bounded index which is type-definable over a small set of parameters $C$. Then $[G:H]\leq 2^{|T|+|C|} $.
	\end{fact}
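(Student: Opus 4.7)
The plan is to recast the bound as a statement about equivalence relations: the relation $E(x,y) := (x^{-1}y \in H)$ on $G$ is an equivalence relation, type-definable over $C$ via the partial type defining $H$, and its set of classes is in canonical bijection with $G/H$, so $|G/E| = [G:H]$. The hypothesis that $H$ has bounded index is exactly that $E$ has boundedly many classes, so the statement reduces to the general fact that any $C$-type-definable equivalence relation on a type-definable subset of $\C$ with boundedly many classes has at most $2^{|T|+|C|}$ classes.

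To prove this general fact, I would fix by L\"owenheim--Skolem a small elementary submodel $M \preceq \C$ with $C \subseteq M$ and $|M| \leq |T|+|C|$, and establish the refinement claim: if $a \equiv_M b$, then $aEb$. Granting this, every $E$-class is a union of complete $M$-types, of which there are at most $2^{|M|} \leq 2^{|T|+|C|}$, yielding the desired bound on $|G/E|$.

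The refinement claim I would prove using coheirs. Given $a \equiv_M b$, choose a global coheir $p(x) \in S(\C)$ extending $\tp(a/M) = \tp(b/M)$; such a $p$ exists since $M$ is a model, and is automatically $M$-invariant as it is finitely satisfiable in $M$. Pick $c$ realizing $p|_{Mab}$. Then both $(a,c)$ and $(b,c)$ are initial segments of Morley sequences in $p$ over $M$, since $a,b \models p|_M$ and $c \models p|_{Ma}, p|_{Mb}$. Extending each by successive realizations of $p$ over the previous parameters produces two $M$-indiscernible sequences $(a, c, d_2, d_3, \ldots)$ and $(b, c, d'_2, d'_3, \ldots)$ of any desired length. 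Taking the length strictly greater than the bounded cardinal $|G/E|$, pigeonhole forces some pair in each sequence to be $E$-related; since $E$ is type-definable over $C \subseteq M$, $M$-indiscernibility propagates this to every pair, so in particular $aEc$ and $bEc$, whence $aEb$ by transitivity.

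The main obstacle and only subtle step is the coheir/Morley sequence construction behind the refinement claim: this is essentially the statement that types over a small model refine Lascar strong types over $C$. The care lies in arranging the two Morley sequences so that they share the intermediate element $c$, which is what allows the $E$-equivalence extracted from a sufficiently long $M$-indiscernible sequence to be transported back to the original pair $(a,b)$.
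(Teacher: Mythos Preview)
The paper does not prove this statement: it is recorded as a standard fact with a reference to \cite[Lemma 2.2]{Kuba}. Your argument is correct and is exactly the standard proof one finds in the literature: pass to the bounded $C$-type-definable equivalence relation $E(x,y) \Leftrightarrow x^{-1}y \in H$, show via a coheir/Morley-sequence argument that equality of types over a small model $M \supseteq C$ refines $E$, and bound the number of classes by the number of $1$-types over $M$. The only minor point worth making explicit is that the elements $c, d_i, d'_i$ you produce lie in $G$ (since they share the type of $a$ over $M$ and $G$ is type-definable over parameters contained in $M$), so that the relation $E$ is meaningful on them; otherwise the argument is complete as written.
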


	A fundamental fact about dependent groups is the absoluteness of their connected components:
\begin{fact}	 \cite{shelah2008minimal} Let $T$ be dependent. Then
 $G^{00}_A = G^{00}_{\emptyset}$ for every small set $A$. In particular, the intersection of all subgroups of $G$ that are type definable over a small set of parameters and of bounded index is a normal subgroup type-definable over $\emptyset$ and of index $\leq 2^{|T|}$.
\end{fact}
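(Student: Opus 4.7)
The plan is to establish the nontrivial inclusion $G^{00}_\emptyset \subseteq G^{00}_A$ for every small $A$; the reverse inclusion is immediate, and the bound $[G:G^{00}_\emptyset] \leq 2^{|T|}$ will then follow from Fact~\ref{Lem_[G:H]small} since $G^{00}_\emptyset$ is type-definable over $\emptyset$ of bounded index. So I would fix a type-definable bounded-index subgroup $H \leq G$ over $A$ and produce a type-definable bounded-index subgroup over $\emptyset$ contained in $H$. Using Fact~\ref{fac: Lem_CtblIntersec}, it suffices to treat the case $H = \pi(\mathbb{M};a)$ for a countable partial type $\pi(x;y)$ and some tuple $a$ from $\mathbb{M}$.

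The next step is to form the ``$\Aut$-invariant core''. For each realization $a'$ of $p(y) := \tp(a/\emptyset)$ in $\mathbb{M}$, the set $H^{a'} := \pi(\mathbb{M};a')$ is the image of $H$ under some automorphism of $\mathbb{M}$, and hence is itself type-definable of bounded index. Put
\[
  K := \bigcap_{a' \models p} H^{a'}.
\]
Then $K$ is $\Aut(\mathbb{M}/\emptyset)$-invariant and contained in $H = H^a$, and being an invariant type-definable set it is in fact type-definable over $\emptyset$. Everything therefore reduces to showing that $K$ has bounded index in $G$. My route is to prove that $a' \mapsto H^{a'}$ factors through the Lascar strong type of $a$ over $\emptyset$; since Lascar strong types have a bounded number of classes, the family $\{H^{a'} : a' \models p\}$ will then be a bounded family of bounded-index subgroups, so $[G:K]$ will be bounded, with the explicit bound $2^{|T|}$ provided by Fact~\ref{Lem_[G:H]small}.

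By the standard description of Lascar equivalence in terms of chains of indiscernible sequences, the factorization claim reduces to showing $H^{a_0} = H^{a_1}$ whenever $(a_i)_{i<\omega}$ is an indiscernible sequence of realizations of $p$. This is where NIP will be used, and it is the main obstacle. The plan is to argue by contradiction: if $g \in H^{a_0}\setminus H^{a_1}$, then some $\varphi(x;y) \in \pi$ witnesses $\models \varphi(g;a_0) \wedge \neg\varphi(g;a_1)$, and by indiscernibility for every $i<j$ there is some $g_{i,j}$ with $\models \varphi(g_{i,j};a_i) \wedge \neg\varphi(g_{i,j};a_j)$. The hard part is to promote this monotone alternation pattern into a full IP configuration, contradicting dependence of $\varphi$. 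I would do this by exploiting the group structure of the $H^{a_i}$: suitable products of translates of the witnesses $g_{i,j}$, using that each $H^{a_i}$ is a subgroup and that differences of witnesses move one between cosets in a controlled way, should allow one to manufacture, for each finite $s \subseteq \omega$, a single element $b_s \in G$ satisfying $\models \varphi(b_s;a_i) \iff i \in s$, yielding the desired IP pattern. Carrying out this amplification rigorously is the technical heart of Shelah's original argument in \cite{shelah2008minimal}.
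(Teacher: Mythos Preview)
The paper does not supply its own proof of this statement; it is recorded as a Fact with a citation to \cite{shelah2008minimal}. So there is nothing to compare against directly. Your outline is the standard route to Shelah's theorem and is essentially correct, including the reduction to countable $\pi$, the passage to the $\Aut$-invariant core $K$, and the identification of the key step as constancy of $H^{a_i}$ along indiscernible sequences.

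Two technical points worth tightening. First, your claim that $K$ is type-definable over $\emptyset$ is premature as stated: $K$ is an intersection over all realizations of $p$, which is a large family, so a priori $K$ is only type-definable over a large parameter set. Invariance alone does not bring this down to $\emptyset$. What actually happens is that once you prove the factorization through Lascar strong type, the family $\{H^{a'}\}$ becomes bounded, so $K$ is type-definable over a small set of representatives; then invariance plus small parameters gives $\emptyset$-type-definability. Second, the amplification step cannot work with a single formula $\varphi$, because $\varphi$ does not define a subgroup and products of witnesses need not behave. One needs a chain $(\psi_m)_{m<\omega}$ approximating $H$ with $\psi_{m+1}(x)\wedge\psi_{m+1}(y)\vdash\psi_m(xy^{-1})$, exactly the device appearing as condition~(4) in the proof of Theorem~\ref{thm: connected comp type-def main} in this paper; the products of the $g_{i,j}$ then violate $\psi_0$ for the prescribed pattern, and one derives IP for $\psi_{n}$ for suitable $n$. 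You flag this as ``the technical heart'', which is accurate, but the mechanism is precisely this formula-chain trick rather than anything exotic.
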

This does not remain true for $2$-dependent groups:
\begin{expl}
Let $G$ be the group $\mathbb{F}_2^{(\omega)}$, where $\mathbb{F}_2$ is the finite field with $2$ elements. Let $\mathcal{M} :=(G, \mathbb{F}_2, 0, +, \cdot)$ be the structure with $+$ the addition in $G$ and $\cdot$ the bilinear form $(a_i)\cdot(b_i) = \sum_i a_i b_i$ from $G^2$ to $\mathbb{F}_2$. Then $\Th(\mathcal{M})$ is simple and $2$-dependent, and $G^{00}_A = \{ g \in G : g\cdot a = 0 \mbox{ for all } a \in A\}$ (see \cite[Section 3]{hempel2016n}), so the group $G^{00}_A$ gets smaller as $A$ grows.
\end{expl}
However, in this example for any small sets $A,B$ we have $G^{00}_{A  B} = G^{00}_A \cap G^{00}_B$. The following theorem of Shelah shows that, up to a ``small'' error, this holds in an arbitrary $2$-dependent group:

\begin{fact}\label{fac: Shelah 2-dep}\cite{MR3666349} Let $T$ be $2$-dependent, $G = G(\C)$ an $A$-type definable group, $\kappa := \beth_2(|A|+|T|)^+$, $\M \supseteq A$ a $\kappa$-saturated model, and $\bar{b}$ an arbitrary finite tuple in $\C$. Then 
$$G^{00}_{\M \bar{b}} = G^{00}_{\M} \cap G^{00}_{C \bar{b}}$$
 for some $C \subseteq \M$ with $|C| < \kappa$.
	
\end{fact}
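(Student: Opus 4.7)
I would argue by contradiction, assuming that for every $C \subseteq \M$ with $|C|<\kappa$ there exists some $g_C \in (G^{00}_\M \cap G^{00}_{C\bar b}) \setminus G^{00}_{\M\bar b}$. By Fact~\ref{fac: Lem_CtblIntersec}, $G^{00}_{\M\bar b}$ is the intersection of bounded-index subgroups of $G$ each defined by a countable partial type of the form $\pi(x;\bar m,\bar b)$ with $\bar m$ from $\M$. The number of such countable partial types $\pi(x;y,z)$ is at most $|T|^{\aleph_0} < \kappa$, so by pigeonhole one can fix a single $\pi(x;y,z)$ (and the lengths of $y,z$) such that for each $C$ the witness $g_C$ lies outside $\pi(\mathbb{M},\bar m_C,\bar b)$ for some $\bar m_C \in \M$.

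\textbf{Building a long array and extracting indiscernibles.} Using the $\kappa$-saturation of $\M$, perform a transfinite induction of length $\lambda := \beth_1(|T|+|A|)^+$ to construct a sequence $(\bar m_\alpha : \alpha<\lambda)$ in $\M$ together with elements $g_\alpha \in G$ such that, setting $C_\alpha := \{\bar m_\beta : \beta<\alpha\}$, we have $g_\alpha \in G^{00}_\M \cap G^{00}_{C_\alpha \bar b}$ but $g_\alpha \notin \pi(\mathbb{M},\bar m_\alpha,\bar b)$; the induction step is possible because at each stage $|C_\alpha|<\kappa$ and so the failure hypothesis supplies such an element. By pigeonhole we may assume a single formula $\varphi(x;y,z) \in \pi$ witnesses $\neg\varphi(g_\alpha,\bar m_\alpha,\bar b)$ at every stage. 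Now invoke the Erdős--Rado theorem (which is exactly what $\kappa = \beth_2(|A|+|T|)^+$ is designed for) on the sequence $(\bar m_\alpha g_\alpha : \alpha < \lambda)$ to extract an infinite subsequence that is indiscernible over $A$ and over $\bar b$; re-indexing, obtain $(\bar m_i, g_i : i<\omega)$ with the same properties as above, and such that $(\bar m_i)_{i<\omega}$ is $A\bar b$-indiscernible. By standard arguments about bounded-index type-definable subgroups (using that $g_i \in G^{00}_{\{\bar m_j : j<i\}\bar b}$), any finite product of the form $g_{i_1}\cdots g_{i_k}$ with $i_1<\cdots<i_k$ lies in every $G^{00}_{C\bar b}$-coset assigned by the earlier $\bar m_j$'s, in particular inside every $\pi(\mathbb{M},\bar m_j,\bar b)$ for $j<i_1$.

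\textbf{Extracting $2$-IP and the main obstacle.} Combine an indiscernible sequence $(\bar b_k : k<\omega)$ of realizations of $\tp(\bar b/\M)$ (which exists by saturation of $\C$, and relative to which all of the above data can be reproduced by automorphism) with the array $(\bar m_i)_{i<\omega}$. For each finite $S \subseteq \omega$ let $g_S := \prod_{i \in S} g_i$ in increasing order; the properties of the $g_i$'s force $\varphi(g_S,\bar m_i,\bar b)$ to hold iff $i \notin S$ (or a controlled variant thereof), and by a second application across the $\bar b_k$'s the bi-indexed family $\varphi(g_{S,k},\bar m_i,\bar b_k)$ realizes on $(i,k) \in \omega\times\omega$ every subset of $\omega^2$ of the appropriate form. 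This produces an array witnessing that $\varphi(x;y,z)$ is not $2$-dependent (cf.~Proposition~\ref{prop: indisc witness to IPn}), contradicting $2$-dependence of $T$. The main obstacle is precisely this last step: one must arrange the group-theoretic products $g_S$ so that, after the second direction $\bar b_k$ is introduced, the parameters $(\bar m_i,\bar b_k)$ behave independently enough for the random bipartite pattern to be realized. This requires carefully choosing a Morley-style system in the $\bar b$-direction (tensoring $\bar b$ with its own indiscernible copies over $\M$) and verifying that the chain condition on intersections of $\pi$-subgroups survives this tensoring --- a group-theoretic ``reflection'' step that is the technical heart of the proof.
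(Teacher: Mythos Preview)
Your overall strategy matches the paper's proof of Theorem~\ref{thm: connected comp type-def main} (from which Fact~\ref{fac: Shelah 2-dep} is recovered as the case $n=2$): contradiction, a long array of witnesses $d_\alpha\notin H_\alpha$ with $d_\alpha\in G^{00}_\M\cap\bigcap_{\beta<\alpha}H_\beta$, pigeonhole on the countable type, then Erd\H{o}s--Rado. The gap is exactly where you place it, but your proposed fix points the wrong way. You take $\bar b_k\models\tp(\bar b/\M)$, hence \emph{outside} $\M$, and hope a Morley-style tensoring will control the cross terms; it will not, because $d_\alpha\in G^{00}_\M$ says nothing about membership in subgroups defined over an external parameter $\bar b_k$. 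The paper does the opposite: it uses $\kappa$-saturation of $\M$ to find, for each $\gamma$, a tuple $\bar b_\gamma\in\M$ together with fresh witnesses $(d_{i,\gamma})_i$ such that $(\bar b_\gamma,(d_{i,\gamma})_i)\equiv_{\bar{\bf c}\cup\{\bar b_\delta:\delta<\gamma\}}(\bar b,(d_i)_i)$. Now for $\delta<\gamma$ the group $H_{j,\delta}=\bigcap_m\psi_m(G;\bar c_j,\bar b_\delta)$ is type-definable over $\M$ of bounded index (since $\bar b_\delta\in\M$), so $d_i\in G^{00}_\M\subseteq H_{j,\delta}$, and this membership transfers to $d_{i,\gamma}$ by the type equality. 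That is precisely the free cross-control you were missing.

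Two smaller deviations worth noting. The paper never extracts indiscernibles: its Erd\H{o}s--Rado is applied to the colouring of pairs $\alpha<\beta$ by the $H_\beta$-coset of $d_\alpha$ (at most $2^{\aleph_0}$ colours by Fact~\ref{Lem_[G:H]small}), after which the doubling trick $e_i:=d_{2i}^{-1}d_{2i+1}$ gives the exact pattern $e_i\in H_j\iff i\neq j$. A second coset Erd\H{o}s--Rado and doubling in the $\gamma$-direction then yields the two-dimensional pattern $e_{i,\gamma}\in H_{j,\delta}\iff(i,\gamma)\neq(j,\delta)$, and finite products over subsets of $\omega^2$ exhibit a single formula $\psi_m$ that is not $2$-dependent.
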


In this section, we generalize this result to $n$-dependent groups for arbitrary $n$. In order to state our generalization, we need to introduce an appropriate notion of independence.

\begin{defn}(\emph{$\kappa$-coheirs}) For any cardinal $\kappa$, any model $\M$, and any tuple $a$ we write
	$$a \ind_{\M}^{u,\kappa} B$$
if for any set $C \subset B \cup \M$ of size $<\kappa$, $\tp(a/C)$ is realized in $\M$. 
\end{defn}
When $\kappa= \aleph_0$ we get $a\ind^{u, \aleph_0}_{\mathcal{M}}B$ if and only if $\tp(a/B\M)$ is finitely satisfiable in $\mathcal{M}$. In this case we simply write $a \ind^u_{\mathcal{M}} B$ (the usual notation for coheir independence). 

Recall that for an infinite cardinal $\kappa$ and $n \in \omega$, the cardinal $\beth_n(\kappa)$ is defined inductively by $\beth_0(\kappa) = \kappa$ and $\beth_{n+1}(\kappa) = 2^{\beth_n(\kappa)}$. Then the Erd\H{o}s-Rado theorem says that $\left( \beth_r(\kappa) \right)^+ \rightarrow (\kappa^+)^{r+1}_{\kappa}$ for all infinite $\kappa$ and $r\in \omega$.

\begin{defn}(\emph{Generic position})\label{def: generic position}
Let $\M$ be a small model, $A$ a subset of $\M$ and $\bar{b}_1, \ldots, \bar{b}_{n-1}$ finite tuples in $\C$. We say that $(\M, A, \bar{b}_1, \ldots, \bar{b}_{n-1})$ are in a \emph{generic position} if there exist regular cardinals $\kappa_1 < \kappa_2 < \ldots < \kappa_{n-1}$ and models $\M_0 \preceq \M_1 \preceq \ldots \preceq \M_{n-1} = \M$ such that $A \subseteq \M_0$, $\beth_2(|\M_i|)^+ \leq \kappa_{i+1}$ for $i=0, \ldots, n-2$ and 
$$\bar b_i \ind_{\M_i}^{u,\kappa_i} \bar b_{<i} \M_{n-1}$$
for all $1\leq i \leq n-1$. 
\end{defn}

One general method to find elements in a generic position is given in Remark \ref{expl: generic postion}. But first let us state the main result of the section.

\begin{theorem}\label{thm: connected comp type-def main}
Let $n \geq 1$, $T$ an $n$-dependent theory, $A \subseteq \C \models T$ a small parameter set and $G = G(\C)$ a type-definable group in $T$ over $A$ be given. Let $\M \supseteq A$ be a small model and $\bar b_1, \dots, \bar b_{n-1}$ finite tuples in $\C$ such that $(\M, A, \bar{b}_1, \ldots, \bar{b}_{n-1})$ are in a generic position.

 Let $(H_\alpha: \alpha \in I)$ be any family of subgroups of $G$ of bounded index, with each $H_\alpha$ type-definable over $\M \bar b_1 \dots \bar b_{n-1}$. Then there is some $J \subseteq I$ with $|J| \leq \beth_2(|T|+|A|)$ such that 
 \begin{gather*}
 	\bigcap_{\alpha \in I} H_\alpha \cap \bigcap_{i=1, \dots, n-1} G_{\M \cup \bar{b}_1 \cup \ldots \cup \bar{b}_{i-1} \cup \bar{b}_{i+1} \cup \ldots \cup  \bar{b}_{n-1}}^{00} = \\
 	\bigcap_{\alpha \in J} H_\alpha \cap \bigcap_{i=1, \dots, n-1} G_{\M \cup \bar{b}_1 \cup \ldots \cup \bar{b}_{i-1} \cup \bar{b}_{i+1} \cup \ldots \cup  \bar{b}_{n-1} }^{00}.
 \end{gather*}
\end{theorem}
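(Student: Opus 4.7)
The plan is to argue by contradiction: a hypothetical failure should produce a $G_{n,p}$-indiscernible array that violates Proposition \ref{prop: char of NIP_k by preserving indisc}. I would induct on $n$. For $n=1$ the statement reduces to Shelah's absoluteness $G^{00}_{\M} = G^{00}_{A}$ in dependent theories, whence $\bigcap_{\alpha \in I}H_\alpha \supseteq G^{00}_A$ has bounded index and can be described by a partial type of size $\leq |T|+|A|$; Fact \ref{fac: Lem_CtblIntersec} then allows the extraction of a small $J$ of the required size.

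For the inductive step, write $K := \bigcap_{i=1}^{n-1} G^{00}_{\M \cup \bar{b}_1 \cup \ldots \cup \bar{b}_{i-1} \cup \bar{b}_{i+1} \cup \ldots \cup \bar{b}_{n-1}}$ and suppose that no subfamily of size $\leq \beth_2(|T|+|A|)$ works. By transfinite recursion of length $\beth_2(|T|+|A|)^+$ I would build an increasing chain $(J_\xi)$ of subsets of $I$ together with $\alpha_\xi \in I$ and $g_\xi \in \bigcap_{\alpha \in J_\xi} H_\alpha \cap K$ satisfying $g_\xi \notin H_{\alpha_\xi}$. After Fact \ref{fac: Lem_CtblIntersec} and a pigeonhole, the $H_{\alpha_\xi}$ may be taken to be defined by a single formula $\varphi(x; y_1, \ldots, y_{n-1}, z)$ evaluated at $\bar b_1, \ldots, \bar b_{n-1}$ and a tuple $\bar c_\xi \subseteq \M$. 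I would then exploit the generic position: iteratively, for $i = 1, \ldots, n-1$, I generate a long sequence of copies $(\bar b_i^s)_{s \in S_i}$ of $\bar b_i$ that are mutually $\kappa_i$-coheir independent over $\M_i$ with the earlier substitutions. Successive Erd\H{o}s--Rado contractions, permitted by the chain $\beth_2(|\M_i|)^+ \leq \kappa_{i+1}$, produce an $O_{n,p}$-indiscernible configuration of parameter copies together with the $\bar c_\xi$'s on the $P_n$ axis, and Fact \ref{fac: random hypergraph indiscernibles exist}(2) upgrades this to a $G_{n,p}$-indiscernible array $(a_g)_{g \in G_{n,p}}$ together with a uniform witness $g \in G$.

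The contradiction should come from the role of $K$: since $g \in K$, the element $g$ lies in $G^{00}_{\M \cup (\bar b_j)_{j \neq i}}$ for every $i$, and since $G^{00}$ is invariant under automorphisms fixing its parameters, $g$ is forced into any substituted version of $H_\alpha$ in which the $i$-th tuple has been changed while the remaining ones remain fixed --- absorbing every $(n-1)$-ary membership pattern. Consequently the $H$-membership pattern of $g$ along the array detects exactly the $n$-ary edges $R_n$ of $G_{n,p}$: the array is $G_{n,p}$-indiscernible over $g$ but fails to be $O_{n,p}$-indiscernible over $g$, contradicting $n$-dependence. I expect the main obstacle to be the combinatorial bookkeeping in the spreading step --- coordinating the saturation chain of the generic position with the successive Erd\H{o}s--Rado contractions so that the resulting $O_{n,p}$-indiscernible configuration is broken exactly by the full $n$-ary subgroup-membership pattern rather than by a lower-arity pattern which $K$ has already absorbed.
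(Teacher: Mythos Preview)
Your overall architecture (failure $\Rightarrow$ long transfinite sequence, pigeonhole on formulas, spread the $\bar b_i$'s using the coheir chain, Erd\H{o}s--Rado, contradiction with $n$-dependence) matches the paper's proof in outline, but two of the steps you describe will not work as written, and the mechanism the paper actually uses is different.

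\textbf{The absorption argument collapses after spreading.} You argue that $g \in K = \bigcap_i G^{00}_{\M \cup (\bar b_j)_{j\neq i}}$ forces $g$ into any substituted $H_\alpha$ in which one $\bar b_i$ has been moved. This is correct \emph{only} when the substitute $\bar b_i'$ lies in $\M$ and the other $\bar b_j$'s are still the original tuples. But your spreading replaces \emph{all} of $\bar b_1,\ldots,\bar b_{n-1}$ by copies inside $\M$; at that point every spread $H$ is type-definable over $\M$, hence contains $G^{00}_\M \supseteq K$, and your single $g$ lies in all of them --- there is no pattern left to detect. The paper avoids this by spreading one coordinate at a time (reverse induction on $l$), and at each stage carefully carrying along a refined version of the $G^{00}$-membership (condition $(\dagger_3)$) in which the already-spread coordinates appear explicitly in the base of $G^{00}$.

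\textbf{The missing coset trick.} Even within a single spreading step, your $K$-membership only gives the ``past'' direction: when you generate copies $\bar b_{l-1,\gamma}$ in $\M_{l-1}$ and new elements $d_{\cdot,\gamma,\cdot}$, you get $d_{\cdot,\gamma,\cdot} \in H_{\cdot,\delta,\cdot}$ for $\delta<\gamma$ (since $\bar b_{l-1,\delta}$ is already in the base), but nothing forces it for $\delta>\gamma$. The paper handles the forward direction by an Erd\H{o}s--Rado argument on the cosets $d_{\cdot,\gamma,\cdot} K_\delta$ (where $K_\delta$ is the relevant $G^{00}$) to find an infinite set on which the cosets are constant; then replacing $d$ by $e := d_{\cdot,2\gamma,\cdot}^{-1} d_{\cdot,2\gamma+1,\cdot}$ lands in the required subgroups for $\delta>\gamma$ as well. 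This trick is iterated at every stage of the reverse induction and is the technical heart of the proof; it does not appear in your proposal.

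Two further differences worth noting: the paper does \emph{not} extract a $G_{n,p}$-indiscernible via Fact~\ref{fac: random hypergraph indiscernibles exist}, nor does it find a single witness $g$ whose membership pattern matches $R_n$. Instead it builds an $\omega^n$-indexed family of elements $d_{i,\gamma_1,\ldots,\gamma_{n-1}}$ satisfying the diagonal condition $d_{\bar\gamma}\in H_{\bar\delta} \Leftrightarrow \bar\gamma\neq\bar\delta$, and then takes \emph{products} of these to shatter arbitrary finite sets. For the product step to work one cannot pigeonhole to a single formula $\varphi$; one needs a nested countable sequence $(\psi_m)$ with $\psi_{m+1}(x)\wedge\psi_{m+1}(y)\vdash\psi_m(xy^{\pm1})$, which the paper sets up at the outset.
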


Before we prove Theorem \ref{thm: connected comp type-def main}, let us observe the following corollary for the connected components of $n$-dependent groups.

\begin{cor}\label{cor: G00 2-dep} Let $T$ be a $n$-dependent theory, $A \subseteq \C \models T$ a small parameter set and $G = G(\C)$ a type-definable group in $T$ over $A$ be given. Let $\M \supseteq A$ be a small model and $\bar b_1, \dots, \bar b_{n-1}$ finite tuples in $\C$ in a generic position.
Then there is some $C \subseteq \M$ with $|C| \leq \beth_2(|T|+|A|)$ such that 
$$G^{00}_{\M \cup \bar b_1 \cup \dots \cup \bar b_{n-1}} = \bigcap_{i=1, \dots, n-1} G_{\M \cup \bar{b}_1 \cup \ldots \cup \bar{b}_{i-1} \cup \bar{b}_{i+1} \cup \ldots \cup  \bar{b}_{n-1} }^{00} \cap G^{00}_{C \cup \bar b_1\cup \dots\cup \bar b_{n-1}}.$$

\end{cor}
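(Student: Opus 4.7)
\textbf{Proof plan for Corollary \ref{cor: G00 2-dep}.} The plan is to deduce the corollary from Theorem \ref{thm: connected comp type-def main} by taking the family $(H_\alpha)_{\alpha\in I}$ to be the family of \emph{all} bounded-index subgroups of $G$ type-definable over $\mathcal M \bar b_1\ldots \bar b_{n-1}$, and then collecting the parameters from $\mathcal M$ actually used by the subfamily $(H_\alpha)_{\alpha\in J}$ extracted from the theorem.

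First, by definition $G^{00}_{\M \cup \bar b_1 \cup \dots \cup \bar b_{n-1}}=\bigcap_{\alpha\in I}H_\alpha$. Observe that for each $1\le i\le n-1$ the group $G^{00}_{\M \cup \bar{b}_1 \cup \ldots \cup \bar{b}_{i-1} \cup \bar{b}_{i+1} \cup \ldots \cup \bar{b}_{n-1}}$ is itself type-definable over a subset of $\M \bar b_1\ldots \bar b_{n-1}$ and of bounded index, so it belongs to the family $(H_\alpha)_{\alpha\in I}$; consequently $\bigcap_{\alpha\in I}H_\alpha \subseteq \bigcap_{i=1}^{n-1} G^{00}_{\M \cup \bar{b}_1 \cup \ldots \cup \bar{b}_{i-1} \cup \bar{b}_{i+1} \cup \ldots \cup \bar{b}_{n-1}}$. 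Applying Theorem \ref{thm: connected comp type-def main} we obtain a subset $J\subseteq I$ with $|J|\le \beth_2(|T|+|A|)$ such that
$$G^{00}_{\M \cup \bar b_1 \cup \dots \cup \bar b_{n-1}}= \bigcap_{\alpha\in J}H_\alpha \cap \bigcap_{i=1}^{n-1}G^{00}_{\M \cup \bar{b}_1 \cup \ldots \cup \bar{b}_{i-1} \cup \bar{b}_{i+1} \cup \ldots \cup \bar{b}_{n-1}}.$$

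Next, by Fact \ref{fac: Lem_CtblIntersec} each $H_\alpha$ may be written as an intersection of countably many formula-defined subgroups of bounded index; let $C_\alpha\subseteq \M$ be the (at most countable) set of parameters from $\M$ appearing in such a presentation, and set $C:=\bigcup_{\alpha\in J}C_\alpha$. Then $|C|\le |J|\cdot \aleph_0 \le \beth_2(|T|+|A|)$, and for every $\alpha\in J$ the subgroup $H_\alpha$ is type-definable over $C\bar b_1\ldots \bar b_{n-1}$ and of bounded index, so by definition of $G^{00}$ we have $G^{00}_{C \cup \bar b_1\cup \dots\cup \bar b_{n-1}} \subseteq H_\alpha$. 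Intersecting over $\alpha\in J$ and combining with the displayed equation above yields
$$G^{00}_{C \cup \bar b_1\cup \dots\cup \bar b_{n-1}} \cap \bigcap_{i=1}^{n-1}G^{00}_{\M \cup \bar{b}_1 \cup \ldots \cup \bar{b}_{i-1} \cup \bar{b}_{i+1} \cup \ldots \cup \bar{b}_{n-1}} \subseteq G^{00}_{\M \cup \bar b_1 \cup \dots \cup \bar b_{n-1}}.$$
The reverse inclusion is automatic since $C\subseteq \M$ and each of the other parameter sets on the right-hand side of the claimed equality is contained in $\M \cup \bar b_1\cup \dots \cup \bar b_{n-1}$, so $G^{00}$ over the larger parameter set is contained in each.

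The entire content of the corollary is thus essentially bookkeeping given the theorem; the only non-trivial input is the cardinality estimate $|C|\le \beth_2(|T|+|A|)$, which uses both the bound on $|J|$ from Theorem \ref{thm: connected comp type-def main} and the countable-presentation principle of Fact \ref{fac: Lem_CtblIntersec}. The main obstacle is really the theorem itself rather than this deduction; the only point in the deduction that requires a moment's care is checking that the $G^{00}_{\M \cup \bar{b}_1 \cup \ldots \cup \bar{b}_{i-1} \cup \bar{b}_{i+1} \cup \ldots \cup \bar{b}_{n-1}}$ already appear among the $H_\alpha$, which is what lets us absorb the ``extra'' factor on the right-hand side of the theorem into the description of $G^{00}_{\M \cup \bar b_1 \cup \dots \cup \bar b_{n-1}}$ itself.
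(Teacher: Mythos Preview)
Your overall strategy is right, but there is a genuine gap in the parameter-counting step. You invoke Fact~\ref{fac: Lem_CtblIntersec} to assert that each $H_\alpha$ with $\alpha\in J$ ``may be written as an intersection of countably many formula-defined subgroups'' and hence uses only a countable set $C_\alpha\subseteq\M$ of parameters. That is not what Fact~\ref{fac: Lem_CtblIntersec} says: it guarantees that $H_\alpha$ is an intersection of subgroups each defined by a countable partial type, but it places no bound on the number of such subgroups in the intersection. Since you chose $(H_\alpha)_{\alpha\in I}$ to be \emph{all} bounded-index subgroups type-definable over $\M\bar b_1\ldots\bar b_{n-1}$, a single $H_\alpha$ appearing in $J$ may well require $|\M|$ parameters from $\M$ (think of $G^{00}_{\M}$ itself). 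Thus your bound $|C|\le |J|\cdot\aleph_0\le\beth_2(|T|+|A|)$ is unjustified.

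The paper's proof fixes exactly this by reversing the order of the two steps: it first uses Fact~\ref{fac: Lem_CtblIntersec} to write
\[
G^{00}_{\M\bar b_1\ldots\bar b_{n-1}}=\bigcap_{i=1}^{n-1} G^{00}_{\M\bar b_1\ldots\bar b_{i-1}\bar b_{i+1}\ldots\bar b_{n-1}}\ \cap\ \bigcap\left\{G^{00}_{B\bar b_1\ldots\bar b_{n-1}}: B\subseteq\M\text{ countable}\right\},
\]
and only then applies Theorem~\ref{thm: connected comp type-def main} to the family on the right. Each member of this family already uses only countably many parameters from $\M$, so the union $C$ of the countable sets $B$ surviving in the sub-intersection of size $\le\beth_2(|T|+|A|)$ has the required cardinality. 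Your argument becomes correct with this reordering.
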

\begin{proof}
By Fact \ref{fac: Lem_CtblIntersec} we have 
\begin{gather*}
	G^{00}_{\M \cup \bar b_1 \cup \dots \cup \bar b_{n-1}} =\\
	 \bigcap_{i=1, \dots, n-1} G_{\M \cup \bar{b}_1 \cup \ldots \cup \bar{b}_{i-1} \cup \bar{b}_{i+1} \cup \ldots \cup  \bar{b}_{n-1}}^{00} \cap \bigcap \left\{ G^{00}_{B \cup \bar b_1 \cup \dots \cup \bar b_{n-1}} : B \subseteq \M \textrm{ countable} \right\}.
\end{gather*}

By Theorem \ref{thm: connected comp type-def main}, it is already given by some sub-intersection of size at most $\beth_2(|T|+|A|)$. Letting $C \subseteq \M$ be the set containing all of the sets $B$ appearing in this sub-intersection gives the desired result.
\end{proof}

\begin{remark}
 Some variant of Corollary \ref{cor: G00 2-dep} is alluded to in  \cite[Discussion 2.14(2)]{MR3666349}, but we are not aware of any followup.
\end{remark}

\begin{remark}
\begin{enumerate}
	\item For $n=1$ the assumptions of Corollary \ref{cor: G00 2-dep} are trivially satisfied by any sufficiently large model $\M = \M_0$, and the conclusion gives $G^{00}_{\M} = G^{00}_{C}$ for some small subset $C$ of $\M$ (since the first intersection on the right hand side is over the empty set). This easily implies absoluteness of $G^{00}$. 
	\item For $n=2$, the assumption $\bar{b}_1 \ind ^{u, \kappa_1}_{\M_1} \M_{1}$ is clearly satisfied by any $\kappa_1$-saturated model $\M_1 \supseteq A$ (taking $A \subseteq \M_0 \preceq \M_1$ arbitrary), and the conclusion gives $G^{00}_{\M_1 \bar{b}_1} = G^{00}_{\M_1} \cap G^{00}_{\bar{b}_1 C}$ --- hence Fact \ref{fac: Shelah 2-dep} follows.
\end{enumerate}
\end{remark}

\begin{problem}
In order for our proof of Theorem \ref{thm: connected comp type-def main} to work, the  tuples $b_1, \dots, b_{n-1}$ are assumed to be in a generic position (which we think of intuitively as ``sufficiently independent from each other over $\mathcal{M}$''). On the other hand, in the extreme opposite case when one of the tuples is in the algebraic closure of the other ones the result holds trivially. Thus we ask if any hypothesis on the tuples $b_1, \ldots, b_{n-1}$ is needed for the result to hold.
\end{problem}

The following remark provides a general method for finding tuples in a generic position.
\begin{remark}\label{expl: generic postion}
	Let $T$ be an arbitrary $\mathcal{L}$-theory, $A$ a small subset of $\C$ and $G = G(\C)$ a group type-definable over $A$. Let $T' := T^{\Sk}$ be a Skolemization of $T$ in a language $\mathcal{L}' \supseteq \mathcal{L}, |\mathcal{L}'| = |\mathcal{L}|$. 
	Let $\kappa_0 := |A| + |\mathcal{L}|$ and $\kappa_{i+1} := \beth_2\left( \kappa_i \right)^+$, a regular cardinal, for $i=1, \ldots, n-2$.
	Next, choose  mutually $\mathcal{L}'(A)$-indiscernible sequences  
	 $J_1= (\bar b_{1,j}: j\in \kappa_1 +1), \ldots, J_{n-1}= (\bar b_{n,j}: j\in \kappa_n +1)$ in some monster model $\mathbb{M}'$ of $T'$. Then for $i=1, \dots, n-1$, let $I_i= (\bar b_{i,j}: j\in \kappa_i)$, $\bar b_i= \bar b_{i, \kappa_i+1}$ (i.e.~the last element in the sequence  $J_i$), and $\M_i := \Sk(A I_1 \ldots I_i)$ (where $\Sk$ is the operation of taking the Skolem hull). Thus $A \subseteq \M_0 \preceq \M_1 \preceq \ldots \preceq \M_{n-1}$ and $|\M_i| = |A \cup \bigcup_{1 \leq s \leq i} I_s | + |\mathcal{L}'| = \kappa_i$, hence $\beth_2\left(|\M_i|\right)^+ \leq \kappa_{i+1}$ holds for all $0 \leq i \leq n-2$. 
	 
	 We claim that $\bar b_i \ind_{\M_i}^{u,\kappa_i} \bar b_{<i} \M_{n-1}$ for every $1 \leq i \leq n-1$. To see this, fix $i$, and let $C \subseteq \M_{n-1}$ with $|C| < \kappa_i$ be arbitrary. By construction, there exist some $S_i \subseteq \kappa_i$ with $|S_i| < \kappa_i$ such that $C \subseteq \dcl_{\mathcal{L}'}\left( A(b_{1,j})_{j \in S_1} \ldots (b_{n-1,j})_{j \in S_{n-1}} \right)$. As $\kappa_i > |S_i|$ is regular, there exists some $j^* > S_i$ in $\kappa_i$. But then by mutual $\mathcal{L}'(A)$-indiscernibility of $J_1, \ldots, J_{n-1}$ we have that 
$$\bar{b}_{i,j^*} \equiv_{A(b_{1,j})_{j \in S_1} \ldots (b_{n-1,j})_{j \in S_{n-1}} \bar{b}_{<i}} \bar{b}_i,$$
hence $\bar{b}_{i,j^*} \equiv_{C \bar{b}_{<i}} \bar{b}_i$ and $\bar{b}_{i,j^*}$ is in $\M_i$. 

Thus $(A,\M_{n-1}, \bar{b}_1, \ldots, \bar{b}_{n-1})$ are in a generic position (both in the sense of $T$ and $T'$).
 \end{remark}

\subsection{Proof of Theorem \ref{thm: connected comp type-def main}}\label{sec: type-def connected comp}

We are ready to prove the main theorem.
\proof[ Proof of Theorem \ref{thm: connected comp type-def main}]
Assume that the conclusion fails, and let $A \subseteq \M_0 \preceq \M_1 \preceq \ldots \preceq \M_{n-1} = \M$ witness the generic position as in Definition \ref{def: generic position}. Then, using Fact \ref{fac: Lem_CtblIntersec}, we can find inductively a sequence of $(\M_{n-1} \cup \bar b_1 \cup \dots \cup \bar b_{n-1})$-type-definable subgroups $H_{\alpha}$, $\alpha < \kappa := \beth_2(|T|+|A|)^+$ of $G$ of bounded index such that $H_\alpha  = \bigcap_{m < \omega} \psi^\alpha_m(G; \bar c_{\alpha}, \bar b_1 , \dots , \bar b_{n-1})$ for some countable $\bar{c}_\alpha$ from $\M_{n-1} $, and elements $(d_\alpha)_{\alpha < \kappa}$ in $G$ such that: 

\begin{enumerate}
\item $d_\alpha \in \bigcap_{i=1, \dots, n-1} G_{\M_{n-1} \cup \bar{b}_1 \cup \ldots \cup \bar{b}_{i-1} \cup \bar{b}_{i+1} \cup \ldots \cup  \bar{b}_{n-1} }^{00}  \cap \bigcap_{\beta < \alpha} H_\beta$,
\item $d_\alpha \not\in H_\alpha $.
	\end{enumerate}

\noindent Using compactness, and possibly replacing each $\psi_m^{\alpha}$ by a finite conjunction of  $\psi_i^{\alpha}$'s, we may assume additionally that the following hold:

\begin{enumerate}
\setcounter{enumi}{2}
\item $\models \neg \psi_0^\alpha(d_\alpha; \bar c_{\alpha}, \bar b_1 , \dots , \bar b_{n-1})$,
\item $\{\psi^\alpha_{m+1}(x; \bar c_{\alpha}, \bar b_1 , \dots , \bar b_{n-1}), \psi^\alpha_{m+1}(y; \bar c_{\alpha}, \bar b_1 , \dots , \bar b_{n-1})\}\ \vdash\ \psi^\alpha_{m}(x \cdot y; \bar c_{\alpha}, \bar b_1 , \dots , \bar b_{n-1})\ \wedge\  
\psi^\alpha_{m}(x^{-1}; \bar c_{\alpha}, \bar b_1 , \dots , \bar b_{n-1})\ \wedge\  \psi^\alpha_{m}(xy^{-1}; \bar c_{\alpha}, \bar b_1 , \dots , \bar b_{n-1})$ for all $m \in \omega$.

	\end{enumerate}

As there are only $|T|^{< \aleph_0}$ many formulas and $\operatorname{cf}(\kappa) > |T|^{\aleph_0}$, by the pigeonhole principle and after dropping some of the $H_\alpha$'s, we can find $(\psi_m)_{m\in \omega}$ such that for all $\alpha < \kappa$, $$\psi_m^\alpha= \psi_m.$$

\begin{claim} \label{G00 claim ij}
	In addition to (1)--(4), we may assume that for all $i, j \in \omega$, we have that 
	$$d_i \in H_j \Leftrightarrow i \neq j.$$
	\end{claim}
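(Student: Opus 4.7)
My approach will be to extract an $\omega$-indexed indiscernible subsequence from the long sequence $(d_\alpha, \bar c_\alpha)_{\alpha<\kappa}$, use indiscernibility to reduce the truth of ``$d_i \in H_j$'' to the three cases determined by the order of $i$ and $j$, and in the sole remaining case modify the sequence by a product of nearby terms to force the desired pattern.

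The length $\kappa = \beth_2(|T|+|A|)^+$ was picked precisely so that the standard Erd\H{o}s--Rado argument, combined with Ramsey's theorem and compactness in the monster model, produces an $\omega$-indexed subsequence $(d_i, \bar c_i)_{i<\omega}$ of the given sequence that is indiscernible over $A \cup \bar b_1 \cup \dots \cup \bar b_{n-1}$. After renaming, the subsequence still satisfies (1)--(4), where $H_i := \bigcap_m \psi_m(G; \bar c_i, \bar b_1, \dots, \bar b_{n-1})$. By indiscernibility, for each $m$ the truth of $\models \psi_m(d_i; \bar c_j, \bar b_1, \dots, \bar b_{n-1})$ depends only on whether $i<j$, $i=j$, or $i>j$; hence so does the truth of ``$d_i \in H_j$''. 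Properties (1) and (3) already give $d_i \in H_j$ for $i>j$ and $d_i \notin H_i$, so exactly one of two cases arises for $i<j$: either $d_i \in H_j$ uniformly (the \emph{good case}, which immediately yields $d_i \in H_j \Leftrightarrow i \neq j$ and we are done), or $d_i \notin H_j$ uniformly (the \emph{bad case}).

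In the bad case I will pass to a modified sequence. The natural attempt is $d'_i := d_i \cdot d_{i+1}^{-1}$, $\bar c'_i := \bar c_i \bar c_{i+1}$, $H'_i := H_i$, noting that the new subgroup family is still uniformly definable via $\psi'_m(x; yz, \bar b) := \psi_m(x; y, \bar b)$. A direct coset computation shows $d'_i \in H_j$ for $j<i$ (both factors lie in $H_j$) and $d'_i \notin H_i$ (since $d_i \notin H_i$ while $d_{i+1} \in H_i$, the two factors are in distinct cosets). For $j>i$, indiscernibility forces a constant truth value. If this constant is ``in'', we have reached the good pattern. Otherwise, we iterate or pass to richer products such as $d_{2i} d_{2i+1}^{-1}$ with $H'_i := H_{2i} \cap H_{2i+1}$ (still of the required uniform shape); combining indiscernibility with the bounded-index estimate $[G:H_j] \leq 2^{|T|+|\M|}$ from Fact \ref{Lem_[G:H]small} forces the process to stabilize at the good pattern after finitely many refinements, after which restricting to an $\omega$-subsequence and re-enumerating concludes.

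The main obstacle is the final step: pinpointing the correct group-theoretic modification and rigorously verifying that it achieves exactly the incidence pattern $d_i \in H_j \Leftrightarrow i \neq j$ while simultaneously preserving properties (1)--(4) and the uniform definability of the subgroup family. This requires a careful interplay between indiscernibility and the boundedness of $[G:H_j]$, and this is where the bookkeeping is most delicate.
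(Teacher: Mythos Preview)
Your proposal has two related gaps. First, from a sequence of length $\kappa=\beth_2(|T|+|A|)^+$ you \emph{cannot} extract a fully indiscernible $\omega$-subsequence over $A\cup\bar b_1\cup\dots\cup\bar b_{n-1}$: one Erd\H{o}s--Rado application at this length homogenizes only \emph{pairs}, and extracting a genuinely indiscernible infinite subsequence would require far larger cardinals. (Producing a \emph{new} indiscernible sequence via compactness is not an option either, since condition~(1) is a type over the large set $\M_{n-1}$ and need not transfer.) Two-indiscernibility does decide whether $d_i\in H_j$ for $i<j$, but the moment you pass to $d'_i=d_i d_{i+1}^{-1}$, the question $d'_i\in H_j$ for $j>i+1$ involves three indices and is no longer controlled by your homogenization. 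Second, the ``iterate until it stabilizes'' step in the bad case is not an argument: you give no mechanism connecting the bounded-index estimate to termination of the refinement process.

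The paper's proof avoids both issues by choosing a different coloring. Rather than coloring pairs by their type, it colors $(\alpha,\beta)$ with $\alpha<\beta$ by the \emph{coset} of $d_\alpha$ modulo $H_\beta$; by Fact~\ref{Lem_[G:H]small} the number of cosets is bounded independently of $\beta$, so a single Erd\H{o}s--Rado application yields an infinite $J\subseteq\kappa$ on which this coset is constant. That immediately gives $d_i^{-1}d_j\in H_m$ for all $i<j<m$ in $J$, with no case split and no iteration. Setting $e_i:=d_{2i}^{-1}d_{2i+1}$ and $K_i:=H_{2i}$ then works in one stroke: $e_i\in K_j$ for $j<i$ by~(1), $e_i\notin K_i$ since $d_{2i}\notin H_{2i}\ni d_{2i+1}$, and $e_i\in K_j$ for $j>i$ since $2i<2i+1<2j$. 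The missing idea is that bounded index should enter as the bound on the number of \emph{colors} in the Erd\H{o}s--Rado step, not as a stabilization device afterwards.
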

	
\proof
By Fact \ref{Lem_[G:H]small}, for each $\beta < \kappa$ there is a partition $\{ g_{\beta, \nu}H_\beta: \nu < \theta_{\beta}\}$ of $G$, where $\theta_\beta = [G : H_\beta ] \leq 2^{\aleph_0}$. Therefore, considering $d_\alpha$ for some $\alpha < \kappa$, there is $\nu_{\alpha, \beta} < \theta_\beta$  such that 
$$d_\alpha \in g_{\beta, \nu_{\alpha, \beta}}H_\beta.$$
As $\kappa = \beth_2(|T| + |A|)^+$ by assumption, by Erd\H{o}s-Rado we can find an infinite subset $J$ of $\kappa$ and $\nu<2^{\aleph_0}$, such that for all $\alpha< \beta $ in $J$, we have that 
$$d_\alpha \in g_{\beta, \nu}H_\beta.$$
We may assume that $J = \omega$. Now let $i<j<m \in \omega$. Then by the above we have that
$$d_i\in g_{m, \nu}H_m\ \ \mbox{and}\ \  d_j \in g_{m, \nu}H_m,$$
or in other words
$$d_i =  g_{m, \nu}h_i  \ \ \mbox{and}\ \  d_j = g_{m, \nu} h_j$$
for some $h_i, h_j \in H_m$. Thus

$$d_i^{-1}d_j = h_i^{-1} g_{m, \nu}^{-1} g_{m, \nu} h_j = h_i^{-1}  h_j \in H_m.\ \ \ \ (\ast)$$
Now for $i \in \omega$, let
$$e_i=d_{2i}^{-1}d_{2i+1},\ \ K_i= H_{2i}, \ \  \varphi_m= \psi_{m+1},\ \ \mbox{and}\ \ \bar f_i= \bar c_{2i}. $$
Then, after replacing $d_i$ by $e_i$, $H_i$ by $K_i$, $\psi_i$ by $\varphi_{i}$, and $c_i$ by $f_i$, $(1)$ and $(4)$ are still satisfied. To show that condition $(3)$ remains true, assume the opposite, i.e.~$\models \varphi_0(e_i; \bar f_i, \bar b_1, \dots, \bar b_{n-1} )$, which is equivalent to $\models   \psi_1(e_i; \bar c_{2i}, \bar b_1, \dots, \bar b_{n-1} )$. By $(1)$, we know that $\models \psi_1(d_{2i+1}; \bar c_{2i}, \bar b_1, \dots, \bar b_{n-1} )$. Now, using that $d_{2i}= d_{2i+1}e_i^{-1}$ and $(4)$, we can conclude that $\models  \psi_0(d_{2i}; \bar c_{2i}, \bar b_1, \dots, \bar b_{n-1} )$ --- contradicting $(3)$ for the original sequence. Finally, 
\begin{itemize}
	\item if $i\neq j$, then 
	$$e_i = d_{2i}^{-1}d_{2i+1} \overset{\tiny \mbox{by} (1)\mbox{ if }j<i }{ \underset{\tiny \mbox{by} (\ast)\mbox{ if } i<j}{\in} }H_{2j}= K_j; $$
	
	\item if $i = j$, then by $(1)$ and $(2)$, we have that $d_{2i} \not \in H_{2i}$ but $d_{2i+1} \in H_{2i}$, so $e_i \not \in H_{2i}=K_i$. This also shows that condition $(2)$ is still satisfied.
	\end{itemize}

This finishes the proof of the claim.
\qed$_{\operatorname{claim}}$

Let $\bar {\bf c} = (\bar c_i : i \in \omega)$.

\begin{claim} \label{claim G00main} There are sequences $(\bar{b}_{1,\gamma}: \gamma<\omega), (\bar{b}_{2,\gamma}: \gamma<\omega), \dots, (\bar{b}_{n-1,\gamma}: \gamma<\omega)$ in $\M_{n-1}$ and elements $(d_{i,\gamma_{1}, \dots, \gamma_{n-1}} : (i,  \gamma_1, \dots, \gamma_{n-1})\in \omega^n )$ in $G$ such that
\begin{align*}
	&d_{i,\gamma_{1}, \dots, \gamma_{n-1}} \in H_{j,\delta_{1}, \dots, \delta_{n-1}} :=  \bigcap_{m < \omega} \psi_m(G; \bar c_j, \bar b_{1, \delta_1} , \dots , \bar b_{n-1, \delta_{n-1}}) \\ 
	\Leftrightarrow\ & (i,\gamma_{1}, \dots, \gamma_{n-1}) \neq (j, \delta_{1}, \dots, \delta_{n-1}) \\
	\Leftrightarrow\  & \models \psi_{n-1}(d_{i,\gamma_{1}, \dots, \gamma_{n-1}};\bar c_j, \bar b_{1,\delta_1}, \dots, \bar b_{n-1,\delta_{n-1}}).
	\end{align*}
\end{claim}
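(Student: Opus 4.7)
\textbf{Proof plan for Claim \ref{claim G00main}}:

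I would proceed by induction on the number $s \in \{0, 1, \ldots, n-1\}$ of stretched dimensions, beginning from the data of Claim \ref{G00 claim ij} ($s=0$) and ending with the desired array ($s=n-1$). At stage $s$ I maintain nested formulas $\psi^{(s)}_m := \psi_{s+m}$ preserving the chain rule (4), sequences $(\bar b_{l, \gamma_l})_{\gamma_l < \omega} \subseteq \M_l$ for each $l > n-s-1$, and an array $d^{(s)}_{i, \tuple{\gamma}} \in G$ indexed by $(i, \gamma_{n-s}, \ldots, \gamma_{n-1}) \in \omega^{s+1}$ such that the pattern
\[
d^{(s)}_{i, \tuple{\gamma}} \in H^{(s)}_{j, \tuple{\delta}} := \bigcap_{m<\omega} \psi^{(s)}_m\bigl(G;\, \bar c_j, \bar b_1, \ldots, \bar b_{n-s-1}, \bar b_{n-s, \delta_{n-s}}, \ldots, \bar b_{n-1, \delta_{n-1}}\bigr)
\]
holds if and only if $(i, \tuple{\gamma}) \neq (j, \tuple{\delta})$, with the diagonal non-membership witnessed by $\neg \psi_s$. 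At $s = n-1$ this specializes to the statement of the claim.

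For the inductive step $s \to s+1$ I stretch the $(n{-}s{-}1)$-st coordinate. Setting $B := \bar{\mathbf c} \cup \bar b_{<n-s-1} \cup \bigcup_{l > n-s-1}\{\bar b_{l, \gamma}\}_{\gamma<\omega}$, a countable subset of $\bar b_{<n-s-1}\M_{n-1}$, the coheir hypothesis $\bar b_{n-s-1} \ind^{u, \kappa_{n-s-1}}_{\M_{n-s-1}} \bar b_{<n-s-1}\M_{n-1}$ lets me iteratively build a long sequence $(\bar b_{n-s-1, \gamma})_{\gamma < \lambda}$ inside $\M_{n-s-1}$ in which each $\bar b_{n-s-1, \gamma}$ realizes $\tp(\bar b_{n-s-1}/B \cup \bar b_{n-s-1, <\gamma})$. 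Choosing automorphisms $\sigma_\gamma \in \Aut(\C/B)$ with $\sigma_\gamma(\bar b_{n-s-1}) = \bar b_{n-s-1, \gamma}$, I transport the existing array as $\tilde d^\gamma_{i, \tuple{\gamma}'} := \sigma_\gamma\bigl(d^{(s)}_{i, \tuple{\gamma}'}\bigr)$; taking $\lambda$ large enough and extracting (standard indiscernible extraction via homogeneity of $\C$), I may assume $\bigl((\tilde d^\gamma_{i, \tuple{\gamma}'}),\, \bar b_{n-s-1, \gamma}\bigr)_{\gamma < \omega}$ is indiscernible over $B$. Finally, imitating the combination trick from Claim \ref{G00 claim ij}, I set
\[
d^{(s+1)}_{i, \alpha, \tuple{\gamma}'} := \tilde d^{2\alpha}_{i, \tuple{\gamma}'} \cdot \bigl(\tilde d^{2\alpha+1}_{i, \tuple{\gamma}'}\bigr)^{-1}, \qquad \bar b_{n-s-1, \alpha}^{\mathrm{new}} := \bar b_{n-s-1, 2\alpha}.
\]
By indiscernibility, for $\alpha \neq \beta$ both factors lie in the same coset of any transported subgroup indexed by $\bar b_{n-s-1, 2\beta}$, so the quotient lies in $H^{(s+1)}_{j, \beta, \tuple{\delta}'}$; and at the diagonal $(i, \alpha, \tuple{\gamma}') = (j, \alpha, \tuple{\delta}')$ the chain rule (4) forces $\neg \psi_{s+1}$ on the quotient, since otherwise $\psi_{s+1}$ applied to the quotient together with $\psi_{s+1}\bigl(\tilde d^{2\alpha+1}\bigr)$ would yield $\psi_s\bigl(\tilde d^{2\alpha}\bigr)$, contradicting the diagonal hypothesis at level $s$.

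The principal technical hurdle is the remaining cross-slice case $\alpha = \beta$ with $(i, \tuple{\gamma}') \neq (j, \tuple{\delta}')$: the required membership of $d^{(s+1)}_{i, \alpha, \tuple{\gamma}'}$ in $H^{(s+1)}_{j, \alpha, \tuple{\delta}'}$ hinges on the coset of $\tilde d^{2\alpha+1}$ in that subgroup being trivial, which indiscernibility alone does not supply. I expect this to be handled by invoking the hypothesis $d^{(s)} \in G^{00}_{\M \cup \bar b_{\neq n-s-1}}$ from condition (1) of the outer construction together with the generic position of the $\bar b_i$'s: the former places $d^{(s)}$ inside every bounded-index type-definable subgroup over $\M \bar b_{\neq n-s-1}$, and combined with the coheir indiscernibility this pins the cross-slice cosets to be trivial, completing the verification. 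Iterating the inductive step $n-1$ times yields the final array, with witness formula $\psi_{n-1}$, as required.
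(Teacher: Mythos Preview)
Your overall architecture --- reverse induction on the number of stretched coordinates, using the $\kappa$-coheir hypothesis to manufacture copies $\bar b_{l-1,\gamma}$ inside $\M_{l-1}$, then passing to quotients as in Claim~\ref{G00 claim ij} --- is exactly the paper's strategy, and you correctly flag the cross-slice case $\alpha=\beta$, $(i,\tuple\gamma')\neq(j,\tuple\delta')$ as the crux.

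The gap is in your resolution of that case. The $G^{00}$-membership you appeal to is only supplied at $s=0$ by condition~(1); to invoke it at later stages you must carry an analogue as an explicit inductive invariant. The paper does this via its condition $(\dagger_3)$: at stage $l$ one records that each $d_{i,\gamma_l,\ldots,\gamma_{n-1}}$ lies in $G^{00}_{\M_{l-1}\bar{\bf c}\,\bar b_1\ldots\bar b_{t-1}\bar b_{t+1}\ldots}$ for \emph{every} $t<l$, with the base model shrinking from $\M_{l-1}$ to $\M_{l-2}$ at the next step. Two ingredients absent from your sketch are needed to propagate this. First, the automorphisms realizing the copies must be taken over $\M_{l-2}$ together with all earlier $\bar b_{l-1,<\gamma}$, not merely over your countable $B$; this is precisely what the bound $\kappa_{l-1}\geq\beth_2(|\M_{l-2}|)^+$ in the generic-position hypothesis is for, and your choice $\sigma_\gamma\in\Aut(\C/B)$ throws it away. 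Second, to show the new quotients lie in $G^{00}_{\M_{l-2}\ldots}$ (so that the induction can continue), the paper does not extract an indiscernible sequence at all; it applies Erd\H{o}s--Rado directly to the $G^{00}$-coset colouring of pairs $(\gamma,\delta)$ along the long sequence of length $\kappa_{l-1}$, obtaining the condition $(\dagger\dagger)$ which simultaneously handles the cross-slice membership and propagates $(\dagger_3)$. Your indiscernibility extraction over the countable $B$ cannot detect cosets of groups type-definable over the uncountable $\M_{l-2}$.
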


\proof
We will prove the following claim by reverse induction on $l=1, \dots, n$.

\

There are sequences $(\bar{b}_{l,\gamma}: \gamma<\omega), (\bar{b}_{l+1,\gamma}: \gamma<\omega), \dots, (\bar{b}_{n-1,\gamma}: \gamma<\omega)$ in $\M_{n-1}$ and elements $\left(d_{i,\gamma_{l}, \dots, \gamma_{n-1}} : (i,  \gamma_l, \dots, \gamma_{n-1})\in \omega^{n-l+1} \right)$ in $G$ such that $(\dagger_1)$--$(\dagger_4)$ below hold:
\begin{align*}
	&d_{i,\gamma_{l}, \dots, \gamma_{n-1}} \in H_{j,\delta_{l}, \dots, \delta_{n-1}}: =   \bigcap_{m < \omega} \psi_m(G; \bar c_{j}, \bar b_1, \dots, \bar b_{l-1}, \bar b_{l, \delta_l} , \dots , \bar b_{n-1, \delta_{n-1}})  \\ 
	\Leftrightarrow\ & (i,\gamma_{l}, \dots, \gamma_{n-1}) \neq (j, \delta_{l}, \dots, \delta_{n-1})& (\dagger_1) \\
	\Leftrightarrow\  & \models \psi_{n-l}(d_{i,\gamma_{l}, \dots, \gamma_{n-1}};\bar c_j,\bar b_1, \dots, \bar b_{l-1}, \bar b_{l,\delta_l}, \dots, \bar b_{n-1,\delta_{n-1}})  & (\dagger_2)\\
	&\mbox{and}\\
	&d_{i,\gamma_{l}, \dots, \gamma_{n-1}} \in  \underset{(\delta_{l}, \dots, \delta_{n-1})\in \omega^{n-l} }{\bigcap_{t=1, \dots, l-1;}} G_{\M_{l-1} \bar {\bf c} \, \bar b_1 \dots \bar b_{t-1} \bar b_{t+1} \ldots \bar b_{l-1} \bar b_{l, \delta_l} \dots \bar b_{n-1, \delta_{n-1}}}^{00} & (\dagger_3)
	\end{align*}
	for all $\gamma_{l}, \ldots, \gamma_{n-1}, \delta_l, \ldots, \delta_{n-1}, i,j \in \omega$ (where $\M_0 \preceq  \dots \preceq \M_{n-1}$ are given by the assumption),
	and
	\begin{align*}	
		 \left. \begin{aligned}
&\psi_{m+1}(x; \bar c_i, \bar b_1, \dots, \bar b_{l-1},  \bar b_{l, \gamma_{l}},  \dots, \bar{b}_{n-1,\gamma_{n-1}}) \ \land \\
&\psi_{m+1}(y; \bar c_i, \bar b_1, \dots, \bar b_{l-1},  \bar b_{l, \gamma_{l}},  \dots, \bar{b}_{n-1,\gamma_{n-1}} ) 
\\
&\vdash\psi_{m}(x \cdot y; \bar c_i,  \bar b_1, \dots, \bar b_{l-2},  \bar b_{l-1, \gamma_{l-1}},  \dots, \bar{b}_{n-1,\gamma_{n-1}} ) \ \ \land\\ 
&\psi_{m}(x^{-1}; \bar c_i, \bar b_1, \dots, \bar b_{l-1},  \bar b_{l, \gamma_{l}},  \dots, \bar{b}_{n-1,\gamma_{n-1}} )\ \ \land\\ 
&\psi_{m}(x \cdot y^{-1}; \bar c_i,\bar b_1, \dots, \bar b_{l-1},  \bar b_{l, \gamma_{l}},  \dots, \bar{b}_{n-1,\gamma_{n-1}} ) \mbox{ for all } i,m \in \omega.  \end{aligned} \ \  \right\} 
  && (\dagger_4)
	\end{align*}

\ 

For $l=1$, this completes the proof of Claim \ref{claim G00main}.

For $l=n$, this is Claim \ref{G00 claim ij}  together with $(1)$, $(3)$ and $(4)$.

Now suppose the claim is true for $1<l<n$, and we want to prove the claim for $l-1$. First,  as $\bar b_{l-1} \ind_{\M_{l-1}}^{u,\kappa_{l-1}} \bar b_{<l-1} \M_{n-1}$, we can choose sequences $( \bar{b}_{l-1, \gamma}: \gamma < \kappa_{l-1})$ in $\M_{l-1}$  and $( d_{i,\gamma_{l-1}, \dots, \gamma_{n-1}}: (i,\gamma_{l-1}, \dots, \gamma_{n-1}) \in \omega \times \kappa_{l-1} \times \omega^{n-l})$ in $G$ such that for any $\gamma< \kappa_{l-1}$:
\begin{align*} 
  \left. \begin{aligned}
    & ( \bar b_{l-1, \gamma}, d_{i,\gamma,\gamma_{l} \dots, \gamma_{n-1}}: (i,\gamma_{l}, \dots, \gamma_{n-1}) \in \omega^{n-l+1})\ \  \mbox{ has the same type as }  \\ 
    & (\bar b_{l-1}, d_{i,\gamma_{l} \dots, \gamma_{n-1}}: (i,\gamma_{l}, \dots, \gamma_{n-1})\in \omega^{n-l+1})\ \ \mbox{over} \\ 
    & \M_{l-2}, \bar {\bf c} ,  
 \bar b_1, \dots, \bar b_{l-2} \cup 
 \{b_{l-1, \delta}\,: \delta < \gamma\}\cup  
\{ \bar b_{j,\delta}: j=l, \dots, n-1, \delta < \omega\}.\\
  \end{aligned} \ \  \right\} 
  && (\star)
\end{align*}
Then 
\begin{enumerate}
	\setlength\itemsep{1em}
\setcounter{enumi}{4}
\item By $(\dagger_3)$ for $t=l-1$ and $(\star)$, using that $\bar b_{l-1, \gamma}\in \M_{l-1}$ for all  $\gamma < \kappa_{l-1}$, we obtain  $$ d_{i, \gamma_{l-1}, \dots, \gamma_{n-1} }  \in \bigcap \{ H_{j,\delta_{l-1}, \dots, \delta_{n-1}}: (j,\delta_{l}, \dots, \delta_{n-1}) \in \omega^{n-l+1}, \delta_{l-1} < \gamma_{l-1}\}$$
for all $i \in \omega$. 
\item By  $(\dagger_1)$ and $(\star)$ 
$$d_{i,\gamma_{l-1},\gamma_l, \dots, \gamma_{n-1} } \in \bigcap \{H_{j,\gamma_{l-1},\delta_{l} \dots, \delta_{n-1}) }: (i,\gamma_{l}, \dots, \gamma_{n-1}) \neq (j,\delta_{l}, \dots, \delta_{n-1} ) \}.$$
\item By $(\dagger_1)$ and $(\star)$ $$d_{i,\gamma_{l-1}, \dots, \gamma_{n-1} } \not \in H_{i,\gamma_{l-1}, \dots, \gamma_{n-1} };$$ 
In particular by $(\dagger_2)$ and $(\star)$  $$\models \neg \psi_{n-l}(d_{i, \gamma_{l-1}, \dots, \gamma_{n-1} }; \bar c_i,\bar b_1, \dots, \bar b_{l-2},  \bar b_{l-1, \gamma_{l-1}},  \dots, \bar{b}_{n-1,\gamma_{n-1}});$$ 
\item  By $(\dagger_4)$ and $(\star)$ the formula 
$$\psi_{m+1}(x; \bar c_i, \bar b_1, \dots, \bar b_{l-2},  \bar b_{l-1, \gamma_{l-1}},  \dots, \bar{b}_{n-1,\gamma_{n-1}}) \land$$
$$\psi_{m+1}(y; \bar c_i, \bar b_1, \dots, \bar b_{l-2},  \bar b_{l-1, \gamma_{l-1}},  \dots, \bar{b}_{n-1,\gamma_{n-1}} )$$
implies the formula
$$\psi_{m}(x \cdot y; \bar c_i,  \bar b_1, \dots, \bar b_{l-2},  \bar b_{l-1, \gamma_{l-1}},  \dots, \bar{b}_{n-1,\gamma_{n-1}} )\land$$
 $$  
\psi_{m}(x^{-1}; \bar c_i, \bar b_1, \dots, \bar b_{l-2},  \bar b_{l-1, \gamma_{l-1}},  \dots, \bar{b}_{n-1,\gamma_{n-1}} ) \land $$
 $$\psi_{m}(x \cdot y^{-1}; \bar c_i,\bar b_1, \dots, \bar b_{l-2},  \bar{b}_{l-1, \gamma_{l-1}},  \dots, \bar{b}_{n-1,\gamma_{n-1}} );$$
\item By $(\dagger_3)$ and $(\star)$, and as $\bar{b}_{l-1, \delta_{l-1}}$ are all in $\M_{l-1}$,  $$d_{i,\gamma_{l-1}, \dots, \gamma_{n-1}} \in \underset{\delta_{l-1}\leq \gamma_{l-1},\ (\delta_{l}, \dots, \delta_{n-1})\in \omega^{n-l} }{\bigcap_{t=1, \dots, l-2;}} G_{\M_{l-2} \bar {\bf c} \, \bar b_1 \dots \bar b_{t-1} \bar b_{t+1} \ldots \bar b_{l-2} \bar{b}_{l-1, \delta_{l-1}}  \dots \bar b_{n-1, \delta_{n-1}}}^{00}.$$
\end{enumerate}

Consider the sequence  of countable tuples $$(\bar b_{l-1,\gamma}, (d_{i,\gamma, \gamma_{l} \dots, \gamma_{n-1} }: (i, \gamma_l, \dots, \gamma_{n-1} )\in \omega^{n-l+1}))_{\gamma\in \kappa_{l-1}}.$$ 
Note that for any $\delta < \kappa_{l-1}$, the group 
$$K_\delta := G^{00}_{\M_{l-2} \bar {\bf c} \bar{b}_1 \ldots \bar{b}_{l-2} \bar{b}_{l-1, \delta} \left\{ \bar b_{l, \delta_{l}} \ldots \bar b_{n-1, \delta_{n-1}} : (\delta_{l}, \dots, \delta_{n-1})\in \omega^{n-l} \right\}}$$
 is type definable over a set of size $|\M_{l-2}| + \aleph_0 = |\M_{l-2}|$ and has bounded index in $G$, hence by Fact \ref{Lem_[G:H]small} its index is at most $2^{|\M_{l-2}|}$. 
 Let $(g_{\delta, \nu} : \nu < 2^{|\M_{l-2}|})$ be a set of representatives of its cosets in $G$. For each $\gamma < \delta < \kappa_{l-1}$, consider a countable tuple 
 $$ \bar{\nu}_{\gamma, \delta} := \left(\nu^{\gamma, \delta}_{i, \delta_l, \ldots, \delta_{n-1}} : i, \delta_{l}, \ldots, \delta_{n-1}\in \omega \right)$$
  listing cosets of the elements $\left(d_{i,\gamma, \gamma_{l} \dots, \gamma_{n-1} }: (i, \gamma_l, \dots, \gamma_{n-1} )\in \omega^{n-l+1} \right)$ with respect to the group $K_\delta$. 
 There are at most $\left(2^{|\M_{l-2}|} \right)^{\aleph_0} = 2^{|\M_{l-2}|}$ possible choices for this tuple. As $\kappa_{l-1} \geq \beth_2(|\M_{l-2}|)^+$ by assumption, applying Erd\H{o}s-Rado  there is an infinite subsequence such that $\bar{\nu}_{\gamma, \delta}$ is constant for all $\gamma < \delta $ from this subsequence. As in the proof of Claim \ref{G00 claim ij}, restricting to this subsequence we have that (5)--(9) still hold, and for any fixed $(i,  \gamma_l, \dots, \gamma_{n-1} )\in \omega^{n-l+1}$ we have 
\begin{align*}
	&d_{i,\gamma', \gamma_{l},  \dots,  \gamma_{n-1} }^{-1} d_{i,\gamma, \gamma_{l}  \dots,  \gamma_{n-1}} \in G^{00}_{\M_{l-2} \bar {\bf c} \bar{b}_1 \ldots \bar{b}_{l-2} \bar{b}_{l-1, \delta} \left\{ \bar b_{l, \delta_{l}} \ldots \bar b_{n-1, \delta_{n-1}} \, : \, (\delta_{l}, \dots, \delta_{n-1})\in \omega^{n-l} \right\}} & ( \dagger \dagger) \\
	&\mbox{ for all } \gamma < \gamma' <  \delta < \omega.
\end{align*}

Next, for $(i,  \gamma_l, \dots, \gamma_{n-1} )\in \omega^{n-l+1}$ and   $\gamma_{l-1} < \kappa_{l-1}$, let 
\begin{align*}
e_{i,\gamma_{l-1}, \dots, \gamma_{n-1} }&= d_{i,2\gamma_{l-1}, \gamma_{l}  \dots, \gamma_{n-1} }^{-1}d_{i,2\gamma_{l-1}+1, \gamma_{l} \dots, \gamma_{n-1}},\\ 
\bar f_{l-1,\gamma_{l-1}}&= \bar b_{l-1, 2\gamma_{l-1}},\\
K_{i,\gamma_{l-1},\gamma_l  \dots, \gamma_{n-1}}&= H_{i, 2\gamma_{l-1}, \gamma_{l}  \dots, \gamma_{n-1}}.
	\end{align*}
We will show that replacing $d_{i,\gamma_{l-1}, \dots, \gamma_{n-1}}$  by $e_{i,\gamma_{l-1}, \dots, \gamma_{n-1}}$, $\bar{b}_{l-1, \gamma_{l-1}}$ by $\bar f_{l-1,\gamma_{l-1}}$, $H_{i,\gamma_{l-1}, \dots, \gamma_{n-1}}$ by $K_{i,\gamma_{l-1}, \dots, \gamma_{n-1}}$ and restricting to the first countably many elements gives the desired sequences for $l-1$.

Note first that 
\begin{align*}
	e_{i,\gamma_{l-1}, \dots, \gamma_{n-1}} \ & \in  \underset{\delta_{l-1} \leq  \gamma_{l-1},\ (\delta_{l}, \ldots, \delta_{n-1})\in \omega^{n-l} }{\bigcap_{t=1, \ldots, l-2;}}  G_{\M_{l-2} \bar {\bf c} \, \bar b_1 \ldots \bar b_{t -1} \bar b_{t+1} \ldots \bar{b}_{l-2}\bar b_{l-1, 2 \delta_{l-1}}, \bar b_{l, \delta_{l}} \dots \bar b_{n-1, \delta_{n-1}}}^{00} & ( \ast \ast)
\end{align*}
by $(9)$.

Now let $(j, \gamma_{l-1}, \dots, \gamma_{n-1}) \neq (i, \delta_{l-1}, \dots, \delta_{n-1})$. We consider two cases:

\noindent {\underline{Case 1:} $ \gamma_{l-1} = \delta_{l-1}$. 

In this case $(j, \gamma_{l}, \dots, \gamma_{n-1}) \neq (i, \delta_{l}, \dots, \delta_{n-1})$. Thus 
$$ d_{i,2\gamma_{l-1}, \gamma_{l},  \dots, \gamma_{n-1} }\in H_{j,2\gamma_{l-1},\delta_{l},  \dots, \delta_{n-1}}\mbox{ by }(6)$$ 
and  
$$d_{i,2\gamma_{l-1}+1, \gamma_{l}, \dots, \gamma_{n-1}}\in H_{j,2\gamma_{l-1},\delta_{l},  \dots, \delta_{n-1}}\mbox{ by }(5).$$
So
\begin{enumerate}
\setcounter{enumi}{9}
\item $e_{i,\gamma_{l-1},  \dots, \gamma_{n-1} }= d_{i,2\gamma_{l-1}, \gamma_{l},  \dots, \gamma_{n-1} }^{-1}
d_{i,2\gamma_{l-1}+1, \gamma_{l} \dots, \gamma_{n-1}} \in H_{j,2\gamma_{l-1},\delta_{l},  \dots, \delta_{n-1}} = K_{j,\delta_{l-1},\delta_{l},  \dots, \delta_{n-1}} $,
\end{enumerate}
and in particular
\begin{enumerate}
\setcounter{enumi}{10}
\item $ \models \psi_{n-l+1} \left(e_{i,\gamma_{l-1}, \dots, \gamma_{n-1}};\bar c_j, \bar f_{l-1,\delta_{l-1}}, b_{l, \delta_{l}} \dots,  \bar b_{n-1,\delta_{n-1}} \right).$
\end{enumerate}

\noindent {\underline{Case 2:}  $ \gamma_{l-1} \neq \delta_{l-1}$.

If $\delta_{l-1} < \gamma_{l-1}$, then $e_{i,\gamma_{l-1},  \dots, \gamma_{n-1} }= d_{i,2\gamma_{l-1}, \gamma_{l},  \dots, \gamma_{n-1} }^{-1}
d_{i,2\gamma_{l-1}+1, \gamma_{l} \dots, \gamma_{n-1}} \in H_{j,2\delta_{l-1},\delta_{l},  \dots, \delta_{n-1}}$ by (5). So without loss of generality $\gamma_{l-1} < \delta_{l-1}$. 
Then $(\dagger \dagger)$ implies in particular that 
 $$e_{i,\gamma_{l-1}, \dots, \gamma_{n-1}} = d_{i,2\gamma_{l-1}+1, \gamma_{l},  \dots,  \gamma_{n-1} }^{-1} d_{i,2\gamma_{l-1}, \gamma_{l}  \dots,  \gamma_{n-1}} \in H_{j,2\delta_{l-1}, \delta_{l}, \dots, \delta_{n-1}}$$
 and
 $$e_{i,\gamma_{l-1}, \dots, \gamma_{n-1}} \  \in  \underset{\delta_{l-1} >  \gamma_{l-1},\ (\delta_{l}, \dots, \delta_{n-1})\in \omega^{n-l} }{\bigcap_{t=1, \dots, l-2;}}  G_{\M_{l-2} \bar{\bf c} \, \bar b_1 \dots \bar{b}_{t-1} \bar{b}_{t+1} \ldots \bar{b}_{l-2} \bar b_{l-1, 2 \delta_{l-1}} \bar b_{l, \delta_{l-1}} \dots \bar b_{n-1, \delta_{n-1}}}^{00}.$$

Together with $(\ast \ast)$, this gives
\begin{enumerate}
\setcounter{enumi}{11}
\item $e_{i,\gamma_{l-1}, \dots, \gamma_{n-1}} \  \in  \underset{(\delta_{l-1}, \dots, \delta_{n-1})\in \omega \times \omega^{n-l} }{\bigcap_{t=1, \dots, l-2;}}  G_{\M_{l-2} \bar {\bf c} \, \bar b_1 \ldots \bar{b}_{t-1} \bar{b}_{t+1} \ldots \bar{b}_{l-2} \bar f_{l-1, \delta_{l-1}} \bar b_{l, \delta_{l-1}} \dots \bar b_{n-1, \delta_{n-1}}}^{00}.$
\end{enumerate}
On the other hand, for given $(j, \gamma_{l-1}, \dots, \gamma_{n-1}) \in \omega^{n-l+2}$,  by (7) we have
$$d_{i,2\gamma_{l-1}, \gamma_{l}  \dots,  \gamma_{n-1}} \not \in H_{i,2\gamma_{l-1}, \gamma_{l}  \dots,  \gamma_{n-1}},$$ 
and by (5) we  have 
$$d_{i,2\gamma_l+1,  \gamma_{l}  \dots,  \gamma_{n-1}} \in H_{i,2\gamma_{l-1},  \gamma_{l},  \dots,  \gamma_{n-1}}.$$ 
Hence
\begin{enumerate}
\setcounter{enumi}{12}
\item $e_{i,\gamma_{l-1}, \dots, \gamma_{n-1} }= d_{i,2\gamma_{l-1}, \gamma_{l}  \dots,  \gamma_{n-1}}^{-1}d_{i,2\gamma_{l-1}+1,  \gamma_{l}  \dots,  \gamma_{n-1}} \not \in H_{i, 2\gamma_{l-1}, \gamma_{l}  \dots,  \gamma_{n-1}} =K_{i,\gamma_{l-1}, \dots, \gamma_{n-1}}.$
\end{enumerate}
	Suppose towards a contradiction that 
	$$ \models \psi_{n-l+1}(e_{i,\gamma_{l-1}, \dots, \gamma_{n-1}};\bar c_i, \bar f_{l-1,\gamma_{l-1}}, b_{l, \gamma_{l}} \dots,  \bar b_{n-1,\gamma_{n-1}}).$$ 
	By $(5)$, we also have that 
	$$\models \psi_{n-l+1}(d_{i,2\gamma_{l-1}+1, \gamma_l,  \dots, \gamma_{n-1}+1}; \bar c_{i},   \bar f_{l-1,\gamma_{l-1}}, \bar b_{l, \gamma_l} \dots,  \bar b_{n-1,\gamma_{n-1}}).$$ 
	Since $d_{i, 2\gamma_{l-1}, \dots, \gamma_{n-1}}= d_{i,2\gamma_{l-1}+1, \gamma_l  \dots, \gamma_{n-1}}e_{i,\gamma_{l-1}, \dots, \gamma_{n-1}}^{-1}$, using $(8)$ we conclude that 
	\begin{align*}\models \  & \psi_{n-l}(d_{i,2\gamma_{l-1}, \gamma_l \dots, \gamma_{n-1}}; \bar c_{i}, \bar f_{l-1,\gamma_{l-1}}, \bar b_{l,\gamma_l} \dots, \bar b_{n-1,\gamma_{n-1}})\\
		&= \psi_{n-l}(d_{i,2\gamma_{l-1}, \gamma_l \dots, \gamma_{n-1}}; \bar c_{i}, \bar b_{l-1,2\gamma_{l-1}}, \dots, \bar b_{n-1,\gamma_{n-1}}),
\end{align*}
 contradicting $(7)$. Thus 
 
\begin{enumerate}
\setcounter{enumi}{13}
\item $ \not \models \psi_{n-l+1}(e_{i,\gamma_{l-1}, \dots, \gamma_{n-1}};\bar c_i, \bar f_{l-1,\gamma_{l-1}}, b_{l, \gamma_{l}} \dots,  \bar b_{n-1,\gamma_{n-1}}).$
\end{enumerate}

Now, for $l-1$, we obtain $(\dagger_1)$ from $(10)$ and $(13)$, $(\dagger_2)$ from $(11)$ and $(14)$, $(\dagger_3)$ from $(12)$, and $(\dagger_4)$ from $(8)$ by replacing $e_{i,\gamma_{l-1}, \dots, \gamma_{n-1}}$ by $d_{i,\gamma_{l-1}, \dots, \gamma_{n-1}}$, $\bar f_{l-1,\gamma_{l-1}}$ by $\bar{b}_{l-1, \gamma_{l-1}}$, and  $K_{i,\gamma_{l-1}, \dots, \gamma_{n-1}}$ by $H_{i,\gamma_{l-1}, \dots, \gamma_{n-1}}$. This finishes the proof of the claim.
\qed$_{\operatorname{claim}}$
\begin{claim}\label{cla: group shattering}
For any $I \subset (\omega)^n$ there is some $d \in G$ such that 
$$\models \psi_{n+1}(d;\bar c_i,b_{1,\gamma_1}, \dots, \bar b_{n-1,\gamma_{n-1}})$$
 if and only if $(i, \gamma_1, \dots, \gamma_{n-1}) \notin I$.
\end{claim}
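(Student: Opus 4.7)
The plan is to produce $d$ as an explicit finite product of the elements $d_{j,\delta_1,\dots,\delta_{n-1}}$ built in Claim \ref{claim G00main}, and then pass to arbitrary $I$ by compactness in $\C$.

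First, I would reduce to the case of finite $I$. The partial type in $d$ asserting (i) $d \in G$, (ii) $\psi_{n+1}(d;\bar c_i,\bar b_{1,\gamma_1},\dots,\bar b_{n-1,\gamma_{n-1}})$ for each $(i,\gamma_1,\dots,\gamma_{n-1}) \notin I$, and (iii) its negation for each $(i,\gamma_1,\dots,\gamma_{n-1}) \in I$, is realized in $\C$ by saturation as soon as it is finitely consistent. Since every finite subtype mentions only finitely many indices in $\omega^n$, it is enough to produce, for each finite $I' \subset \omega^n$, a single element $d_{I'} \in G$ shattering those indices with the pattern $I' \cap I$.

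Second, given such a finite set $I'$ of indices, I set
\[
d := \prod_{(j,\delta_1,\dots,\delta_{n-1}) \in I' \cap I} d_{j,\delta_1,\dots,\delta_{n-1}}
\]
for some fixed ordering; as a product of elements of $G$ this lies in $G$. Fix an index $(i,\gamma_1,\dots,\gamma_{n-1}) \in I'$ and write $H := H_{i,\gamma_1,\dots,\gamma_{n-1}}$. If $(i,\gamma_1,\dots,\gamma_{n-1}) \notin I$, then by $(\dagger_1)$ every factor of $d$ lies in $H$, hence $d \in H$, and in particular $\psi_{n+1}(d;\bar c_i,\bar b_{1,\gamma_1},\dots,\bar b_{n-1,\gamma_{n-1}})$ holds as desired.

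Third, for the remaining case $(i,\gamma_1,\dots,\gamma_{n-1}) \in I \cap I'$, I decompose $d = P_1 \cdot d_{i,\gamma_1,\dots,\gamma_{n-1}} \cdot P_2$, with $P_1, P_2$ the sub-products of the other factors. By $(\dagger_1)$ all these other factors lie in $H$, hence so do $P_1,P_2$ and their inverses, and consequently $P_1^{-1}$ and $P_2^{-1}$ satisfy $\psi_m$ for \emph{every} $m$. Assuming toward a contradiction that $\psi_{n+1}(d;\bar c_i,\bar b_{1,\gamma_1},\dots,\bar b_{n-1,\gamma_{n-1}})$ holds, one application of $(\dagger_4)$ to $P_1^{-1}$ and $d$ yields $\psi_n(P_1^{-1} d; \dots)$, and a second application to $P_1^{-1} d$ and $P_2^{-1}$ yields $\psi_{n-1}(P_1^{-1} d P_2^{-1}; \dots) = \psi_{n-1}(d_{i,\gamma_1,\dots,\gamma_{n-1}}; \dots)$, directly contradicting $(\dagger_2)$.

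The main (minor) subtlety I expect is the bookkeeping of formula indices in this last step: a priori, expanding a product of $|I'|$ factors via $(\dagger_4)$ would cost $|I'|$ levels of $\psi_m$, which would preclude using a single fixed formula $\psi_{n+1}$ uniformly in $I$. The trick making the whole claim work is that the ``interfering'' factors all lie in the type-definable group $H$, and so satisfy $\psi_m$ for every $m$ simultaneously; hence only two applications of $(\dagger_4)$ are ever needed to isolate the single factor $d_{i,\gamma_1,\dots,\gamma_{n-1}}$, regardless of $|I'|$, dropping the level from $\psi_{n+1}$ down to $\psi_{n-1}$.
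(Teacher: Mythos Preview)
Your proposal is correct and follows essentially the same route as the paper: reduce to finite $I$ by compactness, take $d$ to be the product of the $d_{j,\delta_1,\dots,\delta_{n-1}}$ over the finite set, use membership of the side factors in the type-definable subgroup $H$ to conclude they satisfy every $\psi_m$, and apply the coset condition $(\dagger_4)$ twice to drop from $\psi_{n+1}$ to $\psi_{n-1}$ and contradict $(\dagger_2)$. The paper phrases the last step in contrapositive form (from $\neg\psi_{n-1}$ up to $\neg\psi_{n+1}$), but the content is identical, and your remark that only two applications of $(\dagger_4)$ are needed regardless of $|I'|$ is exactly the point.
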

\proof
Suppose first that $I$ is finite and let $((i_l, \gamma_{1,l}, \dots, \gamma_{n-1, l}): l\leq s)$ be an enumeration of the elements in $I$. Define $d = d_{i_0,\gamma_{1,0}, \dots, \gamma_{n-1, 0}}\cdot \ldots \cdot d_{i_s,\gamma_{1,s}, \dots, \gamma_{n-1,s}}$. 

If $(j,\delta_{1}, \dots, \delta_{n-1})\notin I$, then $d_{i_l,\gamma_{1,l}, \dots, \gamma_{n-1, l}}\in H_{j,\delta_{1}, \dots, \delta_{n-1}}$ for all $l \leq s $ by Claim \ref{claim G00main}, and hence $d \in H_{j,\delta_{1}, \dots, \delta_{n-1}}$ and in particular $\models \psi_{n+1}(d;\bar c_j, \bar b_{1,\delta_{1}}, \dots, \bar b_{n-1,\delta_{n-1}})$.

On the other hand, consider $(i_l, \gamma_{1,l}, \dots, \gamma_{n-1, l}) \in I$. Let $$e_1= d_{i_0,\gamma_{1,0}, \dots, \gamma_{n-1, 0}}\cdot \ldots \cdot d_{i_{l-1},\gamma_{1,l-1}, \dots, \gamma_{n-1, l-1}}$$ and $$e_2 = d_{i_{l},\gamma_{1,l+1}, \dots, \gamma_{n-1, l+1}}\cdot \ldots \cdot d_{i_s,\gamma_{1,s}, \dots, \gamma_{n-1, s}}.$$  Then $d_{i_l,\gamma_{1,l}, \dots, \gamma_{n-1, l}}= e_1^{-1} d e_2^{-1}$. Observe that
\begin{gather*}
	\models \neg \psi_{n-1}(d_{i_l,\gamma_{1,l}, \dots, \gamma_{n-1, l}}, \bar c_{i_l},b_{1,\gamma_{1,l}}, \dots, \bar b_{n-1,\gamma_{n-1,l}}),\\
	\models \psi_{n}(e_2^{-1}, \bar c_{i_l},b_{1,\gamma_{1,l}}, \dots, \bar b_{n-1,\gamma_{n-1,l}}) \textrm{ and}\\
	\models \psi_{n+1}(e_1^{-1}, \bar c_{i_l},b_{1,\gamma_{1,l}}, \dots, \bar b_{n-1,\gamma_{n-1,l}})
\end{gather*}
by Claim \ref{claim G00main}. Using $(8)$, we conclude that
\begin{align*}
	\models &\ \neg \psi_{n-1}(d_{i_l,\gamma_{1,l}, \dots, \gamma_{n-1, l}}, \bar c_{i_l},b_{1,\gamma_{1,l}}, \dots, \bar b_{n-1,\gamma_{n-1,l}})\\
 \rightarrow \ \models & \ \neg \psi_{n}(e_1^{-1}d, \bar c_{i_l},b_{1,\gamma_{1,l}}, \dots, \bar b_{n-1,\gamma_{n-1,l}})\\
  \rightarrow \ \models& \  \neg \psi_{n+1}(d, \bar c_{i_l},b_{1,\gamma_{1,l}}, \dots, \bar b_{n-1,\gamma_{n-1,l}}). \end{align*}
 
The claim  follows by compactness.
\qed$_{\operatorname{claim}}$

Finally, Claim \ref{cla: group shattering} contradicts $n$-dependence of $\psi_{n+1}$, which finishes the proof.
\qed

\section{2-dependence for compositions of dependent relations and binary functions}\label{sec: functions preserve 2-dep}

The aim of this section is prove the following \emph{Composition Lemma} (Theorem \ref{thm: functions into stable are 2-dep } below): a composition of  a relation (of any arity) definable in a model of a dependent theory with arbitrary binary functions is $2$-dependent. 
This result is crucial in our proof of $2$-dependence of non-degenerate bilinear forms over dependent fields in Section \ref{sec: Granger}. Towards this purpose we first develop a general type-counting criterion for $2$-dependent theories in Section \ref{sec: counting types}, and then apply it along with the set-theoretic absoluteness to deduce   the Composition Lemma in Section \ref{sec: Comp Lemma}.

\subsection{Characterization of $2$-dependence by a type-counting criterion}\label{sec: counting types}

We first recall a type-counting criterion characterizing dependent theories. For the following two facts see e.g. \cite{chernikov2016number} or \cite[Section 6]{chernikov2016non} and references there. We recall that for an infinite $\kappa$, $\ded \kappa$ is the supremum of the number of Dedekind cuts among all linear orders of cardinality $\kappa$. 

\begin{fact}(Shelah)\label{fac: NIP by counting types} Let $T$ be a theory in a countable language, and for an infinite cardinal $\kappa$, let $f_T(\kappa) := \operatorname{sup}\{|S_1(M)|: |M|=\kappa, M \models T\}$.
\begin{enumerate}
\item If $T$ is dependent, then $f_T(\kappa) \leq (\ded \kappa)^{\aleph_0}$ for all infinite cardinals $\kappa$.
\item If $T$ is not dependent, then $f_T(\kappa) = 2^\kappa$ for all infinite cardinals $\kappa$.
\end{enumerate}
\end{fact}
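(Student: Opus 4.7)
My plan is to split the proof into its two halves, treating (2) first (easy) and (1) second (the substantive content). For (2), I would first invoke the classical reduction to singletons (a special case of Theorem \ref{thm: better reduction to singletons} at $n=1$) to obtain an IP formula $\varphi(x;y)$ with $|x|=1$. By definition of IP, in the monster model there exist $(a_i)_{i<\kappa}$ and $(b_s)_{s \subseteq \kappa}$ with $\models \varphi(b_s; a_i) \iff i \in s$. Using downward Löwenheim--Skolem in the countable language, I would fix $M \models T$ of cardinality $\kappa$ containing $\{a_i : i < \kappa\}$; the map $s \mapsto \tp(b_s/M)$ is then injective, giving $|S_1(M)| \geq 2^{\kappa}$, while $|S_1(M)| \leq 2^{|M|} = 2^{\kappa}$ is trivial.

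For (1), my first step is to reduce formula by formula: since $\mathcal{L}$ is countable, $|S_1(M)| \leq \prod_{\varphi(x;y) \in \mathcal{L}} |S_\varphi(M)|$, so it suffices to prove $|S_\varphi(M)| \leq \ded(\kappa)$ for each dependent formula $\varphi(x;y)$ with $|x|=1$ and each model $M$ of size $\kappa$, and then take a countable product to obtain $|S_1(M)| \leq (\ded \kappa)^{\aleph_0}$. To bound $|S_\varphi(M)|$, I would invoke the standard alternation characterization of NIP formulas: there exists $n=n(\varphi)$ such that for every indiscernible sequence $(c_i)_{i \in I}$ of $|y|$-tuples and every singleton $a$, the truth value of $\varphi(a;c_i)$ alternates at most $n$ times along $I$. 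Using this, I would encode each $p(x) \in S_\varphi(M)$ by a bounded tuple of Dedekind cuts in linear orders of size $\leq \kappa$ obtained by Ramsey-style extraction inside $M^{|y|}$; since a bounded product of $\ded(\kappa)$-many objects remains of size $\ded(\kappa)$, this yields $|S_\varphi(M)| \leq \ded(\kappa)$.

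The main obstacle will be making the ``encoding by cuts'' step fully precise, since $M^{|y|}$ carries no canonical linear order. Shelah's original route goes through a careful analysis of maximal indiscernible sequences inside $M$ and finite Boolean combinations of their induced cuts; a cleaner modern route uses honest definitions (or, equivalently, approximation of $p$ by externally definable sets), which canonically attach to each $\varphi$-type a finite collection of cuts in definable linear orders on subsets of $M$. Either way, careful bookkeeping is needed to absorb the auxiliary factors (the number of formulas parameterizing the linear orders, the bounded number of cuts per type) into the clean $\ded(\kappa)$ bound, and to handle the passage from multivariable $y$ to one-dimensional cuts via iterated indiscernible extraction.
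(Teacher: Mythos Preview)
The paper does not prove this statement: it is recorded as a \emph{Fact} attributed to Shelah, with a pointer to \cite{chernikov2016number} and \cite[Section 6]{chernikov2016non} for references. So there is no in-paper proof to compare against.

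Your outline is essentially the standard argument, and the high-level strategy for both parts is correct. For (2) the only small omission is that the definition of IP gives shattering of finite sets; passing to $(a_i)_{i<\kappa}$ and $(b_s)_{s\subseteq\kappa}$ requires a compactness step you did not state explicitly. For (1) your reduction to the formula-by-formula bound $|S_\varphi(M)|\leq\ded\kappa$ and the countable product is exactly the classical decomposition. You are also right that the substantive content is the bound on $|S_\varphi(M)|$, and you correctly identify the two standard routes (Shelah's maximal indiscernible sequences versus honest definitions/uniform approximations). However, the ``encoding by cuts'' paragraph as written is only a sketch: the phrase ``Ramsey-style extraction inside $M^{|y|}$'' is not enough to produce, for each $\varphi$-type, a \emph{fixed finite} family of linear orders of size $\leq\kappa$ and a bounded tuple of cuts determining the type. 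In Shelah's original proof this goes through a careful induction on the VC dimension (or alternation rank) of $\varphi$, and in the honest-definitions route it relies on the existence of uniform honest definitions over the Shelah expansion; either way the details are where all the work lies, and your proposal stops short of them. Since the paper itself treats this as a black-box citation, your level of detail is arguably already more than is required here.
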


In a model of ZFC satisfying the Generalized Continuum Hypothesis, $\ded \kappa = 2^\kappa$ for all infinite cardinals $\kappa$. However, there are models of ZFC in which these two functions are different:

\begin{fact} (Mitchell \cite{mitchell1972aronszajn})\label{fac: Mitchell}
For every cardinal $\kappa$ of uncountable cofinality, there exists a cardinal preserving Cohen extension such that $(\ded \kappa)^{\aleph_0} < 2^\kappa$.
\end{fact}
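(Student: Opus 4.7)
The plan is to realize the statement via a standard Cohen forcing construction. Start with a ground model $V$ satisfying GCH, so that $(\ded \kappa)^{\aleph_0} = \kappa^+$ for every infinite $\kappa$ in $V$. Fix $\kappa$ with $\operatorname{cf}(\kappa) > \aleph_0$ and a cardinal $\lambda > \kappa^+$ with $\lambda^{\aleph_0} = \lambda$. Force over $V$ with the Cohen poset $\mathbb{P} = \operatorname{Fn}(\lambda, 2)$ of finite partial functions $\lambda \to 2$, ordered by reverse inclusion. Since $\mathbb{P}$ is ccc, all cardinals and cofinalities of $V$ are preserved in the generic extension $V[G]$, and the standard Cohen arithmetic gives $(2^\kappa)^{V[G]} \geq \lambda$.

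The core task is to bound $(\ded \kappa)^{V[G]}$ by $\kappa^+$. Given any linear order $L$ of cardinality $\kappa$ in $V[G]$, the ccc property of $\mathbb{P}$ implies that every $\mathbb{P}$-name for an element of $L$ mentions only countably many conditions. Since $\operatorname{cf}(\kappa) > \aleph_0$, one can glue these $\kappa$-many countable pieces into a single $A \subseteq \lambda$ of size $\kappa$ with $L \in V[G \restriction A]$; the same argument handles names for individual Dedekind cuts of $L$. A mutual-genericity/homogeneity argument specific to Cohen forcing then shows that the fragment $G \setminus (G \restriction A)$ does not create genuinely new cuts of $L$: any cut in $V[G]$ coincides with one already present in $V[G \restriction A]$. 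Since $\operatorname{Fn}(A, 2)$ has size $\kappa$, GCH in $V$ yields $(2^\kappa)^{V[G \restriction A]} = \kappa^+$, so $L$ has at most $\kappa^+$ cuts in $V[G]$. Taking a supremum over linear orders of cardinality $\kappa$ yields $(\ded \kappa)^{V[G]} \leq \kappa^+$ and hence $((\ded \kappa)^{\aleph_0})^{V[G]} = \kappa^+ < \lambda \leq (2^\kappa)^{V[G]}$, as desired.

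The hard part will be the homogeneity/mutual-genericity step that pins all cuts of $L$ into a single intermediate extension $V[G \restriction A]$. The naive bound — summing the number of cuts of $L$ present in each $V[G \restriction A']$ over all $A' \in [\lambda]^\kappa$ containing $A$ — yields only the trivial estimate $\ded \kappa \leq \lambda$, which is insufficient. Overcoming this requires the particular structure of Cohen forcing: that disjoint-support conditions commute, and that conditions with support disjoint from a name for $L$ cannot decide new Dedekind cuts of $L$. This is precisely where $\operatorname{cf}(\kappa) > \aleph_0$ enters essentially, since for $\kappa$ of countable cofinality the countable supports of names for points of $L$ can spread out cofinally in $\kappa$, defeating any uniform gluing argument. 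Once this homogeneity step is executed, the remaining ingredients (cardinal preservation by ccc, Cohen cardinal arithmetic) are routine.
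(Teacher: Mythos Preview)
The paper does not prove this fact; it is cited from Mitchell and used as a black box, so there is no in-paper argument to compare against.

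Your proposal contains a genuine error. You force with $\mathbb{P} = \operatorname{Fn}(\lambda, 2)$ over a GCH model and claim $(\ded \kappa)^{V[G]} \leq \kappa^+$. But $\mathbb{P}$ adds $\lambda$ Cohen reals, so $(2^{\aleph_0})^{V[G]} = \lambda$; since $\ded \aleph_0 = 2^{\aleph_0}$ in every model (the order $\mathbb{Q}$ has continuum-many cuts) and $\ded$ is monotone in its argument, we get
\[
(\ded \kappa)^{V[G]} \geq (\ded \aleph_0)^{V[G]} = \lambda > \kappa^+.
\]
The step that fails is the claim that every cut of $L$ in $V[G]$ already lies in $V[G \restriction A]$. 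Take $L$ to be any order of size $\kappa$ containing a copy of $\mathbb{Q}$, say $\mathbb{Q}$ followed by a well-order of type $\kappa$; then every Cohen real coded by a coordinate outside $A$ determines a new cut of that copy of $\mathbb{Q}$, hence a new cut of $L$ not present in $V[G \restriction A]$. Homogeneity of Cohen forcing does not rescue this: the new cuts are pairwise distinct and there are $\lambda$ of them.

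The argument can be repaired, but not with the target bound you set. One must aim only for $(\ded \kappa)^{V[G]} \leq \lambda$ and simultaneously arrange $(2^\kappa)^{V[G]} > \lambda$; by K\"onig the latter forces $\aleph_0 < \operatorname{cf}(\lambda) \leq \kappa$, and this is where the hypothesis $\operatorname{cf}(\kappa) > \aleph_0$ actually enters (not in the gluing of name-supports that you identified as the crux). Proving the former---that in the Cohen-real extension no linear order of size $\kappa$ acquires more than $\lambda$ cuts---is the substantive combinatorial point, and it requires a finer analysis than the homogeneity reduction you sketched.
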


The combination of Facts \ref{fac: NIP by counting types} and \ref{fac: Mitchell} tells us that it is possible to detect whether a theory is dependent by counting types 
using that dependence of a formula is a set-theoretically absolute property.

We want to provide a formula-free characterization of $n$-dependence of a theory
which does not include any assumption of indiscernibility of the witnessing
sequence over the additional parameters (unlike the characterization in Proposition \ref{prop: char of NIP_k by preserving indisc} where additional indiscernibility over the parameter needs to be assumed). We achieve it here for $2$-dependence by providing an analog of Fact \ref{fac: NIP by counting types} in this case (which characterizes $2$-dependence of a theory when working in a model of ZFC with $\ded(\kappa) < 2^{\kappa}$ for some cardinal $\kappa \geq |T|$).

Given a set $X$ and a family $\mathcal{F}$ of subsets of $X$ and $Y \subseteq X$, one says that \emph{$Y$ is shattered by $\mathcal{F}$} if for every $Z \subseteq Y$ there exists some $S \in \mathcal{F}$ such that $Z = Y \cap S$. In what follows, $T$ is a complete theory in a language $\mathcal{L}$ and we work in a monster model $\mathbb{M} \models T$.

\begin{lemma}
\label{lem: basic no shattering}Let $\varphi\left(x;y_{1},y_{2}\right) \in \mathcal{L}$
be $2$-dependent. Then there is some $n\in\mathbb{N}$ such that
for any $c\in\mathbb{M}_{x}$, any $I\subseteq\mathbb{M}_{y_{1}},J\subseteq\mathbb{M}_{y_{2}}$
endless mutually indiscernible sequences, and any $A\subseteq I$
of size $>n$ there is some $b_{A}\in J$ such that $A$ cannot be
shattered by the family $\left\{ \varphi\left(c,y_{1},b\right):b\in J,b>b_{A}\right\} $.
\end{lemma}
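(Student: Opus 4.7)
My strategy is to prove the contrapositive, producing from the failure of the conclusion a witness to the failure of $2$-dependence of $\varphi$ via Proposition~\ref{prop: indisc witness to IPn}. Suppose for contradiction that for every $n \in \mathbb{N}$ there are $c_n \in \mathbb{M}_x$, endless mutually indiscernible sequences $I_n \subseteq \mathbb{M}_{y_1}$, $J_n \subseteq \mathbb{M}_{y_2}$, and $A_n \subseteq I_n$ with $|A_n| > n$, such that for every $b \in J_n$ the family $\{\varphi(c_n, y_1, b') : b' \in J_n, b' > b\}$ shatters $A_n$. A routine compactness argument (amalgamating these witnesses as $n \to \infty$) then produces a single tuple $c \in \mathbb{M}_x$ and mutually $\emptyset$-indiscernible sequences $(a_\alpha)_{\alpha \in L}$ and $(b_\beta)_{\beta \in L}$ indexed by a countable dense linear order without endpoints $L$, satisfying the following \emph{tail-shattering} property: for every finite $F \subseteq L$, every $\eta \colon F \to \{0,1\}$ and every $\beta_0 \in L$, there is some $\beta^* > \beta_0$ in $L$ with $\models \varphi(c, a_\alpha, b_{\beta^*})^{\eta(\alpha)}$ for all $\alpha \in F$ (writing $\psi^1 := \psi$ and $\psi^0 := \neg\psi$).

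Next, I build a source array $(e_g)_{g \in G_{2,p}}$ in $\mathbb{M}$ whose $\varphi(c, -, -)$-edge structure realizes $R_2^{G_{2,p}}$. Fix order isomorphisms $\iota_i \colon P_i^{G_{2,p}} \to L$ for $i = 1, 2$, which exist since $(P_i^{G_{2,p}}, <)$ is a countable dense linear order without endpoints by Fact~\ref{fac: Gnp}(2). For $g \in P_1^{G_{2,p}}$ set $e_g := a_{\iota_1(g)}$. For each $g \in P_2^{G_{2,p}}$, let $\chi_g(g') := 1$ if $G_{2,p} \models R_2(g', g)$ and $\chi_g(g') := 0$ otherwise, and consider the partial type
\[ \Sigma_g(y_2) \;:=\; \bigl\{ \varphi(c, e_{g'}, y_2)^{\chi_g(g')} \;:\; g' \in P_1^{G_{2,p}} \bigr\}. \]
Finite satisfiability of $\Sigma_g$ follows from tail-shattering applied to $F := \iota_1(F_0)$ and $\eta := \chi_g \circ \iota_1^{-1}|_F$, for $F_0$ the finite set of $g'$'s occurring in a given finite fragment. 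Hence $\Sigma_g$ has a realization $e_g \in \mathbb{M}_{y_2}$ by saturation of $\mathbb{M}$, and the resulting array $(e_g)_{g \in G_{2,p}}$ satisfies $\models \varphi(c, e_{g_1}, e_{g_2}) \Longleftrightarrow G_{2,p} \models R_2(g_1, g_2)$ for all $g_1 \in P_1^{G_{2,p}}$, $g_2 \in P_2^{G_{2,p}}$.

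Finally, Fact~\ref{fac: random hypergraph indiscernibles exist}(2) applied with $C := \{c\}$ yields an array $(e'_g)_{g \in G_{2,p}}$ that is $G_{2,p}$-indiscernible over $c$ and based on $(e_g)$ over $c$. Invoking the ``based on'' property with $\Delta := \{\varphi(c, y_1, y_2)\}$ on pairs $(g_1, g_2) \in P_1 \times P_2$ transfers the edge relation: $\models \varphi(c, e'_{g_1}, e'_{g_2}) \Longleftrightarrow G_{2,p} \models R_2(g_1, g_2)$. Since $G_{2,p}$-indiscernibility over $c$ implies $O_{2,p}$-indiscernibility over $\emptyset$ (as $O_{2,p}$ is a reduct of $G_{2,p}$), the pair $(c, (e'_g)_g)$ witnesses condition~(2) of Proposition~\ref{prop: indisc witness to IPn}, contradicting the assumed $2$-dependence of $\varphi$. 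The main obstacle is the initial compactness step: one must verify that ``tail-shattering of an $n$-element subset of $I_n$'' (for arbitrarily large $n$) can be amalgamated into a single consistent partial type expressing mutual indiscernibility together with cofinal realization of every finite $\varphi$-pattern on $I$; the subsequent source-array construction and Ramsey-type extraction via Fact~\ref{fac: random hypergraph indiscernibles exist} are then routine.
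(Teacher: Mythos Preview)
Your argument has a genuine gap at the final step. The claim that ``$G_{2,p}$-indiscernibility over $c$ implies $O_{2,p}$-indiscernibility over $\emptyset$ (as $O_{2,p}$ is a reduct of $G_{2,p}$)'' reverses the correct implication. Precisely because $\mathcal{L}^2_{\op}\subseteq\mathcal{L}^2_{\opg}$, quantifier-free $\mathcal{L}^2_{\op}$-types are \emph{coarser}: more index tuples share an $O_{2,p}$-type than share a $G_{2,p}$-type, so $O_{2,p}$-indiscernibility imposes \emph{more} constraints and is the stronger notion. Fact~\ref{fac: random hypergraph indiscernibles exist}(2) only delivers $G_{2,p}$-indiscernibility over $c$; condition~(2)(a) of Proposition~\ref{prop: indisc witness to IPn} also requires $O_{2,p}$-indiscernibility over $\emptyset$, which you have not established, and without it the proposition cannot be invoked. (This is not cosmetic: the entire content of Proposition~\ref{prop: char of NIP_k by preserving indisc} is that $O_{n,p}$-indiscernibility over a parameter follows from $G_{n,p}$-indiscernibility over that parameter \emph{together with} $O_{n,p}$-indiscernibility over $\emptyset$; the latter is an independent hypothesis, not a consequence.) Nor does the edge characterization alone rescue you, since the sequences witnessing a given pattern in $G_{2,p}$ vary with the pattern.

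The paper's route bypasses your steps 2--4 entirely and is much shorter. Once one has a single $c$ and mutually indiscernible $I,J$ such that for every $n$ some $n$-element subset of $I$ is shattered by every tail of $J$ (this is what your initial compactness, or equivalently the paper's closing compactness remark, provides), one derives $\IP_2$ directly from Definition~\ref{def: k-dependence}. Given any finite $D\subseteq I\times J$ with projections $a_1<\cdots<a_n$ in $I$ and $b_1<\cdots<b_m$ in $J$, take a tail-shattered set $A=\{a'_1<\cdots<a'_n\}\subseteq I$ and choose $b'_1<\cdots<b'_m\in J$ inductively so that $\models\varphi(c,a'_i,b'_j)\iff(a_i,b_j)\in D$. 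Mutual indiscernibility then furnishes an automorphism sending $(\bar a',\bar b')$ to $(\bar a,\bar b)$, and the image of $c$ is the required $c_D$. No $G_{2,p}$-indexed array, no Ramsey extraction, no Proposition~\ref{prop: indisc witness to IPn}.
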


\begin{proof}
Assume that $I,J$ are endless mutually indiscernible sequences and
$c$ is such that the conclusion is not satisfied for any $n\in\omega$.
Let $D\subseteq I\times J$ be any finite set. Let $a_{1}<\ldots<a_{n}$
and $b_{1}<\ldots<b_{m}$ list the projections of $D$ on $I$ and
$J$, respectively. By assumption, there is some $A\subseteq I$ of
size $n$ such that for any $b'\in J$, $A$ is shattered by the family
$\left\{ \varphi\left(c,y_{1},b\right):b\in J,b>b'\right\} $. List $A$
as $a_{1}'<\ldots<a_{n}'$. Then we can choose some $b_{1}'<\ldots<b_{m}'\in J$
such that $\models\varphi\left(c,a'_{i},b_{j}'\right)\iff\left(a_{i},b_{j}\right)\in D$.
As $I,J$ are mutually indiscernible, taking an automorphism of $\mathbb{M}$
sending $a_{i}'$ to $a_{i}$ and $b_{j}'$ to $b_{j}$, for all $1\leq i\leq n,1\leq j\leq m$,
$c$ is sent to some $c_{D}$ such that $\models\varphi\left(c_{D},a_{i},b_{j}\right)\iff\left(a_{i},b_{j}\right)\in D$.
This implies that $\varphi\left(x;y_{1},y_{2}\right)$ is not $2$-dependent,
a contradiction. Hence the conclusion holds for $c,I,J$ for some
$n$.

By compactness we conclude that $n$ can be chosen
depending only on $\varphi$ (and not on $I,J,c$).
\end{proof}
We will need the following fact (originally due to Shelah, with simplifications
by Adler and Casanovas, see e.g. \cite[Lemma 2.7.1]{ChernStability})
\begin{fact}
\label{fact: ded lemma}If $\kappa$ is an infinite cardinal, $\mathcal{F}\subseteq2^{\kappa}$
and $\left|\mathcal{F}\right|>\ded\kappa$, then for each $n\in\omega$
there is some $S\subseteq\kappa$ such that $\left|S\right|=n$ and
$\mathcal{F}\restriction S=2^{S}$.
\end{fact}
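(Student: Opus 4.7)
I would proceed by induction on $n$. The base case $n=0$ is immediate, since the empty set is shattered by any nonempty family, and $|\mathcal F|>\ded\kappa\geq 1$ forces $\mathcal F\neq\emptyset$. For the inductive step it is convenient to prove the contrapositive: if $\mathcal F\subseteq 2^\kappa$ shatters no set of size $n$, then $|\mathcal F|\leq\ded\kappa$. Granting this formulation, the lemma then follows by applying the contrapositive to $n+1$ in place of $n$.

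Assume inductively that the result holds for $n$, and suppose $\mathcal F$ shatters no $(n+1)$-element subset of $\kappa$. The strategy is to inject $\mathcal F$ into the set of Dedekind cuts of a linear order of cardinality at most $\kappa$. To this end, consider the tree
$$T := \{f \restriction \alpha : f \in \mathcal F,\ \alpha \leq \kappa\}$$
ordered by initial segment, so that elements of $\mathcal F$ correspond to the branches of $T$ of length $\kappa$. For each $\sigma\in T$ of length $<\kappa$ let $\mathcal F_\sigma := \{f \in \mathcal F : \sigma \subseteq f\}$, and call $\sigma$ \emph{branching} if $\sigma^{\frown} 0, \sigma^{\frown} 1 \in T$. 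The key point is that at any branching node $\sigma$ with $|\sigma|=\alpha$, if both $\mathcal F_{\sigma^\frown 0}$ and $\mathcal F_{\sigma^\frown 1}$ shattered some common $n$-element set $S\subseteq\kappa\setminus\{\alpha\}$, then $S\cup\{\alpha\}$ would be shattered in $\mathcal F$, contrary to hypothesis. Hence for every branching $\sigma$ at least one side $\mathcal F_{\sigma^\frown i}$ fails to shatter any $n$-set also shattered by the other side, which via the inductive hypothesis will let me control at least one of $|\mathcal F_{\sigma^\frown 0}|, |\mathcal F_{\sigma^\frown 1}|$ by $\ded\kappa$.

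I would then carry out a pruning argument: iterating the above at each branching node, one identifies a sub-tree $T^\ast\subseteq T$ of ``essential'' branching nodes, i.e.~those at which both children retain size $>\ded\kappa$, and argues $|T^\ast|\leq\kappa$. A Kleene--Brouwer-style linearization of $T^\ast$ then yields a linear order $L$ of cardinality $\leq\kappa$. Each $f\in\mathcal F$ is then encoded by a Dedekind cut of $L$ (reading off the left/right decisions $f$ makes at the essential branching nodes along its path), and elements of $\mathcal F$ not distinguished by this encoding all lie in a single subfamily $\mathcal F_\sigma$ of size $\leq\ded\kappa$ by the inductive hypothesis. Combining, $|\mathcal F|\leq |L|\cdot\ded\kappa\cdot\ded\kappa=\ded\kappa$.

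The main obstacle is the bookkeeping in the pruning step: one must argue that the ``essential'' branching nodes genuinely form a set of size $\leq\kappa$, and that the inductive hypothesis can be applied uniformly on both sides to produce a \emph{common} shattered $n$-set witnessing the contradiction (rather than merely one shattered $n$-set on each side separately). This is where the non-shattering hypothesis interacts nontrivially with the cardinal arithmetic of $\ded\kappa$, and where the Adler--Casanovas simplification of Shelah's original tree argument enters.
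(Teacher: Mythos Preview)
The paper does not prove this fact; it is stated with a citation (Shelah, with simplifications by Adler and Casanovas), so there is no in-paper argument to compare your proposal against.

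On the merits of your sketch: the tree-of-initial-segments setup and the branching observation (if both $\mathcal F_{\sigma^\frown 0}$ and $\mathcal F_{\sigma^\frown 1}$ shatter a common $n$-set $S$, then $\mathcal F_\sigma$ shatters $S\cup\{\alpha\}$) are correct and do appear in the standard proofs. But the step ``which via the inductive hypothesis will let me control at least one of $|\mathcal F_{\sigma^\frown 0}|, |\mathcal F_{\sigma^\frown 1}|$ by $\ded\kappa$'' is a non sequitur, and this is a genuine gap rather than bookkeeping. The inductive hypothesis only tells you that a family of size $>\ded\kappa$ shatters \emph{some} $n$-set; applied to both children when both are large, you obtain shattered $n$-sets $S_0$ and $S_1$ with no reason that $S_0=S_1$, hence no contradiction with the non-shattering of $(n{+}1)$-sets. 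Your pruning to ``essential'' nodes then has no force: nothing you have established bounds the number of branching nodes with both children of size $>\ded\kappa$, so the claim $|T^\ast|\leq\kappa$ is unsupported, and the Kleene--Brouwer encoding does not go through.

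The published arguments deploy the branching observation differently --- for instance, applying it to the family of those $f\in\mathcal F$ possessing a ``twin'' $g\in\mathcal F$ that agrees with $f$ off a fixed coordinate $\alpha$ and differs at $\alpha$ (this family genuinely has VC-dimension $<n$, so the inductive hypothesis applies to it directly) --- and then combine the resulting per-coordinate bounds with a separate cut-counting argument on a linear order of size $\leq\kappa$. I would recommend reading the cited source rather than trying to repair the tree-pruning line as written.
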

\begin{defn}\label{def: localized space of types}
Given sets $B\subseteq\mathbb{M}_{x}$, $A\subseteq\mathbb{M}_{y}$
and a formula $\varphi\left(x,y\right)\in\mathcal{L}$, we denote by
$S_{\varphi,B}\left(A\right)$ the set of all $\varphi$-types over $A$
realized in $B$, where by a $\varphi$-type over $A$ we mean a maximal consistent collection of formulas of the form $\varphi(x,a), \neg \varphi(x,a)$ with $a \in A$.
And $S_{B}\left(A\right)$ denotes the set of all complete
types over $A$ realized in $B$.	
\end{defn}

\begin{prop}
\label{prop: few types on a tail}Let $T$ be $2$-dependent, let
$\kappa\geq\left|T\right|$ be an infinite cardinal, and let $\lambda>\kappa$
be a regular cardinal. Then for any mutually indiscernible sequences
$I=\left(a_{i}:i\in\kappa\right),J=\left(b_{j}:j\in\lambda\right)$
of finite tuples and a finite tuple $c$, there is some $\beta\in\lambda$
such that 
$$\left|S_{J_{>\beta}}\left(I\times\{c\}\right)\right|\leq\left(\ded\kappa\right)^{\left|T\right|}.$$
\end{prop}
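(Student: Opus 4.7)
My plan is to establish the bound $|S_{J_{>\beta}}(I \times \{c\})| \leq (\ded \kappa)^{|T|}$ by bounding the number of $\varphi$-types for each $\mathcal{L}$-formula $\varphi$ separately and then multiplying. For a formula $\varphi(y; \bar u, z)$ with $y, \bar u, z$ of the sorts matching elements of $J$, $I$, and $c$ respectively, the $\varphi$-type of $b \in J$ over $I \times \{c\}$ is determined by $D^\varphi_b := \{\bar a \in I^{|\bar u|} : \models \varphi(b, \bar a, c)\}$; set $\mathcal{F}^\varphi_\beta := \{D^\varphi_b : b \in J_{>\beta}\}$. I claim that for each $\varphi$ there is some $\beta_\varphi < \lambda$ with $|\mathcal{F}^\varphi_{\beta_\varphi}| \leq \ded \kappa$. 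Then taking $\beta := \sup_\varphi \beta_\varphi$, which lies in $\lambda$ by regularity of $\lambda > |T|$ and there being at most $|T|$ many such formulas, monotonicity of $\mathcal{F}^\varphi_\beta$ in $\beta$ gives $|\mathcal{F}^\varphi_\beta| \leq \ded \kappa$ for every $\varphi$, and hence $|S_{J_{>\beta}}(I \times \{c\})| \leq \prod_\varphi |\mathcal{F}^\varphi_\beta| \leq (\ded \kappa)^{|T|}$.

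For the per-formula claim, fix $\varphi$ and set $k := |\bar u|$. The partition $\hat\varphi(z; \bar u, y) := \varphi(y; \bar u, z)$ is $2$-dependent by the assumption on $T$; let $n_\varphi$ be given by Lemma \ref{lem: basic no shattering} applied to $\hat\varphi$. Suppose for contradiction that $|\mathcal{F}^\varphi_\beta| > \ded \kappa$ for every $\beta < \lambda$. Then by Fact \ref{fact: ded lemma} applied to $\mathcal{F}^\varphi_\beta \subseteq 2^{I^k}$ (using $|I^k| = \kappa$), for each $\beta$ and each $N$ there is $S_\beta \subseteq I^k$ of size $N$ shattered by $\mathcal{F}^\varphi_\beta$. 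Choosing $N$ as a suitable Ramsey number and applying the finitary Ramsey theorem inside $S_\beta$, I extract a sub-family $A_\beta \subseteq S_\beta$ of size $n_\varphi + 1$ forming an indiscernible sub-sequence of $k$-tuples from $I$; since $I$ is mutually indiscernible with $J$, the sequence $A_\beta$ is automatically mutually indiscernible with $J$. Each $A_\beta$ is coded by an increasing tuple of at most $(n_\varphi + 1)k$ indices from $\kappa$, so takes only $\kappa < \lambda$ possible values; by regularity of $\lambda$ there exists a fixed $A^*$ with $A_\beta = A^*$ for cofinally many $\beta \in \lambda$.

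Extending $A^*$ to an endless indiscernible sequence $I^*$ of $k$-tuples mutually indiscernible with $J$ (routine, using that $|I| = \kappa$ leaves ample room), Lemma \ref{lem: basic no shattering} applied to $\hat\varphi$, $I^*$, $J$, $c$ with $A^* \subseteq I^*$ of size $> n_\varphi$ yields some $b_{A^*} \in J$ past which $A^*$ is not shattered by $\{\varphi(b; \bar u, c) : b > b_{A^*}\}$. Choosing $\beta \in \lambda$ beyond the $J$-index of $b_{A^*}$ with $A_\beta = A^*$ (possible by cofinal realization) gives a shattering of $A^*$ by $\mathcal{F}^\varphi_\beta \subseteq \{D^\varphi_b : b > b_{A^*}\}$, a contradiction. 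The main obstacle is the combinatorial extraction of a mutually-indiscernible sub-family of $k$-tuples from a shattered set inside $I^k$, since $I^k$ itself is not naturally an indiscernible sequence once $k > 1$; finitary Ramsey combined with the mutual indiscernibility of $I$ and $J$ handles this uniformly.
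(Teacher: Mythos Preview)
Your overall reduction --- bound each $\varphi$-type count separately, then take $\beta=\sup_\varphi\beta_\varphi$ --- matches the paper's proof. The substantive difference is that you allow $\bar u$ to pick out $k>1$ elements of $I$, whereas the paper only considers $\varphi(x;y_1,y_2)$ with $y_1$ matching a \emph{single} $a_\alpha$. In the paper's version the shattered set $S$ sits directly inside the indiscernible sequence $I$, so Lemma~\ref{lem: basic no shattering} applies immediately with no Ramsey extraction or extension step; the paper then tacitly identifies ``$S_{J_{>\beta}}(I\times\{c\})$'' with the product of these single-parameter $\varphi$-types (this identification is what the paper does, and it suffices for the applications downstream).

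Your $k>1$ route has a genuine gap at Step~7. Ramsey gives you a finite $A^*\subseteq I^k$ which is pattern-homogeneous (all increasing $m$-subtuples have the same order-pattern in $\kappa$ for each $m\leq n_\varphi+1$), but extending this to an endless indiscernible sequence $I^*$ mutually indiscernible with $J$ is \emph{not} routine. For instance, with $k=2$ and $A^*=\{(a_1,a_4),(a_2,a_5),(a_3,a_6)\}$ (all pairs interleaved: $i<i'<j<j'$), any $e_4$ respecting the pair-pattern with each $e_s$ needs a first coordinate strictly between $a_3$ and $a_4$, which need not exist in $I$ indexed by the ordinal $\kappa$; ``ample room in $\kappa$'' does not help here, since the room is in the wrong place. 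One can rescue the argument by first extending $I$ to some $\tilde I$ indexed by a dense linear order of size $\kappa$ (preserving mutual indiscernibility with $J$ by compactness), and then arguing that any pattern-homogeneous finite sequence in $\tilde I^k$ extends to an infinite one inside $\tilde I^k$ --- but this last step is itself nontrivial (you must show the resulting system of order constraints on the coordinates of $e_{n_\varphi+2}$ is consistent for the given coherent family of patterns) and should be argued, not asserted. If you are content with the paper's single-parameter reading of the statement, you can simply drop the $k>1$ case and the Ramsey/extension machinery entirely.
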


\begin{proof}
Let $I,J$ and $c$ be given. We will show that for each $\varphi\left(x;y_{1},y_{2}\right)\in\mathcal{L}$
there is some $\beta_{\varphi}\in\lambda$ such that $\left|S_{\varphi,J_{>\beta_{\varphi}}}\left(I\times\{c\}\right)\right|\leq\ded\kappa$.
This is enough, as then we can take any $\beta\in\lambda$ with $\beta>\beta_{\varphi}$
for all $\varphi\in\mathcal{L}$ (possible as $\lambda=\cof\left(\lambda\right)>\left|T\right|$),
and $\left|S_{J_{>\beta}}\left(I\times\{c\}\right)\right|\leq\left|\prod_{\varphi\in\mathcal{L}}S_{\varphi,J_{>\beta_{\varphi}}}\left(I\times\{c\}\right)\right|\leq\left(\ded\kappa\right)^{\left|T\right|}$.

So let $\varphi(x;y_1,y_2)\in\mathcal{L}$ be fixed, and assume that for any $\beta\in\lambda$,
$\left|S_{\varphi,J_{>\beta}}\left(I\times\{c\}\right)\right|>\ded\kappa$.
Then by Fact \ref{fact: ded lemma}, considering $\mathcal{F}=\left\{ f_{p}:p\in S_{\varphi,J_{>\beta}}\left(I\times\{c\}\right)\right\} $
(where $f_{p}\in2^{\kappa}$ is given by $f_{p}\left(\alpha\right)=1\iff\varphi\left(x;a_{\alpha},c\right)\in p$,
for all $\alpha\in\kappa$), for any $n\in\omega$ there is \emph{some}
$S\subseteq I$, $\left|S\right|=n$, such that $S$ is shattered
by the family $\left\{ \varphi\left(b_j;y_{1},c\right):j\in\lambda,j>\beta\right\} $.
Using regularity of $\lambda$, by transfinite induction we can choose 
a strictly increasing sequence $\left(\beta_{\alpha}:\alpha\in\lambda\right)$
with $\beta_{\alpha}\in\lambda$ such that for each $\alpha\in\lambda$
there is some $S_{\alpha}\subseteq I,\left|S_{\alpha}\right|=n$ shattered
by the family $\left\{ \varphi\left(b_j;y_{1},c\right):j\in\lambda,\beta_{\alpha}<j<\beta_{\alpha+1}\right\} $.
As $\lambda>\kappa=\kappa^{n}$ is regular, passing to a subsequence
we may assume that there is some $S\subseteq I,\left|S\right|=n$
such that $S_{\alpha}=S$ for all $\alpha\in\lambda$, i.e. this set
$S$ can be shattered arbitrarily far into the sequence. Now by Lemma
\ref{lem: basic no shattering}, this contradicts $2$-dependence
of $\varphi$ if we take $n$ large enough.
\end{proof}

\begin{lemma}\label{lem: asym rand graph}
\label{lem: extension exists}For any cardinal $\kappa$ and any regular
cardinal $\lambda\geq2^{\kappa}$ there is a bipartite graph $\mathcal{G}_{\kappa,\lambda}=\left(\kappa,\lambda,E\right)$ (where its parts are identified with the cardinals $\kappa$ and $\lambda$, and $E \subseteq \kappa\times \lambda$ is the edge relation)
satisfying the following: for any sets $A,A'\subseteq\kappa$ with
$A\cap A'=\emptyset$ and any $b\in\lambda$ there is some $b^{*}\in\lambda$,
$b^{*}>b$ satisfying $\bigwedge_{a\in A}E\left(a,b^{*}\right)\land\bigwedge_{a'\in A'}E\left(a',b^{*}\right)$.
\end{lemma}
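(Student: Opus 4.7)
The plan is to realise $\mathcal{G}_{\kappa,\lambda}$ as a bipartite graph in which every element $b\in\lambda$ is assigned a ``neighbourhood pattern'' from the collection
\[
P := \{(A,A') \in \mathcal{P}(\kappa)^{2} : A \cap A' = \emptyset\},
\]
and every pattern recurs cofinally often in $\lambda$. Once this is set up, the extension property will follow immediately from cofinality. The crux is a cardinal count: one has $|P|\leq 3^{\kappa}=2^{\kappa}\leq \lambda$, so the hypothesis $\lambda\geq 2^{\kappa}$ is exactly what allows every pattern to be allocated a cofinal block of $\lambda$; and regularity of $\lambda$ is exactly what ensures such blocks are in fact unbounded.

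Concretely, I would fix a bijection $\sigma:\lambda \to \lambda \times P$ (which exists since $|\lambda \times P|=\lambda\cdot|P|=\lambda$ as $\lambda$ is infinite and $|P|\leq\lambda$), and for each $p=(A,A')\in P$ set $L_{p} := \sigma^{-1}(\lambda\times\{p\})$. This gives a partition $\lambda=\bigsqcup_{p\in P}L_{p}$ with $|L_{p}|=\lambda$ for every $p$, so by regularity of $\lambda$ each $L_{p}$ is cofinal in $\lambda$. Define
\[
E\subseteq\kappa\times\lambda,\qquad E(a,b) \iff a\in A,
\]
where $p=(A,A')$ is the unique element of $P$ with $b\in L_{p}$.

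To verify the extension property, given $(A,A')\in P$ and $b\in\lambda$, the cofinality of $L_{(A,A')}$ supplies some $b^{*}\in L_{(A,A')}$ with $b^{*}>b$. By construction the $\kappa$-neighbourhood of $b^{*}$ is exactly $A$; in particular it contains $A$, and since $A\cap A'=\emptyset$ it is disjoint from $A'$, which is the required pattern (reading the $A'$-conjunction as a conjunction of negated edges, in line with the standard extension-property formulation of a random bipartite graph). No serious obstacle is anticipated: the whole argument is a one-step bookkeeping driven by the two cardinal hypotheses, and the only substantive point to record is the inequality $|P|\leq\lambda$, which is where $\lambda\geq 2^{\kappa}$ enters.
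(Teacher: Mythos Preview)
Your proof is correct, and it takes a somewhat different route from the paper's. The paper enumerates the set of all triples $D=\{(A,A',b):A,A'\subseteq\kappa,\ A\cap A'=\emptyset,\ b\in\lambda\}$ as $((A_\alpha,A'_\alpha,b_\alpha):\alpha<\lambda)$ (using $|D|\leq\lambda$), and then runs a transfinite recursion: at stage $\alpha$ it picks a fresh $c_\alpha\in\lambda$ above all earlier $b_\beta,c_\beta$ (possible by regularity) and declares the $\kappa$-neighbourhood of $c_\alpha$ to be $A_\alpha$. Your argument instead does a single static allocation: partition $\lambda$ into $|P|$ many blocks of size $\lambda$, one per pattern, and let every point in a block realise its pattern; regularity then gives cofinality of each block. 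Both arguments rest on exactly the same two ingredients ($|P|\leq 2^\kappa\leq\lambda$ and regularity of $\lambda$), but yours avoids the bookkeeping of a recursion and is a bit cleaner. You also correctly read the second conjunct as $\neg E(a',b^*)$; the statement as printed is a typo, and the intended extension property (and its later use) is the one you verified.
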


\begin{proof}
Let $\lambda\geq2^{\kappa}$ be any regular cardinal. Let 
\[
D:=\left\{ \left(A,A',b\right):A,A'\subseteq\kappa,\,A\cap A'=\emptyset,\,b\in\lambda\right\} \mbox{.}
\]
Then $\left|D\right|\leq\lambda$ by assumption, we enumerate it
as $\left(\left(A_{\alpha},A'_{\alpha},b_{\alpha}\right):\alpha<\lambda\right)$.
We define $E_{\alpha}\subseteq\kappa\times\lambda$ by transfinite
induction on $\alpha<\lambda$. On step $\alpha$, we choose some
$c_{\alpha}\in\lambda$ such that $c_{\alpha}>\left\{ b_{\beta},c_{\beta}:\beta<\alpha\right\} $
\textemdash{} possible by regularity of $\lambda$, and we take $E_{\alpha}:=\left\{ \left(a,c_{\alpha}\right):a\in A_{\alpha}\right\} $.
Let $E:=\bigsqcup_{\alpha<\lambda}E_{\alpha}$ \textemdash{} it satisfies
the requirement by construction.
\end{proof}

\begin{prop}\label{lem: pick asym rand graph}
Assume that $\varphi(x,y,z)$ is not $2$-dependent. Then for every regular $\lambda > 2^\kappa$ there exist mutually indiscernible sequences $I=(a_i : i < \kappa)$ in $\mathbb{M}_y$, $(b_j : j < \lambda)$ in $\mathbb{M}_x$ and $c \in \mathbb{M}_z$ such that for every $\beta < \lambda$ we have $|S_{\varphi, J_{> \beta}}(I\times\{c\})| = 2^\kappa$.
\end{prop}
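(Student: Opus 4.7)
My plan is to apply Proposition \ref{prop: indisc witness to IPn}(3) to a permuted version of $\varphi$, choosing the index structure $G'_{2,p}$ so that the desired cofinal shattering is built into its combinatorics via the asymmetric random graph of Lemma \ref{lem: asym rand graph}.

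First, by Fact \ref{fac: props of n-dependent formulas}(2), the permuted formula $\tilde{\varphi}(z; x, y) := \varphi(x, y, z)$, in which $z$ now plays the role of the shattering variable, is not $2$-dependent. Applying Proposition \ref{prop: indisc witness to IPn}(3) to $\tilde{\varphi}$ for a small $G'_{2,p} \equiv G_{2,p}$ produces tuples $(a_g)_{g \in G'_{2,p}}$ and a single element $c \in \mathbb{M}_z$ with $a_g \in \mathbb{M}_x$ for $g \in P_1$ and $a_g \in \mathbb{M}_y$ for $g \in P_2$, such that the array is simultaneously $O'_{2,p}$-indiscernible over $\emptyset$ and $G'_{2,p}$-indiscernible over $c$, and $\models \varphi(a_{g_1}, a_{g_2}, c) \iff R_2(g_1, g_2)$ for all $g_1 \in P_1$, $g_2 \in P_2$.

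Next, I would choose $G'_{2,p}$ to satisfy $|P_1| = \lambda$, $|P_2| = \kappa$, and the cofinal shattering property: for every $S \subseteq P_2$ and every $\beta \in P_1$ there is $j > \beta$ in $P_1$ with $\{i \in P_2 : R_2(j, i)\} = S$. The existence of such $G'_{2,p}$ is essentially the combinatorial content of Lemma \ref{lem: asym rand graph}: take $\mathcal{G}_{\kappa, \lambda}$ (whose one-sided extension axiom, applied with $A = S$ and $A' = P_2 \setminus S$, provides exactly the cofinal shattering property) as the edge relation on the two sorts, then replace the well-orderings on $\kappa$ and $\lambda$ by dense linear orders without endpoints of the same cardinality and add just enough further edges to ensure the remaining axiom of $\Th(G_{2,p})$ from Fact \ref{fac: Gnp}(2) (on the $P_2$-side) also holds, which is possible by a standard back-and-forth given $\lambda > 2^\kappa$.

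Setting $I := (a_g)_{g \in P_2}$ of size $\kappa$ and $J := (b_g)_{g \in P_1} := (a_g)_{g \in P_1}$ of size $\lambda$, the $O'_{2,p}$-indiscernibility over $\emptyset$ immediately yields mutual indiscernibility of $I$ and $J$ over $\emptyset$, since the $\mathcal{L}^2_{\op}$-quantifier-free type of any tuple from $P_1 \cup P_2$ is determined solely by the internal linear orders on each part. The cofinal shattering property of $G'_{2,p}$ then gives, for every $\beta \in J$ and every $S \subseteq I$, some $j > \beta$ in $J$ with $\models \varphi(b_j, a_i, c) \iff R_2(j, i) \iff i \in S$ for all $i \in I$, so $|S_{\varphi, J_{>\beta}}(I \times \{c\})| = 2^\kappa$. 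The main technical step is thus the construction of $G'_{2,p}$ with the required cardinalities and cofinal shattering; Lemma \ref{lem: asym rand graph} does the key combinatorial work, while the rest is a direct application of the variable permutation symmetry of $2$-dependence and of the generalized indiscernible machinery of Proposition \ref{prop: indisc witness to IPn}(3).
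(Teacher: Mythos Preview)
Your core idea---encode the graph $\mathcal{G}_{\kappa,\lambda}$ of Lemma~\ref{lem: asym rand graph} into the $\varphi$-relation and read off cofinal shattering---is exactly the paper's. The gap is in your construction of $G'_{2,p}$. The extension property of $\mathcal{G}_{\kappa,\lambda}$ only produces a witness $b^*$ above a given $b$ in the \emph{well-order} on $\lambda$, not inside an arbitrary open interval of a DLO, so neither density axiom of Fact~\ref{fac: Gnp}(2) follows from it. Replacing the well-orders by unrelated DLOs changes the meaning of ``cofinal'', so the shattering witnesses need no longer lie in every $\prec$-tail; and ``adding just enough further edges'' for the $P_2$-side axiom alters the neighbourhoods $N(j)$ of vertices $j \in P_1$, so the elements with $N(j)=S$ \emph{exactly} that you need for cofinal shattering may be destroyed. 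A correct construction is possible but requires genuine bookkeeping (e.g.\ reserving disjoint cofinal families of $P_1$-vertices for the various roles and choosing the DLO on $\lambda$ so that every DLO-tail is well-order-cofinal); it is not ``standard back-and-forth''. There is also an indexing mismatch at the end: the statement asks for $J$ indexed by the ordinal $\lambda$, while your $P_1$ is a DLO.

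The paper sidesteps all of this. After the same permutation, it notes that one can take mutually indiscernible $I=(a_i)_{i<\kappa}$, $J=(b_j)_{j<\lambda}$ (ordinal-indexed from the start) such that the whole product $I\times J$ is shattered by $\{\varphi(x,y,c):c\in\mathbb{M}_z\}$. Then one simply picks the single $c$ with $\mathbb{M}\models\varphi(b_j,a_i,c)\iff\mathcal{G}_{\kappa,\lambda}\models E(i,j)$ and concludes by Lemma~\ref{lem: asym rand graph}; there is no need for $\mathcal{G}_{\kappa,\lambda}$ to satisfy the axioms of $\Th(G_{2,p})$ at all.
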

\begin{proof}
	By assumption, for any $\kappa,\lambda$ we can find some
mutually indiscernible sequences $I,J$ such that the family $\left\{ \varphi\left(x,y,c\right):c\in\mathbb{M}_z\right\} $
shatters $I\times J$. In particular, we can find $c$ such that $\mathbb{M}\models\varphi\left(b_j,a_{i},c\right)\iff\mathcal{G}_{\kappa,\lambda}\models E\left(a_{i},b_{j}\right)$, and we can conclude by Lemma \ref{lem: asym rand graph}.
\end{proof}
Propositions \ref{prop: few types on a tail} and \ref{lem: pick asym rand graph} together provide an analog of Fact \ref{fac: NIP by counting types} for $2$-dependent theories that will be used in the proof of the Composition Lemma in Section \ref{sec: counting types}. We conclude this subsection with a brief discussion of some questions arising in connection to this criterion (and not used in the rest of the paper).
\begin{defn}\label{defn_globally 2dep}
We say that a theory $T$ is \emph{globally $2$-dependent} if there
are a cardinal $\kappa$ and a regular
cardinal $\lambda\geq2^{\kappa}$ such that for  $c$ and $\mathcal{G}_{\kappa,\lambda}$ given by Lemma \ref{lem: asym rand graph} the following
holds:  for any mutually indiscernible sequences $I=\left(a_{i}:i\in\kappa\right),J=\left(b_{j}:j\in\lambda\right)$
of finite tuples there are $i\in\kappa$ and $j,j'\in\lambda$
such that $ca_{i}b_{j}\equiv ca_{i}b_{j'}$ but $E\left(i,j\right)\land\neg E\left(i,j'\right)$.
\end{defn}
So the idea is that $T$ is globally $2$-dependent if on mutually
indiscernible sequences we cannot distinguish the edges from the
non-edges of a random graph by a complete type (as opposed to witnessing the edges with
realizations of a single formula). We have the following connection between $2$-dependence and global $2$-dependence.

\begin{prop}
\begin{enumerate}
	\item If $T$ is globally $2$-dependent, then it is $2$-dependent.
	\item Let $T$ be a countable $2$-dependent theory and assume that there
exists a cardinal $\kappa$ such that $(\ded\kappa)^{\aleph_0}<2^{\kappa}$. Then
$T$ is globally $2$-dependent.
\end{enumerate}

\end{prop}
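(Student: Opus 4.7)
For (1), I would argue the contrapositive: if $T$ is not $2$-dependent, then it is not globally $2$-dependent. Fix any infinite cardinal $\kappa$ and any regular $\lambda\geq 2^\kappa$, and let $\mathcal{G}_{\kappa,\lambda}$ be the bipartite graph from Lemma \ref{lem: extension exists}. The argument inside the proof of Proposition \ref{lem: pick asym rand graph}, applied to a formula $\varphi(x,y,z)$ witnessing non-$2$-dependence, supplies mutually indiscernible sequences $I=(a_i)_{i<\kappa}$ in $\mathbb{M}_y$, $J=(b_j)_{j<\lambda}$ in $\mathbb{M}_x$ and a tuple $c\in \mathbb{M}_z$ with $\mathbb{M}\models \varphi(b_j,a_i,c)\iff \mathcal{G}_{\kappa,\lambda}\models E(a_i,b_j)$ for all $i,j$. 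All the parameters of $\varphi(\cdot,\cdot,c)$ lie inside $ca_i$, so $ca_ib_j\equiv ca_ib_{j'}$ forces $\varphi(b_j,a_i,c)\iff \varphi(b_{j'},a_i,c)$, and therefore $E(i,j)\iff E(i,j')$. Hence no triple $(i,j,j')$ of the form required by Definition \ref{defn_globally 2dep} exists, so global $2$-dependence fails at this (and in fact any) admissible $(\kappa,\lambda)$.

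For (2), the plan is to combine the type-counting bound of Proposition \ref{prop: few types on a tail} with the universality of $\mathcal{G}_{\kappa,\lambda}$ from Lemma \ref{lem: extension exists}. Fix $\kappa$ with $(\ded\kappa)^{\aleph_0}<2^\kappa$, put $\lambda:=(2^\kappa)^+$, and let $\mathcal{G}_{\kappa,\lambda}$ be the graph given by the lemma. Suppose towards contradiction that global $2$-dependence fails at this $(\kappa,\lambda)$: then there exist a tuple $c$ and mutually indiscernible sequences $I=(a_i)_{i<\kappa}$ and $J=(b_j)_{j<\lambda}$ of finite tuples such that $ca_ib_j\equiv ca_ib_{j'}$ implies $E(i,j)\iff E(i,j')$ for all $i\in\kappa$ and $j,j'\in\lambda$. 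Define the edge-profile map $F\colon J\to 2^\kappa$ by $F(b_j)(i)=1$ iff $E(i,j)$. By hypothesis $F(b_j)(i)$ is determined by $\tp(b_j/ca_i)$, hence $F(b_j)$, as a function of $i$, depends only on $\tp(b_j/Ic)$; consequently $|F(J')|\leq |S_{J'}(I\times\{c\})|$ for every $J'\subseteq J$.

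The two opposing bounds now collide. On the one hand, Proposition \ref{prop: few types on a tail} (applicable since $\lambda>\kappa$ is regular and $|T|=\aleph_0$) yields some $\beta\in\lambda$ with $|S_{J_{>\beta}}(I\times\{c\})|\leq(\ded\kappa)^{|T|}=(\ded\kappa)^{\aleph_0}$, so $|F(J_{>\beta})|\leq(\ded\kappa)^{\aleph_0}$. On the other hand, for any $S\subseteq\kappa$, Lemma \ref{lem: extension exists} applied with $A=S$, $A'=\kappa\setminus S$ and threshold $b_\beta$ produces some $j^*>\beta$ with $F(b_{j^*})=\mathbf{1}_S$; thus every element of $2^\kappa$ lies in $F(J_{>\beta})$, forcing $|F(J_{>\beta})|=2^\kappa$. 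Combining, $2^\kappa\leq(\ded\kappa)^{\aleph_0}$, contradicting the choice of $\kappa$. The main thing I would pin down at the outset is the precise reading of Definition \ref{defn_globally 2dep} (in particular I treat the clause ``for $c$ and $\mathcal{G}_{\kappa,\lambda}$ given by Lemma \ref{lem: extension exists}'' as quantifying $c$ universally with $\mathcal{G}_{\kappa,\lambda}$ fixed by the lemma); once that is settled, both halves of the proposition are essentially formal consequences of the results already assembled in this subsection.
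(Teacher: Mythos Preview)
Your proof is correct and follows essentially the same approach as the paper's. For (1) you unwind the proof of Proposition~\ref{lem: pick asym rand graph} exactly as the paper does; for (2) you phrase as a contradiction what the paper does directly, but the substance---bounding $|S_{J_{>\beta}}(I\times\{c\})|$ via Proposition~\ref{prop: few types on a tail} and playing it against the $2^\kappa$ many edge-profiles guaranteed by Lemma~\ref{lem: extension exists}---is identical (and your reading of the quantification over $c$ in Definition~\ref{defn_globally 2dep} agrees with how the paper uses it).
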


\begin{proof}
(1) If $\varphi\left(x,y_{1},y_{2}\right)$ is a formula witnessing failure
of $2$-dependence, then the proof of Proposition \ref{lem: pick asym rand graph} shows that $T$ is not globally $2$-dependent. 

(2) Fix $\kappa$, and let $\lambda$ be any regular cardinal $\geq2^{\kappa}$.
Let $\mathcal{G}_{\kappa,\lambda}$ be as given by Lemma \ref{lem: extension exists}.
Moreover, let $I,J$ and $c$ be as in Definition \ref{defn_globally 2dep}. By Proposition \ref{prop: few types on a tail},
there is some $\beta\in\lambda$ such that $\left|S_{J_{>\beta}}\left(I\right)\right|\leq\left(\ded\kappa\right)^{\aleph_{0}}$.
On the other hand, by definition of $\mathcal{G}_{\kappa,\lambda}$,
we still have $\left|S_{E,\left\{ \alpha\in\lambda:\alpha>\beta\right\} }\left(\kappa\right)\right|=2^{\kappa}>\left(\ded\kappa\right)^{\aleph_{0}}$
by assumption. Then we can find some $j,j'\in\lambda$ such that $\tp_{E}\left(j/\kappa\right)\neq\tp_{E}\left(j'/\kappa\right)$
but $\tp\left(b_{j}/Ic\right)=\tp\left(b_{j'}/Ic\right)$. In other words,
there is some $i\in\kappa$ such that $E\left(i,j\right)\leftrightarrow\neg E\left(i,j'\right)$
and still $b_{j}a_{i}c\equiv b_{j'}a_{i}c$, as wanted.
\end{proof}
\begin{problem}
Is there an analog of this type-counting criterion for $n$-dependence? Is it true that $n$-dependence implies global $n$-dependence (defined
analogously), in ZFC, or at least consistently for $n>2$?
\end{problem}
Concerning this problem, we remark that at least $n$-dependence implies global $n$-dependence when $T$ is $\aleph_0$-categorical (since every type in finitely many variables
is equivalent to a formula).

\subsection{The Composition Lemma}\label{sec: Comp Lemma}
All of the variables below are allowed to be tuples of arbitrary finite length.

\begin{theorem}[Composition Lemma]\label{thm: functions into stable are 2-dep }
Let $\mathcal{M}_0$ be a dependent structure in a language $\mathcal{L}_0$, and let $\mathcal{M}$ be an \emph{arbitrary} expansion of $\mathcal{M}$ in some language $\mathcal{L} \supseteq \mathcal{L}_0$. Let $\varphi(x_1, \ldots, x_{d})$ be an $\mathcal{L}_0$-formula. For each $i \in \{ 1, \ldots, d \}$, fix some $s_i < t_i \in \{1,2,3 \}$
and let $f_i: M_{y_{s_i}} \times M_{y_{t_i}}  \to M_{x_i}$ be an $\mathcal{L}$-definable binary function.
Then the $\mathcal{L}$-formula 
$$\psi(y_1; y_2, y_3) := \varphi \left (f_1(y_{s_1},y_{t_1}), \ldots, f_d(y_{s_{d}}, y_{t_{d}}) \right)$$ is $2$-dependent (with respect to $\Th_{\mathcal{L}}(\mathcal{M})$).
\end{theorem}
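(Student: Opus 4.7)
My plan is to argue by contradiction, combining the type-counting criterion for $2$-dependence (Propositions \ref{prop: few types on a tail} and \ref{lem: pick asym rand graph}) with set-theoretic absoluteness, as foreshadowed in the introduction. The statement that $\psi$ is $2$-dependent in $\mathcal{M}$ is absolute between transitive models of ZFC (it asserts nonexistence of a countable combinatorial configuration witnessing the $2$-IP). By Fact \ref{fac: Mitchell} I therefore pass to a forcing extension in which $(\ded\kappa)^{|\mathcal{L}|} < 2^\kappa$ for some fixed $\kappa \geq |\mathcal{L}|$, and suppose for contradiction that $\psi$ is not $2$-dependent there. Proposition \ref{lem: pick asym rand graph} (applied with the partition $x = y_1$, $y = y_2$, $z = y_3$) then produces, for some regular $\lambda > 2^\kappa$, mutually $\mathcal{L}$-indiscernible sequences $I = (a_i : i<\kappa)$ in $\mathbb{M}_{y_2}$, $J = (b_j : j<\lambda)$ in $\mathbb{M}_{y_1}$, and $c \in \mathbb{M}_{y_3}$, with $|S^{\mathcal{L}}_{\psi,\,J_{>\beta}}(I \times \{c\})| = 2^\kappa$ for every $\beta<\lambda$. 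The goal becomes to bound this cardinal above by $(\ded\kappa)^{|\mathcal{L}|}$.

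Let $I_1, I_2, I_3$ partition $\{1,\dots,d\}$ according to whether $(s_k,t_k)$ equals $(1,2)$, $(1,3)$ or $(2,3)$. Unpacking the definition of $\psi$,
\[
\psi(b;a_i,c) \iff \mathcal{M}_0 \models \varphi\bigl((f_k(b,a_i))_{k\in I_1},\,(f_k(b,c))_{k\in I_2},\,(f_k(a_i,c))_{k\in I_3}\bigr),
\]
so every instance of $\psi$ reduces to the $\mathcal{L}_0$-formula $\varphi$ evaluated on three blocks of $\mathcal{L}$-definable images in $\mathcal{M}_0$. Setting $\gamma(b):=(f_k(b,c))_{k\in I_2}$, $\bar\alpha_i(b):=(f_k(b,a_i))_{k\in I_1}$ and (independent of $b$) $\bar\delta_i:=(f_k(a_i,c))_{k\in I_3}$, the $\psi$-type of $b$ over $I\times\{c\}$ is precisely the diagonal $\varphi$-pattern $i \mapsto \bigl(\mathcal{M}_0 \models \varphi(\bar\alpha_i(b),\gamma(b),\bar\delta_i)\bigr)$. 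Since $\mathcal{M}_0$ is $1$-dependent and hence $2$-dependent, and since $I, J$ remain $\mathcal{L}_0$-mutually indiscernible, I would apply Proposition \ref{prop: few types on a tail} inside $\mathcal{M}_0$ to the sequences $I, J$ together with $c$ (and any finitely many additional $\mathcal{L}_0$-parameters needed to encode $\varphi$'s arguments) to conclude that on some tail $J_{>\beta_0}$ at most $(\ded\kappa)^{|\mathcal{L}_0|}$ many diagonal $\varphi$-patterns are realized; this gives $|S^{\mathcal{L}}_{\psi, J_{>\beta_0}}(I \times \{c\})| \leq (\ded\kappa)^{|\mathcal{L}|} < 2^\kappa$, contradicting the lower bound supplied by Proposition \ref{lem: pick asym rand graph}.

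The main obstacle is the $\mathcal{L}/\mathcal{L}_0$ mismatch: the $f_k$ are not $\mathcal{L}_0$-definable, so a complete $\mathcal{L}_0$-type of $b$ alone over $I \cup \{c\}$ does not a priori determine the images $\bar\alpha_i(b)$, and hence does not determine the $\psi$-type of $b$. One cannot simply extract sub-sequences mutually $\mathcal{L}$-indiscernible over $c$, since this would collapse the $2^\kappa$ shattering. The heart of the argument is therefore to recast the diagonal $\varphi$-pattern of each $b$ as a complete $\mathcal{L}_0$-type of the $\mathcal{L}$-definable image tuple $\hat b := (\gamma(b),(\bar\alpha_i(b))_{i<\kappa})$ over parameters built from $(\bar\delta_i)_i$ and $c$, and then to feed this repackaged data into Proposition \ref{prop: few types on a tail} inside $\mathcal{M}_0$ to obtain the desired bound.
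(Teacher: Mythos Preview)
Your high-level plan --- contradiction, type-counting, and set-theoretic absoluteness --- matches the paper's, and you have correctly identified the core obstacle: the $\mathcal{L}/\mathcal{L}_0$ mismatch means the complete $\mathcal{L}_0$-type of $b_j$ over $I\cup\{c\}$ does not determine its $\psi$-type. However, your proposed resolution of that obstacle is where the argument is incomplete, and the paper's actual mechanism is different from what you sketch.

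Feeding the ``repackaged tuple'' $\hat b_j = (\gamma(b_j),(\bar\alpha_i(b_j))_{i<\kappa})$ into Proposition~\ref{prop: few types on a tail} does not work as stated: that proposition requires finite tuples in a mutually indiscernible configuration, whereas $\hat b_j$ has length $\kappa$ and depends on $c$ through $\gamma(b_j)$, so no mutual indiscernibility with $(\bar\delta_i)_i$ is available. More fundamentally, the parameter set $\{(\bar\alpha_i(b_j),\bar\delta_i):i<\kappa\}$ over which you would count $\varphi$-types \emph{varies with} $b_j$; this is precisely the coupling that has to be broken, and your sketch does not break it.

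The paper never invokes Proposition~\ref{prop: few types on a tail} here; it uses $1$-dependence of $T_0$ directly via Fact~\ref{fac: NIP by counting types}. The array $(f(b_\beta,a_\alpha))_{\alpha,\beta}$ (your $I_1$-block only) is an indiscernible array; since $T_0$ is dependent and $\lambda>\kappa+|T|$ is regular, there is a tail $\gamma<\lambda$ beyond which the sequence of rows $\bigl((f(b_\beta,a_\alpha))_{\alpha<\kappa}:\gamma<\beta<\lambda\bigr)$ is $\mathcal{L}_0$-indiscernible over $A:=\{f(a_\alpha,c):\alpha<\kappa\}$. Now comes the decoupling trick: for a fixed $\beta>\gamma$, let $e$ range over \emph{all} tuples of the $I_2$-sort (not just the actual value $\gamma(b_\beta)$), and set $S_e^\beta:=\{\alpha<\kappa:\mathbb{M}_0\models\varphi'(f(b_\beta,a_\alpha),e,f(a_\alpha,c))\}$. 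The family $\mathcal{F}_\beta:=\{S_e^\beta:e\}$ has size at most $\ded(\kappa)$ by Fact~\ref{fac: NIP by counting types}, since $S_e^\beta$ is determined by the $\varphi'$-type of $e$ over a set of size $\kappa$. Crucially, $\mathcal{F}_\beta$ is independent of $\beta>\gamma$: any $\mathcal{L}_0$-automorphism over $A$ sending one row to another carries $S_e^\beta$ to $S_{\sigma(e)}^{\beta'}$. Hence the $\psi$-type of each $b_\beta$ is encoded by $S^\beta_{\gamma(b_\beta)}\in\mathcal{F}$, giving $|S_{\psi,J_{>\gamma}}(I\times\{c\})|\leq|\mathcal{F}|\leq\ded(\kappa)$, and the contradiction follows.

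So the idea you are missing is: quantify over all possible values of the $I_2$-block to obtain a family whose size is controlled by the $\ded$ bound for a single dependent formula, then use $\mathcal{L}_0$-indiscernibility of the $I_1$-rows over $A$ (not Proposition~\ref{prop: few types on a tail}) to make that family independent of $\beta$.
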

\begin{proof}
We work in a monster model $\mathbb{M}$ of $T := \Th_{\mathcal{L}}(\mathcal{M})$. Note that then the $\mathcal{L}_0$-reduct $\mathbb{M}_0$ of $\mathbb{M}$ is a monster model of $T_0 := \Th_{\mathcal{L}_0}(\mathcal{M}_0)$, and $T_0$ is dependent. Assume that the formula $\psi(y_1; y_2, y_3)$  as in the statement of the theorem is not $2$-dependent. Let $I=(a_\alpha : \alpha < \kappa)$ with $a_\alpha \in M_{y_2}$, $J=(b_\beta: \beta < \lambda)$ with $b_\beta \in M_{y_1}$ and $c \in M_{y_3}$ be as given by Proposition \ref{lem: pick asym rand graph} with $\lambda > 2^\kappa > |T|$, that is for every $\gamma < \lambda$ we have $|S_{\psi, J_{> \gamma}}(I\times\{c\})| = 2^\kappa$.

 Let $V_{1,2} := \{ i \leq d : (t_i, s_i) = (1,2) \}$, $V_{1,3} := \{ i \leq d : (t_i, s_i) = (1,3) \}$ and $V_{2,3} := \{ i \leq d : (t_i, s_i) = (2,3) \}$, then $V_{1,2}, V_{1,3}, V_{2,3}$ is a partition of $\{1, \ldots, d \}$. Let $f(b_\beta,a_\alpha) := (f_i (b_\beta, a_\alpha) : i \in V_{1,2})$, $f(\b_\beta,c) = (f_i (c, b_\beta) : i \in V_{1,3})$ and $f(a_\alpha,c) := (f_i (a_\alpha, b_\beta) : i \in V_{2,3})$. Then we have $$\models \psi(b_\beta;a_\alpha, c)  \iff \models \varphi'(f(b_\beta,a_\alpha), f( b_\beta, c), f(a_\alpha, c)),$$ where 
 $\varphi' \in \mathcal{L}_0$
  is obtained from $\varphi$ by regrouping the variables accordingly.

Let $A := \{ f(a_\alpha,c): \alpha < \kappa \}$. Consider the rectangular array $( f(b_\beta, a_\alpha) : \alpha<\kappa, \beta < \lambda)$. It is an indiscernible array by mutual indiscernibility of the sequences $(a_\alpha)$ and $(b_\beta)$. In particular, the sequence of rows 
 $((f(b_\beta, a_\alpha) : \alpha < \kappa) : \beta < \lambda)$ is $\emptyset$-indiscernible.
 As $T_0$ is dependent, $|A| \leq \kappa$ and $\lambda > (|T| + \kappa)$ is regular, there is some $\gamma < \lambda$ such that the sequence of columns $((f(b_\beta, a_\alpha) : \alpha < \kappa) :  \gamma < \beta < \lambda)$ is $\mathcal{L}_0$-indiscernible over $A$. 

%
%

Fix $\gamma < \beta < \lambda$. For any tuple $e \in M_{(x_i : i \in V_{1,3})}$, let 
$$S_{e}^\beta := \{ \alpha < \kappa : \models \varphi'( f( b_\beta, a_\alpha), e,f(a_\alpha,c)) \} \subseteq \kappa,$$
and let $\mathcal{F}_\beta := \{ S^\beta_e :  e \in M_{(x_i : i \in V_{1,3})}\}$ be the collection of all such subsets of $\kappa$ that can be realized by some tuple. 

We then have that $\mathcal{F}_\beta = \mathcal{F}_{\beta'}$ for any $\gamma < \beta, \beta' < \lambda$. Indeed, by the $\mathcal{L}_0$-indiscernibility observed above, there is some $\sigma \in \Aut(\mathbb{M}_0/A)$  sending $( f( b_\beta, a_\alpha) : \alpha < \kappa)$ to $ (f( b_\beta', a_\alpha) : \alpha < \kappa)$. But then for any $e$ we have that $S^\beta_e = S^{\beta'}_{\sigma(e)}$ (recalling that $f(a_\alpha,c) \in A$ for all $\alpha<\kappa$), hence $\mathcal{F}_\beta \subseteq \mathcal{F}_{\beta'}$, and vice versa exchanging the roles of $\beta$ and $\beta'$. So let $\mathcal{F} := \mathcal{F}_\beta$ for some (equivalently, any) $\beta > \gamma$. Note that $S^\beta_e$ is determined by the $\mathcal{L}_0$-type $\tp_{\varphi'}(e/ ( f( b_\beta, a_\alpha): \alpha < \kappa) A)$. As $|f( b_\beta, a_\alpha): \alpha < \kappa) A| \leq \kappa$ and $\varphi'$ is dependent, we get that $|\mathcal{F}| \leq \ded(\kappa)$ by Fact \ref{fac: NIP by counting types}.

Now we estimate $|S_{\psi, J_{>\gamma}}(I\times\{c\})|$ (see Definition \ref{def: localized space of types}).
Given $\gamma < \beta < \lambda$, we have that $\tp_{\psi}(b_\beta / I\times\{c\})$ is determined by the set 
$$S^\beta_{f(b_\beta,c)} = \{ \alpha < \kappa :  \models \varphi'(f( b_\beta, a_\alpha),f( b_\beta, c),f(a_\alpha,c)) \} \in \mathcal{F}.$$

But as $|\mathcal{F}| \leq \ded(\kappa)$, there are only $\ded(\kappa)$ choices for this set, hence $|S_{\psi, J_{>\gamma}}(I\times\{c\})| \leq \ded(\kappa)$.

 This would imply a contradiction in a model of ZFC with $\ded (\kappa) < 2^{\kappa}$ (which exists by Fact \ref{fac: Mitchell}). But the property of a given formula $\psi$ being $2$-dependent is arithmetic, hence set-theoretically absolute, so we obtain the result in ZFC.
\end{proof}
\begin{expl}
Let $f: \mathbb{C}^2 \to \mathbb{C}$ be an arbitrary function, and let $p(x,y,z)$ be a polynomial over $\mathbb{C}$. Consider the relation $E\subseteq \mathbb{C}^3$ given by $$E(x,y,z) \iff p(f(x,y), f(x,z), f(y,z)) = 0.$$
 Then there is some finite $3$-partite $3$-hypergraph $H$ such that $E$ does not contain it as an induced tripartite hypergraph.
\end{expl}
\begin{remark} We will see in the proof of Theorem \ref{thm: Granger}(3) that we cannot relax the assumption $\mathcal{M}_0$ is dependent to just $2$-dependent. Generalizations of Theorem \ref{thm: functions into stable are 2-dep } for $n$-dependence and functions of arbitrary arity will be investigated in \cite{NdepGroups3}. \end{remark}

\section{2-dependence of bilinear forms over dependent fields}\label{sec: Granger}

In this section we consider certain theories of bilinear forms on  vector spaces, in a language with a separate sort for the field. Their basic model theory was studied by  Granger in \cite{granger1999stability}, and more recently in \cite{chernikov2016model, dobrowolski2020sets} from the point of view of generalized stability theory. Here we investigate $n$-dependence in these structures. As it was already mentioned in the introduction, all currently known algebraic examples of strictly $n$-dependent theories for $n \geq 2$ are closely related to bilinear forms over \emph{finite fields}. E.g., smoothly approximable structures studied in \cite{cherlin2003finite} are $2$-dependent and coordinatizable via bilinear forms over finite fields (see \cite[Example 2.2(4)]{chernikov2014n} for a discussion of their $2$-dependence); and the strictly $n$-dependent pure groups constructed in \cite{chernikov2019mekler} using Mekler's construction can be interpreted in alternating bilinear maps over finite fields as demonstrated in \cite{baudisch2002mekler}. Here we show that a more general situation is possible: every non-degenerate (symmetric or alternating) bilinear form on an infinite dimensional vector space over an \emph{arbitrary dependent field} is strictly $2$-dependent (and the assumption that the field is dependent is necessary, see Theorem \ref{thm: Granger}). Our proof of $2$-dependence relies crucially on the Composition Lemma (Theorem \ref{thm: functions into stable are 2-dep }) from the previous section. 
We view these examples as clarifying the scope of Conjecture \ref{conj: main conj} (namely, how much algebraic structure is required for the collapse of the $n$-dependence hierarchy) and guiding towards a correct formulation of its abstract counterpart (``every $n$-dependent theory is linear over its $1$-dependent part'').

We begin by recalling some definitions and results from \cite{granger1999stability}.
In this section we consider structures in the language $\mathcal{L}$ consisting of two sorts $V$ and $K$, the field language on $K$, the vector space language on $V$, scalar multiplication function $K \times V \to V$ and the bilinear form function $[x,y]: V \times V \to K$.
The language $\mathcal{L}_{\theta}$ is obtained from $\mathcal{L}$ by adding for each $n \in \omega$ a (definable) $n$-ary predicate $\theta_n(x_1, \ldots, x_n)$ which holds if and only if $x_1, \ldots, x_n \in V$ are linearly independent over $K$. Let $\mathcal{L}_\theta^K$ be a language expanding $\mathcal{L}_\theta$ by relations on $K^n, n \in \omega$ definable in the language of rings such that $K$ eliminates quantifiers in $\mathcal{L}_\theta^K $ (e.g.~we can always take Morleyzation of $K$).

\begin{defn}
	For $K$ a field,  $F \in \{A, S \}$ and an arbitrary $m \in \mathbb{N} \cup \{ \infty \}$, let $\prescript{}{F}{T}^K_m$ denote the $\mathcal{L}_{\theta}^K$-theory expressing that the sort corresponding to $K$ is a field which is moreover a model of $\Th(K)$; $V$ a $K$-vector space of dimension $m$; $[x,y]: V \times V \to K$ is a non-degenerate bilinear form of type $F$, where a form of type $S$ is a symmetric form, and a form of type $A$ is an alternating form; and the predicates $\theta_n$ define linear independent tuples of length $n$ over $K$.
\end{defn}

\begin{fact}\cite[Theorem 9.2.3]{granger1999stability} \label{fac: GrangerQE}
Let $F \in \{A, S \}$, and let $K$ be an arbitrary field  if $F = A$, or a field closed under square roots if $F=S$. Let $m \in \mathbb{N} \cup \{ \infty \}$ be arbitrary if $F = A$, or even if $m \in \mathbb{N}$ and $F=S$.  Then the theory $\prescript{}{F}{T}_{m}^{K}$ is consistent, complete and has elimination of quantifiers in the language $\mathcal{L}_{\theta}^K$.
\end{fact}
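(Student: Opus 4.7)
The plan is to prove consistency, completeness, and quantifier elimination simultaneously via a back-and-forth argument, using an embedding test for QE. A substructure of a model of $\prescript{}{F}{T}_m^K$ is determined by: (i) a subring $K_0$ of the field sort, (ii) a $K_0$-submodule $V_0$ of the vector space sort, (iii) the values $[v,w] \in K_0$ for $v,w \in V_0$, and (iv) the $\theta_n$-pattern on tuples from $V_0$. The point of adding the relations $\theta_n$ to the language is precisely to make ``linear independence over $K$'' quantifier-free, which is necessary since it is a priori existential in $\mathcal{L}$.

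For existence/consistency, I would exhibit a standard model: over any $K$ (in case $F=A$) take a vector space with a symplectic basis $(e_i,f_i)_{i<m}$ and the form $[e_i,f_j]=\delta_{ij}$, $[e_i,e_j]=[f_i,f_j]=0$; in case $F=S$ with $K$ square-root-closed, diagonalize to obtain an orthonormal basis. Completeness will follow from QE once we observe that any two models share a common $\mathcal{L}_\theta^K$-substructure (e.g.\ the prime field of $K$ acting on the zero subspace, whose $\mathcal{L}_\theta^K$-theory is determined by $\Th(K)$ via the QE hypothesis for $K$).

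For quantifier elimination, I would use the test: given $\mathcal{N}_1,\mathcal{N}_2 \models \prescript{}{F}{T}_m^K$ with $\mathcal{N}_2$ sufficiently saturated, a finitely generated substructure $\mathcal{A} \subseteq \mathcal{N}_1$, an $\mathcal{L}_\theta^K$-embedding $f:\mathcal{A} \to \mathcal{N}_2$, and a single new element $c \in \mathcal{N}_1$, extend $f$ to $\mathcal{A}\langle c \rangle$. Two cases:
\begin{itemize}
\item[(a)] $c \in K^{\mathcal{N}_1}$: here one uses the hypothesis that $K$ has QE in the restricted language $\mathcal{L}_\theta^K$ to find $c' \in K^{\mathcal{N}_2}$ with the same type over $f(K^{\mathcal{A}})$; since the form values $[v,w]$ for $v,w \in V^{\mathcal{A}}$ already lie in $K^{\mathcal{A}}$, the extension is automatic.
\item[(b)] $c \in V^{\mathcal{N}_1}$: if $c$ is in the $K^{\mathcal{A}}$-span of $V^{\mathcal{A}}$ (detected by $\theta$-predicates), the image is forced by linearity and the existing form values. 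Otherwise $c$ is linearly independent from $V^{\mathcal{A}}$ over $K^{\mathcal{A}}$, and one needs $c' \in V^{\mathcal{N}_2}$, linearly independent over $f(K^{\mathcal{A}})$ from $f(V^{\mathcal{A}})$, with $[c',f(v)]=f([c,v])$ for each $v$ in a basis of $V^{\mathcal{A}}$, and with $[c',c']$ equal to $f([c,c])$ (which is forced to be $0$ in the alternating case).
\end{itemize}

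The main step is (b), and it reduces to a linear-algebra fact for non-degenerate forms: given a finite-dimensional subspace $W \subseteq V^{\mathcal{N}_2}$ with basis $w_1,\ldots,w_n$ and prescribed scalars $\alpha_1,\ldots,\alpha_n \in K^{\mathcal{N}_2}$ together with $\beta \in K^{\mathcal{N}_2}$ (satisfying $\beta=0$ if $F=A$), find $c' \in V^{\mathcal{N}_2}\setminus W$ with $[c',w_i]=\alpha_i$ and $[c',c']=\beta$. By non-degeneracy and $|V^{\mathcal{N}_2}|$ being large (via saturation plus the dimension $m$, which is either infinite or fixed finite matching the dimension of $V^{\mathcal{N}_1}$), one first produces some $c_0 \in V^{\mathcal{N}_2}$ realizing the linear functional $w_i \mapsto \alpha_i$ (possible by non-degeneracy), and then adjusts by an element of $W^{\perp}$ (and, if $F=S$, absorbs the diagonal value $\beta$ using a square root in $K^{\mathcal{N}_2}$; this is where the square-root-closed hypothesis enters, and in the alternating case no adjustment is needed). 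The chief obstacle is precisely this step of matching the diagonal value $[c',c']$ in the symmetric case, which is what forces the square-root hypothesis and, when $m$ is infinite, compatibility with Witt's extension theorem; everything else is routine back-and-forth.
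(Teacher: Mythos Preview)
The paper does not prove this statement; it is quoted as a fact from Granger's thesis \cite[Theorem 9.2.3]{granger1999stability}, so there is no in-paper proof to compare against. Your outline via the embedding test is the standard route and is essentially how Granger proceeds.

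One point in your sketch deserves more care. In case~(b) you write ``if $c$ is in the $K^{\mathcal{A}}$-span of $V^{\mathcal{A}}$ (detected by $\theta$-predicates), the image is forced by linearity''. But the predicates $\theta_n$ record linear (in)dependence over the \emph{full} field $K$, not over the subring $K^{\mathcal{A}}$. The genuinely delicate subcase is when $c$ lies in the $K^{\mathcal{N}_1}$-span of $V^{\mathcal{A}}$ with coefficients outside $K^{\mathcal{A}}$: then $c\notin V^{\mathcal{A}}$, yet $\neg\theta_{d+1}(c,e_1,\ldots,e_d)$ holds for a $K$-basis $e_1,\ldots,e_d$ of the span of $V^{\mathcal{A}}$, and the substructure $\langle\mathcal{A},c\rangle$ picks up new field-sort elements through the form values $[e_i,c]$. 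Extending $f$ here is not ``forced by linearity'' alone; one must invoke the field QE (your case~(a)) to place suitable coefficients in $K^{\mathcal{N}_2}$ and check that the resulting $c'$ has the correct $\theta$-pattern and form values. In practice one handles this by first enlarging $K^{\mathcal{A}}$ (using the field QE and saturation of $\mathcal{N}_2$) so that $V^{\mathcal{A}}$ has a $K^{\mathcal{A}}$-basis which is also $K$-independent, after which your dichotomy becomes clean. With that adjustment, the argument goes through as you describe.
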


%

We are ready to state the main result of this section.

\begin{theorem}\label{thm: Granger}
Let $T := \prescript{}{F}T^K_m$ be as in Fact \ref{fac: GrangerQE}.
\begin{enumerate}
\item If $m < \infty$ then $\Th(K)$ is $n$-dependent if and only if $T$ is $n$-dependent, for any $n \in \mathbb{N}_{\geq 1}$. 
	\item If $m = \infty$ and $\Th(K)$ is dependent then $T$ is  strictly $2$-dependent.
	\item If $n \in \mathbb{N}_{\geq 1}$, $\Th(K)$ is not $n$-dependent and $m = \infty$, then $T$ is not $2n$-dependent.
\end{enumerate}
\end{theorem}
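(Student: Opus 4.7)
Part (1) reduces to the observation that when $m < \infty$ the theories $T$ and $\Th(K)$ are bi-interpretable: after fixing a basis of $V$ we identify $V$ with $K^m$, and the bilinear form $[v,w]$ becomes a fixed polynomial in the $2m$ coordinates of $v$ and $w$. Since $n$-dependence is preserved under interpretations for every $n \geq 1$, the two theories are simultaneously $n$-dependent. For the non-dependence half of Part (2), fix a linearly independent sequence $(v_i)_{i \in \omega}$ in $V$; for each $s \subseteq \omega$ the partial type $\{[v_i,y] \neq 0 : i \in s\} \cup \{[v_i,y] = 0 : i \notin s\}$ is finitely consistent by non-degeneracy, hence realized by some $w_s$ in the monster, so $[x,y] \neq 0$ has IP.

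The main content is proving 2-dependence in Part (2), assuming $\Th(K)$ is dependent. By quantifier elimination in $\mathcal{L}_\theta^K$ (Fact \ref{fac: GrangerQE}) and the improved reduction to singletons (Theorem \ref{thm: better reduction to singletons}), it suffices to verify 2-dependence for atomic $\mathcal{L}_\theta^K$-formulas $\chi(x;y_1,y_2)$ in which at most one of $x, y_1, y_2$ is a tuple. For atomic formulas of the form $R(s_1, \ldots, s_d)$ with $R$ a $K$-definable relation, expand each scalar term $s_i$ using bilinearity so that it becomes a $K$-polynomial in basic pair values $[u,v]$, where $u, v$ range over the individual basic sub-variables drawn from $x, y_1, y_2$. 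Each such $[u,v]$ involves at most two of the three variable groups. Absorbing the polynomial structure and the field relation into a single larger $K$-definable relation $R'$, the formula $\chi$ takes the form $R'(\ldots,[u,v],\ldots)$, and the Composition Lemma (Theorem \ref{thm: functions into stable are 2-dep }), applied with $\mathcal{M}_0 = K$ as the dependent ambient structure and with the bilinear form and coordinate projections as the binary functions $f_i$, yields 2-dependence. Atomic formulas of the form $\theta_n(t_1, \ldots, t_n)$ on vector terms need a parallel analysis: bilinearity reduces linear independence of $K$-linear combinations of basic variables to a Boolean combination of rank conditions on coefficient matrices (which live in $K$ and so are dependent) together with linear-independence statements about the basic vector variables themselves, which are generically true in the mutually indiscernible setup by infinite-dimensionality, and hence fit back into the same Composition Lemma framework.

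For Part (3), given a field-formula $\varphi(x;z_1, \ldots, z_n)$ witnessing $n$-IP in $\Th(K)$ with sequences $(a_{k,i})_{k \in [n], i \in \omega}$ and parameters $c_s$ for $s \subseteq \omega^n$, set
\[
\psi(x; y_1^{(1)}, y_1^{(2)}, \ldots, y_n^{(1)}, y_n^{(2)}) := \varphi\bigl(x; [y_1^{(1)}, y_1^{(2)}], \ldots, [y_n^{(1)}, y_n^{(2)}]\bigr).
\]
Fix a bijection $\sigma : \omega^2 \to \omega$. Using $\omega$-saturation, the infinite dimension of $V$, and the non-degeneracy of the form, choose for each $k \in [n]$ vector sequences $(v_{k,i}^{(1)})_i$, $(v_{k,j}^{(2)})_j$ realising $[v_{k,i}^{(1)}, v_{k,j}^{(2)}] = a_{k,\sigma(i,j)}$, with cross-pairings across distinct $k$'s set to zero (this is the main non-trivial construction, handled by decomposing $V$ into an orthogonal direct sum of $n$ infinite-dimensional non-degenerate subspaces, or directly by finite consistency plus saturation). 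Then for each $s \subseteq \omega^{2n}$, taking the witness $c_t$ with $t := \{(\sigma(i_1,j_1), \ldots, \sigma(i_n,j_n)) : (i_1,j_1, \ldots, i_n,j_n) \in s\}$ shows that $\psi$ has $2n$-IP. The main obstacle across the proof is the clean handling of the $\theta_n$ predicates in the 2-dependence argument of Part (2); this is where Theorem \ref{thm: better reduction to singletons} is essential, since reducing to singletons before the bilinear expansion keeps the combinatorics of linear independence under control and allows every atomic formula to be genuinely presented in the shape required by the Composition Lemma.
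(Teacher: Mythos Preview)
Your treatment of Parts~(1), (3), and the ``not dependent'' half of Part~(2) matches the paper's arguments closely. One small point in Part~(3): you implicitly need each $z_k$ in $\varphi(x;z_1,\ldots,z_n)$ to be a single $K$-variable for the substitution $z_k \mapsto [y_k^{(1)},y_k^{(2)}]$ to make sense; the paper secures this by first invoking Theorem~\ref{thm: better reduction to singletons}, which you should make explicit.

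The serious gap is in the $2$-dependence half of Part~(2), specifically your handling of the $\theta_d$ atomic formulas. Your plan is to reduce to singletons via Theorem~\ref{thm: better reduction to singletons} and then claim that linear independence of the basic vector variables is ``generically true in the mutually indiscernible setup by infinite-dimensionality.'' This is not justified, and in fact need not hold: an $O_{2,p}$-indiscernible sequence of single vectors $(a_\alpha)$ can perfectly well live in a fixed finite-dimensional subspace (e.g.\ $a_\alpha = v + \lambda_\alpha w$ for an indiscernible scalar sequence $(\lambda_\alpha)$), so the $a_\alpha$'s, the $b_\beta$'s, and the $V$-components of $\bar c$ may be linearly dependent in the would-be witness to $\mathrm{IP}_2$. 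Without linear independence of these concrete elements, $\theta_d(t_1,\ldots,t_d)$ does \emph{not} reduce to a rank condition on the coefficient matrix, and you cannot feed it into the Composition Lemma.

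The paper takes a different route and does \emph{not} use the singleton reduction for Part~(2). Instead, starting from an arbitrary witnessing configuration $(\bar c,(\bar a_\alpha),(\bar b_\beta))$, it runs a finite sequence of reductions (Claims~\ref{cla: lin indep seqs}--\ref{cla: lin indep counterex}) that absorb each linear dependency among the $V$-elements into new $K$-variables and an auxiliary parameter tuple~$\bar e$, shrinking the $V$-parts until all surviving $V$-elements in $\bar c^V,(\bar a_\alpha^V),(\bar b_\beta^V)$ are genuinely linearly independent. Only then does $\neg\theta_d$ become an honest $\mathcal{L}_K$-condition on the scalar terms, which falls to the Composition Lemma exactly as in Case~1. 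Your singleton reduction can shorten this absorption process (fewer $V$-coordinates to eliminate), but it does not replace it; some version of Claims~\ref{cla: lin indep seqs}--\ref{cla: reducing e} is still required, and that is the technical heart of the argument you are missing.
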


\begin{cor}
\begin{enumerate}
\item The case of a finite field $K$ corresponding to extra-special $p$-groups was treated in \cite[Section 3]{hempel2016n}.
\item In \cite{baudisch2016neostability}, for each $n \in \mathbb{N}$ and $p$, Baudisch constructs a structure $D(n)$ in the language of groups with $n$ additional constant symbols, with $D(1)$ corresponding to extra-special $p$-groups. Since all these examples are interpretable in the bilinear form with additional constant symbols, they are all $2$-dependent.
\end{enumerate}
\end{cor}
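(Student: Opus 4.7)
My plan is to record each item as a direct consequence of Theorem~\ref{thm: Granger}(2), using only the standard facts that $n$-dependence passes to reducts, to expansions by a small tuple of constants, and to structures interpretable in $n$-dependent ones.

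For (1), I would take $K = \mathbb{F}_p$, which is finite, so $\Th(K)$ is (super)stable and in particular dependent. Theorem~\ref{thm: Granger}(2) with $F = A$ and $m = \infty$ then yields that the corresponding theory of a non-degenerate alternating form over $\mathbb{F}_p$ on an infinite-dimensional vector space is strictly $2$-dependent. The classical correspondence between such forms and extra-special $p$-groups $G$ of infinite rank --- namely, $G/Z(G)$ is an $\mathbb{F}_p$-vector space whose induced commutator is a non-degenerate alternating bilinear form, and conversely every such form arises this way --- provides an interpretation in each direction, and so transfers $2$-dependence to the group side. The full translation was already carried out in \cite[Section 3]{hempel2016n}, and in the paper I would simply cite that reference.

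For (2), I would invoke Baudisch's construction from \cite{baudisch2016neostability}, in which each $D(n)$ is realized as a structure interpretable in a bilinear form expansion $(V, K, [\cdot,\cdot], c_1, \ldots, c_n)$, for an appropriate dependent base field $K$ (so that Theorem~\ref{thm: Granger}(2) applies) and finitely many named parameters $c_1, \ldots, c_n$. By Theorem~\ref{thm: Granger}(2) the reduct $(V, K, [\cdot,\cdot])$ is $2$-dependent. Naming a finite tuple of constants preserves $2$-dependence: any pattern witnessing the $2$-independence property for a formula $\varphi(x; y_1, y_2, \bar c)$ with the $c_i$ as constants becomes, after treating the $c_i$ as additional free parameters, a pattern witnessing the $2$-independence property of the corresponding formula in the unexpanded language. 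Hence $(V, K, [\cdot,\cdot], \bar c)$ is $2$-dependent, and interpretability transfers this to $D(n)$.

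The only step requiring any care is the verification of Baudisch's interpretation claim: one needs to check from \cite{baudisch2016neostability} that $D(n)$ is genuinely interpretable in a bilinear form structure expanded by only finitely many constants in the existing sorts, rather than by further operations or new sorts. I expect this to be a straightforward inspection of Baudisch's construction, but it is the one substantive point to verify for a complete proof.
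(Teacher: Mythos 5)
Your proposal correctly unpacks the reasoning that the paper leaves implicit: the corollary has no separate proof in the paper (its justification is folded into the statement itself, namely that the Baudisch structures are interpretable in the bilinear-form structure with finitely many named constants), and your argument via Theorem~\ref{thm: Granger}(2), preservation of $2$-dependence under naming finitely many constants, and preservation under interpretation is exactly the intended route. Your closing caveat — that one should verify from Baudisch's paper that the interpretation really only requires adding constants, not new operations or sorts — is the right point of diligence, and is consistent with the paper simply asserting this fact by citation.
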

The rest of the section constitutes a proof of this theorem.
\subsection{Proof of Theorem \ref{thm: Granger}(1)}
If  $m < \infty$, then $K$ is $n$-dependent if and only if $T$ is $n$-dependent, for any $n \geq 1$. This follows from the fact that any model of $\prescript{}{F}T^K_m$  can be interpreted in $K$ using the isomorphism $K^m\cong V$ for some $m \in \mathbb N$ as follows. Interpreting the vector space structure is obvious. Now, let $\mathcal B=(e_1, \dots, e_m)$ be the standard basis  of $K^m$. Then the bilinear form is completely determined by fixing $k_{i,j}=[e_i,e_j]$ for all $1 \leq i,j \leq m$. Let $\pi_i: K^m \rightarrow K$ be the projection map onto the $i$-th coordinate. Then for $v,w \in K^m$, we have that $[v,w]= \sum_{i,j=1}^m \pi_i(v)\pi_j(w)k_{i,j} $, which is definable over $\{ k_{i,j}:  1 \leq i, j \leq  m \}$.

\subsection{Proof of Theorem \ref{thm: Granger}(3)}\label{sec: pairing gives IP2}
Assume that $K$ is not $n$-dependent, then by Theorem \ref{thm: better reduction to singletons} it must be witnessed by some $\mathcal{L}_K$-formula $\varphi(\bar{x};y_1, \ldots, y_n)$ with each $y_i$ a single variable. Then by compactness for $1 \leq k \leq n$ we can find sequences $(c^k_{(i_k,j_k)} : (i_k,j_k) \in \omega \times \omega)$ with $\omega \times \omega$ ordered lexicographically and all $c^k_{i_k, j_k}$ pairwise distinct elements in $K$, such that for every $A\subseteq (\omega \times \omega)^n$ there is some $\bar{e}_A$ satisfying 
$$\models \varphi(\bar{e}_A; c^1_{(i_1, j_1)},\ldots, c^n_{(i_n, j_n)}) \iff ((i_1,j_1), \ldots, (i_n,j_n)) \in A.$$

As $m=\infty$, we can choose  $(a^k_i : 1 \leq k \leq n, i \in \omega)$ a tuple consisting of linearly independent elements in $V$. For each $1 \leq k \leq n$ and $j \in \omega$, let $f^k_{j} : V \to K$ be a linear function satisfying $f^k_j(a^k_i) = c^k_{(i,j)}$ for all $i \in \omega$. Since the bilinear form is non-degenerate, there exists some $b^k_j \in V$ such that $f^k_j(x) = [x, b^k_j]$ for all $x \in V$. But then, identifying $(\omega \times \omega)^n$ with $\omega^{2n}$, for any set $A \subseteq \omega^{2n}$, we have 
$$\models \varphi(\bar{e}_A, [a^1_{i_1}, b^1_{j_1}], \ldots, [a^n_{i_n}, b^n_{j_n}]) \iff (i_1,j_1,\ldots, i_n,j_n) \in A,$$ hence the formula $\psi(\bar{x};y_1, y_2, \ldots, y_{2n-1}, y_{2n}) = \varphi(\bar{x}, [y_1,y_2], \ldots, [y_{2n-1}, y_{2n}])$ is not $2n$-dependent, witnessed by the sequences $(a^1_{i_1}, b^1_{j_1}, \ldots, a^n_{i_n}, b^n_{j_n} : i_1,j_1, \ldots, i_n, j_n \in \omega)$.

\subsection{Proof of Theorem \ref{thm: Granger}(2)}
Let $\mathbb{M} \models T$ be a monster model. If $T$ is not $2$-dependent, by Proposition \ref{prop: indisc witness to IPn}  there exists tuples $\bar{a}_\alpha, \bar{b}_\beta$ and an $\mathcal{L}_{\theta}^K$-formula $\varphi(\bar{x}; \bar{y}, \bar{z})$ without parameters such that $(\bar{a}_\alpha, \bar{b}_\beta : \alpha,\beta \in \mathbb{Q})$ is $O_{2,p}$-indiscernible over $\emptyset$ and it is \emph{shattered} by $\varphi$. More precisely,  for every $A \subseteq \mathbb{Q} \times \mathbb{Q}$ there is some $\bar{c}_A$ such that 
$$ \models \varphi(\bar{c}_A; \bar{a}_\alpha, \bar{b}_\beta) \iff (\alpha,\beta) \in A.$$

We write $\bar{x} = \bar{x}^{K \frown}\bar{x}^V, \bar{y} = \bar{y}^{K \frown} \bar{y}^V, \bar{z} = \bar{z}^{K \frown} \bar{z}^V$ for the subtuples of the variables of the corresponding sorts, where $\bar{x}^K = (x^K_i : i \in X^K)$ and $\bar{x}^V = (x^V_i : i \in X^V)$ and $X^K \sqcup X^V$ is a partition of $\{1, \dots, |\bar{x}|\}$. We proceed similarly for $\bar{y}$ and $\bar{z}$. Let $ a_\alpha = \bar{a}_\alpha^{K \frown} \bar{a}_\alpha^V, \bar{b}_{\beta} = \bar{b}_{\beta}^{K \frown} \bar{b}_\beta^V$ for the corresponding subtuples in the $K$-sort and the $V$-sort, respectively. Let $\bar{a}_\alpha^V = (a^V_{\alpha,i} : i \in Y^V), \bar{b}_\beta^V = (b^V_{\beta,i} : i \in Z^V)$, etc.

As a first step towards obtaining a contradiction, we will show that all of the elements of the sort $V$ in the configuration witnessing the failure of $2$-dependence chosen above may be assumed linearly independent over $K$. This is achieved by modifying the initial configuration and the formula as demonstrated in the following four claims.

\begin{claim}\label{cla: lin indep seqs}
There exist a finite tuple $\bar{e}$ in $V$, a formula $\varphi(\bar{x}, \bar{w}, \bar{y}, \bar{z}) \in \mathcal{L}_{\theta}^K$ and sequences of tuples $(\bar{a}_\alpha, \bar{b}_\beta : \alpha, \beta \in \mathbb{Q})$ such that:
	\begin{enumerate}
		\item $(\bar{a}_\alpha, \bar{b}_\beta : \alpha, \beta \in \mathbb{Q})$ is $O_{2,p}$-indiscernible over $\bar{e}$;
		\item for any $\alpha^* \in \mathbb{Q}$ and $i^* \in Y^V$, we have that 
		$$a^V_{\alpha^*,i^*} \notin \Span\left((a^V_{\alpha^*,i} : i \in Y^V \setminus \{ i^*\} ), (\bar{a}^V_{\alpha} : \alpha \in \mathbb{Q} \setminus \{ \alpha^* \}),(\bar{b}^V_\beta : \beta \in \mathbb{Q}), \bar{e}\right);$$
		\item for any $\beta^* \in \mathbb{Q}$ and $j^* \in Z^V$, we have that 
		$$b^V_{\beta^*,j^*} \notin \Span\left((b^V_{\beta^*,j} : j \in Z^V \setminus \{ j^*\} ), (\bar{b}^V_{\beta} : \beta \in \mathbb{Q} \setminus \{\beta^*\}),(\bar{a}^V_\alpha : \alpha \in \mathbb{Q}), \bar{e}\right);$$

		\item $ \varphi(\bar{x}, \bar{e}; \bar{y}, \bar{z})$ shatters $(\bar{a}_\alpha, \bar{b}_\beta : \alpha, \beta \in \mathbb{Q})$, i.e. for every $A \subseteq \mathbb{Q}^2$, there is some $\bar{c}_A$ such that $\models \varphi(c_A, \bar{e}, \bar{a}_\alpha, \bar{b}_\beta) \iff (\alpha, \beta) \in A$. 
	\end{enumerate}
\end{claim}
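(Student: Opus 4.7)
The plan is to clean up the witnessing configuration by absorbing all ``globally shared'' $V$-sort linear dependencies into a fixed finite parameter tuple $\bar{e}$, while preserving both the $O_{2,p}$-indiscernibility and the shattering property. Since $\theta_n \in \mathcal{L}^K_\theta$, the $O_{2,p}$-indiscernibility over $\emptyset$ forces all linear-dependence patterns among the $V$-sort entries to be uniform in $\alpha,\beta$. In particular, for each $\alpha^*$ the subspace
\[
W_\alpha^* := \Span(\bar{a}^V_{\alpha^*}) \cap \Span\bigl((\bar{a}^V_\alpha)_{\alpha \ne \alpha^*}, (\bar{b}^V_\beta)_{\beta \in \mathbb{Q}}\bigr)
\]
has dimension independent of $\alpha^*$, and symmetrically for the $\bar{b}_\beta$-side. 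A short compactness/finiteness argument shows that $W_{\alpha^*}$ is already captured by finitely many other indices: there is a finite $\mathbb{Q}_0 \subset \mathbb{Q}$ such that for each $\alpha^* \notin \mathbb{Q}_0$ one has $W_{\alpha^*} \subseteq \Span(\bar{a}^V_\alpha, \bar{b}^V_\beta : \alpha,\beta \in \mathbb{Q}_0)$, and likewise for the $b$-side.

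Next, I would let $\bar{e}$ enumerate a basis of $\Span(\bar{a}^V_\alpha, \bar{b}^V_\beta : \alpha,\beta \in \mathbb{Q}_0)$ together with the $K$-sort entries $\bar{a}^K_\alpha, \bar{b}^K_\beta$ for $\alpha,\beta \in \mathbb{Q}_0$. Since $\mathbb{Q}\setminus \mathbb{Q}_0$ still contains an order-isomorphic copy of $\mathbb{Q} \sqcup \mathbb{Q}$ (density of $\mathbb{Q}$), I restrict the sequences to this copy and apply Fact \ref{fac: random hypergraph indiscernibles exist} to extract an $O_{2,p}$-indiscernible sequence over $\bar{e}$ based on it. A standard pigeonhole argument on finite subsets of the formula-witnesses $\bar{c}_A$ (using compactness and the fact that shattering is an arbitrarily-finite phenomenon) ensures that shattering survives the extraction, now witnessed by the formula $\varphi'(\bar{x},\bar{w},\bar{y},\bar{z}) := \varphi(\bar{x};\bar{y},\bar{z})$ with the slot $\bar{w}$ filled by $\bar{e}$.

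Third, for each $i^* \in Y^V$ whose $V$-entry lies in the shared span, I decompose $a^V_{\alpha,i^*}$ uniquely as a $K$-linear combination of $\bar{e}^V$; the coefficients $\lambda_{\alpha,i^*,k} \in K$ are $\mathcal{L}^K_\theta$-definable from $\bar{a}_\alpha \bar{e}$. I then ``move'' these coefficients into the $K$-sort portion of the tuple, i.e.~replace the $V$-coordinate $a^V_{\alpha,i^*}$ by the finite tuple of scalars. The symmetric modification is applied to $\bar{b}_\beta$. The formula $\varphi'$ is rewritten to reconstruct the old $V$-entries from $\bar{e}$ and the new scalar entries, so shattering is unaffected; by construction, the remaining $V$-entries $a^V_{\alpha,i}$ are no longer in the shared span, giving the ``cross'' part of (2) (and (3) by symmetry).

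The main obstacle will be upgrading this to the full strength of (2)/(3), i.e.~ensuring linear independence of $a^V_{\alpha^*,i^*}$ also from its siblings $(a^V_{\alpha^*,i})_{i\ne i^*}$ within the same tuple. This requires a second round of minimization: within $\bar{a}^V_{\alpha^*}$, repeatedly remove any entry expressible from the others modulo $\bar{e}$, the $\bar{a}_\alpha$'s with $\alpha \ne \alpha^*$, and the $\bar{b}_\beta$'s, absorbing its defining $K$-coefficients into $\bar{a}^K_{\alpha^*}$. Uniformity in $\alpha^*$ of which entries are removed is forced by $O_{2,p}$-indiscernibility over $\bar{e}$, so the process terminates in finitely many steps and produces a configuration satisfying (1)--(4).
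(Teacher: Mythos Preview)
Your overall idea---absorb the linear dependencies among $V$-entries into a fixed parameter tuple $\bar e$, move the witnessing coefficients into the $K$-part, and rewrite the formula accordingly---is exactly the mechanism the paper uses. But your two-phase ``global'' implementation has real gaps, and in fact the first phase does no useful work.

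The central unjustified step is the existence of a single finite $\mathbb{Q}_0$ with $W_{\alpha^*}\subseteq\Span(\bar a^V_\alpha,\bar b^V_\beta:\alpha,\beta\in\mathbb{Q}_0)$ for all $\alpha^*\notin\mathbb{Q}_0$. Indiscernibility tells you that the \emph{order-type} (relative to $\alpha^*$) of a finite witnessing index set is uniform, not that the set itself can be taken fixed as $\alpha^*$ varies. Your ``short compactness/finiteness argument'' is not supplied, and it is not clear one exists without already restricting to a subinterval, which is what the paper does anyway. Second, in your third paragraph you write each offending $a^V_{\alpha,i^*}$ as a combination of $\bar e^V$ \emph{alone}; but a failure of (2) typically expresses $a^V_{\alpha^*,i^*}$ as a combination of its siblings \emph{together with} cross terms, not as an element of $W_{\alpha^*}$ itself (e.g.\ $a^V_{\alpha,1}=a^V_{\alpha,2}+w$ with $w\in W_\alpha$). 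So the entries caught by your first round are not the ones that need removing, and the real work is deferred entirely to your ``second round''. There, you say you ``absorb the defining $K$-coefficients into $\bar a^K_{\alpha^*}$'', but the linear combination may involve $\bar a^V_\alpha$ for $\alpha\neq\alpha^*$ and $\bar b^V_\beta$'s; you do not say where those go (they must be thrown into $\bar e$, after which one must restrict the index set and re-extract indiscernibles over the enlarged $\bar e$---none of which you mention).

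The paper avoids all of this by a single direct iteration: if (2) (or (3)) fails at some $(\alpha^*,i^*)$ witnessed by finite $I,J$, put $(\bar a^V_\alpha)_{\alpha\in I}{}^\frown(\bar b^V_\beta)_{\beta\in J}$ into $\bar e$, delete $y^V_{i^*}$ from $\bar y$, add the (possibly undetermined) coefficient tuple $\bar k_\alpha$ to $\bar a^K_\alpha$, substitute the definable linear function into $\varphi$, restrict to the subinterval $(\gamma^-,\gamma^+)\times(\delta,\infty)$ on which indiscernibility over the new $\bar e$ holds, and then apply Fact~\ref{fac: random hypergraph indiscernibles exist} to re-extract an $O_{2,p}$-indiscernible over the new $\bar e$; shattering survives because it is a property of finite subtypes. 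Since $|Y^V|+|Z^V|$ strictly decreases, this terminates. If you rewrite your second round carefully in exactly this form you recover the paper's proof; the first round can simply be deleted.
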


\begin{proof}

Assume that $(\bar{a}_\alpha, \bar{b}_\beta)$ chosen in the beginning of the proof do not satisfy (2) with $\bar{e} = \emptyset$.
 Then there are some $\alpha^* \in \mathbb{Q}, i^* \in Y^V$ and finite sets $I, J \subseteq \mathbb{Q}, \alpha^* \notin I$ such that 
 
 $$a^V_{\alpha^*, i^*} \in \Span \left((a^V_{\alpha^*, i} : i \in Y^V \setminus \{i^* \} ),(\bar{a}^V_\alpha)_{\alpha \in I} ,(\bar{b}^V_\beta)_{\beta \in J} ,\bar{e} \right).$$
 
 Then there is an $\emptyset$-definable function $f$ and some finite tuple $\bar{k}$ in $K$ such that 
 $$ a^V_{\alpha^*, i^*} = f \left(\bar{k}, (a^V_{\alpha^*, i} : i \in Y^V \setminus \{i^* \} ), (\bar{a}^V_\alpha)_{\alpha \in I}, (\bar{b}^V_\beta)_{\beta \in J}, \bar{e} \right). $$
 
We let
 \begin{gather*}
 	\gamma^+ := \min \{ \alpha \in I : \alpha > \alpha^* \} \textrm{, } \gamma^- := \max \{ \alpha \in I : \alpha  < \alpha^*  \} \textrm{, }\delta := \max(J),\\
 	\bar{e}' := \left(\bar{a}^V_\alpha \right)_{\alpha \in I} \,^\frown \left(\bar{b}^V_\beta \right)_{\beta \in J} \,^\frown \bar{e},\\
 	\left(\bar{a}' \right)^V_\alpha := \left(a^V_{\alpha^*, i} : i \in Y^V \setminus \left\{i^* \right\} \right).
 \end{gather*}
Then 
$$\left(\bar{a}_\alpha^{K \frown}\left(\bar{a}' \right)^V_\alpha, \bar{b}_\beta : \alpha \in \left(\gamma^-, \gamma^+ \right), \beta \in \left(\delta, + \infty \right) \right)$$
 is $O_{2,p}$-indiscernible over $\bar{e}'$. As $\alpha^* \in \left( \gamma^-, \gamma^+ \right)$, it follows that for every $\alpha \in (\gamma^-, \gamma^+)$ there is some tuple $\bar{k}_\alpha$ in $K$ such that $a^V_{\alpha, i^*} = f \left( \bar{k}_\alpha, (\bar{a}')^V_\alpha, \bar{e}' \right)$. Let 
%
$$ \varphi'(\bar{x}, \bar{w}, \bar{y}', \bar{z}) := \varphi\left(\bar{x}; \bar{y}^K,(y^V_{i})_{i \in Y^V, i < i^*}, f\left(\bar{y}^K_1, (y^V_{i})_{i \in Y^V \setminus \{ i^* \}}, \bar{w} \right), (y^V_{i})_{i \in Y^V, i > i^*}, \bar{z}\right),$$
where $\bar{y}' = (\bar{y}')^{K \frown}(\bar{y}')^V$, $(\bar{y}')^K = \bar{y}^{K  \frown}_1\bar{y}^K$ and $(\bar{y}')^V = (y^{V}_{i} : i \in Y^{V} \setminus \{ i^* \}) $. Let $\bar{a}'_\alpha := \bar{a}^{ K \frown}_\alpha(\bar{a}')^V_\alpha$ (so the tuple $k_\alpha^{\frown} \bar{a}'_\alpha$ corresponds to the variables $\bar{y}'$).

Restricting to the set $(\gamma^-, \gamma^+)\times(\delta, \infty)$ we may thus assume:

\begin{enumerate}

	\item[(a)] $(\bar{a}'_\alpha, \bar{b}_\beta : \alpha, \beta \in \mathbb{Q})$ is $O_{2,p}$-indiscernible over $\bar{e}'$ (follows by the choice of $\gamma^-, \gamma^+, \delta$, definition of $\bar{a}'_\alpha$ and $O_{2,p}$-indiscernibility of $(\bar{a}_\alpha, \bar{b}_\beta : \alpha, \beta \in \mathbb{Q})$),
	\item[(b)] $\varphi'(\bar{x}, \bar{e}'; \bar{y}' , \bar{z})$ shatters $(\bar{k}_\alpha^{\frown} \bar{a}'_\alpha, \bar{b}_\beta : \alpha, \beta \in \mathbb{Q})$ (as for any $\bar{c}$ and $\alpha, \beta \in \mathbb{Q}$ by the above we have $\models \varphi(\bar{c}, \bar{a}_\alpha, \bar{b}_\beta) \iff \models \varphi'(\bar{c}, \bar{e}', \bar{k}_\alpha^{\frown}\bar{a}'_\alpha, \bar{b}_\beta )$).
\end{enumerate}

By Fact \ref{fac: random hypergraph indiscernibles exist}, let $( \bar{h}_\alpha^\frown \bar{a}''_\alpha, \bar{b}'_\beta : \alpha,\beta \in \mathbb{Q})$ be an $O_{2,p}$-indiscernible over $\bar{e}'$, based on $( \bar{k}_\alpha ^\frown \bar{a}'_\alpha, \bar{b}_\beta : \alpha, \beta \in \mathbb{Q})$ over $\bar{e}'$. We still have that $ \varphi'(\bar{x}, \bar{e}', \bar{y}', \bar{z})$ shatters $( \bar{h}_\alpha^\frown \bar{a}''_\alpha, \bar{b}'_\beta : \alpha,\beta \in \mathbb{Q})$.
Replacing $\varphi$ by $\varphi'$, $\bar{e}$ by $\bar{e}'$ and the sequences $(\bar{a}_\alpha, \bar{b}_\beta : \alpha, \beta \in \mathbb{Q})$ by $(\bar{h}_\alpha^\frown\bar{a}''_\alpha, \bar{b}'_\beta : \alpha, \beta \in \mathbb{Q})$, we have thus reduced the length of the tuples $\bar{a}^V_\alpha$ (at the price of increasing the length of $\bar{a}^K_\alpha$). Repeating this argument for both $\bar{a}_\alpha$'s and $\bar{b}_\beta$'s at most finitely many times (as the variables $\bar y$ and $\bar z$ have finite length), we obtain the conclusion of the claim.
\end{proof}

\begin{claim}\label{cla: treating c} There exist finite tuples $\bar{e}$ in $V$ and $\bar{c}$, a formula $\varphi(\bar{x}, \bar{w}, \bar{y}, \bar{z})  \in \mathcal{L}_{\theta}^K$ and sequences of tuples $(\bar{a}_\alpha, \bar{b}_\beta : \alpha, \beta \in \mathbb{Q})$ satisfying (1)--(3) and
\begin{enumerate}
	\item[(4$'$)] $\models \varphi(\bar{c}, \bar{e}, \bar{a}_\alpha, \bar{b}_\beta) \iff G_{2,p} \models R_2(\alpha, \beta)$ for all $\alpha,\beta \in \mathbb{Q}$.
  \setcounter{enumi}{4}
	\item $c^V_i \notin \Span \left(  (c^V_j : j \neq i), \left( \bar{a}^V_{\alpha}\right)_{\alpha \in \mathbb{Q}}, \left( \bar{b}^V_{\beta} \right)_{\beta \in \mathbb{Q}}, \bar{e} \right)$ for any $i \in X^V$.
\end{enumerate}

\end{claim}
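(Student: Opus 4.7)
The plan is to build on Claim \ref{cla: lin indep seqs} by first choosing $\bar{c}$ to witness the specific pattern (4$'$), then iteratively absorbing any redundant $V$-coordinates of $\bar{c}$ into the parameter tuple $\bar{e}$, following the same substitution-and-restriction scheme used in the proof of Claim \ref{cla: lin indep seqs}.

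First I would fix a $G_{2,p}$-structure on $\mathbb{Q} \sqcup \mathbb{Q}$ whose two parts index $(\bar{a}_\alpha)$ and $(\bar{b}_\beta)$; this exists by the $\aleph_0$-categoricity of $\Th(G_{2,p})$ coming from Fact \ref{fac: Gnp}. Applying the shattering property (4) of Claim \ref{cla: lin indep seqs} with $A := R_2^{G_{2,p}} \subseteq \mathbb{Q} \times \mathbb{Q}$ immediately yields a tuple $\bar{c}$ satisfying (4$'$); conditions (1)--(3) are inherited unchanged. If (5) already holds we are done, so suppose otherwise and fix $i^* \in X^V$ witnessing the failure. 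Write
\[
c^V_{i^*} = f\bigl(\bar{k},\, (c^V_j)_{j\ne i^*},\, (\bar{a}^V_\alpha)_{\alpha\in I},\, (\bar{b}^V_\beta)_{\beta\in J},\, \bar{e}\bigr)
\]
for some $\emptyset$-definable $\mathcal{L}$-function $f$, a tuple $\bar{k}$ in $K$, and finite $I, J \subseteq \mathbb{Q}$. Exactly as in Claim \ref{cla: lin indep seqs}, I would form a new formula $\varphi'$ by substituting this expression for $x^V_{i^*}$ in $\varphi$ (using fresh $K$-variables for $\bar{k}$ and fresh $V$-parameter variables for the $\bar{a}^V_\alpha, \bar{b}^V_\beta$), set $\bar{c}' := (\bar{c}^K, \bar{k}, (c^V_j)_{j\ne i^*})$ and $\bar{e}' := (\bar{e}, (\bar{a}^V_\alpha)_{\alpha\in I}, (\bar{b}^V_\beta)_{\beta\in J})$, and restrict the sequences to intervals $(\gamma^-,\gamma^+) \subseteq \mathbb{Q} \setminus I$ and $(\delta^-,\delta^+) \subseteq \mathbb{Q} \setminus J$. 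The restriction preserves $O_{2,p}$-indiscernibility over $\bar{e}'$ since $\mathcal{L}^{2}_{\op}$-types depend only on the order, and conditions (2) and (3) remain valid because the relevant spans have only shrunk.

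To reinstate the $G_{2,p}$-structure demanded by (4$'$) on the new index set, I would then invoke Remark \ref{rem : induced copy} to locate an induced copy of $G_{2,p}$ inside $(\gamma^-,\gamma^+) \times (\delta^-,\delta^+)$, restrict the sequences further to this copy, and re-enumerate each part by $\mathbb{Q}$; since $\varphi'(\bar{c}',\bar{e}',\bar{a}_\alpha,\bar{b}_\beta)$ agrees with $\varphi(\bar{c},\bar{e},\bar{a}_\alpha,\bar{b}_\beta)$ on the restricted indices, (4$'$) carries over to the new data. Each round strictly reduces $|X^V|$, so the procedure halts within $|X^V|$ iterations, at which point either (5) holds or $X^V$ has become empty and (5) is vacuous. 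The subtle point will be coordinating the three updates---substitution in $\varphi$, enlargement of $\bar{e}$, and restriction of the index sets---so that (1)--(3), (4$'$), and (5) survive simultaneously; this is possible because substituting out a $V$-component of $\bar{c}$ only enlarges the $K$-part of $\bar{c}$ and the $V$-part of $\bar{e}$, which do not interfere with the other conditions, while Remark \ref{rem : induced copy} restores the $G_{2,p}$-pattern after every restriction.
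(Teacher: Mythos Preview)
Your proposal is correct and follows essentially the same approach as the paper's proof: choose $\bar{c}$ realizing the $R_2$-pattern, then iteratively substitute out any $c^V_{i^*}$ lying in the span of the rest, moving the finitely many $\bar{a}^V_\alpha,\bar{b}^V_\beta$ involved into $\bar{e}$, restricting the index set to avoid $I\cup J$, and invoking Remark~\ref{rem : induced copy} to recover an induced copy of $G_{2,p}$ on which (4$'$) continues to hold. The only cosmetic difference is that the paper restricts both coordinates simultaneously to $(\alpha^*,\infty)\times(\alpha^*,\infty)$ with $\alpha^*:=\max(I\cup J)$ rather than treating the two intervals separately.
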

\begin{proof}

We start with $\bar{e}$, $\varphi$ and $(\bar{a}_\alpha, \bar{b}_\beta )_{\alpha, \beta \in \mathbb{Q}}$ satisfying (1)--(4) of Claim \ref{cla: lin indep seqs}.
By (4), there exists a tuple $\bar{c}$  such that 
$$ \models \varphi(\bar{c}, \bar{e}, \bar{a}_\alpha, \bar{b}_\beta) \iff G_{2,p} \models R_2(\alpha, \beta) \textrm{ for all } \alpha,\beta \in \mathbb{Q}.$$

 We write $\bar{c} = \bar{c}^{K \frown} \bar{c}^V$.
	Assume that there is some $i^* \in X^V$ such that $c^V_{i^*}$ is in the span of the tuple 
$$(c^V_i : i \in X^V \setminus \{ i^* \}) \,^{\frown} (\bar{a}^V_\alpha)_{\alpha \in I} \,^{\frown} (\bar{b}^V_\beta)_{\beta \in J} \,^{\frown}  \bar{e}$$
 for some finite sets $I,J \subseteq \mathbb{Q}$. Then 
$$c^V_{i^*} = f \left(\bar{c}^K_1, (c^V_i)_{i \in X^V \setminus \{ i^* \}},(\bar{a}^V_\alpha)_{\alpha \in I},(\bar{b}^V_\beta)_{\beta \in J}, \bar{e} \right) $$ for some $\emptyset$-definable function $f$ and some tuple $\bar{c}^K_1$ in $K$.
Let $\alpha^* := \max( I \cup J )$. We let $\bar{c}' := (\bar{c}')^{K \frown}(\bar{c}')^V$, where $(\bar{c}')^K := \bar{c}^{K \frown}_1 \bar{c}^{K}$ and $(\bar{c}')^V := (c^V_i : i \in X^V \setminus \{ i^* \})$. Let $\bar{e}' := (\bar{a}^V_\alpha)_{\alpha \in I} \,^\frown(\bar{b}^V_\beta)_{\beta \in J} \,^\frown \bar{e}$.
Restricting to a copy of $G_{2,p}$ contained in $(\alpha^*, \infty) \times (\alpha^*, \infty)$ (Remark \ref{rem : induced copy}), we thus have:
	$$G_{2,p}\models R_2(\alpha, \beta) \iff  \models \varphi \left(\bar{c}, \bar{e}, \bar{a}_\alpha, \bar{b}_\beta \right) \iff $$
	$$\models \varphi\left( \bar{c}^K, (\bar{c}^V_i)_{i<i^*}, f(\bar{c}^K_1, (\bar{c}')^V, \bar{e}'), (\bar{c}^V_i)_{i>i^*}, \bar{e}, \bar{a}_\alpha, \bar{b}_\beta \right)$$ 
	$$\iff \models \varphi'(\bar{c}', \bar{e}', \bar{a}_\alpha, \bar{b}_\beta)$$
	for an appropriate $\mathcal{L}_{\theta}^K$-formula $\varphi'$. Replacing $\varphi$ by $\varphi'$, $\bar{e}$ by $\bar{e}'$ and $\bar{c}$ by $\bar{c}'$, this shows that  (4$'$) is still satisfied. And   (1), (2) and (3) still hold as well (follows as $\bar{e}$ satisfies (1), (2), (3) and all the new elements in $\bar{e}'$ are from $(\bar{a}'_\alpha, \bar{b}'_\beta : \alpha, \beta < \alpha^*)$). We have thus reduced the length of the tuple $\bar{c}$ (at the price of increasing the length of $\bar{e}$). Repeating this argument finitely many times if necessary, we obtain the claim.
\end{proof}

\begin{claim}\label{cla: reducing e}
There exist finite tuples $\bar{e}$ in $V$ and $\bar{c}$, a formula $\varphi(\bar{x}, \bar{w}, \bar{y}, \bar{z})  \in \mathcal{L}_{\theta}^K$ and sequences of tuples $(\bar{a}_\alpha, \bar{b}_\beta : \alpha, \beta \in \mathbb{Q})$ satisfying (1)--(3), (4$'$), (5) and 
\begin{enumerate}
  \setcounter{enumi}{5}
	\item $e_i \notin \Span(e_j : j \neq i)$ for any $i \in |\bar{e}|$.
\end{enumerate}

\end{claim}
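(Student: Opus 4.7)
The plan is to argue by induction on $|\bar{e}|$, imitating the mechanism already used in Claims \ref{cla: lin indep seqs} and \ref{cla: treating c}: if some entry of $\bar{e}$ lies in the span of the others, we absorb its coordinates in this span into the $K$-part of the witness tuple $\bar{c}$, and thereby shorten $\bar{e}$ while keeping all of the previously established properties.

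More concretely, suppose we start with $\bar{e}$, $\bar{c}$, $\varphi$, $(\bar{a}_\alpha, \bar{b}_\beta : \alpha,\beta \in \mathbb{Q})$ satisfying (1)--(3), (4$'$) and (5), and assume that (6) fails, so that some $e_{i^*}$ is in $\Span(e_j : j \neq i^*)$. Then there exist an $\emptyset$-definable function $f$ and a finite tuple $\bar{k}$ from $K$ with $e_{i^*} = f\bigl(\bar{k}, (e_j)_{j \neq i^*}\bigr)$. Set $\bar{e}' := (e_j)_{j \neq i^*}$, $\bar{c}' := (\bar{c}^K \,^\frown \bar{k})\,^\frown \bar{c}^V$, and let $\varphi'(\bar{x}', \bar{w}', \bar{y}, \bar{z})$ be the formula obtained from $\varphi(\bar{x}, \bar{w}, \bar{y}, \bar{z})$ by substituting $f(\bar{x}^K_{\text{new}}, \bar{w}')$ for the variable in $\bar{w}$ corresponding to $e_{i^*}$, where $\bar{x}^K_{\text{new}}$ is the new block of $K$-variables holding $\bar{k}$. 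By construction, $\models \varphi'(\bar{c}', \bar{e}', \bar{a}_\alpha, \bar{b}_\beta) \iff\ \models \varphi(\bar{c}, \bar{e}, \bar{a}_\alpha, \bar{b}_\beta)$ for all $\alpha, \beta$, so (4$'$) is preserved with the new data.

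The remaining properties are inherited for free. Since $\bar{e}' \subseteq \bar{e}$, the $O_{2,p}$-indiscernibility of $(\bar{a}_\alpha, \bar{b}_\beta)_{\alpha,\beta \in \mathbb{Q}}$ over $\bar{e}$ immediately yields (1) over $\bar{e}'$, and similarly (2) and (3) become easier, not harder, when the spanning tuple on the right-hand side shrinks. For (5), note that $(\bar{c}')^V = \bar{c}^V$ is unchanged, so linear independence of $(\bar{c}')^V$ modulo the spans of the shorter tuple $\bar{e}'$ together with the $\bar{a}^V_\alpha, \bar{b}^V_\beta$ is again only a weakening of the old (5). Iterating this reduction at most $|\bar{e}|$ times terminates with a tuple $\bar{e}$ satisfying (6), proving the claim.

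The argument is essentially bookkeeping, with no genuine obstacle; the only point to watch is that the newly introduced coefficient tuple $\bar{k}$ must be attached to the $K$-part of the witness, never to the $V$-part, so that the linear-independence condition (5) on $\bar{c}^V$ continues to hold across the iteration. This ensures all of the previously guaranteed non-triviality of the $V$-sort entries survives the reduction.
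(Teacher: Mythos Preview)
Your proposal is correct and follows exactly the same route as the paper's own proof: express a dependent $e_{i^*}$ via a $\emptyset$-definable function of the remaining $e_j$'s and a tuple $\bar{k}$ of field coefficients, absorb $\bar{k}$ into $\bar{c}^K$, shrink $\bar{e}$, and iterate. Your verification that (1)--(3) and (5) are preserved because $\bar{e}$ only shrinks (and $\bar{c}^V$ is unchanged) is precisely the paper's reasoning, stated somewhat more carefully.
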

\begin{proof}
Start with $\bar{e}$, $\varphi$ and $(\bar{a}_\alpha, \bar{b}_\beta )_{\alpha, \beta \in \mathbb{Q}}$ given by Claim \ref{cla: treating c}.
As in the previous two claims, if $e_{i^*} \in \Span(\bar{e}_i : i \neq i^*)$, then $e_{i^*} = f(\bar{k}, (\bar{e}_i)_{ i \neq i^*})$ for some $\emptyset$-definable function $f$ and a tuple $\bar{k}$ in $K$. Replacing $\bar{e}$ by $(\bar{e}_i)_{ i \neq i^*}$, adding $\bar{k}$ to $\bar{c}^K$ and modifying the formula accordingly, the condition (4$'$) is still satisfied. And (1),(2),(3),(5),(6) still hold as we only pass to a subtuple of $\bar{e}$. Repeating this finitely many times we obtain the claim.
\end{proof}

Let $\bar{e}$, $\bar{c}$, $\varphi(\bar{x}, \bar{w}, \bar{y}, \bar{z})  \in \mathcal{L}_{\theta}^K$ and $(\bar{a}_\alpha, \bar{b}_\beta : \alpha, \beta \in \mathbb{Q})$ be as given by Claim \ref{cla: reducing e}. By (2), (3), (5), (6) and linear algebra we get that all elements in the tuple 
$$\bar{c}^V \,^{\frown} \left(\bar{a}^V_{\alpha} \right)_{\alpha \in \mathbb{Q}}\,^{\frown} \left(\bar{b}^V_{\beta} \right)_{\beta \in \mathbb{Q}}\,^{\frown} \bar{e}$$
 are linearly independent. Hence adjoining $\bar{e}^V$ to $\bar{c}^V$ and regrouping the variables of $\varphi$ accordingly, we get:

\begin{claim}\label{cla: lin indep counterex}
	There is an $\mathcal{L}_{\theta}^K$-formula $\varphi(\bar{x},\bar{y}, \bar{z})$ and tuples $\bar{c}, \bar{a}_\alpha, \bar{b}_\beta$ such that:
	\begin{enumerate}
		\item $(\bar{a}_\alpha, \bar{b}_\beta: \alpha, \beta \in \mathbb{Q})$ is $O_{2,p}$-indiscernible;
		\item all elements in the tuple $\bar{c}^V \,^{\frown}\left( \bar{a}^V_{\alpha} \right)_{\alpha \in \mathbb{Q}} \,^{\frown} \left(\bar{b}^V_{\beta} \right)_{\beta \in \mathbb{Q}}$ are linearly independent;
		\item $\models \varphi(\bar{c}, \bar{a}_\alpha, \bar{b}_\beta) \iff G_{2,p}\models R_2(\alpha, \beta)$.
	\end{enumerate}

\end{claim}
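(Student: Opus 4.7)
The plan is to chain the three previous claims and finish with a routine linear algebra argument. First I would invoke Claim \ref{cla: reducing e} to obtain tuples $\bar{c}$ and $\bar{e}$ (with $\bar{e}$ a tuple in $V$), a formula $\varphi(\bar{x},\bar{w},\bar{y},\bar{z})\in \mathcal{L}_{\theta}^K$ and sequences $(\bar{a}_\alpha,\bar{b}_\beta)_{\alpha,\beta\in\mathbb{Q}}$ satisfying conditions (1)--(3), (4$'$), (5), (6). Since $O_{2,p}$-indiscernibility over $\bar{e}$ trivially implies $O_{2,p}$-indiscernibility over $\emptyset$, condition (1) of the target claim is immediate.

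The core step is to check that the combined tuple
$$T := \bar{c}^V\, {}^{\frown}\, (\bar{a}^V_\alpha)_{\alpha\in\mathbb{Q}}\, {}^{\frown}\, (\bar{b}^V_\beta)_{\beta\in\mathbb{Q}}\, {}^{\frown}\, \bar{e}$$
is linearly independent over $K$. I would show that no entry of $T$ lies in the $K$-span of the remaining entries. Fixing such an entry $v\in T$ and assuming a nontrivial relation of the form $v = \sum_l k_l c^V_l + \sum_{\alpha,s} n_{\alpha,s}\, a^V_{\alpha,s} + \sum_{\beta,t} p_{\beta,t}\, b^V_{\beta,t} + \sum_j m_j e_j$ with $v$ itself omitted from the sum, I would run a case analysis. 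If some $k_{l_0}\neq 0$, solving for $c^V_{l_0}$ contradicts (5). If all $k_l$ vanish but some $n_{\alpha^*,s^*}\neq 0$, solving for $a^V_{\alpha^*,s^*}$ contradicts (2); symmetrically, a nonzero $p_{\beta^*,t^*}$ contradicts (3). If all $k$, $n$, $p$ vanish, then $v\in\Span(\bar{e})$: if $v = e_i$ this contradicts (6), and if $v$ is one of $c^V_i, a^V_{\alpha^*,s^*}, b^V_{\beta^*,t^*}$ then (5), (2) or (3) is violated respectively (since each of those spans explicitly contains $\bar{e}$).

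Finally I would perform the regrouping: set $\bar{c}' := \bar{c}\, {}^{\frown}\, \bar{e}$ and let $\varphi'(\bar{x}',\bar{y},\bar{z})$ be the formula obtained from $\varphi(\bar{x},\bar{w},\bar{y},\bar{z})$ by folding the $\bar{w}$ variables into the $\bar{x}$ block. Condition (4$'$) then reads $\models \varphi'(\bar{c}',\bar{a}_\alpha,\bar{b}_\beta) \iff G_{2,p}\models R_2(\alpha,\beta)$, giving (3) of the target claim; and the linear independence established above is exactly condition (2) after absorbing $\bar{e}$ into $\bar{c}^V$. The substantive technical work sits in Claims \ref{cla: lin indep seqs}, \ref{cla: treating c} and \ref{cla: reducing e}, where the $V$-components are progressively purified using the indiscernibility of the sequences and linear definability inside $V$; the present claim is essentially a bookkeeping step, but its value is to repackage the failure of $2$-dependence so that every $V$-sort element is genuinely $K$-linearly independent, which is the configuration needed for the remainder of the proof of Theorem \ref{thm: Granger}(2).
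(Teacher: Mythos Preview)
Your approach is exactly the paper's: take the output of Claim~\ref{cla: reducing e}, argue that $\bar c^V{}^\frown(\bar a^V_\alpha){}^\frown(\bar b^V_\beta){}^\frown\bar e$ is linearly independent from conditions (2), (3), (5), (6), then fold $\bar e$ into $\bar c$ and regroup the variables. The paper compresses the independence step to the phrase ``by (2), (3), (5), (6) and linear algebra''.

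There is one small slip in your case analysis. In the case ``all $k_l$ vanish but some $n_{\alpha^*,s^*}\neq 0$'', if $v$ happens to be some $c^V_i$ then solving for $a^V_{\alpha^*,s^*}$ expresses it in a span containing $c^V_i$, and condition (2) does \emph{not} exclude $\bar c^V$ from its span --- so this does not contradict (2). The fix is immediate: in that sub-case the relation itself already reads $c^V_i\in\Span(\bar a^V,\bar b^V,\bar e)$, contradicting (5) directly. The same remark applies symmetrically to the $p$-case when $v$ is a $c^V_i$ or an $a^V_{\alpha,s}$. The cleanest organization is to view the dependence as a single nontrivial relation (with $v$ carrying coefficient $-1$) and solve for the nonzero term of highest priority in the order $c^V>a^V>b^V>e$, invoking (5), (2), (3), (6) respectively.
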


By quantifier elimination (Fact \ref{fac: GrangerQE}) every $\mathcal{L}_{\theta}^K$-formula is equivalent to a Boolean combination of atomic $\mathcal{L}_{\theta}^K$-formulas. Since $2$-dependence is preserved under Boolean combinations (Fact \ref{fac: props of n-dependent formulas}), it is sufficient to check $2$-dependence for atomic $\mathcal{L}_{\theta}^K$-formulas, which by the definition of the language $\mathcal{L}_{\theta}^K$ fall into one of  the following three cases. 
\noindent

\noindent\textbf{Case 1.} 
The formula $\varphi(\bar{x},\bar{y}, \bar{z})$ is of the form $\psi(t_1(\bar{x},\bar{y}, \bar{z}), \ldots, t_d(\bar{x},\bar{y}, \bar{z}))$ for some $\psi \in \mathcal{L}_K$ and some terms $t_l(\bar x,\bar{y}, \bar{z})$ taking values in $K$ and $1 \leq l \leq d$. 

In this case, for each $l$ we have the following possibilities:
\begin{itemize}
	\item the term $t_l(\bar{x},\bar{y},\bar{z})$ has height $1$, i.e.~it is one of the variables in $\bar{x}^{K \frown}\bar{y}^{K \frown} \bar{z}^{K}$;
	\item $t_l(\bar{x},\bar{y}, \bar{z}) = t^1_l(\bar{x},\bar{y}, \bar{z}) +_K t^2_l(x,\bar{y}, \bar{z})$ or $t_l(\bar x,\bar{y}, \bar{z}) = t^1_l(\bar x,\bar{y}, \bar{z}) \cdot_K t^2_l(\bar x,\bar{y}, \bar{z})$, for some terms $t^1_l, t^2_l$ of smaller height taking values in $K$;
	\item $t_l(\bar x,\bar{y}, \bar{z}) = [t^1_l(\bar x,\bar{y}, \bar{z}), t^2_l(\bar x,\bar{y}, \bar{z})]$ for some terms $t^1_l, t^2_l$ of smaller height taking values in $V$, but then:
	\begin{itemize}
	\item either $t^1_l$ is of height $1$, i.e.~it is one of the variables in $\bar x^{V \frown}\bar{y}^{V \frown }\bar{z}^V$;
	\item or $t^1_l(\bar x,\bar{y}, \bar{z}) = s_l^1(\bar x,\bar{y}, \bar{z}) \cdot_V s_l^2(\bar x,\bar{y}, \bar{z})$ for some terms $s_l^1, s_l^2$ of smaller height taking values in $K$ and $V$, respectively, in which case $t_l(\bar x,\bar{y}, \bar{z}) = s_l^1(\bar x,\bar{y}, \bar{z}) \cdot_K [s_l^2(\bar x,\bar{y}, \bar{z}), t^2_l(\bar x,\bar{y}, \bar{z})]$;
	\item or $t^1_l(\bar x,\bar{y}, \bar{z}) = s_l^1(\bar x,\bar{y}, \bar{z}) +_V s_l^2(\bar x,\bar{y}, \bar{z})$ for some terms $s_l^1, s_l^2$ of smaller height taking values in $V$, in which case $t_l(\bar x,\bar{y}, \bar{z}) = [s_l^1(\bar x,\bar{y}, \bar{z}), t^2_l(\bar x,\bar{y}, \bar{z})] +_K [s_l^2(\bar x,\bar{y}, \bar{z}), t^2_l(\bar x,\bar{y}, \bar{z})]$.

	\end{itemize}
	
	\noindent And similarly for $t^2_l$.
\end{itemize}
Applying this for each $l$ and iterating by recursion on the height of terms, we thus conclude that the formula $\psi(t_1(\bar x,\bar{y}, \bar{z}), \ldots, t_d(\bar x,\bar{y}, \bar{z}))$ is equivalent to
\begin{align*}
&(\ast) &\psi'\biggl(([x^V_i,y^V_j])_{ i \in X^V,j \in Y^V}, ([x^V_i,z^V_j])_{i \in X^V, j \in Y^V},([y^V_i,z^V_j])_{i \in Y^V, j \in Z^V},\\
& &([x^V_i,x^V_j])_{  i, j \in X^V},([y^V_i,y^V_j])_{  i,j \in Y^V}, ([z^V_i,z^V_j])_{ i,j \in Z^V},
  \bar{x}^K, \bar{y}^K, \bar{z}^K \biggr) 
\end{align*}
for some $\mathcal{L}_K$-formula $\psi'$.
As $\Th(K)$ is dependent, Theorem \ref{thm: functions into stable are 2-dep } implies that this formula is $2$-dependent. This concludes Case 1.

\noindent\textbf{Case 2.} The formula $\varphi(\bar{x};\bar{y},\bar{z})$ is given by 
$$\theta_d(t_1( \bar{x},\bar{y}, \bar{z}), \ldots, t_d(\bar{x},\bar{y}, \bar{z}))$$
for some $d \in \mathbb{N}$ and terms $t_l(\bar{x},\bar{y}, \bar{z})$ taking values in $V$.

In this case, by a simple recursion on the height of the terms, we see that for $1 \leq l \leq d$ the term $t_l$ must be of the form
$$\sum_{i \in X^V} t^X_{l,i}(\bar{x},\bar{y},\bar{z}) x_i^V + \sum_{j \in Y^V} t^{Y}_{l,j}(\bar{x},\bar{y}, \bar{z}) y^V_j + \sum_{k \in Z^V} t^{Z}_{l,k}(\bar{x},\bar{y},\bar{z}) z^V_k$$
for some terms $t^X_{l,i}, t^Y_{l,j}, t^Z_{l,k}$ taking values in $K$.

Using linear independence of the set of elements of the tuple $\bar{c}^{V \frown} \bar{a}^{V \frown}_\alpha \bar{b}^V_\beta$  for any $\alpha, \beta\in \mathbb Q$ established in Claim \ref{cla: lin indep counterex}, for any $\alpha, \beta \in \mathbb{Q}$ we have 
$$ G_{2,p} \models R_2(\alpha, \beta) \iff \models  \neg \theta_d(t_1(\bar{c},\bar{a}_\alpha, \bar{b}_\beta), \ldots, t_d(\bar{c},\bar{a}_\alpha, \bar{b}_\beta)) \iff$$
$$ \models (\exists e_1, \ldots, e_d \in K) (e_1, \ldots, e_d) \neq (0, \ldots, 0) \land \bigwedge_{i \in X^V} \left( \sum_{l=1}^d e_l \cdot t_{l,i}^X (\bar{c}, \bar{a}_\alpha, \bar{b}_\beta) = 0 \right) \land $$
$$  \bigwedge_{j \in Y^V}  \left( \sum_{l=1}^d e_l \cdot t^Y_{l,j}(\bar{c}, \bar{a}_\alpha, \bar{b}_\beta) = 0 \right) \land \bigwedge_{k\in Z^V} \left( \sum_{l=1}^d e_l \cdot t^Z_{l,k}(\bar{c}, \bar{a}_\alpha, \bar{b}_\beta) = 0 \right) \iff $$
$$ \models \psi \left( \left(t_{l,i}^X (\bar{c}, \bar{a}_\alpha, \bar{b}_\beta) \right)_{1 \leq l \leq d, i \in X^V}, \left(t^Y_{l,j}(\bar{c}, \bar{a}_\alpha, \bar{b}_\beta) \right)_{1\leq l \leq d, j\in Y^V}, \left(t^Z_{l,k}(\bar{c}, \bar{a}_\alpha, \bar{b}_\beta) \right)_{1 \leq l \leq d, k \in Z^V} \right)$$

for an appropriate formula $\psi \in \mathcal{L}_K$. But this is impossible by Case 1.

\noindent\textbf{Case 3.} The formula is of the form $t(\bar{x},\bar{y},\bar{z}) = 0$ for some term.

As in the previous case, then $t$ must be of the form  $$ \sum_{i \in X^V} t_{i}^X(\bar{x},\bar{y},\bar{z}) x_i^V + \sum_{j \in Y^V} t^{Y}_{j}(\bar{x},\bar{y}, \bar{z}) y^V_j + \sum_{k \in Z^V} t^{Z}_{k}(\bar{x},\bar{y},\bar{z}) z^V_k$$
for some terms $t^X_{i}, t^Y_{j}, t^Z_{k}$ taking values in $K$. Using again linear independence of $\bar{c}^{V \frown} \bar{a}^{V \frown}_\alpha \bar{b}^V_\beta$  for any $\alpha, \beta\in \mathbb Q$ established in Claim \ref{cla: lin indep counterex}, we have that 
$$G_{2,p} \models R_2(\alpha, \beta) \iff t(\bar{c}, \bar{a}_\alpha, \bar{b}_\beta) = 0 \iff$$
$$ \bigwedge_{i \in X^V}t^X_i(\bar{c}, \bar{a}_\alpha, \bar{b}_\beta)=0 \land \bigwedge_{j \in Y^V}t_j^Y(\bar{c}, \bar{a}_\alpha, \bar{b}_\beta) = 0 \land \bigwedge_{k \in Z^V} t_k^Z(\bar{c}, \bar{a}_\alpha, \bar{b}_\beta) = 0$$
$$ \iff \psi \left( (t^X_i(\bar{c},\bar{a}_\alpha, \bar{b}_\beta))_{i \in X^V},  (t^Y_j(\bar{c},\bar{a}_\alpha, \bar{b}_\beta))_{j \in Y^V}, (t^Z_k(\bar{c},\bar{a}_\alpha, \bar{b}_\beta))_{k \in Z^V}\right)$$
for an appropriate $\mathcal{L}_K$-formula $\psi$ and any $\alpha, \beta \in \mathbb{Q}$ --- which is impossible by Case 1.

This finishes the proof of Theorem \ref{thm: Granger} (2). \qed 

\section{Expansions by generic predicates and $n$-dependence}\label{sec: adding rand pred}

In this section we will show that an expansion of a geometric theory $T$ by a generic predicate, in the sense of \cite{chatzidakis1998generic}, is dependent if and only if it is $n$-dependent for some $n$, if and only if the algebraic closure in $T$ is disintegrated (Corollary \ref{cor: n-dep iff acl disint}). In fact, we prove that any expansion of an $n$-dependent geometric theory with disintegrated algebraic closure by generic relations of arity at most $n$ is $n$-dependent (Proposition \ref{prop: Tp is ndep}). While these results have no direct implication for Conjecture \ref{conj: main conj}, the authors view it as
providing some further evidence towards it. Namely, we view it as a ``toy example'' of a situation in which failure of $1$-dependence implies failure of $n$-dependence for all $n$, utilizing the geometric complexity (non-disintegration) of the algebraic closure, similar to the behavior expected in fields.

We begin by recalling some basics on geometric theories and expansions by generic predicates from \cite{chatzidakis1998generic}.
A theory $T$ is \emph{geometric} if it eliminates the $\exists^\infty$-quantifier and the algebraic closure operator $\acl$ satisfies exchange in every model of $T$. In this section, we denote by $\dim$ the $\acl$-dimension in a geometric theory, and by $\ind$ the corresponding algebraic independence relation (see e.g.~\cite[Section 2]{chatzidakis1998generic} for details).

\begin{defn}\label{defn: trivial element} Let $T$ be an arbitrary theory.
\begin{enumerate}
\item An element $a \in \mathbb{M}$ is \emph{non-trivial} if there exist elements $b,c \in \mathbb{M}$ and a small set $B \subseteq \mathbb{M}$ such that $a \in \acl(bcB) \setminus \left( \acl(bB) \cup \acl(c B) \right)$.
\item A theory $T$ is \emph{non-trivial} if there exists a non-trivial element in $\mathbb{M}$, otherwise we call $T$ \emph{trivial}. 
\end{enumerate} 
\end{defn}
\begin{remark}\label{rem: triv iff disint}
	It is immediate from the definitions that a geometric theory $T$ is trivial if and only if it has \emph{disintegrated algebraic closure}, i.e.~if $\acl(A) = \bigcup_{a \in A} \acl(a)$ for any set $A \subseteq \mathbb{M}$.
\end{remark}

Let now $T$ be an arbitrary $\mathcal{L}$-theory with quantifiers elimination (for the questions considered here, we may always assume it replacing $T$ by its Morleyzation) and also eliminating $\exists^{\infty}$. Let $S$ be a distinguished $\mathcal{L}(\emptyset)$-definable set in $T$. We denote by $T_{0,S}$ the theory in a language $\mathcal{L}_P := \mathcal{L} \cup \{P(x) \}$ given by $T \cup \{ P(x) \rightarrow S(x) \}$. When working in an $\mathcal{L}_{P}$-structure, we will write $\tp_{\mathcal{L}}$ and $\acl_{\mathcal{L}}$ to denote the type and the algebraic closure of a tuple in the $\mathcal{L}$-reduct (as opposed to its $\mathcal{L}_P$-type $\tp_{\mathcal{L}_P}$ and its algebraic closure $\acl_{\mathcal{L}_{P}}$ obtained using all $\mathcal{L}_P$-formulas).

\begin{fact}\label{fac: rand pred axioms}
\begin{enumerate}
	\item \cite[Theorem 2.4]{chatzidakis1998generic} The theory $T_{0,S}$ has a model companion $T_{P,S}$ with the following axiomatization:  $\mathcal{M} \models T_{P,S}$ if and only if
	\begin{enumerate}
	\item $\mathcal{M} \models T$;
	\item for every $\mathcal{L}$-formula $\theta(\bar{x},\bar{z})$ with $\bar{x} = (x_1, \ldots, x_n)$ and every $I\subseteq \{1, \ldots, n \}$, $\mathcal{M}$ satisfies
\begin{gather*}
	\forall \bar{z}  \Bigg( \bigg(  \Big( \exists \bar{x} \theta(\bar{x}, \bar{z}) \land \left( \bar{x} \cap \acl_{\mathcal{L}}(\bar{z}) = \emptyset \right) \Big) \land \bigwedge_{i=1}^n S(x_i) \land \bigwedge_{1 \leq i < j \leq n} x_i \neq x_j \bigg)\\
	\rightarrow  \exists \bar{x} \bigg( \theta(\bar{x}, \bar{z}) \land \bigwedge_{i \in I} (x_i \in P) \land \bigwedge_{i \notin I} (x_i \in S \setminus P) \bigg) \Bigg).
\end{gather*}
	\end{enumerate}
	\item Let $\mathcal{M} \models T_{P,S}$. Assume that $\bar{a}, \bar{b}$ are  tuples from $M$ and $A \subseteq M$ is a set of parameters. Then the following are equivalent:
	\begin{enumerate}
	\item $\tp_{\mathcal{L}_P}(\bar{a}/A) = \tp_{\mathcal{L}_P}(\bar{b}/A)$;
	\item there is an $A$-isomorphism of $\mathcal{L}_P$-structures from $\acl_{\mathcal{L}}(A,\bar{a})$ to $\acl_{\mathcal{L}}(A,\bar{b})$ which carries $\bar{a}$ to $\bar{b}$.
	\end{enumerate}
	\item If $\mathcal{M} \models T_{P,S}$, $a \in M$ and $A \subseteq M$, then $a \in \acl_{\mathcal{L}}(A) \iff a \in \acl_{\mathcal{L}_P}(A)$.
\end{enumerate}	
\end{fact}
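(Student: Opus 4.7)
My plan is to handle all three parts of this cited fact through the standard machinery for model companions, with the axiom scheme in (1) feeding directly into the back-and-forth argument needed for (2) and the type-multiplicity argument needed for (3). First, for (1), each instance of the scheme is visibly a single first-order $\mathcal{L}_P$-sentence, so it only remains to show cofinality: every model of $T_{0,S}$ embeds into one satisfying all instances. I would proceed by a transfinite chain construction. Given $\mathcal{M}_0 \models T_{0,S}$, enumerate all triples $(\theta, \bar{z}, I)$ whose hypothesis holds in $\mathcal{M}_0$, and for each one pass to an $\mathcal{L}$-elementary extension where a witnessing $\bar{x}$ exists with $\bar{x} \cap \acl_{\mathcal{L}}(\bar{z}) = \emptyset$; using elimination of $\exists^\infty$ in $T$, declare $P$ to hold or fail on each new $x_i$ according to $I$, consistently with the prior $P$-structure. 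Iterating and taking unions at limits yields a model of $T_{P,S}$. Model completeness (hence ``model companion'') will follow once (2) is proved, since (2) establishes that an $\mathcal{L}_P$-type is determined by the isomorphism type of a tuple's $\mathcal{L}$-algebraic closure as an $\mathcal{L}_P$-structure, yielding quantifier elimination modulo $\acl_{\mathcal{L}}$.

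For (2), the direction (a)$\Rightarrow$(b) is immediate: extend $\bar{a} \mapsto \bar{b}$ to an $\mathcal{L}_P$-automorphism of $\mathbb{M}$ fixing $A$ and restrict to $\acl_{\mathcal{L}}(A\bar{a})$, noting that $\mathcal{L}_P$-automorphisms preserve $\acl_{\mathcal{L}}$ because they preserve $\mathcal{L}$. The substantive direction is (b)$\Rightarrow$(a), which I would prove by back-and-forth. Given an $A$-isomorphism $\sigma_0 \colon \acl_{\mathcal{L}}(A\bar{a}) \to \acl_{\mathcal{L}}(A\bar{b})$ of $\mathcal{L}_P$-structures, construct an increasing chain of partial $\mathcal{L}_P$-elementary maps $\sigma_k$ between $\mathcal{L}$-algebraically closed subsets of $\mathbb{M}$, adding at stage $k$ one new element $c$ on, say, the left. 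Let $q(x) := \tp_{\mathcal{L}}(c/\mathrm{dom}(\sigma_k))$; since $\sigma_k$ is already an $\mathcal{L}$-partial elementary map, $\sigma_k q$ is consistent. If $c \in \acl_{\mathcal{L}}(\mathrm{dom}(\sigma_k))$, the value $\sigma_k(c)$ is forced and has $P$-status already determined. Otherwise, apply axiom (b) of (1) with $\theta$ an isolating formula for $q$, $\bar{z}$ enumerating the parameters of $\theta$, and $I \subseteq \{1\}$ chosen to reflect whether $c \in P$, to produce $c' \in \mathbb{M}$ realizing $\sigma_k q$ with $c' \in P \iff c \in P$ and $c' \notin \acl_{\mathcal{L}}(\mathrm{range}(\sigma_k))$; extend $\sigma_k$ to $\acl_{\mathcal{L}}(\mathrm{dom}(\sigma_k), c)$ by mapping $c \mapsto c'$, transporting $P$-structure on the newly added algebraic elements by applying the axiom again one element at a time. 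The union of the $\sigma_k$ is an $A$-automorphism of $\mathbb{M}$ witnessing $\tp_{\mathcal{L}_P}(\bar{a}/A) = \tp_{\mathcal{L}_P}(\bar{b}/A)$.

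For (3), the inclusion $\acl_{\mathcal{L}}(A) \subseteq \acl_{\mathcal{L}_P}(A)$ is trivial because any $\mathcal{L}$-formula is an $\mathcal{L}_P$-formula. Conversely, suppose $a \notin \acl_{\mathcal{L}}(A)$; then $\tp_{\mathcal{L}}(a/A)$ has infinitely many realizations in $\mathbb{M}$. Using (2), any $\mathcal{L}$-type-preserving map on an algebraically closed set that also matches the $P$-pattern yields an $\mathcal{L}_P$-conjugate; applying axiom (b) of (1) to the isolating formula of $\tp_{\mathcal{L}}(a/A)$ with $I$ prescribed according to whether $a \in P$, we produce arbitrarily many realizations of $\tp_{\mathcal{L}_P}(a/A)$, so $a \notin \acl_{\mathcal{L}_P}(A)$.

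The main obstacle is the back-and-forth bookkeeping in (2): at each step I must prescribe a $P$-pattern on the newly adjoined $\mathcal{L}$-algebraic closure that is simultaneously consistent with $\sigma_k$ and realizable via the axiom scheme. This forces a careful decomposition of the new elements into those lying in the old $\acl_{\mathcal{L}}$ (where $\sigma_k$ or its extension by $c \mapsto c'$ already dictates the value), versus truly new free elements (where axiom (b) of (1) applies and $P$ can be set arbitrarily). The exchange property of $\acl_{\mathcal{L}}$ together with elimination of $\exists^\infty$ guarantees that these two cases partition cleanly and that ``free'' elements remain algebraically independent in the sense needed to invoke the scheme; this is the technical heart of the argument and is where the geometric hypothesis on $T$ would be used implicitly (though not by name in this fact, since the axiomatization itself does not require full geometricity, only elimination of $\exists^\infty$).
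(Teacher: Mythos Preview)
The paper does not prove this statement: it is recorded as a \emph{Fact} and attributed to \cite{chatzidakis1998generic}, so there is no proof in the paper to compare against. Your sketch is broadly the standard argument, but there is one genuine gap in the back-and-forth step for (2), and one incorrect hypothesis at the end.

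In your extension step for (b)$\Rightarrow$(a) you first pick $c'$ matching only the $\mathcal{L}$-type of $c$ and the single bit ``$c\in P$ or not'', and then propose to fix up the $P$-structure on the rest of $\acl_{\mathcal{L}}(\operatorname{dom}(\sigma_k),c)$ by ``applying the axiom again one element at a time''. This does not work: once $c'$ is chosen, the set $\acl_{\mathcal{L}}(\operatorname{range}(\sigma_k),c')$ and its $P$-structure are already determined inside $\mathbb{M}$; you cannot go back and reassign $P$ on those elements. The correct move is to choose $c'$ \emph{together with} images for a finite tuple $d_1,\ldots,d_m$ enumerating the new algebraic elements that lie in $S$, by applying the axiom scheme once with $\bar{x}=(x_0,\ldots,x_m)$ and $\theta$ a suitable $\mathcal{L}$-formula over a finite subset of $\operatorname{range}(\sigma_k)$; since each $d_j\notin\acl_{\mathcal{L}}(\operatorname{dom}(\sigma_k))$ (the domain is $\acl_{\mathcal{L}}$-closed), the hypothesis of the scheme is met. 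Saturation and compactness then let you realise the full $\mathcal{L}_P$-type of $\acl_{\mathcal{L}}(\operatorname{dom}(\sigma_k),c)$ over $\operatorname{range}(\sigma_k)$. Relatedly, speaking of ``an isolating formula for $q$'' is imprecise, since $q$ is a type over a possibly infinite set; what you actually need is a formula capturing a finite fragment, and then compactness.

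Finally, your last paragraph invokes the exchange property of $\acl_{\mathcal{L}}$. This is neither assumed in the statement (which only requires elimination of $\exists^\infty$) nor needed: the argument above goes through because every new element is outside the $\acl_{\mathcal{L}}$-closed domain, not because of any dimension-theoretic partition into ``free'' and ``algebraic'' elements.
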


One typically refers to $T_{P,S}$ as an expansion of $T$ by a \emph{generic predicate} on $S$.
If the predicate $S$ is equivalent to $x=x$ in $T$, we simply write $T_{P}$ instead of $T_{P,S}$.

Our first aim is to show that if $T$ is a non-trivial geometric theory, then its expansion by a generic predicate is not $n$-dependent for any $n$. In order to produce a definable relation witnessing failure of $n$-dependence for an arbitrary $n$, we consider certain algebraic configurations of ``higher arity''.

\begin{defn} 
\cite[Definition 2.6]{berenstein2016geometric} Let $T$ be a geometric theory and $B \subseteq \mathbb{M}$. We say that a tuple $\bar{a} = (a_1, \ldots, a_{n}) \in \mathbb{M}^n$ is an \emph{algebraic $n$-gon over $B$} if $\dim(\bar{a}/B) = n-1$, but any subset of $\{a_1, \ldots, a_{n} \}$ of size $n-1$ is independent over $B$.
\end{defn}

Note that any tuple obtained by permuting the elements of an algebraic $n$-gon over $B$ is still an algebraic $n$-gon over $B$ (by exchange of $\acl$). Due to the following fact, starting with a non-trivial element we can find an algebraic $n$-gon for any $n$.

\begin{fact} \cite[Lemma 2.7]{berenstein2016geometric} \label{fac: n-gon}
	Suppose that $T$ is a geometric theory and $a \in \mathbb{M}$ is non-trivial. Then for every $n \geq 3$ there exist some finite set $B \subseteq \mathbb{M}$ and an algebraic $n$-gon $(a_1, \ldots, a_{n})$ over $B$ such that $a_n = a$.
\end{fact}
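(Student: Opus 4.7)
The argument proceeds by induction on $n \geq 3$.

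\textbf{Base case ($n=3$).} The definition of non-triviality of $a$ directly supplies a finite $B \subseteq \mathbb{M}$ and elements $b, c$ with $a \in \acl(Bbc) \setminus (\acl(Bb) \cup \acl(Bc))$. I claim $(b, c, a)$ is already a 3-gon over $B$. Indeed, if $b \in \acl(Bc)$ then $\acl(Bbc) = \acl(Bc)$, contradicting $a \notin \acl(Bc)$; by exchange $c \notin \acl(Bb)$, so $\{b, c\}$ is $B$-independent. Combined with the hypothesis $a \notin \acl(Bb) \cup \acl(Bc)$, every pair from $\{a, b, c\}$ is independent over $B$, so $\dim(abc/B) = 2$, as required.

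\textbf{Inductive step.} Assume we have an $(n-1)$-gon $(b, c_{n-3}, \ldots, c_1, a)$ over $B$ with last coordinate $a$. The idea is to ``push'' the algebraic dependency off the leading element $b$ by producing a new 3-gon \emph{through $b$}. First observe that $b$ is itself non-trivial in $T$: picking any two of the remaining coordinates $c_{n-3}, \ldots, c_1, a$ as witnesses and dumping the rest into the parameter set, the $(n-1)$-gon property (every $(n-2)$-subset is a basis) forces $b$ to lie in the closure of all coordinates but not in either of the two smaller closures.

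Now apply the base case to $b$ to obtain a 3-gon $(b^{*}, c^{*}, b)$ over some small $B^{*} \supseteq B$. Using $\kappa$-saturation of $\mathbb{M}$ (equivalently, the existence of free extensions for algebraic independence in a geometric theory), I may further arrange that $(b^{*}, c^{*}) \indi_{B^{*} b} (c_{n-3}, \ldots, c_1, a)$. Set $b_{n-2} := b^{*}$, $c_{n-2} := c^{*}$. I claim $\widetilde{\bar e} := (b_{n-2}, c_{n-2}, c_{n-3}, \ldots, c_1, a)$ is an $n$-gon over $B^{*}$. For the total dimension: the independence above, together with $b \in \acl(B\, c_{n-3} \cdots c_1 a)$ from the $(n-1)$-gon and $b \in \acl(B^{*} b_{n-2} c_{n-2})$ from the new 3-gon, reduces the computation to $\dim((c_{n-3}, \ldots, c_1, a)/B^{*}) + \dim((b_{n-2}, c_{n-2})/B^{*} c_{n-3} \cdots c_1 a) = (n-2) + 1 = n-1$.

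The basis condition on $(n-1)$-subsets splits into two cases. If one deletes $b_{n-2}$ or $c_{n-2}$, the 3-gon relation lets the surviving element contribute dimension $1$ over $B^{*} c_{n-3} \cdots c_1 a$ since $b$ lies in that closure, and combining with the $(n-1)$-gon property of the original tuple yields dimension $n-1$. If one deletes one of the $c_i$ ($i \leq n-3$) or $a$, the key observation is the opposite: the $(n-1)$-gon relation \emph{breaks} as soon as one omits any element of the original tuple besides $b$, so $b$ no longer lies in the closure of the remaining $c$'s and $a$; consequently the pair $(b_{n-2}, c_{n-2})$ contributes dimension $2$ (instead of $1$) over this set, and adding the $n-3$ from the surviving $c_j$'s and $a$ again yields $n-1$. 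The technical heart of the argument is the simultaneous choice of a 3-gon through $b$ and its independence from the rest of the configuration over $B^{*} b$: this relies on checking that the partial type asserting both conditions is consistent, via a standard non-forking extension or a direct coheir/saturation construction, after which all the dimension and basis verifications reduce to bookkeeping with exchange and transitivity.
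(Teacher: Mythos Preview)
The paper does not prove this statement; it is quoted as a fact from \cite[Lemma 2.7]{berenstein2016geometric}, so there is no in-paper argument to compare against. Your inductive ``split the leading vertex via a fresh $3$-gon'' approach is the standard one and is correct in outline.

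One point deserves more care than you give it. You assert that the base case hands you a $3$-gon through $b$ over some $B^{*}\supseteq B$, and then \emph{separately} arrange $(b^{*},c^{*})\ind_{B^{*}b}(c_{n-3},\ldots,c_{1},a)$. But the base case only produces a $3$-gon over some unrelated finite $B_{0}$; naively enlarging $B_{0}$ to contain $B$ may destroy the $3$-gon, while enlarging $B$ to contain $B_{0}$ may destroy the original $(n-1)$-gon. The correct construction is to realize $\tp(B_{0},b^{*},c^{*}/b)$ by a copy $(B_{0}',b^{*\prime},c^{*\prime})$ with $(B_{0}',b^{*\prime},c^{*\prime})\ind_{b}(B,c_{n-3},\ldots,c_{1},a)$ and set $B^{*}:=B\cup B_{0}'$. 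One then checks that $b\notin\acl(B)$, $b\notin\acl(B_{0}')$ and $B_{0}'\ind_{b}B$ together force $b\notin\acl(B^{*})$, hence $B_{0}'\ind_{B}(b,c_{n-3},\ldots,c_{1},a)$ and symmetrically $B\ind_{B_{0}'}(b,b^{*\prime},c^{*\prime})$; this is exactly what is needed for both the $(n-1)$-gon and the $3$-gon to survive the base change to $B^{*}$. You acknowledge this is ``the technical heart'' but do not carry it out. Once it is in place, your dimension computations for the resulting $n$-gon are correct as written.
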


\begin{prop}\label{non-disint gives IPn}
	Assume that $T$ is a geometric theory and there exists a non-trivial element in $S$. Then $T_{P,S}$ is not $n$-dependent for any $n \geq 1$.
	
	In particular, if the algebraic closure in $T$ is not disintegrated, then $T_P$ is not $n$-dependent for any $n \geq 1$.
\end{prop}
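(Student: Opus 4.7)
The plan is to exhibit, for each $n \geq 1$, an $\mathcal{L}_P$-formula with the $n$-independence property with respect to $T_{P,S}$ by using an algebraic $(n+2)$-gon with a distinguished vertex in $S$ to produce a grid of algebraic witnesses on which the generic predicate $P$ can be toggled independently.

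First I would apply Fact \ref{fac: n-gon} with parameter $n+2 \geq 3$ to the hypothesized non-trivial element $a \in S$, obtaining a finite $B \subseteq \mathbb{M}$ and an algebraic $(n+2)$-gon $(a_0, a_1, \ldots, a_{n+1})$ over $B$; by the permutation symmetry of the gon condition (from exchange) I may assume $a_0 = a \in S$. Since $a_0 \in \acl_{\mathcal{L}}(a_1, \ldots, a_{n+1}, B)$ and $T$ eliminates $\exists^\infty$, I fix an $\mathcal{L}(B)$-formula $\theta(z; y, x_1, \ldots, x_n)$ satisfied by $(a_0, a_1, \ldots, a_{n+1})$, implying $S(z)$, and with uniformly finitely many solutions in $z$; after enlarging $B$ by finitely many parameters if necessary I may assume $\theta$ isolates a single canonical $z$. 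The candidate formula is
\[
\varphi(y; x_1, \ldots, x_n) \;:=\; \exists z\, [\theta(z; y, x_1, \ldots, x_n) \wedge P(z)].
\]
Using saturation I choose $\mathcal{L}$-sequences $(c^k_i)_{i \in \omega}$ for $k = 1, \ldots, n$ with $c^k_i$ realizing $\tp_{\mathcal{L}}(a_{k+1}/B)$, arranged so that for every $\eta = (\eta_1, \ldots, \eta_n) \in \omega^n$ the tuple $(c^1_{\eta_1}, \ldots, c^n_{\eta_n})$ is algebraically independent over $B$ and realizes $\tp_{\mathcal{L}}(a_2, \ldots, a_{n+1}/B)$.

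For each finite disjoint $s, F \subseteq \omega^n$, I consider the $\mathcal{L}$-formula
\[
\theta'(\bar u; \bar c) \;:=\; \exists y\, \Big[\,y \models \tp_{\mathcal{L}}(a_1/B)\,\wedge\,\bigwedge_{\eta \in s \cup F}\theta(u_\eta;\, y,\, c^1_{\eta_1},\ldots, c^n_{\eta_n})\,\Big],
\]
where $\bar u = (u_\eta)_{\eta \in s \cup F}$ and $\bar c$ lists the relevant $c^k_{\eta_k}$'s together with the elements of $B$. A pregeometric calculation using the $(n+2)$-gon condition and the intersection identity $\acl(Ac) \cap \acl(Ac') = \acl(A)$ for $c, c'$ independent over $A$ (a consequence of exchange) shows that for any $y$ algebraically independent from the sequences over $B$, the witnesses $u_\eta = d_{y, \eta} \in \acl_{\mathcal{L}}(y, c^1_{\eta_1}, \ldots, c^n_{\eta_n}, B)$ are pairwise distinct, lie in $S$, and satisfy $u_\eta \notin \acl_{\mathcal{L}}(\bar c)$. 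Hence the antecedent of the axiomatization of $T_{P,S}$ in Fact \ref{fac: rand pred axioms}(1) holds for $\theta'$, and choosing the index set $I = s$ produces a realization $\bar u$ of $\theta'$ with $P(u_\eta)$ holding precisely for $\eta \in s$. Any associated witness $y$ is then the desired parameter $b_{s,F}$, and the canonical choice of $\theta$-witness gives $\varphi(b_{s,F}; c^1_{\eta_1}, \ldots, c^n_{\eta_n}) \Leftrightarrow \eta \in s$ for $\eta \in s \cup F$. A standard compactness argument in $T_{P,S}$ then extends the finite approximations to all $s \subseteq \omega^n$, establishing IP$_n$ for $\varphi$.

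The main obstacle is the pregeometric verification of the genericity of the witness grid: I must show that for a sufficiently generic parameter $y$ the $d_{y, \eta}$'s are pairwise distinct across indices $\eta \in \omega^n$ and collectively disjoint from $\acl_{\mathcal{L}}((c^k_i)_{k \leq n, i \in \omega}, B)$, which relies on iterated application of the intersection identity together with the gon property that no proper subtuple of $(y, c^1_{\eta_1}, \ldots, c^n_{\eta_n})$ has $d_{y, \eta}$ in its $\acl_{\mathcal{L}}$. A secondary but routine subtlety is isolating a canonical $\theta$-witness so that $\varphi$ reads off exactly one $P$-value; this can be handled either by extra constants in $B$ or by enlarging $\bar u$ to list all finitely many $\theta$-conjugates and prescribing $P$ on the whole block uniformly.
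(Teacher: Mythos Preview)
Your proposal is correct and follows essentially the same strategy as the paper: both produce an $(n+2)$-gon with one vertex in $S$, vary $n$ of the remaining vertices along independent sequences, use the last vertex as the shattering parameter, and invoke the axioms of $T_{P,S}$ to prescribe $P$ on the resulting grid of algebraic witnesses. Your pregeometric verification that the witnesses are pairwise distinct and outside $\acl_{\mathcal{L}}(\bar c)$ is exactly the content of the paper's Claim (items (8) and (9)).

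One cautionary note: your first route to handling multiplicity---enlarging $B$ so that $\theta$ isolates a \emph{single} $z$---is not available in general, since an element of $\acl\setminus\dcl$ cannot always be made definable by adding parameters without destroying the gon. The paper therefore works throughout with the full set of $k$ conjugates: it chooses $\varphi$ to isolate $\tp_{\mathcal{L}}(a_{n+2}/a_1,\ldots,a_{n+1})$ and to have solution set of size uniformly in $\{0,k\}$, then prescribes $P$ on the entire $k$-block for each index. This is precisely the fallback you describe, and it is the step that makes the argument go through; the uniform bound $|\varphi(\cdot,\mathbb{M})|\in\{0,k\}$ is what guarantees that the $\theta$-witnesses at the \emph{new} existential parameter produced by the axiom are exhausted by the prescribed tuple.
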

\begin{proof}
Fix $n \geq 1$, and let $\mathbb{M}$ be a monster model of $T_{P,S}$. Note that the $\mathcal{L}$-reduct of $\mathbb{M}$ is a monster model of $T$. By Fact \ref{fac: rand pred axioms}(3) we have $\acl_{\mathcal{L}_P} = \acl_{\mathcal{L}}$ in $\mathbb{M}$, so we will just write $\acl$ in the rest of the proof. By assumption, there exists a non-trivial $a \models S(x)$ in $\mathbb{M}$.

By Fact \ref{fac: n-gon}, let $a_1, \ldots, a_{n+1}, a_{n+2}$ and a finite set $B$ be such that $a_{n+2} = a$ and $(a_1, \ldots, a_{n+2})$ is an algebraic $(n+2)$-gon over $B$. So in particular $a_{n+2} \models S(x)$. Naming $B$ by constants, without loss of generality we may assume that $B = \emptyset$. Then $\{a_1, \ldots, a_{n+1} \}$ is an $\ind$-independent set. Using extension, symmetry and transitivity we can choose inductively sequences $\bar{a}_i = (a_i^j : j \in \omega)$ for $1 \leq i \leq n$ such that:
\begin{enumerate}
	\item $a_i^j \ind \bar{a}_{<i}a_i^{<j} a_{i+1} \ldots a_{n+1}$ for all $1 \leq i\leq n$ and all $j \in \omega$;
	\item $a_i^j \equiv_{ \bar{a}_1 \ldots \bar{a}_{i-1}a_{i+1} \ldots a_{n+1}} a_i$ for all $1 \leq i\leq n$ and all $j \in \omega$.
\end{enumerate}
In particular, by basic properties of $\ind$ and exchange (1) implies that 
\begin{enumerate}
	\item[(3)] $\{a_i^j : 1 \leq i \leq n, j \in \omega \} \cup \{ a_{n+1} \}$ is an $\ind$-independent set,
\end{enumerate}
and (2) implies that 
\begin{enumerate}
	\item[(4)] $a_1^{j_1} \ldots a_{n}^{j_{n}} a_{n+1}\equiv a_1 \ldots a_{n} a_{n+1}$ for all $j_1, \ldots, j_{n} \in \omega$.
\end{enumerate} 

By assumption we have $a_{n+2} \in S(\mathbb{M}) \cap \acl(a_1 \ldots a_{n+1})$. Then we can choose a formula $\varphi(x_1, \ldots, x_{n+2}) \in \mathcal{L}$ and $1 \leq k \in \omega$ such that:
\begin{enumerate}
\item[(5)] $\varphi(a'_1, \ldots, a'_{n+1}, x_{n+2}) \rightarrow S(x_{n+2})$ for any $a'_1, \ldots, a'_{n+1} \in \mathbb{M}$;
	\item[(6)]$\varphi(a_1, \ldots, a_{n+1}, x_{n+2})$ isolates $\tp_{\mathcal{L}}(a_{n+2} / a_1 \ldots a_{n+1})$;
	\item[(7)] $|\varphi(a_1, \ldots, a_{n+1}, \mathbb{M})| = k$ and $|\varphi(a'_1, \ldots, a'_{n+1},  \mathbb{M})| \in \{0, k\}$ for any $a'_1, \ldots, a'_{n+1} \in \mathbb{M}$.
\end{enumerate}

\begin{claim}
The following holds:
\begin{enumerate}
\item[(8)]for any $(j_1, \ldots, j_n) \in \omega^n$ we have
 $$|\varphi(a_1^{j_1}, \ldots, a_n^{j_{n}}, a_{n+1}, \mathbb{M})| = k \textrm{ and } \varphi(a_1^{j_1}, \ldots, a_{n}^{j_{n}}, a_{n+1}, \mathbb{M}) \cap \acl(\bar{a}_1 \ldots \bar{a}_n) = \emptyset;$$ 
	\item[(9)] for any $(j_1, \ldots, j_n) \neq (j'_1, \ldots, j'_n) \in  \omega^n$ 
	we have 
	$$\varphi(a_1^{j_1}, \ldots, a_n^{j_{n}}, a_{n+1}, \mathbb{M}) \cap \varphi(a_1^{j'_1}, \ldots, a_n^{j'_{n}}, a_{n+1}, \mathbb{M}) = \emptyset.$$
	 \end{enumerate}
	
\end{claim}
\proof
\begin{enumerate}
	\item[(8)] 
		Let $(j_1, \ldots, j_n) \in \omega^n$ be arbitrary. By the first item in (7), let $\bar{b}$ be the tuple listing all $k$-realizations of $\varphi(a_1, \ldots, a_{n+1}, x_{n+2})$. 
		By (4) we can choose some tuple $\bar{b}'$ in $\mathbb{M}$ such that $a_1^{j_1} \ldots a_{n}^{j_{n}} a_{n+1} \bar{b}' \equiv a_1 \ldots a_{n} a_{n+1} \bar{b}$. Then each of the $k$ pairwise-distinct elements in $\bar{b}'$ is a realization of $\varphi(a_1^{j_1}, \ldots, a_n^{j_{n}}, a_{n+1}, x_{n+2})$, and these are all possible realizations by the second item in (7).
		
	Assume now that $b \in \varphi(a_1^{j_1}, \ldots, a_n^{j_{n}}, a_{n+1},\mathbb{M})$ is arbitrary. By (3) and basic properties of algebraic independence we have 
	$a_{n+1} \ind_{a_1^{j_1} \ldots a_n^{j_{n}}} a_1^{\neq j_1} \ldots a_n^{\neq j_n}$. Hence if $b \in \acl(\bar{a}_1 \ldots \bar{a}_n)$, then already $b \in \acl (a_1^{j_1} \ldots a_n^{j_{n}})$. But $a_1^{j_1} \ldots a_n^{j_n} a_{n+1}b \equiv a_1 \ldots a_n a_{n+1} a_{n+2}$ by (4) and (6), and $a_{n+2} \notin \acl(a_1 \ldots a_n)$ since $(a_1, \ldots, a_{n+2})$ is an algebraic $(n+2)$-gon --- a contradiction.

	\item[(9)] Let $(j_1, \dots, j_n) \neq (j_1', \dots, j_n')$ in $\omega^n$ be given, and let $J := \{1 \leq t \leq n : j_t = j'_t \}$. Thus $|J| < n$. By (3) and basic properties of algebraic independence, we have 
	$$(a_t^{j_t} : t \notin J) \ind_{a_{n+1} (a_t^{j_t} : t \in J)} (a_t^{j'_t} : t \notin J).$$
	Using this and the second item in (7), if 
	$$b\in \varphi(a_1^{j_1}, \ldots, a_n^{j_{n}}, a_{n+1}, \mathbb{M}) \cap \varphi(a_1^{j'_1}, \ldots, a_n^{j'_{n}}, a_{n+1}, \mathbb{M}),$$
	 then 
	$$b \in \acl \left((a_t^{j_t} : t\in J) a_{n+1}) \right).$$
	Additionally by (4) and (6) we have $a_1^{j_1} \ldots a_n^{j_n} a_{n+1}b \equiv a_1 \ldots a_n a_{n+1} a_{n+2}$, hence $a_{n+2} \in \acl((a_t : t \in J) a_{n+1})$. This is a contradiction since $(a_1, \ldots, a_{n+2})$ is an $(n+2)$-gon and $|J| < n$. \qed
 \end{enumerate}

Now, consider the formula
$$\psi(x_1, \ldots, x_{n+1}) := \exists x_{n+2} \in P \varphi(x_1, \ldots, x_{n+1}, x_{n+2}),$$
 we will show that $\psi$ is not $n$-dependent. For this, we show that for an arbitrary $m \in \omega$, $\psi$ shatters $(a^j_i : 1 \leq i \leq n, j \in m)$. Towards this,  let $I \subseteq m^n$ be fixed. Set $\bar{a} := (a_i^{j} : 1 \leq i \leq n, 1 \leq j \leq m)$, with $a_i^j$ as chosen in the beginning of the proof, and consider the $\mathcal{L}$-formula 
\begin{gather*}
	\theta(\bar{x}, \bar{a}) = \theta((x^t_{\bar{j}} : \bar{j} \in m^n, 1 \leq t \leq k), \bar{a}) := \\
	\exists x_{n+1} \bigwedge_{\bar{j} \in m^n} \bigwedge_{1 \leq t \leq k} \varphi \left(a_1^{j_1}, \ldots, a_n^{j_{n}}, x_{n+1}, x^t_{(j_1, \ldots, j_n)} \right) \land \rho \left((x^t_{\bar{j}} : \bar{j} \in m^n, 1 \leq t \leq k) \right),
\end{gather*}
where $\rho$ is a formula expressing that all of the elements of the tuple $(x^t_{\bar{j}} : \bar{j} \in m^n, 1 \leq t \leq k)$ are pairwise-distinct.
By the first item in (8), for each $\bar{j} \in m^n$ we let $\bar{b}_{\bar{j}} =(b_{\bar{j}}^t : 1 \leq t \leq k)$ be a tuple of length $k$ enumerating the set $\varphi(a_1^{j_1}, \ldots, a_n^{j_{n}}, a_{n+1}, \mathbb{M})$ in an arbitrary order, and let $\bar{b} := (b^t_{\bar{j}} : \bar{j} \in m^n, 1 \leq t \leq k)$. Then we have:
\begin{itemize}
\item $\bar{b} \subseteq S$, by (5);
\item all elements of the tuple $\bar{b}$ are pairwise-distinct, by (9);
\item $\bar{b} \cap \acl(\bar{a}) = \emptyset$, by the second item in (8);
\item $\models \theta(\bar{b}, \bar{a})$, with $\exists x_{n+1}$ realized by $a_{n+1}$ by the choice of $\bar{b}$.
\end{itemize}
Hence, applying Fact \ref{fac: rand pred axioms}, there exists some $\bar{c} = (c^t_{\bar{j}} : \bar{j} \in m^n, 1 \leq t \leq k) \subseteq S$ such that 
\begin{itemize}
	\item $\models \theta(\bar{c},\bar{a})$;
	\item $\bar{c}_{\bar{j}} := (c^t_{\bar{j}} : 1 \leq t \leq k) \subseteq P$ for every $\bar{j} \in I$;
	\item $\bar{c}_{\bar{j}}  \cap P = \emptyset $ for every $\bar{j} \in m^n \setminus I$.
\end{itemize}
As $\models \theta(\bar{c},\bar{a})$, all elements of $\bar{c}$ are pairwise distinct as it realizes $\rho$, and there exists some  $a^{I}_{n+1} \in \mathbb{M}$ such that 
$$\models \bigwedge_{\bar{j} \in m^n} \bigwedge_{1 \leq t \leq k} \varphi \left(a_1^{j_1}, \ldots, a_n^{j_{n}}, a^I_{n+1}, c^t_{(j_1, \ldots, j_n)} \right).$$
In particular, for every $\bar{j} = (j_1, \ldots, j_n) \in m^n$, every element of the tuple $\bar{c}_{\bar{j}}$ of length $k$ is in the set 
$\varphi \left(a_1^{j_1}, \ldots, a_n^{j_{n}}, a^I_{n+1}, \mathbb{M} \right)$. Hence, by the second item in (7), the tuple $\bar{c}_{\bar{j}}$ lists all of the elements of the set $\varphi \left(a_1^{j_1}, \ldots, a_n^{j_{n}}, a^I_{n+1}, \mathbb{M} \right)$. By the choice of $\bar{c}$ it follows that for every $\bar{j} \in m^n$,
$$\models \psi(a_1^{j_1}, \ldots, a_n^{j_n}, a^I_{n+1}) \iff i \in I.$$
As $m$ and $I$ were arbitrary we conclude that $\psi$ is not $n$-dependent by compactness.

Finally, the ``in particular'' part of the proposition is immediate by Remark \ref{rem: triv iff disint}.
\end{proof}

\begin{remark}\label{rem: counterex to ChPil}
	The case of $n=1$ of Proposition \ref{non-disint gives IPn} is claimed in \cite[Proposition 2.10]{chatzidakis1998generic} without the assumption that $T$ is geometric. However, their proof contains a gap and the claim is false as witnessed by the following example.
	Let $T$ be the theory of the infinite branching tree, i.e.~the theory of an infinite graph $(G,R)$ such that
	\begin{enumerate}
		\item for every vertex $a \in G$ there are infinitely many $b$ such that $aRb$,
		\item there are no cycles.
	\end{enumerate}
	It is not hard to see by back-and-forth that $T$ is complete and admits quantifier elimination after adding distance predicates (which are definable in the graph language using quantifiers). Then $T_P$ is stable, e.g.~since by \cite[Theorem 1.4]{ivanov1993structure} every expansion of a planar graph by unary predicates is stable. However, $\acl$ is not disintegrated (for any $a \in G$ and two elements $b,c$ connected to it, we have that $a \in \dcl(bc)$, but $a \notin \acl(b) \cup \acl(c)$. Note that $\acl$ doesn't satisfy exchange in this example since $b \notin \acl(ac)$.
	
\end{remark}

\begin{remark}
In a recent erratum \cite{ChaPilErr} to \cite{chatzidakis1998generic}, it is observed that $T_{P,S}$ is not dependent assuming the following stronger variant of the failure of disintegration of $\acl$ in $T$: there exist a small model $M \prec \mathbb{M} \models T$ and tuples $a,b$ such that $\acl(M,a,b) \cap S \subsetneq \acl(M,a) \cup \acl(M,b)$,  and moreover $\tp(a/Mb)$ is finitely satisfiable in $M$.
\end{remark}

Next we will show a converse to Proposition \ref{non-disint gives IPn}: if the algebraic closure in a geometric theory is disintegrated, then $n$-dependence is preserved after adding a generic predicate. More generally, we consider expansions by ``generic'' relations of arity at most $n$. 

The following is an analog of Fact \ref{fac: rand pred axioms} in this more general setting. It is essentially from \cite{winkler1975model}, though we refer to \cite{kruckman2018interpolative} here. Namely, let $T$ be a complete theory in a language $\mathcal{L}$ with a distinguished $\mathcal{L}(\emptyset)$-definable set $S(x)$ and let $\mathcal{L}' \supseteq \mathcal{L}$ be a language such that $\mathcal{L'} \setminus \mathcal{L} = \{R_i : i \in I\}$ only contains relational symbols. Then we let $\mathcal{L}_0 := \mathcal{L} \cup \{ S^*(x)\}$, where $S^*(x)$ is a new unary predicate symbol, and let $T_0$ be the (complete) $\mathcal{L}_0$-theory axiomatized by $T \cup \{S(x) \leftrightarrow S^*(x)\}$. Let $T_{\cap}$ be the reduct of $T_0$ to the language $\mathcal{L}_{\cap} := \{S^*(x)\}$ (so a complete theory of a unary predicate). And for $i \in I$, we let $\mathcal{L}_i := \{ S^*(x)\} \cup \{ R_i \}$ and let $T_i$ be the model companion of the $\mathcal{L}_i$-theory $\{\forall x_1 \ldots \forall x_n R_i(x_1, \ldots, x_n) \rightarrow \bigwedge_{1 \leq j \leq n} S^*(x_j) \}$ (which exists by \cite[Theorem 5]{winkler1975model}, or by \cite[Remark 2.12.2]{chatzidakis1998generic}). In the next fact, the existence of a model companion $T'$ of the theory $T_{\cup} := \bigcup_{i \in I} T_i$ is given by \cite[Theorem 5.50]{kruckman2018interpolative}, the description of types in $T'$ by
\cite[Proposition 6.11]{kruckman2018interpolative} and the description of the algebraic closure $\acl_{T'} = \acl_{T}$ by \cite[Theorem 6.3]{kruckman2018interpolative}).
\begin{fact}\label{fac: QE interpolative fusion}
Let $T$ be a theory in the language $\mathcal{L}$ eliminating quantifiers and $\exists^\infty$, and fix an $\mathcal{L}(\emptyset)$-definable predicate $S$. Let $\mathcal{L}' \supseteq \mathcal{L}$ be a language such that $\mathcal{L}' \setminus \mathcal{L}$ only contains relational symbols, and consider the $\mathcal{L}'$-theory 
$$T'_0 := T \cup \left\{ R(x_1, \ldots, x_n) \rightarrow \bigwedge_{1 \leq i \leq n} S(x_i) : R \in \mathcal{L}' \setminus \mathcal{L}\right\}.$$
 Then $T'_0$ admits a model companion $T'$, and working in a monster model of $T'$ we have the following: for any tuples $\bar{a},\bar{b}$ and a subset $A$,

\begin{tabular}{  m{33em}  m{1cm} } 	
$\tp_{\mathcal{L'}}(\bar{a}/A) = \tp_{\mathcal{L}'}(\bar{b}/A)$ if and only if there is an $A$-isomorphism of $\mathcal{L}'$-structures from $\acl_{\mathcal{L}}(A,\bar{a})$ to 
$\acl_{\mathcal{L}}(A,\bar{b})$ which carries $\bar{a}$ to $\bar{b}.$ &\ \  $(\dagger)$ 
\end{tabular}
\end{fact}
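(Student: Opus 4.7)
The plan is to generalize the Chatzidakis--Pillay axiomatization in Fact \ref{fac: rand pred axioms} from a single unary generic predicate to an arbitrary collection of generic relations of higher arity, and to verify the two conclusions by a back-and-forth argument on algebraic closures. The first step is to write down candidate axioms for $T'$ extending $T'_0$: for every $\mathcal{L}$-formula $\theta(\bar{x},\bar{z})$ with $\bar{x}=(x_1,\ldots,x_n)$, for every choice of finite subsets $I_R \subseteq \{1,\ldots,n\}^{\operatorname{ar}(R)}$ and $J_R \subseteq \{1,\ldots,n\}^{\operatorname{ar}(R)}$ with $I_R \cap J_R = \emptyset$ for each $R \in \mathcal{L}'\setminus \mathcal{L}$, impose
\begin{gather*}
\forall \bar{z}\Bigl( \bigl( \exists \bar{x}\, \theta(\bar{x},\bar{z})\land \bar{x}\cap \acl_{\mathcal{L}}(\bar{z})=\emptyset \land \bigwedge_{i} S(x_i)\land \bigwedge_{i<j} x_i\neq x_j \bigr)\\
\rightarrow \exists \bar{x}\bigl(\theta(\bar{x},\bar{z}) \land \bigwedge_{R}\bigwedge_{\bar{\imath}\in I_R} R(x_{\bar{\imath}}) \land \bigwedge_{R}\bigwedge_{\bar{\imath}\in J_R}\neg R(x_{\bar{\imath}})\bigr)\Bigr).
\end{gather*}

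The second step is to show that every model of $T'_0$ extends to a model of these axioms. For this one repeats the Chatzidakis--Pillay construction: fix a generically extended $\mathcal{L}$-elementary extension, pick tuples $\bar{x}$ witnessing the existential hypothesis (avoiding the algebraic closure of the parameters), and freely stipulate the truth values of the $R$-relations on them as required by $I_R, J_R$; because the tuples lie outside $\acl_{\mathcal{L}}$, stipulating these relation values is consistent with $T'_0$. Iterating along a chain exhausting all relevant instances yields the desired extension, so the axioms are satisfiable over any model of $T'_0$.

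The third and key step is to prove that the resulting theory $T'$ is model-complete, from which both the model-companion property and the characterization of types follow. Here I would carry out a back-and-forth argument between two $\aleph_1$-saturated models $\mathcal{M},\mathcal{N}\models T'$: given a partial $\mathcal{L}'$-isomorphism $f:\acl_{\mathcal{L}}(A\bar{a})\to \acl_{\mathcal{L}}(B\bar{b})$ with $f(\bar{a})=\bar{b}$, an additional element $c$ can be matched either (i) when $c\in\acl_{\mathcal{L}}(A\bar{a})$, in which case $f$ already dictates its image, or (ii) when $c$ is $\mathcal{L}$-transcendental over $A\bar{a}$, in which case the genericity axioms let one freely realize the chosen $R$-pattern on $c$ over the $f$-image. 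The absence of function symbols in $\mathcal{L}'\setminus \mathcal{L}$ is crucial: the algebraic closure in $\mathcal{L}'$ reduces to that in $\mathcal{L}$, so the back-and-forth stays inside $\acl_{\mathcal{L}}$. This immediately gives the type description $(\dagger)$ and, via standard arguments, that $T'$ is the model companion of $T'_0$ and that $\acl_{T'}=\acl_T$.

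The main obstacle I anticipate is verifying the back-and-forth step (ii) when the new element $c$ must be placed into already-existing tuples together with many earlier-chosen elements: one must show that the genericity axioms — stated formula-by-formula — suffice to prescribe an arbitrary consistent pattern of $R$-relations between $c$ and the finite tuple built so far, uniformly over all relations $R\in \mathcal{L}'\setminus \mathcal{L}$ simultaneously. This is handled by taking $\theta(\bar{x},\bar{z})$ to isolate the $\mathcal{L}$-type of $c$ over the previous parameters and packaging all the $R$-truth values into a single choice of $I_R, J_R$; the $\exists^\infty$-elimination and exchange of $\acl_{\mathcal{L}}$ ensure that $\theta$ has a realization outside $\acl_{\mathcal{L}}$ of the parameters, whence the axioms apply.
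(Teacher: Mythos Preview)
The paper does not prove this fact; it quotes it from the literature, setting it up as an instance of the interpolative fusion machinery of Kruckman--Tran--Walsberg (existence of the model companion from their Theorem~5.50, description of types from their Proposition~6.11, and $\acl_{T'}=\acl_T$ from their Theorem~6.3), with Winkler cited for the model companion of a single generic relation. Your plan of writing down explicit Chatzidakis--Pillay style axioms and running a back-and-forth is the natural direct route and is essentially Winkler's original method, so the overall strategy is sound and is a reasonable alternative to invoking the interpolative-fusion black boxes.

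There is, however, a genuine gap in the axiom scheme as you have written it. Your sets $I_R, J_R$ range over $\{1,\ldots,n\}^{\operatorname{ar}(R)}$, i.e.\ over tuples built only from the \emph{new} variables $\bar x$. For relations of arity $\geq 2$ this is not enough: in back-and-forth step~(ii) you must find a match $c'$ for the transcendental $c$ so that the $R$-relations between $c'$ and the \emph{previously placed} elements --- which live among the parameters $\bar z$, not among $\bar x$ --- agree with those of $c$. Your axiom does not let you impose conditions of the form $R(x_i,z_{j_1},\ldots)$ or $\neg R(x_i,z_{j_1},\ldots)$, so the genericity you have stated does not suffice to carry out the extension; with your scheme you can only control $R$ on tuples lying entirely inside the fresh block $\bar x$. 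The fix is straightforward but necessary: allow the index tuples in $I_R, J_R$ to range over $\bar x \cup \bar z$, requiring only that each tuple contain at least one $x$-coordinate (tuples entirely in $\bar z$ are already decided in the ambient model and cannot be freely prescribed). With that correction the construction in step~2 and the back-and-forth in step~3 go through as you describe.

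A minor point: at the end you invoke ``exchange of $\acl_{\mathcal L}$'', but the hypothesis of the fact is only elimination of $\exists^\infty$, not that $T$ is geometric. Exchange is not used here; what you actually need is that ``$\bar x \cap \acl_{\mathcal L}(\bar z)=\emptyset$'' is first-order (which follows from $\exists^\infty$-elimination) and that a non-algebraic $\mathcal L$-type stays non-algebraic in elementary extensions.
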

We sometimes refer to $T'$ as the expansion of $T$ by \emph{generic relations} in $\mathcal{L}' \setminus \mathcal{L}$.

In \cite[Lemma 2.1]{hrushovski1991pseudo} Hrushovski observes that the random $n$-ary hypergraph is not a finite Boolean combination of relations of arity $n-1$. In order to demonstrate preservation of $n$-dependence in expansions of disintegrated theories by generic $n$-ary relations we will use the following infinitary generalization of this fact.
\begin{prop}\label{prop: inf Hrush}
For each $n \in \omega, n \geq 1$ and an infinite cardinal $\kappa$ there exists some cardinal $\lambda \geq \kappa$ satisfying the following: Let $G'_{n,p}$ be a $\lambda$-saturated model of $\Th(G_{n,p})$, let $\tilde{\mathcal{L}}$ be an arbitrary relational language with $|\tilde{\mathcal{L}}| \leq \kappa$ containing only relations of arity at most $n-1$, and let $\tilde{ O}'_{n,p}$ be an expansion of $O'_{n,p}$ obtained by adding arbitrary interpretations for all the relations in $\tilde{\mathcal{L}}$. Then the following holds: 

\begin{tabular}{  m{33em}  m{1cm} } 	 there are  $g_i, h_i \in P_i^{G'_{n,p}}, 1 \leq i \leq n$, such that  $\qftp_{\tilde{\mathcal{L}}}(g_1, \ldots, g_n) = \qftp_{\tilde{\mathcal{L}}}(h_1, \ldots, h_n)$ and  $G'_{n,p} \models R_n(g_1, \ldots, g_n) \wedge \neg R_n(h_1, \ldots, h_n).$ &  $(\ast)$
	\end{tabular}
\end{prop}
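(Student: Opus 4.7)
Plan: I will take $\lambda$ to be a sufficiently large cardinal --- for instance $\lambda = \beth_n((2^\kappa)^+)^+$, so that iterated Erdős--Rado partition theorems apply with the number of colors bounded by $2^\kappa$ (the number of $\tilde{\mathcal{L}}$-qftp's in finitely many variables). The argument will proceed by contradiction: assume that for any $n$-tuples $(g_1,\ldots,g_n),(h_1,\ldots,h_n)$ with $g_i,h_i\in P_i$ and $\qftp_{\tilde{\mathcal{L}}}(g_1,\ldots,g_n)=\qftp_{\tilde{\mathcal{L}}}(h_1,\ldots,h_n)$ one has $R_n(g_1,\ldots,g_n)=R_n(h_1,\ldots,h_n)$, so that $R_n$ is a function of $\qftp_{\tilde{\mathcal{L}}}$.

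First, using $\lambda$-saturation of $G'_{n,p}$ together with iterated Erdős--Rado (coloring tuples picked from the various $P_i$'s by their $\tilde{\mathcal{L}}$-qftp), I will construct mutually $\tilde{\mathcal{L}}$-indiscernible sequences $I_i=(a_i^\alpha:\alpha<\omega)\subseteq P_i$ for $i=1,\ldots,n$. By mutual $\tilde{\mathcal{L}}$-indiscernibility, every diagonal tuple $(a_1^{\alpha_1},\ldots,a_n^{\alpha_n})$ carries the same $\qftp_{\tilde{\mathcal{L}}}$; so under the contradiction hypothesis the value of $R_n$ on all such tuples is a common constant $r\in\{0,1\}$, and we may assume $r=1$. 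The plan for the contradiction is then an infinitary analog of Hrushovski's original finite argument, whose key combinatorial observation is that any atomic $\tilde{\mathcal{L}}$-formula evaluated on an $n$-tuple involves at most $n-1$ of its coordinates, so the $\tilde{\mathcal{L}}$-qftp of an $n$-tuple is determined by its proper subtuples. Using the density axiom of $G_{n,p}$ (Fact \ref{fac: Gnp}(2)(b)) for $j=n$, one produces elements $c\in P_n$ in the interval $(a_n^0,a_n^1)$ with $\neg R_n(a_1^0,\ldots,a_{n-1}^0,c)$; a further Erdős--Rado refinement of such witnesses, coordinated with saturation to realize them inside sequences compatible with the already established mutual indiscernibility, should produce such a $c$ whose $\tilde{\mathcal{L}}$-type over $(a_1^0,\ldots,a_{n-1}^0)$ coincides with that of $a_n^0$. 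This immediately contradicts the hypothesis, since then $(a_1^0,\ldots,a_{n-1}^0,a_n^0)$ and $(a_1^0,\ldots,a_{n-1}^0,c)$ would share $\qftp_{\tilde{\mathcal{L}}}$ while $R_n$ takes opposite values on them.

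The main obstacle is the coordination in this last step: the density axiom controls $R_n$ but gives no direct control over $\tilde{\mathcal{L}}$-types in an arbitrary expansion, whereas Erdős--Rado refinement controls $\tilde{\mathcal{L}}$-types but may inadvertently force $R_n$ to stay constant. The bridge is the arity-$\leq n-1$ constraint on $\tilde{\mathcal{L}}$: any atomic $\tilde{\mathcal{L}}$-formula relevant to comparing $\qftp_{\tilde{\mathcal{L}}}(a_1^0,\ldots,a_{n-1}^0,a_n^0)$ with $\qftp_{\tilde{\mathcal{L}}}(a_1^0,\ldots,a_{n-1}^0,c)$ involves at most $n-2$ of the parameters $a_1^0,\ldots,a_{n-1}^0$, so applying the density axiom successively in each of the $n$ coordinates while refining by Erdős--Rado in the remaining ones preserves enough freedom to align the $\tilde{\mathcal{L}}$-type of the final density-produced witness $c$ with that of $a_n^0$. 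Carrying out this $n$-fold alternation of density-axiom and Erdős--Rado refinement cleanly, with the cardinal arithmetic ensuring that all intermediate sequences retain length $\geq\omega$, is the technical heart of the argument.
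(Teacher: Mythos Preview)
Your plan through the construction of mutually $\tilde{\mathcal L}$-indiscernible sequences $I_1,\ldots,I_n$ is sound, and you correctly isolate the remaining obstacle. But the proposed ``$n$-fold alternation of density-axiom and Erd\H{o}s--Rado refinement'' does not close the gap, and the gap is genuine. Once $a_1^0,\ldots,a_{n-1}^0$ are fixed, the density axiom of $G'_{n,p}$ produces $c\in P_n$ with prescribed $R_n$-behaviour, but gives no control whatsoever over $\qftp_{\tilde{\mathcal L}}(c/a_1^0\ldots a_{n-1}^0)$ --- the expansion $\tilde O'_{n,p}$ is arbitrary and enjoys no genericity. Erd\H{o}s--Rado extracts from any large set of such $c$'s a subset sharing a common $\tilde{\mathcal L}$-type, but cannot force that common type to equal the \emph{pre-specified} type $\qftp_{\tilde{\mathcal L}}(a_n^0/a_1^0\ldots a_{n-1}^0)$; it only guarantees that \emph{some} type is hit many times. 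Alternating does not help: each density step forgets the $\tilde{\mathcal L}$-control just gained, each Erd\H{o}s--Rado step forgets the $R_n$-control, and there is no convergence mechanism. Nor is the observation that $R_n$ is constant on $I_1\times\cdots\times I_n$ a contradiction by itself, since the $I_i$'s were extracted without reference to $R_n$ and nothing in the axioms of $G_{n,p}$ forbids $R_n$ from being constant on a particular product of subsets.

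The paper's argument has a different architecture: an induction on $n$. One applies a single Erd\H{o}s--Rado pass only to $P_1\times\cdots\times P_{n-1}$ (homogenizing $(n-1)$-tuples there), chooses an abstract $\lambda_{n-1}$-saturated copy of $G_{n-1,p}$ with parts inside the resulting sets $A_1,\ldots,A_{n-1}$, and then uses $\lambda_n$-saturation of $G'_{n,p}$ to pick a single $c\in P_n$ for which $R_n(\cdot,c)$ codes the $R_{n-1}$-relation of that copy. Crucially, no attempt is made to match $\qftp_{\tilde{\mathcal L}}(c)$ against anything. Instead, the relations obtained from $\tilde{\mathcal L}$ by plugging $c$ into one argument slot are packaged into a new language $\tilde{\mathcal L}_{n-1}$ of arity at most $n-2$ on the embedded $G'_{n-1,p}$; the contradiction hypothesis for $n$ then descends verbatim to the pair $(n-1,\tilde{\mathcal L}_{n-1})$, contradicting the inductive hypothesis. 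The missing idea in your plan is precisely this: rather than trying to match the $\tilde{\mathcal L}$-type of a new witness against a target, absorb whatever that type turns out to be into a lower-arity language and recurse.
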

\begin{proof}
	We show this result by induction on $n$ (the base case $n=1$ obviously holds with $\lambda := \kappa$). 
	Now fix $n \geq 2$ and a cardinal $\kappa$. Let $\lambda=\lambda_{n-1}$ satisfy the proposition for $n-1$ and $\kappa$. We will show that $\lambda=\lambda_n := \beth_{n-2}\left(2^{\lambda_{n-1}}\right)^+$ satisfies the proposition for $n$.
	
	Towards a contradiction, assume that some $\lambda_n$-saturated $G'_{n,p} \equiv G_{n,p}$, some language $|\tilde{\mathcal{L}}| \leq \kappa$ and some expansion $\tilde{ O}'_{n,p}$ do not satisfy $(*)$, i.e.~for all $g_i, h_i \in P_i^{G'_{n,p}}, 1 \leq i \leq n$, 
	
	\begin{tabular}{  m{33em}  m{1cm} } if	 $\qftp_{\tilde{\mathcal{L}}}(g_1, \ldots, g_{n}) = \qftp_{\tilde{\mathcal{L}}}(h_1, \ldots, h_{n}) \land G'_{n,p} \models R_{n}(g_1, \ldots, g_n)$ then\\ $G'_{n,p} \models R_{n}(h_1, \ldots, h_{n}).$&  $(\dagger)$
	\end{tabular}

	By the choice of $\lambda_n$ and Erd\H{o}s-Rado we have $\lambda_n \rightarrow \left( \left(2^{\lambda_{n-1}} \right)^+ \right)^{n-1}_{2^{\lambda_{n-1}
	}}$, hence we can find some sets $A_i \subseteq P_i^{G'_{n,p}}, 1 \leq i \leq n-1$ such that $|A_i| \geq \left(2^{\lambda_{n-1}}\right)^{+}$ and $\qftp_{\tilde{\mathcal{L}}}(g_1, \ldots, g_{n-1}) = \qftp_{\tilde{\mathcal{L}}}(h_1, \ldots, h_{n-1})$ for all $g_i, h_i \in A_i, 1 \leq i \leq n-1$. Next, we can find a $\lambda_{n-1}$-saturated structure $G'_{n-1,p} \equiv G_{n-1,p}$ with $|G'_{n-1,p}| \leq 2^{\lambda_{n-1}}$ and such that $P_i^{G'_{n-1,p}} \subseteq A_i, 1 \leq i \leq n-1$.
	As $G'_{n,p}$ is $\lambda_n$-saturated and $\lambda_n >2^{\lambda_{n-1}}$, by the axioms of $\Th(G_{n,p})$ there exists some $c \in P_{n}^{G'_{n,p}}$ such that for all $g_i \in P_i^{G'_{n-1,p}}, 1 \leq i \leq n-1$ we have
	$$G'_{n-1,p} \models R_{n-1}(g_1, \ldots, g_{n-1}) \iff G'_{n,p} \models R_{n}(g_1, \ldots, g_{n-1}, c).$$
	Without loss of generality, we may assume that all relations in $\tilde {\mathcal L}$ are of arity exactly $n-1$. We consider the language $\tilde{\mathcal{L}}_{n-1}$ containing, for each $F \in \tilde{\mathcal{L}}$ and 
	$ \Upsilon \in \mathcal P( \{1, \dots, n-1\}) \setminus \emptyset$, an  $(n-1- |\Upsilon|)$-ary relational symbol $F_\Upsilon$. We define an expansion $\tilde{O}'_{n-1,p}$ of $O'_{n-1,p}$ in which we interpret each such $F_\Upsilon \in \tilde{\mathcal{L}}_{n-1}$ as 
	$F$ with each of the variables $x_i, i \in \Upsilon$ fixed by $c$ and restricted to the universe of $G'_{n-1,p}$ (e.g.~if $\Upsilon= \{2,n-1\}$, then $F_{\Upsilon}$ is interpreted as $F(x_1, c, x_3, \dots, x_{n-2}, c) \cap \prod_{i \in \{1, 2, \ldots, n-1\} \setminus \Upsilon} P_i^{G'_{n-1,p}}$). Hence $|\tilde{\mathcal{L}}_{n-1}| \leq \kappa$ and all relations in $\tilde{\mathcal{L}}_{n-1}$ have arity  at most $n-2$. Note that by the choice of $A_i, 1\leq i\leq n-1$ we automatically have that for any $F \in \tilde{\mathcal{L}}$ and any $g_i,h_i \in P_i^{G'_{n-1,p}}, 1 \leq i \leq n-1$,
	$$\tilde{ O}'_{n,p} \models F(g_1, \ldots, g_{n-1}) \iff \tilde{ O}'_{n,p} \models F(h_1, \ldots, h_{n-1}).$$

	By the choice of the $F_{\Upsilon}$'s, we have that for any $g_i, h_i \in P_i^{G'_{n-1,p}}, 1 \leq i \leq n-1$,
	$$\qftp_{\tilde{\mathcal{L}}_{n-1}}(g_1, \ldots, g_{n-1}) = \qftp_{\tilde{\mathcal{L}}_{n-1}}(h_1, \ldots, h_{n-1}) \implies $$
	$$\qftp_{\tilde{\mathcal{L}}}(g_1, \ldots, g_{n-1},c) = \qftp_{\tilde{\mathcal{L}}}(h_1, \ldots, h_{n-1}, c). $$
	
	Since $(\dagger)$ holds for $\tilde{ O}'_{n,p}$, by the choice of $c$ this implies 
	$$\qftp_{\tilde{\mathcal{L}}_{n-1}}(g_1, \ldots, g_{n-1}) = \qftp_{\tilde{\mathcal{L}}_{n-1}}(h_1, \ldots, h_{n-1}) \land G'_{n-1,p} \models R_{n-1}(g_1, \ldots, g_{n-1}) \implies $$
	$$G'_{n-1,p} \models R_{n-1}(h_1, \ldots, h_{n-1}).$$

That is, $\tilde{ O}'_{n-1,p}$ and $\tilde{\mathcal{L}}_{n-1}$ satisfy $(\dagger)$ for $n-1$ --- contradicting the induction hypothesis.
\end{proof}
Using Proposition \ref{prop: inf Hrush}, we can prove preservation of $n$-dependence.

\begin{prop}\label{prop: Tp is ndep}
Let $T$ be a theory in the language $\mathcal{L}$ eliminating $\exists^\infty$ (not necessarily geometric), and assume that all elements in $S$ are trivial. Fix $n \geq 1$, and let $T'$ be a generic expansion of $T$ in a language $\mathcal{L}'$ such that $\mathcal{L}' \setminus \mathcal{L}$ only contains relational symbols of arity at most $n$ living on $S$. Then $T'$ is $n$-dependent if and only if $T$ is $n$-dependent.
\end{prop}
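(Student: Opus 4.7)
The ``only if'' direction is immediate, since $T$ is a reduct of $T'$. For the converse, assuming $T$ is $n$-dependent, I would argue by contradiction. Fix an infinite cardinal $\kappa \geq |\mathcal{L}'|$ and let $\lambda \geq \kappa$ be given by Proposition \ref{prop: inf Hrush} applied to $n$ and $\kappa$. By Proposition \ref{prop: indisc witness to IPn}(3) applied in a monster model $\mathbb{M}' \models T'$, there exist a $\lambda$-saturated $G'_{n,p} \equiv G_{n,p}$, a formula $\varphi(x; y_1, \ldots, y_n) \in \mathcal{L}'$ and tuples $b, (a_g)_{g \in G'_{n,p}}$ with $(a_g)$ both $O'_{n,p}$-indiscernible over $\emptyset$ and $G'_{n,p}$-indiscernible over $b$ in $\mathcal{L}'$, and such that $\models \varphi(b; a_{g_1}, \ldots, a_{g_n}) \iff G'_{n,p} \models R_n(g_1, \ldots, g_n)$ for every $g_i \in P_i$. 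Since $\mathcal{L} \subseteq \mathcal{L}'$, both indiscernibilities persist in the $\mathcal{L}$-sense; then by the $n$-dependence of $T$ and Proposition \ref{prop: char of NIP_k by preserving indisc}(3) applied inside the $\mathcal{L}$-reduct of $\mathbb{M}'$, $(a_g)$ is in fact $O'_{n,p}$-indiscernible over $b$ in $\mathcal{L}$. Consequently the $\mathcal{L}$-isomorphism type of $\acl_{\mathcal{L}}(b, a_{g_1}, \ldots, a_{g_n})$, as an enumerated $\mathcal{L}$-structure over $b$, depends only on the $O'_{n,p}$-type of $(g_1, \ldots, g_n)$.

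I would then exploit triviality of $S$: a short induction on $N$ using Definition \ref{defn: trivial element} (at each step splitting off one element and absorbing the rest into the ``small parameter set'') shows that for $a \in S$ with $a \in \acl_{\mathcal{L}}(c_1, \ldots, c_N)$ for individual elements $c_i$, one has $a \in \acl_{\mathcal{L}}(c_i)$ for some single $i$. Applying this to our configuration, every $d \in S \cap \acl_{\mathcal{L}}(b, a_{g_1}, \ldots, a_{g_n})$ lies in $\acl_{\mathcal{L}}(c)$ for some individual element $c$ belonging to one of the $n+1$ tuples $b, a_{g_1}, \ldots, a_{g_n}$. Hence every atomic formula $R(d_1, \ldots, d_k)$, with $R \in \mathcal{L}' \setminus \mathcal{L}$ of arity $k \leq n$ and $d_j \in S \cap \acl_{\mathcal{L}}(b, a_{g_1}, \ldots, a_{g_n})$, involves elements from the $\acl_{\mathcal{L}}$-closures of at most $k \leq n$ of the $n+1$ tuples, so by pigeonhole at least one of them is missed. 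Either $b$ is missed, in which case the instance is an $\mathcal{L}'$-formula in $(a_{g_1}, \ldots, a_{g_n})$ over $\emptyset$ and its truth value is constant by $O'_{n,p}$-indiscernibility in $\mathcal{L}'$ over $\emptyset$; or some $a_{g_i}$ is missed, in which case the instance lives in $\acl_{\mathcal{L}}(b, a_{g_j}: j \in J)$ for some $J \subsetneq \{1, \ldots, n\}$ with $|J| \leq n-1$.

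For the final step, I would build a language $\tilde{\mathcal{L}}$ with $|\tilde{\mathcal{L}}| \leq \kappa$ containing only relations of arity $\leq n-1$, and an expansion $\tilde{O}'_{n,p}$ of $O'_{n,p}$: for each proper $J \subsetneq \{1, \ldots, n\}$, each $R \in \mathcal{L}' \setminus \mathcal{L}$ of arity $k$, and each ``code'' $\bar\chi$ specifying, via $\mathcal{L}$-formulas and algebraic-root indices, $k$ elements of $S \cap \acl_{\mathcal{L}}(b, a_{g_j}: j \in J)$ uniformly in $(g_j : j \in J)$, introduce a $|J|$-ary predicate $\tilde{R}_{R,J,\bar\chi}$ on $\prod_{j \in J} P_j$ recording the truth value of $R$ on the corresponding $k$-tuple. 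Together with the constants from the ``$b$ missed'' case, this encoding captures the full $\mathcal{L}' \setminus \mathcal{L}$ structure on $\acl_{\mathcal{L}}(b, a_{g_1}, \ldots, a_{g_n})$, so by Fact \ref{fac: QE interpolative fusion} combined with the uniformity of the $\mathcal{L}$-part from the first paragraph, the $\mathcal{L}'$-type of $(b, a_{g_1}, \ldots, a_{g_n})$ --- and in particular whether $\varphi(b; a_{g_1}, \ldots, a_{g_n})$ holds --- is determined by $\qftp_{\tilde{\mathcal{L}}}(g_1, \ldots, g_n)$. Proposition \ref{prop: inf Hrush} applied to $\tilde{\mathcal{L}}$, $\tilde{O}'_{n,p}$ and $G'_{n,p}$ then produces $g_i, h_i \in P_i^{G'_{n,p}}$ with equal $\qftp_{\tilde{\mathcal{L}}}$ but opposite $R_n$-values, contradicting the defining property of $\varphi$. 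The main technical obstacle will be the bookkeeping for the codes $\bar\chi$: one has to fix a canonical $\mathcal{L}$-definable enumeration of $S \cap \acl_{\mathcal{L}}(b, a_{g_j}: j \in J)$ that behaves uniformly in $(g_j : j \in J)$, which should follow from the $\mathcal{L}$-isomorphism invariance established in the first paragraph together with elimination of $\exists^\infty$ in $T$.
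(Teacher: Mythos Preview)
Your approach mirrors the paper's: establish $O'_{n,p}$-indiscernibility over $b$ in $\mathcal{L}$ via Proposition~\ref{prop: char of NIP_k by preserving indisc}, use triviality of $S$ and pigeonhole to see that every instance of a new relation misses at least one of $b,a_{g_1},\ldots,a_{g_n}$, encode the remaining data by relations of arity $\leq n-1$ on $G'_{n,p}$, and invoke Proposition~\ref{prop: inf Hrush} together with Fact~\ref{fac: QE interpolative fusion}.

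The genuine gap is precisely the point you flag as a ``technical obstacle''. In the ``$b$ missed'' case you assert that $R(d_1,\ldots,d_k)$ is ``an $\mathcal{L}'$-formula in $(a_{g_1},\ldots,a_{g_n})$ over $\emptyset$'' and hence constant by indiscernibility; but each $d_j$ is only \emph{algebraic} over its $a_{g_{i_j}}$, not definable, so without a coherent choice of conjugate as $g$ varies this is not an $\mathcal{L}'$-condition on the tuple $(a_{g_1},\ldots,a_{g_n})$, and $\mathcal{L}'$-indiscernibility of the $a_g$'s says nothing about which root you picked. The same problem makes your codes $\bar\chi$ ill-defined: an ``algebraic-root index'' has no canonical meaning, and elimination of $\exists^\infty$ only bounds the number of roots --- it does not order them or select one uniformly.

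The paper resolves this by Skolemizing $T'$ \emph{before} applying Proposition~\ref{prop: indisc witness to IPn}. The witnessing array $(a_g)$ is then $O'_{n,p}$-indiscernible over $\emptyset$ and $G'_{n,p}$-indiscernible over $b$ in the richer language $\mathcal{L}^{\Sk}$, and one sets $a'_g := (\Sk_j(a_g))_j$ for a fixed enumeration of Skolem terms (and similarly $b'$ for $\acl_{\mathcal{L}}(b)$). Since each coordinate of $a'_g$ is $\mathcal{L}^{\Sk}$-definable from $a_g$, the enumerated tuples $(a'_g)$ inherit all the indiscernibility; this simultaneously makes the ``$b$ missed'' truth values genuinely constant and renders your relations $\tilde R_{R,J,\bar\chi}$ well-defined (take $\bar\chi$ to be a tuple of coordinate indices into $b'$ and the $a'_g$'s). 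With this single device in hand, the rest of your argument goes through as written.
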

\begin{proof}
Assume that $T$ is $n$-dependent, but that there is some $\mathcal{L}'$-formula $\varphi(x;y_1, \ldots,y_n)$ which is not $n$-dependent in $T'$. Let $T^{\Sk}$ be a Skolemization of $T'$ with a distinguished constant symbol $0$, in the language $\mathcal{L}^{\Sk} \supseteq \mathcal{L}', |\mathcal{L}^{\Sk}| = |\mathcal{L}'|$. Let $\kappa := |\mathcal{L}'|$, and let $\lambda$ be as given by Proposition \ref{prop: inf Hrush} for $n$ and $\kappa$.

Let $G'_{n,p}$ be a $\lambda$-saturated model of $\Th(G_{n,p})$. Working in a monster model $\mathbb{M}$ of $T^{\Sk}$ (which we may assume to be $|G'_{n,p}|^+$-saturated in particular), $\varphi(x;y_1, \ldots,y_n)$ is still not $n$-dependent, hence by Proposition \ref{prop: indisc witness to IPn}(3) there exist tuples $(a_g)_{g \in G'_{n,p}}$ and $b$ such that:
\begin{enumerate}
	\item $(a_g)_{g \in G'_{n,p}}$ is $O'_{n,p}$-indiscernible over $\emptyset$ and $G'_{n,p}$-indiscernible over $b$, both in the sense of $T^{\Sk}$;
	\item $\models \varphi(b;a_{g_1}, \ldots, a_{g_{n}}) \iff G'_{n,p}\models R_n(g_1, \ldots, g_n)$, for all $g_i 
	\in P_i$.
\end{enumerate}

First we would like to replace each $a_g$ and $b$ by their algebraic closures. In order to preserve indiscernibility, we have to enumerate these algebraic closures in a coherent manner, which can be done as follows.
Let $\left(f_{\alpha}(\bar{y}_{\alpha}) : \alpha \in \kappa  \right)$ be an arbitrary enumeration of all $\mathcal{L}^{\Sk}(\emptyset)$-definable functions. Given an arbitrary tuple $c = (c_j)_{ j \in I}$ in $\mathbb{M}$, we let $J_{c} := \kappa \times I^{< \omega}$ be ordered lexicographically with respect to the ordering on the ordinal $\kappa$ and the ordering on $I$. Then we consider the tuple $\Sk(c) = \left( \Sk_{j}(c) : j \in J_c \right)$, where for $j = (\alpha,\beta) \in \kappa \times I^{< \omega}$, $\Sk_j(c)$ is $f_{\alpha} \left(\left( c_t : t \in \beta \right) \right)$ (or $0$ if the sort of the parameter doesn't fit the sort of the variables of the function).

Now for each $1 \leq i \leq n$ we fix an arbitrary $g_i \in P_i^{G'_{n,p}}$. The set of elements appearing in the tuple $\Sk(a_{g_i})$ is an elementary $\mathcal{L}^{\Sk}$-submodel of $\mathbb{M}$ by Tarski-Vaught. In particular, we can choose some $J_i \subseteq J_{a_{g_i}}$ so that the tuple $a'_{g_i} := (\Sk_{j}(a_{g_i}) : j \in J_i)$ enumerates $\acl_{\mathcal{L}}(a_{g_i})$ without repeated elements. Now for an arbitrary $1 \leq i \leq n$ and $g \in P_i^{G'_{n,p}}$, we let $a'_g := (\Sk_{j}(a_{g}) : j \in J_i)$. Similarly, we choose some $J_0 \subseteq J_b$ so that the tuple $b' := (\Sk_{j}(b) : j \in J_0)$ lists $\acl_{\mathcal{L}}(b)$ without repeated elements. Since we have both indiscernibilities in (1) in the sense of $T^{\Sk}$, it follows that:
\begin{enumerate}
  \setcounter{enumi}{2}
  \item $b'$ enumerates $\acl_{\mathcal{L}}(b)$ (by definition);
  \item for any $g \in G'_{n,p}$, the tuple $a'_g$ enumerates $\acl_{\mathcal{L}}(a_g)$ without repetitions (as $g \in P_i^{G'_{n,p}} \implies a_g \equiv^{\mathcal{L}^{\Sk}} a_{g_i}$ by (1) $\implies a'_g \equiv^{\mathcal{L}} a'_{g_i}$ );
  \item $|b'|, |a'_g| \leq \kappa$;
  \item $(a'_g)_{g \in G'_{n,p}}$ is $O'_{n,p}$-indiscernible over $\emptyset$ and $G'_{n,p}$-indiscernible over $b'$, both in the sense of $T^{\Sk}$ (and hence in the sense of $T$ as well; again by (1) and definition of $a'_g$ and $b'$).
\end{enumerate}
 As $T$ is $n$-dependent, it follows from (6) by Proposition \ref{prop: char of NIP_k by preserving indisc}(3) that 
 \begin{enumerate}
   \setcounter{enumi}{6}
 	\item $(a'_g)_{g \in G'_{n,p}}$ is $O'_{n,p}$-indiscernible over $b'$ in the sense of $T$.
 \end{enumerate}
	
	Next we consider the $\mathcal{L}'$-isomorphism type of the structure induced on the tuple $b'a'_{g_1} \ldots a'_{g_n}$ as $\bar{g}$ varies, and demonstrate that it cannot reflect exactly the hyper-edge relation $R_n$ of $G'_{n,p}$. Without loss of generality we may assume that all relations in $\mathcal{L}' \setminus \mathcal{L}$ are of arity exactly $n$. By $O'_{n,p}$-indiscernibility in $T'$ in (6), for any $F \in \mathcal{L}'\setminus \mathcal{L}$ we have 
	\begin{enumerate}
	 \setcounter{enumi}{7}
		\item $\models F(a_{g_1}, \ldots, a_{g_n}) \iff \models F(a_{h_1}, \ldots, a_{h_n})$ for all $g_i,h_i \in P_i^{G'_{n,p}}, 1 \le i\leq n$.
	\end{enumerate}
	
	We consider an expansion $\tilde{G}'_{n,p}$ of $G'_{n,p}$ where for each $F(x_1, \ldots, x_{n}) \in \mathcal{L}'\setminus \mathcal{L}$, each $1 \leq i \leq n$ and each $j_t \in J_t, t \in \{0, \ldots, n \} \setminus \{i\}$ we add a new $(n-1)$-ary relation $R_{F,i,\bar{j}} \subseteq \prod_{t \in \{1, \ldots, n\}\setminus \{i\}} P_t^{G'_{n,p}}$ defined as follows: for any $(g_t)_{t \in \{1, \ldots, n \} \setminus \{i\}} \in \prod_{t \in \{1, \ldots, n\}\setminus \{i\}} P_t^{G'_{n,p}}$ we have
	$$(g_t)_{t \in \{1, \ldots, n \} \setminus \{i\}} \in R_{F,i,\bar{j}} ~ :\iff ~ \models F(a'_{g_1,j_1}, \ldots, a'_{g_{i-1},j_{i-1}}, b'_{j_0},a'_{g_{i+1},j_{i+1}}, \ldots, a'_{g_{n},j_{n}}),$$
	where $a'_{g,j}$ is the $j$th element of the tuple $a'_g$ (i.e.~$a'_{g,j} = \Sk_{j}(a_g)$).

	Note that taking 
	$$\tilde{\mathcal{L}} := \{R_{F,i,\bar{j}}: F \in \mathcal{L}' \setminus \mathcal{L}, 1 \leq i \leq n, j_t \in J_t  \textrm{ for } 0\leq t \leq n\} \cup \mathcal{L}$$
	 we have $|\tilde{\mathcal{L}}| \leq \kappa$ by (5). Then, by Proposition \ref{prop: inf Hrush} and the choice of $\lambda$, there exist some $g_i, h_i \in P_i^{G'_{n,p}}$ for $1 \leq i \leq n$ such that: 
	\begin{enumerate}
	 \setcounter{enumi}{8}
		\item $\qftp_{\tilde{\mathcal{L}}}(g_1, \ldots, g_n) = \qftp_{\tilde{\mathcal{L}}}(h_1, \ldots, h_n)$,
		\item $G'_{n,p} \models R_n(g_1, \ldots, g_n)$,
		\item $G'_{n,p} \models \neg R_n(h_1, \ldots, h_n)$.
	\end{enumerate}
	
	By (7) $a'_{g_1} \ldots a'_{g_n} \equiv^{\mathcal{L}}_{b'} a'_{h_1} \ldots a'_{h_n}$. Taking any $\mathcal{L}$-automorphism $\sigma$ of $\mathbb{M}$ sending the tuple on the left-hand side to the right-hand side over $b'$ we have:
	\begin{enumerate}
		 \setcounter{enumi}{11}
		\item $\sigma$ fixes $b'$ pointwise, and $\sigma(a'_{g_i}) = a'_{h_i}$ (preserving the ordering of the tuples) for all $1 \leq i \leq n$;
		\item $\sigma \left(\acl_{\mathcal{L}}(b'a'_{g_1} \ldots a'_{g_n}) \right) = \acl_{\mathcal{L}}(b'a'_{h_1} \ldots a'_{h_n})$ setwise;
		\item $\sigma \left(\acl_{\mathcal{L}}(b'a'_{g_1} \ldots a'_{g_n}) \cap S \right) = \acl_{\mathcal{L}}(b'a'_{h_1} \ldots a'_{h_n}) \cap S$ setwise. 
	\end{enumerate}
	
	Let now an  element  $e \in \acl_{\mathcal{L}}(b'a'_{g_1} \ldots a'_{g_n}) \cap S$ be arbitrary. As all elements of $S$ are trivial (in $T$) by assumption, we have $e \in \acl_{\mathcal{L}}(b') = b'$ or $e \in \acl_{\mathcal{L}}(a'_{g_i}) = a'_{g_i}$ for some $1 \leq i \leq n$. Hence $\acl_{\mathcal{L}}(b'a'_{g_1} \ldots a'_{g_n}) \cap S \subseteq b'a'_{g_1} \ldots a'_{g_n}$, and so also $\acl_{\mathcal{L}}(b'a'_{h_1} \ldots a'_{h_n}) \cap S \subseteq b'a'_{h_1} \ldots a'_{h_n}$. Given any tuple $c$ of elements from $\acl_{\mathcal{L}}(b'a'_{g_1} \ldots a'_{g_n})$, by definition of $T'$ it can satisfy a relation $F \in \mathcal{L}'\setminus \mathcal{L}$ only if $c$ is entirely contained in $S$, hence  only if $c$ is contained in the tuple $b'a'_{g_1} \ldots a'_{g_n}$.
But by (8) and (9) (unwinding the definition of $\tilde{\mathcal{L}}$ and using (12)), for any tuple $c$ in $b'a'_{g_1} \ldots a'_{g_n}$ and $F \in \mathcal{L}'\setminus \mathcal{L}$, $c$ satisfies $F$ if and only if $\sigma(c)$ satisfies $F$. And of course $c$ and $\sigma(c)$ agree on all $\mathcal{L}$-formulas. We thus conclude that $\sigma \restriction_{\acl_{\mathcal{L}}(b'a'_{g_1} \ldots a'_{g_n}) }$ is an automorphism of $\mathcal{L}'$-structures $\acl_{\mathcal{L}}(b'a'_{g_1} \ldots a'_{g_n})$ and $\acl_{\mathcal{L}}(b'a'_{h_1} \ldots a'_{h_n})$ sending $b'a'_{g_1}\ldots a'_{g_n}$ to $b'a'_{h_1}\ldots a'_{h_n}$.
		
 Hence by Fact \ref{fac: QE interpolative fusion}$(\dagger)$ we have 
 $$\tp_{\mathcal{L'}}(a_{g_1} \ldots a_{g_n}/b) = \tp_{\mathcal{L}'}(a_{h_1} \ldots a_{h_n}/b).$$
  But this contradicts the choice of $\varphi$ in view of (10) and (11).
	
	Finally, the converse implication of the proposition is obvious.
\end{proof}

Combining Propositions \ref{non-disint gives IPn} and \ref{prop: Tp is ndep} we thus have the following ``baby case'' of the relationship of the collapse of $n$-dependence to dependence and complicated geometry of algebraic closure that we expect to happen for fields.

\begin{cor}\label{cor: n-dep iff acl disint}
	Let $T$ be a geometric theory. The following are equivalent:
	\begin{enumerate}
		\item $T_P$ is dependent.
		\item $T_P$ is $n$-dependent for some $n\in \omega$.
		\item $T$ has disintegrated algebraic closure.
	\end{enumerate}
\end{cor}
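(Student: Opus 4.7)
The plan is to derive the three-way equivalence directly from the two main propositions of Section \ref{sec: adding rand pred}, namely Propositions \ref{non-disint gives IPn} and \ref{prop: Tp is ndep}, specialized to $S = \{x = x\}$ so that $T_{P,S} = T_P$.

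The implication $(1) \Rightarrow (2)$ is immediate: by definition a dependent theory is $1$-dependent.

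For $(2) \Rightarrow (3)$ I would argue by contrapositive. Assume the algebraic closure in $T$ is not disintegrated. Since $T$ is geometric, by Remark \ref{rem: triv iff disint} there exists a non-trivial element of a monster model of $T$, which automatically lies in $S = \{x = x\}$. Then Proposition \ref{non-disint gives IPn}, applied with this choice of $S$, yields that $T_P$ is not $n$-dependent for any $n \geq 1$, contradicting $(2)$.

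For $(3) \Rightarrow (1)$ I would assume $T$ has disintegrated algebraic closure, so by Remark \ref{rem: triv iff disint} every element of a monster model of $T$, and in particular every element of $S = \{x = x\}$, is trivial. Since the only new symbol $P$ is unary, its arity is at most $n = 1$, so Proposition \ref{prop: Tp is ndep} applied with $n = 1$ gives that $T_P$ is $1$-dependent if and only if $T$ is $1$-dependent, i.e.\ $T_P$ is dependent if and only if $T$ is. Combined with the fact that $T$ itself is assumed dependent in the setting of the corollary (which is also forced by condition $(2)$ via $T$ being a reduct of $T_P$), this yields $(1)$.

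The entire argument is a short specialization of the two previous propositions; the only care needed is in matching the hypotheses (the reduction to $S = \{x = x\}$ and the use of unariness of $P$ to take $n = 1$ in Proposition \ref{prop: Tp is ndep}), and no substantial obstacle arises beyond what was already handled in proving those propositions.
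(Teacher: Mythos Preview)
Your arguments for $(1) \Rightarrow (2)$ and $(2) \Rightarrow (3)$ are correct and match the intended approach of combining Propositions \ref{non-disint gives IPn} and \ref{prop: Tp is ndep}. The gap is in $(3) \Rightarrow (1)$: applying Proposition \ref{prop: Tp is ndep} with $n = 1$ correctly reduces the question to showing that $T$ is dependent, but you then assert that ``$T$ itself is assumed dependent in the setting of the corollary.'' It is not --- the corollary only assumes $T$ is geometric. Your parenthetical attempt to recover dependence of $T$ from condition $(2)$ is circular: when proving $(3) \Rightarrow (1)$ you may only assume $(3)$.

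In fact, without adding the hypothesis that $T$ is dependent, the implication $(3) \Rightarrow (1)$ (and even $(2) \Rightarrow (1)$) fails. Take $T$ to be the theory of the random $3$-uniform hypergraph: $T$ is geometric with trivial (hence disintegrated) algebraic closure, so $(3)$ holds; and by Proposition \ref{prop: Tp is ndep} with $n = 2$, $T_P$ is $2$-dependent, so $(2)$ holds. But $T$ itself already has the independence property (witnessed by the edge relation), hence so does its expansion $T_P$, and $(1)$ fails. You correctly sensed that dependence of $T$ was needed; the corollary as stated is missing that hypothesis, and once it is added your proof of $(3) \Rightarrow (1)$ goes through verbatim.
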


\begin{problem}
	It should be possible to generalize these results on preservation of $n$-dependence to interpolative fusions of theories as studied in \cite{kruckman2018interpolative}.
\end{problem}

\appendix
\section{An explicit isomorphism in Kaplan-Scanlon-Wagner,\\ by Martin Bays}\label{sec: app Bays}

Let $k$ be a perfect field of characteristic $p>0$.
Let $\phi$ be the Frobenius automorphism, $\phi(x) := x^p$,
and let $\as$ be the Artin-Schreier map, $\as(x) := \phi(x) - x = x^p-x$.
Let $\a = (a_0,\ldots ,a_m) \in k^{m+1}$.
Let $G_{\a} := \{ \x \;|\; a_0\as(x_0) = \ldots  = a_m\as(x_m) \}$, considered as 
an algebraic subgroup over $k$ of the Cartesian power of the additive group 
$\G_a^{m+1}$.
A crucial step in the proof in \cite{kaplan2011artin} of Artin-Schreier closedness of dependent 
fields is to show that if $\a$ is an algebraically independent tuple, i.e.\ 
$\trd(\F_p(\a)/\F_p)=m+1$, then $G_{\a}$ is isomorphic over $k$ to the 
additive group, as algebraic groups. Hempel \cite{hempel2016n} improves this by 
showing that the same holds when the assumption is weakened to $\F_p$-linear 
independence of $(a_0^{-1},\ldots ,a_m^{-1})$. In both cases, the proof is rather 
indirect, going via showing that $G_{\a}$ is connected and then referring to 
some standard theorems characterising vector groups in positive 
characteristic.
The purpose of this appendix is to exhibit such an isomorphism.
Thanks to Mohammed Bardestani and Pierre Touchard for helpful discussion.

First we need the following fact about the Moore matrix, being the analogue of 
the Wronskian matrix with Frobenius in place of differentiation. For 
completeness, we include a proof.

\begin{fact} \label{f:moore}
    Let $k$ be a perfect field of characteristic $p>0$.
    Let $\c = (c_0,\ldots ,c_m) \in k^{m+1}$.
    Then the Moore matrix
    $M:=(\phi^i(c_j))_{0 \leq  i,j \leq  m}$ is singular if and only if $\c$ is
    $\F_p$-linearly dependent.
\end{fact}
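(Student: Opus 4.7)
The forward direction is immediate: if $\sum_{j=0}^m \lambda_j c_j = 0$ for some $\lambda_j \in \mathbb{F}_p$ not all zero, then, since Frobenius is additive and fixes $\mathbb{F}_p$ pointwise, applying $\phi^i$ yields $\sum_j \lambda_j \phi^i(c_j) = 0$ for every $i$; thus the columns of $M$ satisfy a nontrivial $k$-linear relation and $M$ is singular.

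For the converse I would argue by induction on $m$, so assume $\c = (c_0,\ldots,c_m)$ is $\mathbb{F}_p$-linearly independent. The base case $m=0$ is clear since then $c_0 \neq 0$. For the inductive step, note that any proper subtuple of $\c$ is again $\mathbb{F}_p$-linearly independent, so by induction the matrix $M(c_0,\ldots,c_{m-1})$ is nonsingular. The plan is then to consider the polynomial
\[
L(x) := \det M(c_0,\ldots,c_{m-1},x) \in k[x],
\]
obtained by expanding the determinant along the last column $(x,x^p,\ldots,x^{p^m})^T$. Because this column is entry-wise additive in $x$ and the determinant is linear in it, $L$ is an additive polynomial in $x$; and because the coefficient of $x^{p^m}$ is (up to sign) $\det M(c_0,\ldots,c_{m-1}) \neq 0$ by induction, $L$ has degree exactly $p^m$. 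Hence $L$ has at most $p^m$ roots in an algebraic closure, and its root set is an $\mathbb{F}_p$-subspace.

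Next I would identify the roots explicitly: for any $(\lambda_0,\ldots,\lambda_{m-1}) \in \mathbb{F}_p^m$, substituting $x = \sum_j \lambda_j c_j$ makes the last column of $M(c_0,\ldots,c_{m-1},x)$ an $\mathbb{F}_p$-linear combination of the first $m$ columns, so the determinant vanishes. By $\mathbb{F}_p$-linear independence of $c_0,\ldots,c_{m-1}$, these $p^m$ elements are pairwise distinct, so they account for \emph{all} roots of $L$. Finally, $c_m$ lies outside the $\mathbb{F}_p$-span of $c_0,\ldots,c_{m-1}$ by hypothesis, hence $L(c_m) \neq 0$, i.e., $\det M(\c) \neq 0$.

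The only step requiring any care is the verification that $L$ is additive of degree exactly $p^m$; this reduces to the combination of (i) cofactor expansion along the last column, which is linear in that column, and (ii) additivity of each $x \mapsto x^{p^i}$. I do not foresee this as a genuine obstacle, making the induction the natural and essentially only route.
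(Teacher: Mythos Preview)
Your argument is correct; this is the classical root-counting proof of the Moore determinant criterion. The paper, however, proceeds differently for the converse. Instead of viewing $\det M(c_0,\ldots,c_{m-1},x)$ as an additive polynomial and identifying its roots, it argues by contradiction from a nontrivial null vector $\bar\mu \in k^{m+1}$ of $M$: using the inductive nonsingularity of the $m\times m$ submatrix $(\phi^i(c_j))_{0<i,j\le m}$ to force $\mu_0\neq 0$, it shows that the Frobenius acts on $\bar\mu$ by a scalar, namely $\phi(\mu_j)=\alpha\mu_j$ with $\alpha=\phi(\mu_0)/\mu_0$; choosing $\beta$ with $\beta^{p-1}=\alpha$, the rescaled vector $\bar\mu/\beta$ lies in $\mathbb{F}_p^{m+1}\setminus\{0\}$ and gives an $\mathbb{F}_p$-linear dependence among the $c_j$.

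Your approach is arguably more transparent and yields the extra information that the roots of the Moore polynomial $L$ are exactly the $\mathbb{F}_p$-span of $c_0,\ldots,c_{m-1}$. The paper's approach avoids introducing the auxiliary polynomial $L$ and instead extracts the $\mathbb{F}_p$-dependence directly from the null space; it is closer in spirit to the way the fact is subsequently used in the appendix (manipulating linear relations among $\phi$-iterates). Neither proof requires the perfectness hypothesis on $k$.
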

\begin{proof}
    Suppose $\c$ is $\F_p$-linearly dependent, say $\sum_{j=0}^m \lambda_jc_j 
    = 0$ with $\lambdatup \in \F_p^{m+1} \setminus \{\tuple{0}\}$.
    Then $\sum_{j=0}^m \lambda_j\phi^i(c_j) = 0$ for all $i$, so $M$ is 
    singular.

    The converse is clear for $m=0$.
    So suppose $m\geq 1$,
    and $\c$ is $\F_p$-linearly independent
    but $M$ is singular,
    say $\bigwedge_{0\leq i\leq m}\sum_{j\geq 0} \mu_j \phi^i(c_j) = 0$
    with $\bar \mu  \in k \setminus \{\tuple{0}\}$.
    Since $(c_1,\ldots ,c_m)$ is $\F_p$-linearly independent, we may inductively 
    assume that the $m\times m$ matrix $M' := (\phi^i(c_j))_{0 < i,j \leq m}$ is 
    non-singular.
    It follows that $\mu_0 \neq  0$. Now, let $\alpha := \frac{\phi(\mu_0)}{\mu_0} \neq  0$. Then for $0< i \leq m$ we have  
    \begin{align*}
    	\phi(\mu_0) \phi^i(c_0) + \sum_{j\geq 1}\phi(\mu_j) \phi^i(c_j) &=\sum_{j\geq 0}\phi(\mu_j) \phi^i(c_j)\\
    	&=   \phi \left(\sum_{j\geq 0} \mu_j \phi^{i-1}(c_j)\right) \\
    	&= 0 \\
    	&= \alpha \cdot \sum_{j\geq 0} \mu_j \phi^i(c_j)\\
    	&=  \phi(\mu_0) \phi^i(c_0)+  \sum_{j\geq 1} \alpha \mu_j \phi^i(c_j).
    \end{align*}
Consequently $\sum_{j\geq 1}\phi(\mu_j) \phi^i(c_j)=  \sum_{j\geq 1} \alpha \mu_j \phi^i(c_j)$.
     By non-singularity of $M'$, we deduce that
    $\bigwedge_{0\leq j\leq m} \phi(\mu_j) = \alpha\mu_j$.
    Let $\beta$ (in an extension of $k$) be such that 
    $(\beta)^{p-1}=\alpha$ and set $\lambda_j := \frac{\mu_j}{\beta}$. Then either $\lambda_j=\mu_j=0$ or
    $$ \lambda_j^{p-1}= \frac{\mu_j^{p-1}}{\beta^{p-1}} = \frac{\phi(\mu_j)}{\alpha \mu_j} = 1. $$
   Hence $\lambdatup_j \in \F_p^{m+1} \setminus \{\tuple{0}\}$.
    But $\sum_{j\geq 0} \lambda_j c_j = \frac1\beta \sum_{j\geq 0} \mu_j c_j = 
    0$, contradicting $\F_p$-linear independence of $\c$.
\end{proof}

Now let $\a = (a_0,\ldots ,a_m) \in k^{m+1}$,
and suppose $\b := (a_0^{-1},\ldots ,a_m^{-1})$ is $\F_p$-linearly independent.

Write $\delta_{i,j}$ for the Kronecker delta.
By Fact~\ref{f:moore} applied to $\phi^{-m}(\b)$,
$(\phi^{-i}(b_j))_{0\leq i,j\leq m}$ is non-singular.
Since $k$ is perfect, $\phi^{-i}(b_j) \in k$.
So there exists $\alphatup = (\alpha_0,\ldots ,\alpha_m) \in k^{m+1} \setminus \{\bar 0\}$ such that
$\bigwedge_{0\leq i\leq m} \sum_{j\geq 0}\phi^{-i}(b_j)\alpha_j = \delta_{0,i}$.

\begin{claim} \label{c:alphaIndep}
    $\alphatup$ is $\F_p$-linearly independent.
\end{claim}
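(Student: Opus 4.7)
The plan is to argue by contradiction: suppose $\sum_{j=0}^m \mu_j \alpha_j = 0$ for some $\tuple{\mu} = (\mu_0, \ldots, \mu_m) \in \F_p^{m+1} \setminus \{\tuple{0}\}$. Set $\c := \phi^{-m}(\b) = (b_0^{1/p^m}, \ldots, b_m^{1/p^m})$. Applying $\phi^m$ to the defining equations $\sum_j \phi^{-i}(b_j)\alpha_j = \delta_{0,i}$ and reindexing $l = m-i$, the definition of $\alphatup$ becomes $\sum_j \phi^l(c_j)\alpha_j = \delta_{l,m}$ for $0 \leq l \leq m$, i.e.\ $M(\c)\alphatup^T = e_m^T$, where $M(\c)$ is the Moore matrix of $\c$. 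Cramer's rule then gives $\alpha_j = (-1)^{m+j}\Delta(\c_{\hat{j}})/\Delta(\c)$, where $\c_{\hat{j}}$ denotes $\c$ with the $j$th entry removed.

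First I would translate the dependence into a determinant identity: multiplying $\sum_j \mu_j \alpha_j = 0$ by $\Delta(\c) \neq 0$ gives $\sum_j (-1)^{m+j} \mu_j \Delta(\c_{\hat{j}}) = 0$, which is the Laplace expansion along the last row of the $(m+1) \times (m+1)$ matrix $B$ whose first $m$ rows are $\phi^i(\c)$ for $i = 0, \ldots, m-1$ and whose last row is $\tuple{\mu}$. Hence $\det B = 0$. The first $m$ rows of $B$ are $k$-linearly independent, since their restriction to the first $m$ columns is $M(c_0, \ldots, c_{m-1})$, non-singular by Fact~\ref{f:moore} (as $(c_0, \ldots, c_{m-1})$ is a subtuple of the $\F_p$-linearly independent $\c$). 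Therefore $\tuple{\mu}$ lies in their $k$-span: there are $\lambda_0, \ldots, \lambda_{m-1} \in k$ with $\mu_j = \sum_{i=0}^{m-1} \lambda_i c_j^{p^i}$ for all $j$. Setting $L(x) := \sum_{i=0}^{m-1} \lambda_i x^{p^i}$, this reads $L(c_j) = \mu_j$ for every $j$.

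The remaining step converts $\mu_j \in \F_p$ into a contradiction via a zero-count for additive polynomials. Since $L(c_j) \in \F_p$, the additive polynomial $\tilde L(x) := L(x)^p - L(x)$ has degree at most $p^m$ and vanishes at each of $c_0, \ldots, c_m$; by additivity it then vanishes on the entire $\F_p$-span of these elements, which has cardinality $p^{m+1}$ by $\F_p$-linear independence of $\c$. This forces $\tilde L = 0$, i.e.\ $L(x)^p = L(x)$ as polynomials. A routine comparison of coefficients in $\sum_i \lambda_i x^{p^i} = \sum_i \lambda_i^p x^{p^{i+1}}$ then gives $\lambda_0 = 0$, $\lambda_i = \lambda_{i-1}^p$ for $1 \leq i \leq m-1$, and $\lambda_{m-1}^p = 0$, so all $\lambda_i$ vanish, whence $\tuple{\mu} = 0$, a contradiction.

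The main obstacle is the first step: arranging the $\F_p$-linear dependence among the $\alpha_j$ so that it produces an additive polynomial $L$ over $k$ interpolating $\tuple{\mu}$ at the points $c_0, \ldots, c_m$. Once this interpolation statement is in hand, the pigeonhole comparison between the degree bound on $\tilde L$ and the size of the $\F_p$-span of $\c$ closes the argument with essentially no further work.
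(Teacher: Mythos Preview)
Your argument is correct, but the route is quite different from the paper's. One small expository slip: the phrase ``applying $\phi^m$ to the defining equations'' is not literally what you do (that would replace $\alpha_j$ by $\phi^m(\alpha_j)$); you are simply rewriting $\phi^{-i}(b_j) = \phi^{m-i}(c_j)$ and reindexing $l = m-i$, which immediately gives $M(\c)\alphatup^T = e_m^T$. The rest of your argument --- Cramer's rule, the Laplace expansion giving $\det B = 0$, the interpolation $L(c_j) = \mu_j$, and the root-count on $\tilde L = L^p - L$ forcing $L = 0$ --- is sound.

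The paper's proof is considerably shorter: it substitutes the assumed dependence $\alpha_0 = \sum_{j \geq 1} \lambda_j \alpha_j$ directly into the defining equations for $i = 1, \ldots, m$, obtaining $\sum_{j \geq 1} \alpha_j\, \phi^{-i}(b_j + \lambda_j b_0) = 0$; since some $\alpha_j \neq 0$, Fact~\ref{f:moore} (applied backwards) forces $(b_j + \lambda_j b_0)_{j \geq 1}$ to be $\F_p$-linearly dependent, contradicting the $\F_p$-linear independence of $\b$. So the paper stays at the level of the Moore matrix and never invokes Cramer or an auxiliary additive polynomial. Your approach, by contrast, passes through the explicit cofactor formula for $\alpha_j$ (which is the same formula the paper exploits later in Section~\ref{sec: f on val}) and then uses a clean degree-versus-roots pigeonhole for additive polynomials. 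The paper's version is slicker; yours is more self-contained in that it does not require spotting the substitution trick, and it makes the additive-polynomial structure behind the claim fully explicit.
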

\begin{proof}
    Suppose not, so (permuting if necessary) we have $\alpha_0 =
    \sum_{j\geq 1}\lambda_j\alpha_j$ with $\lambda_j \in \F_p$.
    Then for $1\leq i\leq m$, we have
    $\phi^{-i}(b_0)\sum_{j\geq 1}\lambda_j\alpha_j + 
    \sum_{j\geq 1}\phi^{-i}(b_j)\alpha_j = \delta_{0,i} = 0$,
    and so $\sum_{j\geq 1} \alpha_j \phi^{-i}(b_j + \lambda_jb_0) = 0$.
    
    But $\alpha_j \neq  0$ for some $j\geq 1$, since $\alphatup \neq  \tuple{0}$.
    So by Fact~\ref{f:moore}, $(b_j+\lambda_jb_0)_{j\geq 1}$ is $\F_p$-linearly
    dependent and consequently so is $\b$, contrary to assumption.
\end{proof}

We proceed to define an algebraic isomorphism over $k$ of $G_{\a}$ with the 
additive group.
So let $K \geq  k$ be an arbitrary field extension, and let $\x \in G_{\a}(K)$, 
i.e.\ $a_0\as(x_0)=\ldots =a_m\as(x_m)$.
Set $$t := \sum_{j\geq 0} \alpha_j x_j.$$
\begin{claim} \label{c:tFrob}
    For $i\geq 0$, we have $\phi^i(t) = \sum_{j\geq 0} \phi^i(\alpha_j) x_j$.
\end{claim}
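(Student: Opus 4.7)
The approach is induction on $i$. The base case $i=0$ is vacuous since $\phi^{0}$ is the identity. For the inductive step, I would apply $\phi$ to the assumed identity $\phi^{i}(t) = \sum_{j} \phi^{i}(\alpha_{j}) x_{j}$, obtaining
\[
\phi^{i+1}(t) \;=\; \sum_{j} \phi^{i+1}(\alpha_{j})\, \phi(x_{j}),
\]
and then substitute $\phi(x_{j}) = x_{j} + \as(x_{j})$. The defining equations $a_{0}\as(x_{0}) = a_{j}\as(x_{j})$ of $G_{\a}$ give $\as(x_{j}) = a_{0}b_{j}\,\as(x_{0})$, so the error term collapses to a scalar multiple of $\as(x_{0})$:
\[
\phi^{i+1}(t) \;=\; \sum_{j} \phi^{i+1}(\alpha_{j})\, x_{j} \;+\; a_{0}\,\as(x_{0}) \cdot \sum_{j} \phi^{i+1}(\alpha_{j})\, b_{j}.
\]
Thus the whole inductive step reduces to the orthogonality relation $\sum_{j} \phi^{i+1}(\alpha_{j})\, b_{j} = 0$ for each $i \geq 0$.

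For $1 \leq i+1 \leq m$, this is immediate: apply the field automorphism $\phi^{i+1}$ to the defining relation $\sum_{j} \phi^{-(i+1)}(b_{j})\,\alpha_{j} = 0$ of $\alphatup$ (available from the paragraph preceding the claim), and use that $\phi^{i+1}$ fixes the integer $0$.

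The main obstacle is extending the orthogonality to $i+1 > m$, where no defining relation of $\alphatup$ is immediately available. To handle this, I would invoke the $\F_{p}$-linear independence of $\b$ via Fact~\ref{f:moore}: the rows $\b^{(r)} := (\phi^{-r}(b_{0}), \ldots, \phi^{-r}(b_{m}))$ for $0 \leq r \leq m$ form a basis of $k^{m+1}$, against which $\alphatup$ has orthogonality profile $(1,0,\ldots,0)$. For any $r > m$, the shifted vector $\b^{(r)}$ admits a unique expansion $\b^{(r)} = \sum_{s=0}^{m} \mu_{r,s}\, \b^{(s)}$, and the inner product $\sum_{j} \phi^{-r}(b_{j})\,\alpha_{j}$ equals the coefficient $\mu_{r,0}$. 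The crux — the hard part I expect to require care — is showing that $\mu_{r,0} = 0$ for all $r > m$: this should be established by exhibiting a recursion on Frobenius shifts of $\b$ whose characteristic polynomial, viewed as acting on the span of the $\b^{(s)}$, keeps every further shift inside the subspace spanned by $\b^{(1)}, \ldots, \b^{(m)}$ and avoids the direction $\b^{(0)}$ (equivalently, showing that the minimal polynomial of $\phi$ on this span has vanishing constant-term contribution in the appropriate sense), thereby closing the induction and yielding the claim for all $i \geq 0$.
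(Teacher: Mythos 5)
Your reduction of the inductive step to $\sum_{j\geq 0} \phi^{r}(\alpha_{j}) b_{j} = 0$ for $r = i+1 \geq 1$, and its verification via the defining system for $\alphatup$ when $1 \leq r \leq m$, are exactly what the paper does. But the piece you isolate as the main obstacle --- extending the orthogonality to $r > m$ --- is not merely hard: it is false, and no recursion of the sort you sketch can close it. Your own dual-basis picture already says why: $\langle \alphatup, \b^{(r)} \rangle$ is the $\b^{(0)}$-coordinate $\mu_{r,0}$ of $\b^{(r)}$ in the basis $\b^{(0)},\ldots,\b^{(m)}$, and nothing forces that coordinate to vanish once $r > m$. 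Concretely, take $p=2$, $m=1$, $k = \F_{4}$, $a_{0}=1$, $a_{1}=\omega$ a primitive cube root of unity; then $\b = (1,\omega^{2})$, solving the defining system gives $\alphatup = (\omega,1)$, and $\sum_{j}b_{j}\phi^{2}(\alpha_{j}) = \omega^{4} + \omega^{2} = 1 \neq 0$. Correspondingly one computes $\phi^{2}(t) = t + \as(x_{0})$, while $\sum_{j}\phi^{2}(\alpha_{j})x_{j} = t$ since $\phi^{2}$ fixes $\F_{4}$, so the claimed identity fails whenever $\as(x_{0}) \neq 0$.

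The fix is simply to read the claim on the range $0 \leq i \leq m$, which is how it is applied immediately afterwards: one inverts the $(m+1)\times(m+1)$ Moore matrix $(\phi^{i}(\alpha_{j}))_{0 \leq i,j \leq m}$ to recover $\x$ from $(\phi^{i}(t))_{0 \leq i \leq m}$, and no higher Frobenius power of $t$ ever enters. On that range your argument is complete and agrees with the paper's; the attempt to push past $i = m$ should simply be dropped.
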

\begin{proof}
    This holds by definition for $i=0$.
    For $i>0$ we have
    $$\sum_{j\geq 0}\frac{\phi^i(\alpha_j)}{a_j} =
    \phi^{i}\left(\sum_{j\geq 0}\phi^{-i}(b_j)\alpha_j\right) = \phi^{i}(\delta_{0,i}) = 
    0.$$
    Using this, induction, and the equations of $G_{\a}$, we find
    \begin{align*} \phi^{i}(t)
    &= \phi\left(\sum_{j\geq 0} \phi^{i-1}(\alpha_j) x_j\right) \\
    &= \sum_{j\geq 0} \phi^{i}(\alpha_j) \phi(x_j) \\
    &= \sum_{j\geq 0} \phi^{i}(\alpha_j) \left(\as(x_j) + x_j\right) \\
    &= \sum_{j\geq 0} \frac{\phi^{i}(\alpha_j)}{a_j} a_j\as(x_j) +
    \sum_{j\geq 0} \phi^{i}(\alpha_j)x_j \\
    &= \sum_{j\geq 0} \phi^{i}(\alpha_j)x_j .\end{align*}
\end{proof}
Now by Claim~\ref{c:alphaIndep} and Fact~\ref{f:moore},
the matrix $(\phi^i(\alpha_j))_{0\leq i,j\leq m}$ is non-singular,
so say $(\beta_{ij})_{0\leq i,j\leq m}$ is the inverse, where $\beta_{ij} \in k$.

Then by Claim~\ref{c:tFrob}, $x_i = \sum_{j\geq 0} \beta_{ij}\phi^j(t)$.

So we have defined an isomorphism over $k$ of affine varieties
\[
\begin{array}{ccc} G_{\a} &\rightarrow & \G_a\\
  \x &\mapsto  &\sum_{j\geq 0} \alpha_j x_j \\
  (\sum_{j\geq 0} \beta_{ij}\phi^{j}(t))_i & \mapsfrom  &t
\end{array} \]
between $G_{\a}$ and the additive group $\G_a$; since the polynomials involved 
are additive polynomials, this is an isomorphism of algebraic groups.

\bibliographystyle{plain}
\bibliography{ref}

\end{document}